\theoremstyle{plain}
 \newtheorem{theorem}{Theorem}[section]
 \newtheorem{lemma}[theorem]{Lemma}
 \newtheorem{prop}[theorem]{Proposition}
 \newtheorem{cor}[theorem]{Corollary}
 \newtheorem{conjecture}[theorem]{Conjecture}
\theoremstyle{definition}
 \newtheorem{definition}[theorem]{Definition}
\theoremstyle{remark}
 \newtheorem{remark}[theorem]{Remark}
\numberwithin{equation}{section}
\numberwithin{theorem}{section}
\newcommand\nc\newcommand
\DeclareMathOperator
\nc{\Red}[1]{\textcolor[rgb]{1,0,0}{[#1]}}
\nc{\Green}[1]{\textcolor[rgb]{0,1,0}{[#1]}}
\nc{\Blue}[1]{\textcolor[rgb]{0,0,1}{[#1]}}
\nc{\abbr}[1]{{\sc{\lowercase{#1}}}}
\nc{\abbrev}[1]{{\raisebox{-0.0pt}{\tt{\uppercase{\scalebox{0.75}{#1}}}}}}
\nc{\ii}{\mathrm{i}}
\dmo{\sign}{sign}
\dmo{\spt}{spt}
\dmo{\supp}{supp}
\dmo{\sym}{Sym}
\nc{\R}{\mathbb{R}}
\nc{\C}{\mathbb{C}}
\nc{\N}{\mathbb{N}}
\nc{\Z}{\mathbb{Z}}
\nc{\erdos}{Erd\H os }
\nc{\er}{Erd\H os--R\'enyi } 
\dmo{\ls}{\lesssim}
\dmo{\gs}{\gtrsim}
\def \<{\langle}
\def \>{\rangle}
\def \lf {\lfloor}
\def \rf {\rfloor}
\nc{\expo}[1]{\exp \left( #1 \rule{0mm}{3mm}\right)}
\DeclarePairedDelimiter\parentheses{\lparen}{\rparen}
\nc{\dd}{d}
\dmo{\e}{\mathbb{E}}
\dmo{\var}{Var}
\dmo{\pr}{\mathbb{P}}
\dmo{\un}{\mathbbm{1}}
\nc{\eqd}{\,{\buildrel d \over =}\,}
\nc{\bad}{\mathcal{B}}
\nc{\event}{\mathcal{E}}
\nc{\good}{\mathcal{G}}
\nc{\pro}[1]{\mathbb{P}\parentheses*{#1 \rule{0mm}{0mm}}}
\nc{\set}[1]{\left\{ #1 \right\}}
\dmo{\I}{I}
\dmo{\tr}{tr}
\dmo{\rank}{rank}
\dmo{\rk}{Rank}
\dmo{\corank}{corank}
\def \tran {\mathsf{T}}
\def \HS {\mathrm{HS}}
\nc{\Span}{\operatorname{span}}
\dmo{\per}{\text{per}}
\nc{\eps}{\varepsilon}
\nc{\ep}{\epsilon}
\nc{\mA}{\mathcal{A}}
\nc{\mB}{\mathcal{B}}
\nc{\mC}{\mathcal{C}}
\nc{\mD}{\mathcal{D}}
\nc{\mE}{\mathcal{E}}
\nc{\mF}{\mathcal{F}}
\nc{\mG}{\mathcal{G}}
\nc{\mH}{\mathcal{H}}
\nc{\mI}{\mathcal{I}}
\nc{\mJ}{\mathcal{J}}
\nc{\mK}{\mathcal{K}}
\nc{\mL}{\mathcal{L}}
\nc{\mM}{\mathcal{M}}
\nc{\mN}{\mathcal{N}}
\nc{\mO}{\mathcal{O}}
\nc{\mP}{\mathcal{P}}
\nc{\mQ}{\mathcal{Q}}
\nc{\mR}{\mathcal{R}}
\nc{\mS}{\mathcal{S}}
\nc{\mT}{\mathcal{T}}
\nc{\mU}{\mathcal{U}}
\nc{\mV}{\mathcal{V}}
\nc{\mW}{\mathcal{W}}
\nc{\mX}{\mathcal{X}}
\nc{\mY}{\mathcal{Y}}
\nc{\mZ}{\mathcal{Z}}
\nc \wY {\widetilde{Y}}
\nc \wA {\widetilde{A}}
\nc \tA {\widetilde{A}}
\nc \tG {\widetilde{G}}
\nc \stA {A^\star}
\nc \wR {\widetilde{R}}
\nc \wM {\widetilde{M}}
\nc \tnu {\check{\nu}}
\nc \hnu {\hat{\nu}}
\dmo \Sparse {Sparse}
\dmo \Comp {Comp}
\dmo \Flat {Flat}
\dmo \Proj	{Proj}
\dmo \circular {circ}
\nc{\muc}{\mu_{\circular}}
\dmo{\Her}{\mathbf{H}}
\dmo{\Res}{\mathbf{R}}
\nc{\tS}{\widetilde{S}}
\nc{\tT}{\widetilde{T}}
\dmo{\dist}{dist}
\dmo{\Switch}{Switch}
\dmo{\eye}{\mathbf{I}_2}
\dmo{\jay}{\mathbf{J}_2}
\dmo{\codeg}{codeg}
\dmo{\discrep}{discrep}
\dmo{\edge}{disc}
\dmo{\expand}{exp}
\dmo{\II}{\mathbb{I}}
\dmo{\indic}{\1}	
\nc{\sph}{\mathbb{S}^{n-1}}
\nc{\ball}{\mathbb{B}^n}
\dmo{\Id}{Id}
\nc{\oneperp}{{\langle \1 \rangle^{\perp}}}
\nc{\iid}{i.i.d.}
\nc{\me}{{m}}
\nc{\mo}{{m}}
\nc{\ka}{k}
\nc{\Abar}{\bar{A}}
\nc{\bxi}{{\bs{\xi}}}
\nc{\bi}{\bs{i}}
\begin{document}

\title{The Circular Law for random regular digraphs}

\author{Nicholas Cook}
\address{Department of Mathematics, University of California, Los Angeles}
\email{nickcook@math.ucla.edu}
\thanks{The author was partially supported by NSF grant DMS-1266164 and an NSF postdoctoral fellowship.} 

\date{\today}

\begin{abstract}
Let $\log^Cn\le d\le n/2$ for a sufficiently large constant $C>0$ and let $A_n$ denote the adjacency matrix of a uniform random $d$-regular directed graph on $n$ vertices.
We prove that as $n$ tends to infinity, the empirical spectral distribution of $A_n$, suitably rescaled, is governed by the Circular Law.
A key step is to obtain quantitative lower tail bounds for the smallest singular value of additive perturbations of $A_n$.
\end{abstract}

\keywords{Random matrix, directed graph, logarithmic potential, singular values, non-normal matrix, universality}
\subjclass[2010]{Primary: 15B52, Secondary: 60B20, 05C80}

\maketitle

\let\oldtocsubsection=\tocsubsection
\renewcommand{\tocsubsection}[2]{\hspace*{1cm}\oldtocsubsection{#1}{#2}}
\let\oldtocsubsubsection=\tocsubsubsection
\renewcommand{\tocsubsubsection}[2]{\hspace*{2.5cm}\oldtocsubsubsection{#1}{#2}}

\setcounter{tocdepth}{2}

\section{Introduction}

\subsection{Convergence of ESDs and the Circular Law}

For an $n\times n$ matrix $M$ with complex entries and eigenvalues $\lambda_1,\dots, \lambda_n\in \C$ (counted with multiplicity and labeled in some arbitrary fashion), denote the \emph{empirical spectral distribution (ESD)} 
\begin{equation}	\label{def:esd}
\mu_M = \frac1n \sum_{i=1}^n \delta_{\lambda_i}.
\end{equation}
We give the space of probability measures on $\C$ the vague topology.
Thus, a sequence of random probability measures $\mu_n$ over $\C$ converges to another measure $\mu$ in probability  if for every $f\in C_c(\C)$ and every $\eps>0$, 
\begin{equation}	\label{def:weakprob}
\lim_{n\to\infty}\pro{ \left|\int_\C f \,\dd \mu_n - \int_\C f\,\dd \mu\right|>\eps} = 0,
\end{equation}
and $\mu_n$ converges to $\mu$ almost surely if for every $f\in C_c(\C)$, $\int_\C f\dd \mu_n\to \int_\C f \dd\mu$ almost surely. We say that $\mu_n$ converges to $\mu$ in expectation if $\e \int_\C fd\mu_n \to \e \int_\C fd\mu$ for every $f\in C_c(\C)$. 

A well-studied class of non-Hermitian random matrices is the \emph{iid matrix} $X_n$, which has iid centered entries of unit variance.
A seminal result in the theory of non-Hermitian random matrices is the Circular Law for iid matrices, which was established in various forms over several decades. We denote by $\muc$ the normalized Lebesgue measure on the unit disk in $\C$.

\begin{theorem}[Strong Circular Law for iid matrices \cite{TaVu:esd}]	\label{thm:circ}
Fix a complex random variable $\xi$ with zero mean and unit variance, and for each $n\ge1$ form an $n\times n$ random matrix $X_n = (\xi_{ij}^{(n)})$ with entries that are iid copies of $\xi$. 
Then the rescaled ESDs $\mu_{\frac1{\sqrt{n}}X_n}$ converge to $\muc$ almost surely.
\end{theorem}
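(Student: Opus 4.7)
The standard strategy I would follow is Girko's Hermitization. For any $n\times n$ matrix $M$, its ESD $\mu_M$ is determined through its logarithmic potential
\[ U_{\mu_M}(z) := -\int_{\C}\log|w-z|\,\dd\mu_M(w) = -\frac{1}{n}\log|\det(M-zI)| = -\int_0^\infty\log s\,\dd\nu_{M-zI}(s), \]
where $\nu_N$ denotes the empirical distribution of singular values of $N$. Thus, to prove $\mu_{X_n/\sqrt{n}}\to\muc$ almost surely it suffices (by a standard replacement lemma) to show that for Lebesgue-a.e.\ $z\in\C$, the random measures $\nu_{\frac{1}{\sqrt n}X_n-zI}$ converge vaguely almost surely and that $\log s$ is a.s.\ uniformly integrable against them, so that $\int\log s\,\dd\nu_{\frac{1}{\sqrt n}X_n-zI}(s)$ converges to the logarithmic potential of $\muc$ at $z$.

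The first task is to identify and verify the limiting singular value distribution. I would introduce the Hermitization
\[ \mH_z := \begin{pmatrix} 0 & \frac{1}{\sqrt n}X_n-zI \\ (\frac{1}{\sqrt n}X_n-zI)^* & 0 \end{pmatrix} \]
and compute the Stieltjes transform of its ESD. A resolvent / Schur-complement argument in the spirit of Marchenko--Pastur produces a quadratic self-consistent equation whose solution gives a deterministic symmetric probability measure $\tilde\nu_z$ on $\R$ with $\int\log|s|\,\dd\tilde\nu_z(s)=-U_{\muc}(z)$. Almost-sure weak convergence $\nu_{\frac{1}{\sqrt n}X_n-zI}\to\tilde\nu_z|_{[0,\infty)}$ then follows from concentration of the Stieltjes transform (via a martingale difference decomposition along rows) combined with Borel--Cantelli.

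The main obstacle is upgrading weak convergence of $\nu_{\frac{1}{\sqrt n}X_n-zI}$ to convergence of the unbounded integral $\int\log s\,\dd\nu(s)$. Control near $s=\infty$ is standard: truncate the entries as $\xi_{ij}\mapsto \xi_{ij}\1_{|\xi_{ij}|\le n^{\eta}}-\e[\xi_{ij}\1_{|\xi_{ij}|\le n^{\eta}}]$, apply Hoffman--Wielandt to compare the singular spectra of the original and truncated matrices, and control the truncated matrix by moment methods. The delicate step is near $s=0$, where I would need a quantitative smallest-singular-value bound
\[ \pro{\sigma_{\min}\!\left(\tfrac{1}{\sqrt n}X_n-zI\right)\le n^{-A}}\le n^{-\omega(1)}, \]
together with intermediate singular value bounds of the form $\sigma_{n-i}(\frac{1}{\sqrt n}X_n-zI)\gtrsim i/n$ valid for $i\ge (\log n)^B$; these suffice (via Borel--Cantelli) to establish a.s.\ uniform integrability of $\log s$.

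The smallest singular value estimate, which is the heart of the difficulty, I would obtain by the Rudelson--Vershynin decomposition of $\sph$ into compressible and incompressible vectors. On compressible unit vectors, an $\eps$-net combined with a subgaussian tail bound for $\|(X_n-z\sqrt n I)v\|_2$ handles matters with exponentially small failure probability. On incompressible vectors, the invertibility-via-distance reduction expresses $\{\sigma_{\min}\le\eps\}$ in terms of the distance from a row of $X_n-z\sqrt{n}I$ to the span of the other rows; a Littlewood--Offord / small-ball inequality, exploiting the essential least common denominator of the orthogonal direction, bounds this distance's small-ball probability by $O(\eps n^{1/2})+n^{-\omega(1)}$. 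Intermediate singular value bounds follow from a coarser variant of the same net argument. Assembling these ingredients, for a.e.\ $z\in\C$ the integral $\int\log s\,\dd\nu_{\frac{1}{\sqrt n}X_n-zI}(s)$ converges a.s.\ to $-U_{\muc}(z)$, and Girko's lemma delivers $\mu_{\frac{1}{\sqrt n}X_n}\to\muc$ almost surely.
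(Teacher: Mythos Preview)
This theorem is not proved in the paper: it is quoted from \cite{TaVu:esd} as a known result, and the surrounding discussion only surveys its history. So there is no ``paper's own proof'' to compare against. That said, your outline is a faithful high-level sketch of the argument actually carried out in \cite{TaVu:circ,TaVu:esd} and the works leading up to them: Girko's Hermitization reducing to convergence of singular value distributions $\nu_{\frac1{\sqrt n}X_n-z}$, identification of the limit $\nu_z$ via a self-consistent equation for the Stieltjes transform, and the key analytic difficulty of uniform integrability of $\log s$ near $s=0$, handled by quantitative bounds on $s_n$ and the intermediate singular values $s_{n-j}$.

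Two minor remarks. First, the paper notes (just after the theorem statement) that \cite{TaVu:esd} actually deduces the general-$\xi$ case from a \emph{universality principle} comparing $\mu_{X_n}$ with $\mu_{X_n'}$ for a second atom variable $\xi'$, and then invoking the Ginibre case; this is a somewhat different packaging than the direct route you sketch, though the underlying singular-value machinery is the same. Second, your claimed tail $\pr(\sigma_{\min}\le n^{-A})\le n^{-\omega(1)}$ is stronger than what is available under only a second-moment hypothesis on $\xi$: in \cite{TaVu:circ} one gets $\pr(\sigma_{\min}\le n^{-A})=O(n^{-B})$ for any fixed $B$ with $A=O_B(1)$, which is already enough for Borel--Cantelli. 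The Rudelson--Vershynin essential-LCD technology you invoke requires subgaussian tails for $\xi$; under bare second moment one instead uses the inverse Littlewood--Offord theory of Tao--Vu.
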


The above strong form of the Circular Law due to Tao and Vu, and is the culmination of the work of many authors.
Previous works had obtained the Circular Law under additional assumptions on the \emph{atom variable} $\xi$, or with convergence in probability or expectation rather than almost-sure convergence (the above result is called a ``strong law" in analogy with the strong law of large numbers). 
The earliest result was by Ginibre, who established the Circular Law (with convergence in expectation) for the \emph{Ginibre ensemble}, where the atom variable $\xi$ is a standard complex Gaussian \cite{Ginibre65} (see also \cite{Mehta}); the harder case of real Gaussian entries was handled by Edelman \cite{Edelman:circ}. These results relied on explicit formulas available for Gaussian ensembles for the joint density of eigenvalues.
Following influential work of Girko \cite{Girko84}, Bai was the first to rigorously establish the Circular Law for a general class of atom variables, assuming that $\xi$ has bounded density and finite sixth moment \cite{Bai97}.
Following breakthrough work of Rudelson \cite{Rudelson:inv}, Tao--Vu \cite{TaVu:cond} and Rudelson--Vershynin \cite{RuVe:ilo} on the smallest singular value for random matrices with independent entries, the assumptions on the atom variable were progressively relaxed in works of G\"otze--Tikhomirov \cite{GoTi:circ}, Pan--Zhou \cite{PaZh}, and Tao--Vu \cite{TaVu:circ, TaVu:esd}.

Theorem \ref{thm:circ} is an instance of the \emph{universality phenomenon} in random matrix theory, exhibiting an asymptotic behavior of the spectrum which is insensitive to all but a few details of the atom variable (in this case the first two moments). 
In fact, it is a consequence of a more general ``universality principle" established in \cite{TaVu:esd}, which states that if $X_n, X_n'$ are iid matrices generated from atom variables $\xi$ and $\xi'$, respectively, and $M_n$ is a deterministic matrix satisfying $\frac1n\|M_n\|_{\HS} = O(1)$ (where $\|M\|_{\HS}$ is the Hilbert--Schmidt norm), then 
\[
\mu_{M_n+X_n} - \mu_{M_n+X_n'} \to 0\quad \text{in probability}.
\]
(Almost sure convergence is also obtained under an additional technical assumption that we do not state here.) The Circular Law for general iid matrices can then be deduced from the universality principle (taking $M_n=0$) and the Circular Law for the Ginibre ensemble. The perturbations $M_n$ can also give rise to limiting measures different from $\muc$.

Since the work of Tao and Vu the Circular Law has been strengthened and extended in several directions.
In a sequence of works, Bourgade, Yau and Yin \cite{BYY:loccirc1, BYY:loccirc2, Yin:loccirc3} have established the \emph{local Circular Law}, showing that $\muc$ provides a good estimate for the number of eigenvalues of $\frac1{\sqrt{n}}X_n$ in a fixed small ball $B(z,r)$ down to the optimal scale $r\sim n^{-1/2+\eps}$ for arbitrary fixed $\eps>0$, assuming an exponential decay condition for the tails of the atom variable $\xi$. A weaker local law was obtained by Tao and Vu in \cite{TaVu:iidloc} as part of their proof of universality for local eigenvalue statistics. 

We will informally say that a random matrix $Y_n$  (that is, a sequence of $n\times n$ random matrices $(Y_n)_{n\ge1}$) lies in the \emph{Circular Law universality class} if, after rescaling, the ESDs $\mu_{Y_n}$ converge in probability to $\muc$.
Theorem \ref{thm:circ} shows this class contains all iid matrices $X_n$, but in recent years various works have shown it to be somewhat larger.
In \cite{GoTi:circ, TaVu:circ, Wood:sparse, BaRu:sparse} it has been shown that the Circular Law is robust under sparsification, i.e.\ that matrices of the form $Y_n=A_n\circ X_n$ lie in the Circular Law universality class, where $\circ$ denotes the Hadamard (entrywise) product, $X_n$ is an iid matrix, and $A_n$ is a 0--1 matrix of iid Bernoulli($p$) variables, independent of $X_n$, with $p=o(n)$ and $pn$ growing at some speed. In particular, Wood \cite{Wood:sparse} showed the Circular Law holds with convergence in probability if $pn=n^{\eps}$ for any fixed $\eps\in (0,1)$, while the recent work \cite{BaRu:sparse} allows $pn = \omega(\log^2n)$ under higher moment assumptions on the atom variable. 

There has also been extensive work on non-Hermitian matrices with dependent entries. 
In \cite{BCC:markov}, Bordenave, Caputo and Chafa\"i showed the Circular Law class includes random Markov matrices obtained by normalizing the entries of a matrix with iid nonnegative entries of finite variance by the row sums.
Nguyen and Vu obtained the Circular Law for random $\pm1$ matrices with prescribed row sums $|s|\le (1-\eps)n$ for some fixed $\eps\in (0,1]$ \cite{NgVu}. 
Later, Nguyen proved the Circular Law for random doubly stochastic matrices (drawn uniformly from the Birkhoff polytope), which do not enjoy independence between rows or columns \cite{Nguyen:uds}.
In \cite{AdCh}, Adamczak and Chafa\"i  showed that random real matrices having unconditional log-concave distribution obey the Circular Law, extending Edelman's result for real Gaussian matrices.
Adamczak, Chafa\"i and Wolff proved the Circular Law for random matrices with exchangeable entries having finite moments of order $20+\eps$ (if not for the moment assumption this result would generalize the Circular Law for iid matrices) \cite{ACW:exchangeable}. 
In \cite{Cook:circpm}, the author obtained the Circular Law for adjacency matrices of dense random regular digraphs with random edge weights, i.e.\ matrices of the form $A_n\circ X_n$ with $X_n$ an iid matrix and $A_n$ a 0--1 matrix constrained to have all rows and columns sum to $\lf p n\rf$ for some fixed $p\in (0,1)$. Such matrices $A_n$, which are the focus of the present work, can be seen as a discrete version of the doubly stochastic matrices considered by Nguyen.

The second moment hypothesis in Theorem \ref{thm:circ} is sharp. Indeed, in \cite{BCC:heavy}, Bordenave, Caputo and Chafa\"i established a different limiting law for matrices with iid entries lying in the domain of attraction of an $\alpha$-stable distribution for $\alpha\in (0,2)$. 
In \cite{BCCP:markov.heavy} the same authors together with Piras have considered random stochastic matrices obtained by normalizing the entries of an iid heavy-tailed matrix with $\alpha\in (0,1)$ by the row sums, proving convergence of the ESDs to deterministic measure supported on a compact disk (while they do not obtain an expression for the limiting density, simulations indicate that it is not the uniform measure on the disk). 

Finally, a natural question is whether the Circular Law extends to matrices with independent but non-identically distributed entries having finite second moment. 
If the entries all have unit variance then one can replace the assumption of identical distribution with some more general technical hypotheses; see \cite{TaVu:circ}, \cite[p.\ 428]{BaSi10:book}.
Recently, the work \cite{CHNR} studied the asymptotic ESDs for matrices of the form $Y_n = \frac1{\sqrt{n}} A_n\circ X_n$, with $X_n$ an iid matrix having entries with finite $(4+\eps)$-th moment, and $A_n=(\sigma_{ij})$ a fixed ``profile" of standard deviations $\sigma_{ij}\in [0,1]$. In particular, it was shown that the Circular Law holds if the standard deviations $\sigma_{ij}$ are uniformly bounded and the variance profile $(\frac1n\sigma_{ij}^2)$ is doubly stochastic. Examples were also provided of variance profiles leading to limiting measures different from $\muc$, though they are always compactly supported and rotationally symmetric.
Another recent work \cite{AEK:inhom} has obtained a local law (analogous to the above-mentioned local Circular Law of Bourgade--Yau--Yin) for $Y_n$ as above, but under stronger assumptions: that the entries have smooth distribution and the variances are uniformly bounded above and below by positive constants. \\

The Circular Law and its extensions have been applied to the stability analysis of complex dynamical systems, in theoretical ecology \cite{May72} and neuroscience \cite{SCS:chaos}. In the latter work, an iid matrix was used to model the \emph{synaptic matrix} for a large neural network. 
There has since been significant effort to extend the results of \cite{SCS:chaos} to various random matrix models incorporating additional structural features of natural neural networks such as the brain, using both rigorous and non-rigorous methods \cite{RaAb:neural, ASS, ARS:block,AFM:neural}. 
However, a key feature that has not been covered by these works is \emph{sparsity}. While the aforementioned works such as \cite{Wood:sparse, BaRu:sparse} can be used to extend the analysis of \cite{SCS:chaos} to sparse iid matrices, it would also be interesting to treat networks where each node has a specified valence. However, such constraints destroy the independence between entries, making the analysis of such models challenging. 

In the present work we make a first step in this direction by extending the Circular Law to adjacency matrices of random regular digraphs. 
For integers $n\ge 1$ and $d\in [n]$ denote
\begin{equation}	\label{def:And}
\mA_{n,d} = \left\{ A \in \{0,1\}^{n\times n} :\; A\1 = A^\tran\1 = d\1 \right\},
\end{equation}
which is the set of 0--1 adjacency matrices for $d$-regular directed graphs (digraphs) on $n$ vertices, allowing self-loops.
(Here and throughout, $\1=\1_n$ denotes the column vector of all 1s.) 
Given $A\in \mA_{n,d}$, we denote the normalized matrix
\begin{equation}	\label{def:Abar}
\Abar = \frac{1}{\sqrt{d\left(1-d/n\right)}} A.
\end{equation}
The main result of this paper is the following:

\vspace{.1cm}
\begin{theorem}[Circular Law for random regular digraphs]	\label{thm:main}
Assume $d=d(n)$ satisfies $\min(d,n-d)\ge \log^{C_0}n$ for a sufficiently large constant $C_0>0$.
For each $n\ge1$ let $A_n$ be a uniform random element of $\mA_{n,d}$. Then
$\mu_{\Abar_n} \to \muc$ in probability.
\end{theorem}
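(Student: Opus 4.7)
The natural strategy is Girko's Hermitization: by the logarithmic potential argument, it suffices to show that for Lebesgue-a.e.\ $z\in \C$, the empirical singular value distribution $\nu_{n,z}$ of $\Abar_n - zI$ converges in probability to a deterministic measure $\nu_z$, \emph{and} that $\log t$ is uniformly integrable against $\nu_{n,z}$ with high probability. Equivalently one needs $\frac{1}{n}\sum_j \log s_j(\Abar_n-zI)$ to converge to the logarithmic potential of $\muc$ at $z$. The first condition handles the bulk of the singular values; the second -- preventing a loss of mass as $t\to 0$ or $t\to \infty$ -- is what forces the quantitative control of small singular values announced in the abstract.

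For the bulk convergence $\nu_{n,z}\to \nu_z$, the strategy is to compare $\Abar_n - zI$ with $\frac{1}{\sqrt{n}}X_n - zI$, whose limit is known. Although the entries of $A_n$ are not independent, the decomposition $A_n = \frac{d}{n}\1\1^\tran + E_n$ isolates a rank-one mean part (which accounts for the Perron eigenvalue of size $\sim\!\sqrt{d}$ after normalization and does not affect the ESD limit), and the centered part $E_n$ can be analyzed via its Stieltjes transform on the $2n\times 2n$ Hermitization. With concentration estimates obtained from the switching method, one argues that the Stieltjes transforms satisfy, in the limit, the same fixed-point equation as in the iid case. This follows the template of \cite{Cook:circpm} for the dense regime and should extend to the sparse regime with care in controlling error terms.

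The core of the argument is a quantitative lower tail bound for $s_n(\Abar_n + M)$ with $M$ deterministic -- specialized to $M=-zI$ -- along with an intermediate singular value bound $s_{n-k}(\Abar_n+M) \gs k/n$ for $k \ge n^{1-\eps}$. Together these ensure that $\frac{1}{n}\sum_j \log s_j(\Abar_n-zI)$ does not leak mass near zero. Following the Rudelson--Vershynin paradigm, I would decompose $\sph$ into compressible and incompressible vectors: compressible (near-sparse) vectors are excluded by an $\eps$-net argument combined with Bernstein-type concentration, while for incompressible vectors the crucial step is anti-concentration of $\langle A_n v, e_i\rangle$. Independence of rows of $A_n$ fails, so in place of the classical Littlewood--Offord theorem one employs the \emph{switching method}: resampling pairs of edges $(i,j),(k,\ell) \leftrightarrow (i,\ell),(k,j)$ preserves the degree sequence while generating a conditionally independent perturbation of the matrix, which yields small ball probability bounds.

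The main obstacle is to make the switching-based anti-concentration estimates work all the way down to $d = \log^{C_0}n$. Each switch alters only $O(1)$ entries of $A_n$, so one must combine many switches to generate enough randomness; sparse or structured near-kernel vectors are harder to rule out since $A_n$ itself is sparse; and the shift $M = -zI$ provides smoothing only of order $|z|$, much weaker than the typical spectral scale $\sqrt{d(1-d/n)}$. I expect the bulk of the technical work to be a structural classification of vectors $v$ with $\|(\Abar_n+M)v\|$ anomalously small, combined with a counting of compatible switchings that produces an anti-concentration bound of strength at least $e^{-d^{\Omega(1)}}$, strong enough to defeat the relevant $\eps$-net cardinalities and close the argument.
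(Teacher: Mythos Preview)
Your high-level framework matches the paper: Hermitization reduces Theorem~\ref{thm:main} to convergence of $\nu_{n,z}$ plus uniform integrability of $\log$, with the latter driven by a quantitative lower bound on $s_n(\Abar_n-z)$ established via switchings. Two choices differ substantively from your sketch.

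For the bulk convergence $\nu_{n,z}\to\nu_z$ and the companion mesoscopic bound $\nu_{n,z}([0,\eta])\ll \eta+d^{-c}$, the paper does \emph{not} derive a Stieltjes self-consistent equation directly via switchings. Instead it runs a two-step comparison: first from $A_n$ to an iid Bernoulli$(d/n)$ matrix $B_n$ by the Tran--Vu--Wang conditioning trick (Guionnet--Zeitouni concentration for linear statistics of $B_n$, divided by a lower bound $\pr(B_n\in\mA_{n,d})\ge e^{-o(nd)}$ proved in the appendix), then from $B_n$ to a Gaussian matrix by Lindeberg exchange on the resolvent. This buys both the weak limit and the intermediate singular value estimate essentially for free from the Gaussian case, without ever analyzing self-consistent equations for $A_n$. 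Your direct route is closer to the loop-equation approach of \cite{BCZ} and would likely work, but is heavier.

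For the smallest singular value, the sphere is \emph{not} split into compressible and incompressible vectors. Because $A_n\1=d\1$, the relevant structured vectors are those close to $\lambda\1+\text{sparse}$ for some $\lambda\in\C$ --- the paper calls these \emph{flat} --- and all analysis is carried out on $\sph\cap\langle\1\rangle^\perp$. Non-flat vectors have components that do not cluster near any single value (rather than merely being spread away from zero), and this is precisely the form of anti-concentration the switching argument needs: a simple switching at columns $j_1,j_2$ perturbs $R_i\cdot u$ by $\pm(u_{j_1}-u_{j_2})$, so one needs many pairs with $|u_{j_1}-u_{j_2}|$ large, not many $|u_j|$ large. Also, even the structured-vector step cannot use Bernstein directly since the entries are dependent; the paper uses switchings and graph-regularity properties there too, with an iterative bootstrap that pushes the flatness parameter from $m\ll d/\log n$ up to $m\asymp n/\log^3 n$.
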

\vspace{.01cm}

\begin{remark}	\label{rmk:C0}
The proof shows we can take $C_0=96$, but we have not tried to optimize this constant. Various parts of the argument work for smaller degree, and for the interested reader we state the required range for $d$ in the statements of the lemmas, sometimes indicating how the range might be improved by longer arguments.
\end{remark}

\begin{remark}
The methods in this paper can also be used to prove the Circular Law for random regular digraphs with random edge weights, extending the result of \cite{Cook:circpm} to the sparse setting (in fact this is somewhat easier than Theorem \ref{thm:main}). Specifically, letting $X_n$ be an iid matrix as in Theorem \ref{thm:circ} with entries having finite fourth moment, and putting $Y_n = \frac1{\sqrt{d}}A_n\circ X_n$, it can be shown that $\mu_{Y_n}\to \muc$ in probability if $\min(d,n-d)\ge \log^{C_0}n$.  We do not include the proof in order to keep the article of reasonable length.
\end{remark}

\begin{remark}[Reduction to $d\le n/2$]	\label{rmk:reduced}
From \eqref{def:And} we have that $\1$ is an eigenvector of $A_n$ with eigenvalue $d$ (this is the Perron--Frobenius eigenvalue).
A routine calculation shows that if $\lambda\in \C\setminus\{d-n\}$ is an eigenvalue of $A_n$ with nonzero eigenvector $v\in \C^n$, then putting $A_n':=\1\1^\tran -A_n$ we have $A_n'w=-\lambda w$, where
\[
w=w(\lambda,v): =
v- \frac{\langle v, \1\rangle}{\lambda+n-d} \1. 
\]
Note that $w(\lambda,v)=0$ only if $\lambda=d$ and $v\in \langle \1\rangle$.
Thus, each eigenvalue of $-A_n$ (counting multiplicity) is an eigenvalue of $A_n'$, with at most one exception, and conversely. In particular,
$\mu_{\overline{A'}_n}$ and the reflected ESD $
\mu_{-\Abar_n}=\mu_{\Abar_n}(-\cdot)$ differ in total variation distance by at most $2/n$. Since $\muc$ is invariant under the refection $\lambda\mapsto -\lambda$, in the proof of Theorem \ref{thm:main} we may and will assume that $d\le n/2$, as we can replace $A_n$ with $A_n'$ if necessary.
\end{remark}

We conjecture that Theorem \ref{thm:main} still holds if $\min(d,n-d)$ tends to infinity with $n$ at any speed. For fixed degree we have the following well-known conjecture.

\begin{conjecture}[\cite{BoCh:survey}]	\label{conj:fixedd}
Fix $d\ge 3$ and let $A^{(d)}_{n}\in \mA_{n,d}$ be drawn uniformly at random. Then $\mu_{A^{(d)}_{n}}\to \mu^{(d)}_{\text{KM}}$ in probability, where $\mu^{(d)}_{\text{KM}}$ is the \emph{oriented Kesten--McKay law} on $\C$ with density
\begin{equation}	\label{def:OKM}
f^{(d)}_{\text{KM}}(z) = \frac1\pi \frac{d^2(d-1)}{(d^2-|z|^2)^2} 1_{\{|z|\le \sqrt{d}\}}
\end{equation}
with respect to Lebesgue measure. 
\end{conjecture}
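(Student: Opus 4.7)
I would approach Conjecture~\ref{conj:fixedd} via Girko's Hermitization, coupled with Benjamini--Schramm (local weak) convergence of random regular digraphs and --- for the small singular value control --- combinatorial switching arguments. Denoting by $\nu_{n,z}$ the empirical singular-value distribution of $A_n^{(d)}-zI$, the logarithmic potential theorem reduces $\mu_{A_n^{(d)}}\to \mu^{(d)}_{\mathrm{KM}}$ in probability to: (i) in-probability convergence of $\nu_{n,z}$ to a deterministic limit $\nu_z$ for Lebesgue-a.e.\ $z\in\C$ whose log-integral matches $U_{\mu^{(d)}_{\mathrm{KM}}}(z)$; and (ii) uniform integrability in $n$ of $\log(\cdot)$ near $0$ against $\nu_{n,z}$ for almost every such $z$.

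Step (i) exploits the fact that at fixed $d$ a uniform element of $\mA_{n,d}$ converges in the Benjamini--Schramm sense to the infinite oriented $d$-regular tree $\mathbb{T}_d$, i.e.\ the unique rooted digraph in which every vertex has in- and out-degree $d$ and there are no directed cycles. Because $(A_n^{(d)}-zI)^*(A_n^{(d)}-zI)$ is a radius-$2$ local functional of the digraph, the method of moments upgrades local weak convergence to convergence in expectation of $\nu_{n,z}$ to the spectral measure at the root of $|\mathcal{A}_{\mathbb{T}_d}-z|$, where $\mathcal{A}_{\mathbb{T}_d}$ is the oriented adjacency operator on $\ell^2(\mathbb{T}_d)$. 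Concentration around the mean comes from an Azuma--Hoeffding estimate for the Doob martingale under simple switching moves preserving $\mA_{n,d}$. The operator $\mathcal{A}_{\mathbb{T}_d}$ realizes a free sum of $d$ Haar unitaries in a free probability space --- in particular it is $R$-diagonal --- so its Brown measure is rotationally invariant, computable via the Haagerup--Larsen theorem from the $S$-transform of $\mathcal{A}_{\mathbb{T}_d}^*\mathcal{A}_{\mathbb{T}_d}$, and a direct calculation recovers the density $f^{(d)}_{\mathrm{KM}}$.

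The main obstacle, and the reason the statement remains conjectural, is step (ii): one must prove $\pro{s_n(A_n^{(d)}-zI)\le n^{-B}}=o(1)$ uniformly in $z$ over compact subsets of $\C$, with $B=B(d)$. Even at $z=0$, invertibility of $A_n^{(d)}$ at fixed $d\ge 3$ required delicate recent work (Litvak--Lytova--Tikhomirov--Tomczak-Jaegermann, Nguyen, M\'esz\'aros and others), and for general $z$ the Rudelson--Vershynin small-ball machinery driving Theorem~\ref{thm:main} in the $d\to\infty$ regime is unavailable: at fixed degree each row of $A_n^{(d)}$ carries only $O(1)$ bits of randomness, so no inverse Littlewood--Offord lemma gives useful concentration. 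My plan is a compressible/incompressible dichotomy adapted to the sparse regular setting: compressible vectors are eliminated by union bounding over an $\eps$-net using sharp asymptotic enumeration of $\mA_{n,d}$; incompressible vectors would require a genuinely new small-ball estimate that exploits local switchings along short directed paths, analogous to the simple-switching graph used in the $z=0$ arguments but now adapted to the shifted matrix $A_n^{(d)}-zI$. Producing this small-ball estimate at fixed $d$ and uniformly in $z$ appears to be the essential open problem, and I expect essentially all of the remaining work to concentrate there.
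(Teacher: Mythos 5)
This statement is stated in the paper as a \emph{conjecture} (attributed to \cite{BoCh:survey}), and the paper explicitly does not prove it; indeed, it is still open. There is therefore no ``paper's own proof'' for me to compare against, and your submission is, to its credit, not a proof either: you lay out a strategy and honestly flag that the small-singular-value estimate at fixed $d$ is the missing ingredient. That assessment is correct, and it matches what the paper itself says: the author remarks that proving this conjecture would require a significantly different approach than Theorem~\ref{thm:main}, because at fixed $d$ the limiting singular-value distributions $\nu_z$ are no longer those of a shifted Gaussian matrix, so the Gaussian/Bernoulli comparison machinery driving the rest of the paper is unavailable.

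Your step~(i) is essentially the route the paper gestures at: the conjectured limit is indeed the Brown measure of the free sum of $d$ Haar unitary operators (the paper cites~\cite{HaLa}, Example~5.5), and the sum of $d$ free Haar unitaries is $R$-diagonal, so Haagerup--Larsen gives the rotationally invariant density~\eqref{def:OKM}. The paper, however, suggests a concretely different way to execute this step than your Benjamini--Schramm/moment-method proposal: Basak and Dembo~\cite{BaDe} proved the Circular-Law-type statement for the sum of $d$ independent Haar \emph{unitary} or \emph{orthogonal} matrices, and the paper points out that extending their argument to sums of $d$ independent Haar \emph{permutation} matrices $S_n^{(d)}$, combined with Janson's contiguity results~\cite{Janson:contiguity} (valid at fixed $d$), would yield Conjecture~\ref{conj:fixedd}. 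The author's own companion work~\cite{BCZ} carries out the permutation-model analysis in the growing-degree regime via the Schwinger--Dyson loop equations for the Stieltjes transform of $\nu_{n,z}$, precisely because moment/local-weak-convergence arguments do not give the quantitative control needed to integrate $\log(\cdot)$ against $\nu_{n,z}$; your plan to rely on BS-convergence plus a martingale concentration bound would likely need to be upgraded to loop-equation-type estimates for the same reason. One small inaccuracy: the local weak limit you describe should be characterized by the underlying \emph{undirected} graph being a tree (the oriented Cayley graph of $\mathbb{F}_d$), not merely by the absence of \emph{directed} cycles.

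On step~(ii), you have put your finger on exactly the obstacle: a uniform-in-$z$ lower tail bound for $s_n(A_n^{(d)}-zI)$ at fixed $d\ge 3$. Even $z=0$ required the LLTTY-type switching machinery, and no Littlewood--Offord-style anti-concentration is available when each row carries only $O(1)$ bits of randomness. Your proposal of a compressible/incompressible split with a new switching-based small-ball lemma along directed paths is a reasonable thing to try, but until it is produced there is no proof here; the conjecture remains open, and you have correctly said so.
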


For some numerical evidence supporting this conjecture the reader is referred to \cite{Cook:circpm}.
The measure \eqref{def:OKM} is the Brown measure for the free sum of $d$ Haar unitary operators; see \cite[Example 5.5]{HaLa}. Basak and Dembo established the conclusion of Conjecture \ref{conj:fixedd} with $A^{(d)}_{n}$ replaced by the sum of $d$ independent Haar unitary or orthogonal matrices \cite{BaDe}. Conjecture \ref{conj:fixedd} would follow from an extension of their proof to the sum $S_n^{(d)}$ of $d$ independent Haar \emph{permutation} matrices. Indeed, note that $S_n^{(d)}$ is a random element of $\mM_{n,d}$, the set of adjacency matrices for $d$-regular directed \emph{multi-}graphs on $n$ vertices. 
While the law of $S_n^{(d)}$ and the uniform distribution on $\mA_{n,d}\subset \mM_{n,d}$ are different measures,  \emph{contiguity} results \cite{Janson:contiguity} state any sequence of events $\event_n\subset \mM_{n,d}$ with probability $o(1)$ under the former distribution must also have probability $o(1)$ under the latter, provided $d$ is fixed independent of $n$. This allows one to deduce asymptotic results for $A_{n}^{(d)}$ from results for $S_n^{(d)}$.

A proof of Conjecture \ref{conj:fixedd} would require a significantly different approach than the one we take to prove Theorem \ref{thm:main}. For instance, to prove Theorem \ref{thm:main} we will need to understand the asymptotic empirical distribution of singular values $\mu_{\sqrt{(\Abar_n-z)^*(\Abar_n-z)}}$ for arbitrary fixed $z\in \C$ (see Section \ref{sec:highlevel} for additional explanation). In the present work we do this by comparing with a Gaussian matrix, for which the asymptotics are well-understood. However, when $\Abar_n$ is replaced by $A_n^{(d)}$ the conjectured asymptotic singular value distributions are different, and in particular we cannot compare with a Gaussian matrix or any other well-understood model.

We mention that in the recent work \cite{BCZ} with Basak and Zeitouni we established the Circular Law for the permutation model $S_n^{(d)}$ (rescaled by $\sqrt{d}$) under the assumption that $d$ grows poly-logarithmically as in the present work. Parts of the proof in \cite{BCZ} follow a significantly different approach from the present paper. In particular, in the present work the singular value distributions of $\Abar_n-z$ for $z\in \C$ are analyzed by first replacing $A_n$ with an iid Bernoulli matrix $B_n$ using a lower bound for the number of 0--1 matrices with constrained row and column sums, and then replacing the Bernoulli matrix with a Gaussian matrix using a Lindeberg exchange-type argument (see Section \ref{sec:compare.proofs}). Such a comparison is unavailable for the sum of permutation matrices. In \cite{BCZ} we derive and analyze the \emph{Schwinger--Dyson loop equations} for the Stieltjes transform of empirical singular value distributions, implementing a discrete analogue of techniques that were used in \cite{GKZ,BaDe} for the unitary group.

\subsection{The smallest singular value}

A key challenge for proving convergence of the ESDs of non-normal random matrices is to deal with possible \emph{spectral instability} for such matrices. This can be quantified in terms of the pseudospectrum.
Recall that the $\eps$-pseudospectrum of a matrix $M_n\in \mM_n(\C)$ is the set
\[
\Lambda_\eps(M_n) = \Lambda(M_n)\cup \left\{ z\in \C\setminus \Lambda(M_n): \|(M_n-z)^{-1}\| \ge \eps^{-1} \right\}
\]
where $\Lambda(M_n)$ is the set of eigenvalues of $M_n$.
(Here and throughout, $\|\cdot \|$ denote the operator norm when applied to matrices.)
If the pseudospectrum of a matrix is much larger than the spectrum itself, then the ESD can vary wildly under perturbations of small norm. 
For a random matrix $M_n$ the pseudospectrum is a random subset of the complex plane, and we will need it to be small in the sense that
\begin{equation}	\label{pseudo}
\pro{ z\in \Lambda_\eps(M_n)} = o(1) \qquad \text{ for $a.e.$ } z\in \C
\end{equation}
for some $\eps\ge \expo{ -n^{o(1)}}$. (Here the rate of convergence in the $o(1)$ terms may depend on $z$.)
Establishing \eqref{pseudo} is a key step
in all known approaches to proving the Circular Law for a given random matrix ensemble $(M_n)_{n\ge 1}$ (except in the case of integrable models such as the Ginibre ensemble).
See the survey \cite{BoCh:survey} for additional discussion of the pseudospectrum and its role in proving the Circular Law for random matrices.

Denote the singular values of a matrix $M$ by $s_1(M)\ge \cdots \ge s_n(M)\ge 0$. 
We can alternatively express our goal \eqref{pseudo} as showing that for $a.e.$ $z\in \C$, 
\begin{equation}	\label{sn}
\pro{ s_n(M_n-z) \le \eps} = o(1).
\end{equation}
Establishing \eqref{sn} is an extension of the \emph{invertibility problem}, which is to show 
\begin{equation}	\label{singprob}
\pro{ s_n(M_n) = 0} =\pro{ \det(M_n)=0} = o(1).
\end{equation}
The problem of proving \eqref{singprob}, along with quantifying the rate of convergence, has received much attention for the case of random matrices with discrete distribution, such as matrices with iid uniform $\pm1$ entries -- see \cite{Komlos67, KKS, TaVu:sing, BVW}.

The problem of proving \eqref{sn} with $M_n=X_n$ as in Theorem \ref{thm:circ} (i.e.\ having iid entries with zero mean and unit variance) was addressed in works of Rudelson \cite{Rudelson:inv}, Tao--Vu \cite{TaVu:cond, TaVu:circ} and Rudelson--Vershynin \cite{RuVe:ilo}. In particular, through new advances in the Littlewood--Offord theory from additive combinatorics, for certain random discrete matrices \cite{TaVu:cond} established bounds of the form $\pr(s_n(X_n)\le n^{-A}) = O(n^{-B})$ for arbitrary $B>0$ and $A=O_B(1)$. This result was extended in \cite{TaVu:circ} to allow general entry distributions with finite second moment and deterministic perturbations (such as scalar perturbations as in \eqref{sn}). 
The work \cite{RuVe:ilo} obtained the optimal dependence $A=B+1/2$ under a stronger subgaussian hypothesis for the entry distributions. Recently, this optimal bound was obtained for centered real iid matrices only assuming finite second moment \cite{ReTi}.

The invertibility problem for adjacency matrices of random regular digraphs $A_n$ as in Theorem \ref{thm:main} was first addressed in \cite{Cook:sing}, where it was shown that if $\min(d,n-d)\ge C\log^2n$, then
\begin{equation}	\label{cook:sing}
\pro{ s_n(A_n) = 0 } = O(d^{-c})
\end{equation}
for some absolute constants $C,c>0$. The main difficulties in proving \eqref{cook:sing} over the case of, say, iid $\pm1$ matrices are the lack of independence among entries and the sparsity of the matrix. 
The author introduced an approach based on a combination of strong graph regularity properties and the method of \emph{switchings}.

In its most basic form, one performs a \emph{simple switching} on a regular digraph by replacing directed edges $i_1\to j_1, i_2\to j_2$ with edges $i_1\to j_2, i_2\to j_1$ when this is allowed (i.e.\ when this does not create parallel edges); see Figure \ref{fig:switching}. The switching preserves the degrees of all vertices. One can create coupled pairs $(A_n,\tA_n)$ of random elements of $\mA_{n,d}$ by first drawing $A_n$ uniformly at random, and then applying several switchings at different $2\times 2$ submatrices of $A_n$ independently at random to form $\tA_n$. Taking care to do this in a way that $\tA_n$ is also uniformly distributed, one can condition on $A_n$ (perhaps restricted to a ``good" event on which $A_n$ enjoys certain graph regularity properties) and proceed using only the randomness of the independent switchings. In particular one gains access to tools of Littlewood--Offord theory. See \cite{Cook:sing} for additional motivation of the switchings method for the invertibility problem.

\begin{figure}
\centering
  \includegraphics[width=100mm]{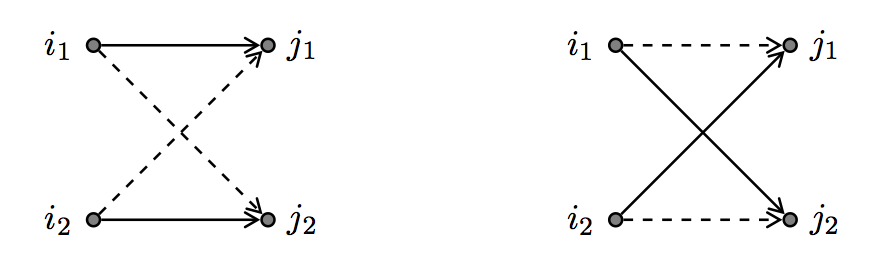}

\caption{Two possible configurations of directed edges passing from a pair of vertices $\{i_1,i_2\}$ to a pair of vertices $\{j_1,j_2\}$ in a digraph, where a dashed arrow indicates the \emph{absense} of a directed edge. Note that there may be edges passing from $\{j_1,j_2\}$ to $\{i_1,i_2\}$, or between $i_1$ and $i_2$ or $j_1$ and $j_2$. A simple switching replaces the configuration on the left with the one on the right and vice versa; for any other configuration the simple switching leaves the graph unchanged. }

\label{fig:switching}
\end{figure}

The switchings method has long been a popular tool for analyzing random regular graphs -- for additional background see the survey \cite{Wormald}. It has also recently been applied in the random matrix setting in \cite{BKY:semicircle, BHKY:rrg, BHY:km} to prove local laws for the empirical spectral distribution and universality of local spectral statistics for undirected random regular graphs.

Following the work \cite{Cook:sing}, it was shown in \cite{LLTTY} that for $C\le d\le cn/\log^2n$ we have
\begin{equation}	\label{LLTTY:bound}
\pro{ s_n(A_n)=0} \ll \frac{\log^3d}{\sqrt{d}}
\end{equation}
for some absolute constants $C,c>0$. Together with \eqref{cook:sing} this shows that $A_n$ is invertible with probability $1-o(1)$ as soon as $\min(d,n-d) = \omega(1)$. The work \cite{LLTTY} also follows the approach of using random switchings and graph regularity properties. The key new ingredients are finer regularity properties that apply for smaller degree, as well as an efficient averaging argument to improve the probability bound. (We make use of a variant of this averaging argument in the proof of Theorem \ref{thm:ssv} below.)
A natural conjecture is that
$
\pro{ s_n(A_n)= 0} =o(1)
$
for all $3\le d\le n-3$, which mirrors a conjecture by Vu for adjacency matrices of undirected random regular graphs \cite{Vu:discrete}.

In the present work we extend the approaches of \cite{Cook:sing, LLTTY} to obtain lower tail bounds on the smallest singular value of $\Abar_n - z$ for arbitrary scalar shifts $z\in \C$. It turns out that we can handle a more general class of perturbations; to describe them we need some notation.
First, note that $\1$ is a left and right eigenvector of $A_n$ with eigenvalue $d$. By a standard argument using the Cauchy--Schwarz inequality we have $\|A_n\|\le  d$,  so that in fact
\begin{equation}	\label{pf}
A_n\1 = A_n^* \1 =\|A_n\|\1 = d\1.
\end{equation}
We will be able to handle perturbations $Z\in \mM_n(\C)$ that also preserve the space $\1$, and which have polynomially-bounded norm on $\langle \1\rangle^\perp$. 
For instance, $Z$ could be the adjacency matrix of another regular digraph, either fixed or random and independent of $A_n$.
For a subspace $W\subset \C^n$ we write
\begin{equation}	\label{def:oneperp}
\|M\|_W := \|M:W\to \C^n\|=\sup_{u\in \sph\cap W} \|Mu\|.
\end{equation}

In the statement and proof of the following result we regard $n$ as a sufficiently large fixed integer and give quantitative bounds. As such, we will generally suppress the subscript $n$ from our matrices.

\begin{theorem}[The smallest singular value]		\label{thm:ssv}
Let $1\le d\le n/2$ and let $A$ be a uniform random element of $\mA_{n,d}$.
Fix $\gamma\ge 1$ and let $Z$ be a deterministic $n\times n$ matrix with $\|Z\|_{\oneperp}\le n^\gamma$ and such that $Z\1=\zeta\1$, $Z^*\1=\overline{\zeta}\1$ for some $\zeta\in \C$ with $|d+\zeta|\ge n^{-10}$.
There exists $\Gamma =O( \gamma \log_dn)$ such that
\begin{equation}	\label{ssv:bd}
\pro{ \,s_n(A+Z)\le n^{-\Gamma}\,}	
\ll_{\gamma} \frac{\log^{C_1}n}{\sqrt{d}}
\end{equation}
for some absolute constant $C_1>0$.
\end{theorem}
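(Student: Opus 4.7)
The plan is to follow the Rudelson--Vershynin paradigm for the smallest singular value, adapted to random regular digraphs via the graph-regularity-plus-switchings methodology developed in \cite{Cook:sing, LLTTY}. First I would reduce to the invariant subspace $\oneperp$: from \eqref{pf} we have $A\1=A^*\1=d\1$, and by hypothesis $Z\1=\zeta\1$ and $Z^*\1=\overline{\zeta}\1$, so both $A$ and $Z$ preserve the orthogonal decomposition $\C\1\oplus\oneperp$. On the first summand $A+Z$ acts by the scalar $d+\zeta$ of modulus at least $n^{-10}\gg n^{-\Gamma}$, so it suffices to show $s_{\min}((A+Z)|_{\oneperp})\ge n^{-\Gamma}$.

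Next I would split the unit sphere of $\oneperp$ into a compressible part $\Comp(\delta,\rho)$ (vectors within $\ell^2$-distance $\rho$ of one supported on at most $\delta n$ coordinates, for parameters $\delta,\rho$ tuned to $d$) and an incompressible part. For the compressible piece, the pseudorandom properties of $A$ established in \cite{Cook:sing, LLTTY} -- in particular uniform lower bounds on $\|Au\|$ over sparse $u$ -- let us cover $\Comp(\delta,\rho)$ with a net $\mathcal{N}$ of mesh $\eta\approx n^{-\Gamma-\gamma}$, so that $\|(A+Z)(u-u_0)\|\le (\|A\|+\|Z\|_{\oneperp})\eta\le 2n^\gamma\eta$ for $u_0\in\mathcal{N}$ close to $u$. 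A union bound over $\mathcal{N}$ combined with an individual anti-concentration estimate for $\|(A+Z)u_0\|$ (the deterministic shift $Zu_0$ being harmless) yields $\inf_{u\in\Comp}\|(A+Z)u\|\ge n^{-\Gamma}$ outside an event of probability $O(\log^{C_1}n/\sqrt{d})$.

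The main effort is for incompressible $u$, where for each row $i$
\[
\big((A+Z)u\big)_i \;=\; \langle R_i(A),u\rangle + (Zu)_i,
\]
with the perturbation entering only as a deterministic translation. To extract anti-concentration from $A$, I would couple $A$ with $\wA$ obtained by performing independent random simple switchings (cf.\ Figure \ref{fig:switching}) on $2\times 2$ submatrices that intersect row $i$; this keeps $\wA$ uniform on $\mA_{n,d}$ while endowing selected entries of $R_i(\wA)$ with conditional Bernoulli randomness. A Littlewood--Offord-type inequality applied to $\langle R_i(\wA),u\rangle$, which is unaffected by the shift $(Zu)_i$ by translation invariance, together with incompressibility to rule out strong arithmetic structure in $u$, bounds $\pro{|((A+Z)u)_i|\le \eps}$ by roughly $\eps\sqrt{n}/\sqrt{d}$ plus a term controlled by the ``level'' of additive structure of $u$. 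Averaging over rows $i$ in the spirit of \cite{LLTTY} and taking a union bound over a net of incompressible vectors (whose size is tame thanks to incompressibility) then gives the required bound at scale $\eps=n^{-\Gamma}$.

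The main obstacle is to drive the switchings-plus-Littlewood--Offord framework of \cite{Cook:sing, LLTTY} through in the presence of the large perturbation with $\|Z\|_{\oneperp}\le n^\gamma$. Although the shift $(Zu)_i$ is harmless for individual Littlewood--Offord bounds, the $n^\gamma$-scale of $Z$ demands a very fine net on $\sph\cap\oneperp$, and hence anti-concentration down to scale $n^{-\Gamma}$. Obtaining such bounds calls for a quantitative inverse Littlewood--Offord classification of vectors with poor anti-concentration and a stratified union bound over the resulting scales of additive structure; each scale contributes a factor of $\log_d n$ to the exponent, producing the dependence $\Gamma=O(\gamma\log_d n)$ in the statement.
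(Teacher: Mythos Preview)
Your high-level plan---reduce to $\oneperp$, partition the sphere, use switchings for randomness---is in the right spirit, but several mechanisms differ from what actually works here, and one step has a genuine gap.

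First, the relevant notion of structure is not compressibility but \emph{flatness}: $u$ is $(m,\rho)$-flat if it lies within $\rho$ of $\lambda\1+\Sparse(m)$ for some $\lambda\in\C$. A simple switching at columns $j_1,j_2$ perturbs $R_i\cdot u$ by $\pm(u_{j_1}-u_{j_2})$, so anti-concentration is governed by how spread out the empirical distribution of the components of $u$ is, not by sparsity of $u$ itself. For vectors in $\sph_0$ the two notions are related but only up to an additive error $\sim\sqrt{m/n}$, which swamps the target scale $\rho=n^{-\Gamma}$. The paper works with $\Flat_0(m,\rho)$ throughout.

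Second, the dependence $\Gamma=O(\gamma\log_d n)$ does not come from an inverse Littlewood--Offord classification by arithmetic structure. No LCD-type quantity enters. One first handles ``very flat'' vectors ($m\ll d/\log n$) by a net argument using control on codegrees, and then iteratively \emph{increments} the flatness threshold: given control on $\Flat_0(m_k,\rho_k)$, switchings across many columns combined with the neighborhood-expansion property of $A$ push $m_{k+1}\asymp (d/\operatorname{polylog} n)\,m_k$, at the cost of shrinking $\rho$ by a factor $n^{-O(\gamma)}$. After $O(\log_d n)$ rounds one reaches $m\asymp n/\log^3 n$ with $\rho=n^{-O(\gamma\log_d n)}$; this is the source of the exponent.

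Third---and this is where your argument breaks---you cannot treat non-flat vectors by ``a union bound over a net of incompressible vectors''. That set is essentially the full sphere; any $\rho$-net has cardinality $(C/\rho)^{\Theta(n)}$, which no pointwise probability bound can beat at scale $\rho=n^{-\Gamma}$. Instead one uses an averaging argument over \emph{pairs} of rows $(i_1,i_2)$: conditioning on $A^{(i_1,i_2)}$ fixes $R_{i_1}+R_{i_2}$ and $\mN_A(i_1)\triangle\mN_A(i_2)$ (by the column-sum constraint), and hence a $\langle A^{(i_1,i_2)}\rangle$-measurable near-null vector $u^{(i_1,i_2)}$, while the residual randomness sits in $R_{i_1}-R_{i_2}$ and yields a walk $\sum_j\xi_j(u_j-u_{\pi(j)})$ over $\mN_A(i_1)\triangle\mN_A(i_2)$. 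The discrepancy property guarantees this symmetric difference has large overlap with the bimodal coordinate sets of $u^{(i_1,i_2)}$ (furnished by non-flatness), and a Berry--Ess\'een-type bound gives anti-concentration of order $(\log n/d)^{1/2}$. Crucially, a \emph{left} approximate null vector $v$ is also used to select pairs $(i_1,i_2)$ with $|v_{i_1}-v_{i_2}|$ bounded below, which is what converts the individual row-pair estimate into the global bound \eqref{ssv:bd} without ever netting the unstructured vectors.
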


\begin{remark}	\label{rmk:C1}
We note that \eqref{ssv:bd} only gives a nontrivial bound if $d\ge C\log^{2C_1}n$ for some $C=C(\gamma)$ sufficiently large.
The proof shows we can take $C_1=11/2$, though there is certainly room for improvement. For instance, this exponent could be lowered using some refined graph expansion and discrepancy lemmas -- see Remark \ref{rmk:expand}. 
\end{remark}

For our purposes of proving Theorem \ref{thm:main} we only need the following consequence (recall the notation \eqref{def:Abar}):

\begin{cor}	\label{cor:ssv}
Assume $\log^{C_1'}n\le d\le n/2$ for a sufficiently large constant $C_1'>0$, and let $A$ be a uniform random element of $\mA_{n,d}$.
Fix $z\in \C$. 
There exists $\Gamma=o( \log n)$ such that
\begin{equation}
\pro{ s_n\left( \Abar_n -z\right) \le n^{-\Gamma}} =o_z(1).
\end{equation}
\end{cor}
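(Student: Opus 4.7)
The plan is to deduce this corollary directly from Theorem \ref{thm:ssv} by taking the perturbation there to be a scalar multiple of the identity. Set $\alpha := \sqrt{d(1-d/n)}$ and $Z := -z\alpha\, I_n$, so that $\Abar_n - z = \alpha^{-1}(A + Z)$ and $s_n(\Abar_n - z) = \alpha^{-1} s_n(A+Z)$. This reduces the claim to an estimate on $s_n(A+Z)$ that Theorem \ref{thm:ssv} is built to supply, provided I can check its two hypotheses for this choice of $Z$.

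Both hypotheses are immediate for fixed $z$ and $n$ large. Since $Z$ is scalar, $Z\1 = \zeta \1$ with $\zeta = -z\alpha \in \C$, and $\|Z\|_{\oneperp} = |z|\alpha \le |z|\sqrt{n} \le n$, so the norm bound holds with $\gamma = 1$ for all $n$ sufficiently large depending on $|z|$. For the nondegeneracy $|d+\zeta|\ge n^{-10}$, using the identity $d = \alpha\sqrt{d/(1-d/n)}$ one computes
\[
|d+\zeta| \;=\; \alpha\bigl|\sqrt{d/(1-d/n)} - z\bigr| \;\ge\; \alpha\bigl(\sqrt{d} - |z|\bigr) \;\gs\; \sqrt{d},
\]
where the last bound uses $d \ge \log^{C_1'}n \to \infty$ and $\alpha \ge \sqrt{d/2}$ (valid for $d\le n/2$). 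In particular $|d+\zeta| \ge 1 \gg n^{-10}$ for $n$ large depending on $z$.

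Applying Theorem \ref{thm:ssv} therefore yields $\pr(s_n(A+Z)\le n^{-\Gamma_0}) \ll \log^{C_1}n/\sqrt{d}$ for some $\Gamma_0 = O(\log_d n) = O(\log n/\log\log n)$, using the hypothesis $d\ge \log^{C_1'}n$. Under the same hypothesis with $C_1'$ sufficiently large (e.g.\ $C_1' \ge 2C_1+1$), the right-hand side is $o(1)$. Setting $\Gamma := \Gamma_0 + 1/2$, the identity $s_n(A+Z) = \alpha s_n(\Abar_n-z)$ and the trivial bound $\alpha\le \sqrt{n}$ give the event inclusion
\[
\{s_n(\Abar_n - z) \le n^{-\Gamma}\} \;\subseteq\; \{s_n(A+Z) \le \sqrt{n}\cdot n^{-\Gamma}\} \;=\; \{s_n(A+Z) \le n^{-\Gamma_0}\},
\]
and $\Gamma = o(\log n)$, completing the argument.

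The deduction is essentially a rescaling together with verification of quantitatively mild hypotheses, so there is no substantive obstacle: all of the difficulty has been packaged into Theorem \ref{thm:ssv} itself. The only point requiring a little care is that both the norm bound on $Z$ and the nondegeneracy of $d+\zeta$ depend on $z$, which is precisely why the corollary's error term is $o_z(1)$ rather than uniform.
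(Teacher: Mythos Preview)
Your proof is correct and follows essentially the same approach as the paper's: apply Theorem~\ref{thm:ssv} with $Z=-z\sqrt{d(1-d/n)}\,I_n$, verify the norm and nondegeneracy hypotheses for $n$ large depending on $z$, and absorb the rescaling factor $\alpha$ into $\Gamma$ by an additive $1/2$. Your treatment of the rescaling and the event inclusion is in fact slightly more explicit than the paper's.
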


\begin{proof}
We may assume $n$ is sufficiently large depending on $z$.
Up to perturbing $\Gamma$ by a constant factor, it suffices to verify the matrix
\[
Z= -z\sqrt{d(1-d/n)} \I_n
\]
satisfies the conditions of Theorem \ref{thm:ssv}. The condition $\|Z\|_{\oneperp}\le n^\gamma$ holds with $\gamma=0.51$, say, when $n$ is sufficiently large. Taking $\zeta= z\sqrt{d(1-d/n)}$ we have
\[
|d-\zeta| \ge d - |z|\sqrt{d} = d(1-o_z(1)) \to \infty
\]
and the condition on $\zeta$ easily holds when $n$ is sufficiently large. The result now follows from Theorem \ref{thm:ssv} and taking $C_1'=2C_1+1$, say.
\end{proof}

Recently (a few months after this paper was first posted to arXiv) \cite{LLTTY17} obtained an improvement of Theorem \ref{thm:ssv} (for the case of scalar shifts), showing that for some constants $C,c>0$ and any fixed $z\in \C$ with $|z|\le d/6$, if $C\le d\le cn/(\log n)(\log\log n)$ then $s_n(A_n-z)\ge n^{-6}$ with probability $1-O(\log^2d/\sqrt{d})$.

\subsection{Overview of the paper}	\label{sec:overview}

The first part of the paper (Sections \ref{sec:graph}--\ref{sec:unstructured}) is devoted to the proof of Theorem \ref{thm:ssv}.
In Section \ref{sec:graph} we recall some concentration inequalities for random regular digraphs from \cite{Cook:discrep} and use these to show that a random element of $\mA_{n,d}$ satisfies certain graph regularity properties with high probability.
In Section \ref{sec:partition} we describe the general approach to Theorem \ref{thm:ssv}, which proceeds by partitioning the sphere $\sph_0$ into sets whose elements have a similar level of ``structure" (in a certain precise sense that we do not describe here), and separately controlling $\inf_{v\in S} \|(A+Z)v\|$ for each part $S$ of the partition. We then establish bounds on covering numbers for sets of highly structured vectors, and prove anti-concentration properties for unstructured vectors. 
In Section \ref{sec:high} we establish uniform control from below on $\|(A+Z)v\|$ for ``highly structured" vectors $v$, and in Section \ref{sec:increment} we boost this to control for less structured vectors by an iterative argument. In Section \ref{sec:unstructured} we obtain control over the remaining unstructured vectors. We mention that in each of Sections \ref{sec:high}, \ref{sec:increment} and \ref{sec:unstructured} we make use of a different graph regularity property from Section \ref{sec:graph}, and all three sections use coupling arguments based on switchings.

In the remainder of the paper we prove Theorem \ref{thm:main}.
In Section \ref{sec:highlevel} we recall the approach to proving the Circular Law via the logarithmic potential, and give a high-level proof of Theorem \ref{thm:main} using Theorem \ref{thm:ssv} and two propositions concerning the empirical singular value distributions for certain perturbations of $\Abar_n$.
In Sections \ref{sec:comparison} and \ref{sec:compare.proofs} we prove these propositions by a two-step comparison approach, first comparing $A_n$ with a matrix $B_n$ having iid Bernoulli entries, and then comparing $B_n$ (suitably centered and rescaled) with an iid Gaussian matrix $G_n$, for which the desired results are known.
The comparison of singular value distributions for $A_n$ with those of $B_n$ is accomplished using a conditioning argument of Tran, Vu and Wang from \cite{TVW}, together with a new estimate for the probability that $B_n$ lies in $\mA_{n,d}$, proved in Appendix \ref{app:band}.
For the comparison between $B_n$ and $G_n$ we use the Lindeberg replacement strategy, through an invariance principle of Chatterjee (Theorem \ref{thm:invar}). 
In the appendix we prove Lemma \ref{lem:wegner.gaussian}, which gives a near-optimal estimate on local density of small singular values for perturbed Gaussian matrices.

\subsection{Notation}	\label{sec:notation}

$C,c,c',c_0$, etc.\ denote unspecified constants whose value may change from line to line, understood to be absolute unless otherwise stated.
$f=O(g)$, $f\ll g$ and $g\gg f$ are synonymous and mean that $|f|\le Cg$ for some absolute constant $C<\infty$. $f\asymp g$ means $f\ll g$ and $g\ll f$. $f=o(g)$ and $g=\omega(f)$ mean that $f/g\to 0$ as $n\to \infty$.
We indicate dependence of implied constants with subscripts, e.g.\ $f\ll_\alpha g$; by $f=o_\alpha(g)$ we mean $f/g\to 0$ when $\alpha$ is fixed and $n\to \infty$, where the rate of convergence may depend on $\alpha$.

$\mM_{n}(\C)$ denotes the set of $n\times n$ matrices with complex entries. 
For $M=(m_{ij})\in \mM_n(\C)$ it will sometimes be convenient to denote the $(i,j)$-th entry by $M(i,j)=m_{ij}$.
For $i_1,\dots i_k,j_1,\dots, j_l\in [n]$ we write
\begin{equation}
M_{(i_1,\dots, i_k)\times (j_1,\dots, j_l)} := (m_{i_{k'} j_{l'}})_{k'\in [k], l'\in [l]}.
\end{equation}
If one of the sequences $(i_1,\dots, i_k)$, $(j_1,\dots, j_l)$ is replaced by an unordered set $J\subset[n]$ then we interpret $J$ as a sequence with the natural ordering inherited from $[n]$.
We also write $M^{(i_1,i_2)}$ for the $(n-2)\times n$ matrix obtained by removing rows $i_1$ and $i_2$ (assuming $i_1\ne i_2$).
We label the singular values of $M$ in non-increasing order: 
\[
s_1(M)\ge \cdots\ge s_n(M)\ge 0.
\] 
In addition to our notation \eqref{def:esd} for the empirical spectral distribution, we denote the empirical singular value distribution by 
\begin{equation}	\label{def:esvd}
\nu_M:= \frac1n\sum_{i=1}^n \delta_{s_i(M)}.
\end{equation}

$\|\cdot\|$ denotes the Euclidean norm when applied to vectors and the $\ell_2^n\to \ell_2^n$ operator norm when applied to elements of $\mM_n(\C)$. Other norms are indicated with subscripts; in particular, $\|M\|_{\HS}$ denotes the Hilbert--Schmidt (or Frobenius) norm of a matrix $M$. 
We denote the (Euclidean) closed unit ball in $\C^n$ by $\ball$ and the unit sphere by $\sph$. We write $\C^J$ for the subspace of vectors supported on $J\subset[n]$, and write $\mathbb{B}^J, \mathbb{S}^J$ for the unit ball and sphere in this subspace.
Given $v\in \C^n$ and $J\subset[n]$, $v_J$ denotes the projection of $v$ to $\C^J$.
$\1=\1_n$ denotes the $n$-dimensional vector with all components equal to one, and consequently $\1_J$ denotes the vector with $j$th component equal to 1 for $j\in J$ and 0 otherwise. 
We will frequently consider the unit sphere in $\langle \1\rangle^\perp$, which we denote
\begin{equation}
\sph_0:= \sph\cap \oneperp = \big\{u\in \C^n: \|u\|=1, \; \langle u, \1\rangle =0\big\}.
\end{equation}

It will be conceptually helpful to associate a 0--1 $n\times n$ matrix $A=(a_{ij})$ to a directed graph $\Gamma_A=([n],E_A)$, which we do in the natural way, i.e.\ $E_A=\{(i,j)\in [n]^2: a_{ij}=1\}$.
Given a vertex $i\in [n]$ we denote its set of out-neighbors by
\begin{equation}	\label{def:nbhd}
\mN_A(i):= \{ j\in [n]: a_{ij}=1\}.
\end{equation}
Its set of in-neighbors is consequently given by $\mN_{A^\tran}(i)$. 
For $i\in [n]$ and $L\subset[n]$ we sometimes abbreviate
\begin{equation}	\label{def:LA}
L_A(i) := \mN_A(i) \cap L.
\end{equation} 
We denote the out-neighorhood of a set $I\subset[n]$ by
\begin{equation}	
\mN_A(I):= \bigcup_{i\in I} \mN_A(i).
\end{equation}
Given $I,J\subset[n]$, we denote by
\begin{equation}	\label{def:edge}
e_A(I,J) = \sum_{i\in I, j\in J} a_{ij}
\end{equation}
the number of directed edges which start in $I$ and end in $J$.
For sets $J\subset[n]$ we will frequently abbreviate $J^c:= [n]\setminus J$.

\subsection{Acknowledgement}

The author thanks Terry Tao for his encouragement and support.

\section{Graph regularity properties}	\label{sec:graph}

Recall the graph theoretic notation from Section \ref{sec:notation}.
In this section we define three collections of ``good" subsets of $\mA_{n,d}$, namely
\[
\mA^{\codeg}(i_1,i_2)\,,\; \mA^{\edge}(n_0,\delta)\,,\; \text{and } \mA^{\expand}(\kappa)
\]
whose elements are associated to digraphs enjoying certain graph regularity properties. 
We will show that for appropriate values of the parameters, each of these sets constitutes most of $\mA_{n,d}$.
The key tools to establish this are sharp tail bounds for codegrees and edge densities for random regular digraphs that were proved in \cite{Cook:discrep}.

For $A\in \mA_{n,d}$, the number of common out-neighbors $|\mN_A(i_1)\cap \mN_A(i_2)|$ of a pair of vertices $i_1,i_2\in [n]$ in the associated digraph is called the \emph{out-codegree} of $i_1,i_2$. By a routine calculation, for a fixed pair $\{i_1,i_2\}\subset [n]$ and $A\in \mA_{n,d}$ drawn uniformly at random we have
\[
\e |\mN_A(i_1)\cap \mN_A(i_2)| = \frac{d(d-1)}{n-1}\approx \frac{d^2}{n}.
\]
In the proof of Theorem \ref{thm:ssv} we will want to restrict attention to those $A\in \mA_{n,d}$ whose out-codegree at a fixed pair of vertices is not too large. 
For distinct $i_1,i_2\in [n]$ and $K>0$ define the set of elements of $\mA_{n,d}$ having \emph{good codegrees}
\begin{equation}	\label{def:goodcodeg}
\mA^{\codeg}(i_1,i_2) := \set{ A\in \mA_{n,d}: \; \big| \mN_A(i_1)\cap \mN_A(i_2)\big| \le d/4}.
\end{equation}

\begin{lemma}[Control on codegrees, cf.\ {\cite[Proposition 4.1]{Cook:discrep}}]	\label{lem:codeg} 
Let $1\le d\le n$ and let $A\in \mA_{n,d}$ be drawn uniformly at random. For any distinct $i_1,i_2\in [n]$ and $K>0$,
\begin{equation}
\pro{ \big| \mN_A(i_1)\cap \mN_A(i_2)\big| \ge (1+K)\frac{d^2}{n}} \le\expo{ -\frac{K^2}{4+ 2K}\frac{d^2}{n}}.
\end{equation}
In particular, taking $K$ to be a sufficiently large constant multiple of $n/d$, we have
\begin{equation}
\pro{ A\notin \mA^{\codeg}(i_1,i_2) } \le e^{-cd}
\end{equation}
for some constant $c>0$.
\end{lemma}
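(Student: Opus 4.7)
My plan is to derive both statements from the Bennett–type concentration inequality for edge/codegree statistics of uniform random regular digraphs proved as Proposition 4.1 of \cite{Cook:discrep}, which the lemma explicitly cites. Observing that the out-codegree may be written as the linear statistic $|\mN_A(i_1)\cap \mN_A(i_2)| = \sum_{j\in [n]} a_{i_1 j}\,a_{i_2 j}$, a routine double-counting gives that under the uniform measure on $\mA_{n,d}$ its mean equals $d(d-1)/(n-1)\le d^2/n$. The first tail bound is then exactly the specialization of the cited concentration inequality to this statistic, so the first assertion follows by direct invocation.

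If the cited result needed to be reproved in this specific form, I would use the switching method (the same toolkit invoked elsewhere in the paper): reveal the rows of $A$ one at a time to form a Doob martingale for $|\mN_A(i_1)\cap \mN_A(i_2)|$, control the martingale differences by exhibiting a measure-preserving coupling via simple switchings that changes only $O(1)$ many coordinates of the codegree, and then apply a Bennett/Freedman-type bound that takes into account the small per-coordinate variance (each summand $a_{i_1 j} a_{i_2 j}$ is a Bernoulli variable with parameter of order $d^2/n^2$, so the predictable quadratic variation of the martingale is of order $d^2/n$). This reproduces the Bennett shape $\exp\!\bigl(-K^2 d^2/((4+2K)n)\bigr)$, with the linear growth in $K$ in the denominator characteristic of bounded-difference statistics with small variance.

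For the second assertion I would simply plug in $K = n/(4d)-1$, chosen so that $(1+K)d^2/n = d/4$, which is a positive absolute-constant multiple of $n/d$ in the regime of interest (where $d/n$ is bounded away from $1/4$; the bound is only meaningful in this regime since the mean codegree is $\asymp d^2/n$). With this choice, the event $\{|\mN_A(i_1)\cap \mN_A(i_2)|>d/4\}$ is contained in the event appearing in the first inequality, while the exponent in the Bennett bound simplifies using $K^2/(4+2K) \asymp K/2$ (since $K\gg 1$ when $d \ll n$) to
\[
\frac{K^2}{4+2K}\cdot\frac{d^2}{n} \asymp \frac{K}{2}\cdot \frac{d^2}{n} \asymp \frac{n}{d}\cdot \frac{d^2}{n} = d,
\]
yielding the claimed $e^{-cd}$ bound. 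The only subtlety, should one reprove the concentration inequality rather than cite it, is handling the lack of independence among the $a_{ij}$; this is precisely what the switching-based martingale construction is designed to address, and it is where the main technical work lies.
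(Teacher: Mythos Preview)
Your proposal is correct and matches the paper's approach: the paper does not prove this lemma either, as the first display is taken directly from the cited Proposition~4.1 of \cite{Cook:discrep}, and the ``In particular'' claim is obtained exactly by the substitution you describe (choosing $K$ so that $(1+K)d^2/n=d/4$, i.e.\ $K=n/(4d)-1\asymp n/d$, and simplifying the exponent using $K^2/(4+2K)\asymp K$ for large $K$). Your caveat that this is only meaningful when $d/n$ is bounded away from $1/4$ is well taken; the paper's phrasing ``a sufficiently large constant multiple of $n/d$'' is slightly loose on this point, but in all applications within the paper one has $d\le n/2$ together with additional slack, so the issue does not arise.
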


For fixed sets $I,J\subset[n]$ and $A\in \mA_{n,d}$ drawn uniformly at random, the expected number of directed edges passing from $I$ to $J$ in the digraph associated to $A$ is
\[
\e e_A(I,J) = \frac{d}{n} |I||J|.
\]
In the random graphs literature, a graph for which the edge densities $e_A(I,J)/|I||J|$ (for all sufficiently large sets $I,J$) do not deviate too much from the overall density $d/n=e_A([n],[n])/n^2 $ is said to satisfy a \emph{discrepancy property}. 
For $n_0\in [n]$ and $\delta>0$ we define the set of elements of $\mA_{n,d}$ enjoying a discrepancy property:
\begin{equation}	\label{event:discrep}
\mA^{\edge}(n_0,\delta) = \bigcap_{\substack{I,J\subset[n]:\\ |I|,|J|>n_0}} \left\{A\in \mA_{n,d}:\ \left|e_A(I,J) - \frac{d}{n}|I||J| \right| < \delta \frac{d}{n}|I||J|\right\}.
\end{equation}

The following is an easy corollary of the main result in \cite{Cook:discrep}.

\begin{lemma}[Discrepancy property]		\label{lem:discrep}
Assume $1\le d\le n/2$. 
Let $\delta\in (0,1)$. If
$
(C/\delta)nd^{-1/2}\le n_0\le n
$
for a sufficiently large constant $C>0$, then for a uniform random element $A\in \mA_{n,d}$ we have
\[
\pro{ A\in\mA^{\edge}(n_0,\delta)} = 1- n^{O(1)} \expo{ -c\delta \min(d, \delta n)}.
\]

\end{lemma}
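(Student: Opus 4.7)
The plan is to apply the main concentration inequality from \cite{Cook:discrep} to each fixed pair of subsets $(I,J)$ with $|I|,|J|>n_0$, and then take a union bound. The main result of \cite{Cook:discrep} provides, for fixed $I,J\subset[n]$, a Bernstein-type upper tail bound for $e_A(I,J)$ around its mean $\frac{d}{n}|I||J|$, analogous to Lemma~\ref{lem:codeg} for the codegree. Concretely, one expects a bound of the form
\[
\pro{\left|e_A(I,J) - \frac{d}{n}|I||J|\right| \ge t} \le 2\exp\!\left(-c\min\!\left(\frac{t^2}{d|I||J|/n},\,t\right)\right).
\]

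Setting $t = \delta\frac{d}{n}|I||J|$ and using $\delta\in(0,1)$, the quadratic term in the exponent dominates, so each pair $(I,J)$ violates the target discrepancy bound with probability at most $2\exp\!\bigl(-c\delta^2 d|I||J|/n\bigr)$. Next I would take the union bound over all pairs with $|I|,|J|>n_0$, of which there are at most $4^n$. Using $|I||J|>n_0^2$ together with the hypothesis $n_0\ge (C/\delta)n/\sqrt{d}$, we get
\[
\delta^2\frac{d|I||J|}{n} \ge \delta^2\frac{d n_0^2}{n} \ge C^2 n,
\]
so the union bound gives overall failure probability at most $2\cdot 4^n\cdot \exp(-cC^2 n)\le \exp(-c'n)$ provided $C$ is taken sufficiently large. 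Since $\delta\min(d,\delta n)\le d\le n$, this is in particular dominated by $n^{O(1)}\exp(-c\delta\min(d,\delta n))$, which yields the claim.

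The only real content is the pointwise concentration inequality from \cite{Cook:discrep}, whose proof must contend with the subtle dependence between entries forced by the degree constraints in $\mA_{n,d}$; once that tail bound is in hand, the uniform discrepancy property reduces to the routine union bound described above. For this reason the lemma is properly an easy corollary of the result in \cite{Cook:discrep}, and the one potential pitfall—ensuring the quadratic regime actually controls the tail for all relevant $(I,J)$—is built into the lower threshold $n_0 \gtrsim n/(\delta\sqrt{d})$, which is precisely calibrated so that the entropy $2n\log 2$ of the union bound is absorbed by the quadratic exponent.
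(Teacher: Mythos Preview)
Your approach is essentially the same as the paper's: apply the concentration result from \cite{Cook:discrep} for each fixed pair $(I,J)$, then union bound over the at most $4^n$ pairs, using the lower bound on $n_0$ to absorb the entropy. The union-bound arithmetic you carry out is identical to the paper's.

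There is one gap, however. You posit an \emph{unconditional} Bernstein-type tail bound
\[
\pro{\left|e_A(I,J) - \tfrac{d}{n}|I||J|\right| \ge t} \le 2\exp\!\left(-c\min\!\left(\tfrac{t^2}{d|I||J|/n},\,t\right)\right),
\]
but that is not quite what \cite[Theorem~1.5]{Cook:discrep} provides. Because of the dependencies forced by the regularity constraints, the concentration inequality there is stated only on a high-probability ``good'' event $\good_0\subset\mA_{n,d}$: one has $\pr(A\in\good_0)=1-n^{O(1)}\exp(-c\delta\min(d,\delta n))$, and only conditionally
\[
\pro{A\in\good_0,\;\left|e_A(I,J)-\tfrac{d}{n}|I||J|\right|\ge \delta\tfrac{d}{n}|I||J|}\le 2\exp\!\left(-c\delta^2\tfrac{d}{n}|I||J|\right).
\]
This is precisely why the weaker error term $n^{O(1)}\exp(-c\delta\min(d,\delta n))$ appears in the lemma rather than the $\exp(-c'n)$ you derive: it comes from $\pr(A\notin\good_0)$, not from the union bound. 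Your final sentence (``dominated by $n^{O(1)}\exp(-c\delta\min(d,\delta n))$'') papers over this by observing your bound would be stronger, but the stronger bound is not actually available. Once you insert the restriction to $\good_0$ the argument goes through exactly as you wrote it, with the additional additive term $\pr(A\notin\good_0)$ at the end.
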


\begin{proof}
By \cite[Theorem 1.5]{Cook:discrep}, there is a set $\good_0\subset\mA_{n,d}$ with $\pr(A\in \good_0) = 1-n^{O(1)} \exp(-c\delta \min(d,\delta n))$ such that for any fixed $I,J\subset[n]$, 
\begin{equation}
\pro{ A\in \good_0, \; \left|e_A(I,J) - \frac{d}{n}|I||J|\right| \ge  \delta \frac{d}{n}|I||J|} \le 2\expo{ -c\delta^2 \frac{d}{n}|I||J|}.
\end{equation}
Let $n_0$ be as in the statement of the lemma.
Applying the union bound over choices of $I,J$ with $|I|,|J|>n_0$, 
\begin{align*}
\pro{ A\in \good_0 \cap \mA^{\edge}(n_0,\delta)^c}\le 4^n\times 2\expo{ -c\delta^2 \frac{d}{n}n_0^2}\le 2^{-n}
\end{align*}
where in the last bound we took the constant $C$ in the lower bound on $n_0$ sufficiently large. 
Thus,
\[
\pro{ A\in \mA^{\edge}(n_0,\delta)} \ge 1-\pro{ A\notin\good_0} - 2^{-n} = 1-n^{O(1)} \exp(-c\delta \min(d,\delta n))
\]
as desired.
\end{proof}

Finally, we will need to show that most elements of $\mA_{n,d}$ satisfy a certain neighborhood expansion property. By the $d$-regularity constraint, for any $I\subset[n]$ and $A\subset \mA_{n,d}$ we have
\[
|\mN_A(I)| \le d|I|. 
\]
It turns out that for random regular digraphs and $|I|<n/d$, this upper bound is not far from the truth. 
For $\kappa\in (0,1)$ we define the set of elements of $\mA_{n,d}$ enjoying the ``good expansion property":
\begin{equation}	\label{def:goodexp}
\mA^{\expand}(\kappa) = \bigcap_{\substack{J\subset [n]:\\ |J|\le  n/2\kappa d }}\Big\{ A\in \mA_{n,d}: \ \big|\mN_{A^\tran}(J)\big| > \kappa d|J| \Big\}.
\end{equation}

\begin{lemma}[Expansion property]	\label{lem:expand}
There are absolute constants $C,c>0$ such that if $C\log n\le d\le n/2$, then
\[
\pro{ A\in \mA^{\expand}\left( \frac{c}{\log n}\right)} = 1- O(e^{-cd}).
\]
\end{lemma}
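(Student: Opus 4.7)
The plan is to reformulate a failure of the expansion property as the vanishing of a submatrix of $A$, and then combine the edge-density concentration bound from \cite{Cook:discrep} (in the same form recalled in the proof of Lemma~\ref{lem:discrep}) with a union bound over vertex sets. Set $\kappa=c_0/\log n$ with $c_0>0$ a small constant to be chosen. For $J\subset[n]$ with $|J|=j\le n/(2\kappa d)$, the inequality $|\mN_{A^\tran}(J)|\le\kappa dj$ is equivalent to the existence of some $K\subset[n]$ with $|K|=\lceil\kappa dj\rceil$ and $\mN_{A^\tran}(J)\subset K$, which in turn is the statement $e_A(K^c,J)=0$. Since every vertex of $J$ has in-degree exactly $d$, this event can only occur when $\kappa dj\ge d$, so we may restrict to $j\ge 1/\kappa=\Theta(\log n)$; throughout this range, also $|K^c|\ge n/2$.

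Next, for each such pair $(J,K)$ I would apply, with $I=K^c$ and $\delta=1$, the tail bound from \cite[Theorem~1.5]{Cook:discrep} used in the proof of Lemma~\ref{lem:discrep}: there is an event $\good_0$ with $\pr(\good_0^c)\le n^{O(1)}e^{-cd}$ on which $\pr\big(|e_A(I,J)-\tfrac{d}{n}|I||J||\ge\delta\tfrac{d}{n}|I||J|\big)\le 2\exp(-c\delta^2\tfrac{d}{n}|I||J|)$ for any fixed $I,J$. This yields
\[
\pr\big(A\in\good_0,\;e_A(K^c,J)=0\big)\le 2\exp(-cdj/2).
\]
Multiplying by the number $\binom{n}{j}\binom{n}{\lceil\kappa dj\rceil}\le\exp\big(j\log(en/j)+\kappa dj\log(en/(\kappa dj))\big)$ of admissible pairs, and crudely bounding both logarithms by $O(\log n)$, the total contribution for fixed $j$ is at most $\exp\big(O(j\log n)+O(c_0 dj)-\tfrac{c}{2}dj\big)$.

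Finally, the hypothesis $d\ge C\log n$ with $C$ large lets us absorb $j\log n$ into $jd/C\ll dj$, and choosing $c_0$ small absorbs the $O(c_0 dj)$ term as well, leaving an exponent $\le -c'dj$ for some $c'>0$. Summing this geometric series over $j\ge\Theta(\log n)$ gives $O(e^{-c'd})$, and combining with $\pr(\good_0^c)\le n^{O(1)}e^{-cd}=O(e^{-c'd})$ (valid when $d\ge C\log n$ with $C$ large) yields the claim. The main technical point to verify is that the edge-concentration inequality invoked must hold uniformly in the sizes of $I$ and $J$—in particular it must remain valid when $|J|$ is as small as $\Theta(\log n)$—rather than only in the averaged form of Lemma~\ref{lem:discrep}, which was applied with sets of size $\gg n/\sqrt{d}$. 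Fortunately the small-set regime is precisely where the sharp exponential tail bounds of \cite{Cook:discrep} are designed to do their work, so no genuinely new estimate is needed.
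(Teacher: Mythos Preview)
Your argument is correct. The reformulation of expansion failure as $e_A(K^c,J)=0$ is standard, the restriction to $j\ge 1/\kappa$ via the trivial lower bound $|\mN_{A^\tran}(J)|\ge d$ is valid, and the union-bound accounting with $\kappa=c_0/\log n$ balances exactly as you describe. Your caveat about the small-set regime is well-placed but harmless: the form of \cite[Theorem~1.5]{Cook:discrep} quoted in the proof of Lemma~\ref{lem:discrep} applies to \emph{arbitrary} fixed pairs $(I,J)$, with the exponent $c\delta^2\tfrac{d}{n}|I||J|$ simply being what it is; there is no hidden size hypothesis.

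As for comparison with the paper: the paper does not actually prove this lemma, but simply cites \cite[Corollary~3.7]{Cook:sing}. So you have supplied a self-contained proof where the paper defers to prior work. The underlying mechanism in \cite{Cook:sing} is also a union bound over small sets combined with tail bounds on edge counts, so your route is close in spirit to the original; the main difference is that you go through the general discrepancy estimate of \cite{Cook:discrep} (which postdates \cite{Cook:sing}) rather than reproving the needed tail bound directly. This is a perfectly reasonable shortcut given that the present paper already imports \cite{Cook:discrep} for Lemma~\ref{lem:discrep}.
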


\begin{remark}	\label{rmk:expand}
While the above will be sufficient for our purposes, we note that Litvak et al.\ obtained a stronger ``log-free" version in \cite{LLTTY} (see Theorem 2.2 there), showing that with high probability one has $|\mN_{A^\tran}(J)|\ge \kappa d|J|$ with $\kappa$ arbitrarily close to one, uniformly over $|J|\le c\kappa n/d$ (in fact they allow $\kappa\to 1$ at a certain rate with $d$). Moreover, their result holds for all $d$ at least a sufficiently large constant. 
Using their result in place of Lemma \ref{lem:expand} would lower the power of log by 2 in our assumption on $d$ in Theorem \ref{thm:ssv}. 
\end{remark}

\begin{proof}
This is essentially a restatement of \cite[Corollary 3.7]{Cook:sing}, taking the parameter $\gamma$ there to be $\kappa\log n$. While it was assumed there that $d=\omega(\log n)$, the proof actually only assumes $d\ge C\log n$ for a sufficiently large constant $C>0$.
\end{proof}

By definition, for $A\in \mA^{\expand}(\kappa)$ and a sufficiently small set $J\subset[n]$, the number of rows of $A$ whose support overlaps with $J$ is within a factor $\kappa$ of its maximum value $d|J|$.
However, we will also need lower bounds on the number of rows whose overlap with $J$ has cardinality within a specified range.  
For $J\subset [n]$ and $r\ge 1$ write 
\begin{align*}
\mN_{A^\tran}^{\le r}(J) &= \{i\in [n]: 1\le |\mN_A(i)\cap J|\le r\},\\
\mN_{A^\tran}^{\ge r}(J) &= \{i\in [n]: |\mN_A(i)\cap J|\ge r\},
\end{align*}
and similarly define $\mN_{A^\tran}^{< r}(J), \mN_{A^\tran}^{> r}(J)$.

\begin{lemma}		\label{lem:goodexpand}
Let $1\le d\le n/2$, $\kappa\in(0,1)$ and $A\in\mA^{\expand}(\kappa)$. Then the following hold:
\begin{enumerate}
\item For all $J\subset[n]$ such that $|J|\le n/2\kappa d$,
\begin{equation}	\label{bd:goodexp1}
\big|\mN_{A^\tran}^{\le r}(J)\big|\ge\left( \kappa - \frac1{r+1}\right) d|J|
\end{equation}
for all $r\ge 1$.
\item For all $J\subset[n]$ such that $|J|>  n/2\kappa d$,
\begin{equation}	\label{bd:goodexp2}
\big|\mN_{A^\tran}^{> r}(J)\big|>n/8
\end{equation}
for all $1\le r\le \kappa d|J|/4n$.
\end{enumerate}
\end{lemma}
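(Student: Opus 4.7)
The backbone of the argument will be the edge-count identity $e_A([n], J) = d|J|$, which follows from $A^\tran\1 = d\1$ (so every column of $A$ has exactly $d$ ones), together with the expansion hypothesis $|\mN_{A^\tran}(J')| > \kappa d|J'|$ which is available for every $|J'| \le n/(2\kappa d)$.

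For part (1), I will restrict the edge-count $d|J| = \sum_i |\mN_A(i) \cap J|$ to rows $i \in \mN_{A^\tran}^{>r}(J)$: each such summand contributes at least $r+1$, yielding $|\mN_{A^\tran}^{>r}(J)| \le d|J|/(r+1)$. Since $|J| \le n/(2\kappa d)$ the expansion hypothesis applies directly to $J$, and the decomposition $|\mN_{A^\tran}(J)| = |\mN_{A^\tran}^{\le r}(J)| + |\mN_{A^\tran}^{>r}(J)|$ immediately produces
\[
|\mN_{A^\tran}^{\le r}(J)| > \kappa d|J| - \frac{d|J|}{r+1} = \left(\kappa - \frac{1}{r+1}\right) d|J|.
\]

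For part (2), I would set $m := \lfloor n/(2\kappa d)\rfloor \ge 1$ and partition $J = J_1 \sqcup \cdots \sqcup J_k$ into disjoint pieces each of size at most $m$, with $k := \lceil |J|/m \rceil \ge 2$. Applying expansion to each piece and summing gives $\sum_p |\mN_{A^\tran}(J_p)| > \kappa d|J|$. A standard double-counting step rewrites the left side as $\sum_i \alpha_i$, where $\alpha_i := |\{p : \mN_A(i) \cap J_p \ne \emptyset\}|$ satisfies $\alpha_i \le \min(|\mN_A(i) \cap J|, k)$. Writing $B_2 := \mN_{A^\tran}^{>r}(J)$ and $B_1 := \{i : 1 \le |\mN_A(i) \cap J| \le r\}$, the inequalities $k \ge |J|/m \ge 2\kappa d|J|/n$ together with $r \le \kappa d|J|/(4n)$ give $r \le k/8$, so $\alpha_i \le r$ on $B_1$ and $\alpha_i \le k$ on $B_2$. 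Hence
\[
\kappa d|J| < r|B_1| + k|B_2| \le rn + k|B_2|,
\]
and combining this with $rn \le \kappa d|J|/4$ yields $|B_2| > 3\kappa d|J|/(4k)$.

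To conclude, I will verify the elementary combinatorial estimate $|J|/k \ge (m+1)/2$ for all integers $|J| > m \ge 1$ (the extremal case being $|J| = m+1$, $k=2$; the inequality reduces to $(m-1)(q-1) \ge 0$ after cross-multiplication when $|J| = qm + s$). Since $m+1 > n/(2\kappa d)$ implies $\kappa d(m+1) > n/2$, this produces $|B_2| > 3\kappa d(m+1)/8 > 3n/16 > n/8$, as required. The main obstacle is this final chain: the partition bound, the assumption $r \le \kappa d|J|/(4n)$, and the pigeonhole bound $|J|/k \ge (m+1)/2$ must each be tracked with its optimal constant for the numerical conclusion $> n/8$ to close; the remaining work is routine double counting.
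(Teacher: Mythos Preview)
Your proof is correct and follows essentially the same approach as the paper. Part (1) is identical; for part (2) both arguments partition $J$ into pieces of size at most $n/(2\kappa d)$, apply expansion to each piece, and double-count via the incidence numbers $\alpha_i$, with only cosmetic differences (you take $k=\lceil |J|/m\rceil$ pieces partitioning $J$ exactly, while the paper takes $k=\lfloor |J|/m\rfloor$ equal-size subsets covering at least half of $J$). One minor remark: your observation $r\le k/8$ is true but not actually needed, since $\alpha_i\le |\mN_A(i)\cap J|\le r$ on $B_1$ holds directly.
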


\begin{proof}
We begin with (1). 
Fix such a set $J$ and let $r\ge 1$.
We have
\begin{align*}
d|J| &= e_A(\mN_{A^\tran}(J),J) \\
&= \sum_{i\in \mN_{A^\tran}^{\le r}(J)} |\mN_A(i)\cap J| +  \sum_{i\in \mN_{A^\tran}^{> r}(J)} |\mN_A(i)\cap J| \\
&\ge (r+1) \big|\mN_{A^\tran}^{> r}(J)\big| \\
&\ge (r+1) \left( \kappa d |J| - \big|\mN_{A^\tran}^{\le r}(J)\big|\right).
\end{align*}
and \eqref{bd:goodexp1} follows upon rearranging.

We turn to (2). 
Let $J$ and $r$ be as in the statement of the lemma.
Let $m\in (\frac{ n}{4\kappa d}, \frac{n}{2\kappa d}]$ be an integer, put $k=\lf |J|/m\rf$, and let $J_1,\dots,J_k$ be pairwise disjoint subsets of $J$ of size $m$.
Denote $J'=\bigcup_{l=1}^m J_l$, and note that $|J'|\ge |J|/2$.
By our restriction to $\mA^{\expand}(\kappa)$, for each $l\in [m]$ we have
\begin{equation}	\label{goodexp:above}
|\mN_{A^\tran}(J_l)|\ge \kappa dm.
\end{equation}
Let $B$ be the adjacency matrix of the bipartite graph with vertex parts $U=\mN_{A^\tran}(J')$, $V=[k]$ which puts an edge at $(i,l)$ when $|\mN_A(i)\cap J_l|\ge 1$.
From \eqref{goodexp:above} we have that the number of edges $e_B(U,V)$ in this graph is bounded below by $\kappa dmk$.
On the other hand, 
\begin{align*}
e_B(U,V)
&\le k|\{ i\in U: |\mN_B(i)|>r\}| + r|\{i\in U: 1\le |\mN_B(i)|\le r\}|\\
&\le k\big|\mN_{A^\tran}^{>r}(J)\big| + rn.
\end{align*}
Combining these bounds on $e_B(U,V)$ and rearranging, we have
\[
\big|\mN_{A^\tran}^{>r}(J)\big| \ge \kappa dm - \frac{rn}{k} \ge \kappa dm\left( 1 - \frac{|J|}{4km}\right)
\]
where in the second inequality we applied our assumption on $r$.
Now since $km=|J'|\ge |J|/2$ we conclude
\[
\big|\mN_{A^\tran}^{>r}(J)\big| \ge\frac12 \kappa dm >  \frac{n}{8}
\]
as desired.
\end{proof}

\section{Partitioning the sphere}	\label{sec:partition}

In this section we begin the proof of Theorem \ref{thm:ssv}. Throughout this section $A$ denotes a uniform random element of $\mA_{n,d}$.

\subsection{Structured and unstructured vectors}

A well-known approach to bounding the probability a random matrix $M$ is singular is to classify potential null vectors $v\ne 0$ as ``structured" or ``unstructured", and use different arguments to bound 
\[
\pr(\exists \text{ structured } v: Mv=0) \quad \text{and}\quad \pr(\exists \text{ unstructured } v: Mv=0).
\]
This approach goes back to the work of Koml\'os on iid Bernoulli matrices $X=(\xi_{ij})$, where an integer vector $v$ is said to be structured if it is \emph{sparse}, i.e.\ $|\supp(v)|\le n/10$, say. The key observation is that unstructured vectors $v$ enjoy good anti-concentration for random walks 
\begin{equation}	\label{randomwalk}
R_i\cdot v=\sum_{j=1}^n  \xi_{ij}v_j,
\end{equation}
in the sense that $R_i\cdot v$ is unlikely to be zero, where $R_i$ denotes the $i$th row of $X$. On the other hand, while structured vectors only have crude anti-concentration properties, the set of structured vectors has low entropy (i.e.\ cardinality), which allows one to obtain uniform control via the union bound. 
Later, in \cite{TaVu:sing} Tao and Vu used more complicated classifications of potential null vectors $v\in\Z^n\setminus \{0\}$ by relating concentration properties of the random walks \eqref{randomwalk} to arithmetic structure in the components of $v$ using tools from additive combinatorics (such as Freiman's theorem). 

This approach carries over to the problem of bounding the smallest singular value. From the variational formula
\[
s_n(M) = \inf_{u\in \sph} \|Mu\|, 
\]
if $S_1\cup\cdots \cup S_N$ is a partition of $\sph$, then
\begin{equation}	\label{bd:classify}
\pro{ s_n(M) \le \eps} \le \pro{ \inf_{u\in S_1} \|Mu\|\le \eps} + \cdots + \pro{ \inf_{u\in S_N} \|Mu\|\le \eps}.
\end{equation}
The analogue of Koml\'os's argument for the invertibility problem was accomplished by Rudelson for a general class of iid matrices in \cite{Rudelson:inv} (see also \cite{RuVe:ilo}). There a unit vector is said to be structured if it is \emph{close} to a sparse vector. Specifically, for $m\in[n]$ we denote the set of \emph{$m$-sparse vectors}
\begin{equation}	\label{def:sparse}
\Sparse(m) = \big\{ v\in \C^n: |\supp(v)|\le m\big\}
\end{equation}
where $\supp(v) = \{j\in [n]:v_j\ne 0\}$, 
and for $m\in [n],\rho\in (0,1)$ we define the set of \emph{compressible vectors}
\begin{equation}
\Comp(m,\rho) = \sph \cap \left( \Sparse(m) + \rho \ball\right),
\end{equation}
where we recall that $\ball$ denotes the closed unit ball in $\C^n$, so that $E+\rho \ball$ denotes the closed $\rho$-neighborhood of a set $E\subset \C^n$. In \cite{Rudelson:inv},
\eqref{bd:classify} is applied with $N=2$, taking $S_1$ to be the set of compressible vectors (with an appropriate choice of paramers) and $S_2$ the complementary set of ``incompressible" vectors.
As in the invertibility problem, incompressible vectors enjoy good anti-concentration properties for the associated random walks \eqref{randomwalk}, while the set of compressible vectors has low \emph{metric entropy}, which allows one to obtain uniform control using nets and the union bound.
Later works of Tao--Vu \cite{TaVu:cond, TaVu:circ} and Rudelson--Vershynin \cite{RuVe:ilo} used larger partitions based on arithmetic structural properties.

In the present work, the distribution of $A_n$ calls for a different notion of structure than those discussed above.
In the work \cite{Cook:sing} on the invertibility problem for $A_n$ an integer vector was said to be structured if it had a large level set. Thus, for controlling the smallest singular value, we consider a unit vector $u\in \sph$ to be structured if it is close to a vector with a large level set, and call such vectors ``flat". 
Alternatively, non-flat vectors are those $u\in \sph$ for which the empirical measure $\frac1n\sum_{i=1}^n \delta_{u_i}$ of components enjoys some anti-concentration estimate; this perspective will be expanded upon in Section \ref{sec:ac}.

Formally, for $m\in [n]$ and $\rho\in (0,1)$, define the set of \emph{$(m,\rho)$-flat vectors} 
\begin{align}
\Flat(m,\rho) &= \sph\cap \left(  \rho \ball + \bigcup_{\lambda\in \C} \big(\lambda\1+ \Sparse(m)\big) \right) \notag\\
&= \big\{ u\in \sph:  \exists\, v\in \Sparse(m),\, \lambda\in \C \text{ with } \|u-v-\lambda \1\| \le \rho \big\}	.\label{def:Aflat}
\end{align}
We denote the mean-zero flat vectors by
\begin{equation}	\label{def:Aflat0}
\Flat_0(m,\rho) =  \Flat(m,\rho)\cap \langle \1\rangle^\perp.
\end{equation}
For non-integral $x\ge0$ we will sometimes abuse notation and write $\Sparse(x)$, $\Flat(x,\rho)$, etc.\ to mean $\Sparse(\lf x\rf)$, $\Flat(\lf x\rf,\rho)$.

Now we state our main proposition controlling the event that $\|(A+Z)u\|$ is small for some structured vector $u$.
For $K\ge 1$ we denote the \emph{boundedness event}
\begin{equation}	\label{def:bdd}
\mB(K)= \left\{ \|A+Z\|_{\oneperp}\le K\sqrt{d}\right\}
\end{equation}
(recall the notation \eqref{def:oneperp}).
From \eqref{pf}, our assumptions on $Z$ and the triangle inequality, $\mB(K)$ holds with probability one for $K\sqrt{d}= d+n^\gamma$.
(Sharper bounds than this hold with high probability, but are not necessary for our purposes.) 
For much of the proof we will leave the parameter $K$ generic.
For $K\ge1$ and $m\in [n],\rho\in (0,1)$, denote
\begin{equation}	\label{def:event}
\event_K(m,\rho) = \mB(K)\wedge\Big\{\, \exists u\in \Flat_0(m,\rho): \|(A+Z)u\|\le \rho K\sqrt{d}\,\Big\}.
\end{equation}

\begin{prop}[Control on flat vectors]		\label{prop:flat}
Assume $\log^{4}n\le d\le n/2$ and $1\le K\le n^{\gamma_0}$ for some fixed $\gamma_0\ge1/2$.
There exists $\Gamma_0\ll \gamma_0\log_dn$ such that for all $n$ sufficiently large depending on $\gamma_0$,
\begin{equation}	\label{bd:propflat}
\pro{ \event_K\left(\frac{cn}{\gamma_0\log^3n}\,, \;n^{-\Gamma_0}\right)} \le e^{-cd}
\end{equation}
where $c>0$ is an absolute constant.
\end{prop}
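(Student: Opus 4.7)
The plan is to prove Proposition~\ref{prop:flat} by combining a net argument with a switching-based anti-concentration estimate, iterated over a dyadic sequence of sparsity scales.

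\textbf{Reduction to sparse vectors in $\oneperp$.} For $u \in \Flat_0(m,\rho)$, write $u = \lambda\1 + v + w$ with $v \in \Sparse(m)$, $\|v\|\le 1$, and $\|w\|\le \rho$; the constraint $\langle u,\1\rangle=0$ forces $|\lambda|\ll \sqrt{m}/n + \rho/\sqrt n$. Since $A\1=d\1$, $A^*\1=d\1$, $Z\1=\zeta\1$, and $Z^*\1=\bar\zeta\1$, the operator $A+Z$ preserves both $\langle\1\rangle$ and $\oneperp$, so $(A+Z)u\in\oneperp$ and
\[
\|(A+Z)u\| = \|P_0(A+Z)u\| = \|P_0(A+Z)(v+w)\|
\]
with $P_0=I-\tfrac1n\1\1^*$ (the $\lambda$-contribution vanishes under $P_0$). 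On $\mB(K)$ the $w$-contribution is at most $K\sqrt d\,\rho$, so it suffices to prove a uniform lower bound $\|P_0(A+Z)v\| \gg K\sqrt d\,\rho$ over unit-norm sparse $v$, on the intersection of the high-probability regularity events from Section~\ref{sec:graph}.

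\textbf{Dyadic net and switchings.} I would partition the sparsity range into scales $m_j = 2^j m_0$, from a ``highly structured'' base scale $m_0$ handled by Section~\ref{sec:high}, up to $m_J \asymp n/(\gamma_0\log^3 n)$. For each $j$ and each support $J\subset[n]$ of size $m_j$, a $\rho_j$-net of $\mathbb{S}^J$ yields a total net $\mathcal{N}_j$ of cardinality at most $(Cen/(m_j\rho_j^2))^{m_j}$. For each $v\in\mathcal{N}_j$, Lemma~\ref{lem:goodexpand}(1) supplies $\gtrsim \kappa\, d m_j$ rows $i$ of $A$ with $|\mN_A(i)\cap J|=1$, so that $(Av)_i = v_{j(i)}$. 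Simple switchings exchanging edges between two such rows and a pair of columns outside $J$ --- admissible with high probability thanks to the codegree bound of Lemma~\ref{lem:codeg} --- add an independent Bernoulli-like $\pm(v_{j_2}-v_{j_1})$ increment to two coordinates of $Av$ while leaving $Z$ untouched. A Littlewood--Offord bound on a sum of $\asymp m_j \log n$ such independent increments yields
\[
\pro{\|P_0(A+Z)v\|\le t\sqrt d,\ A\in \good} \le (Ct/\rho_j)^{cm_j\log n},
\]
where $\good$ collects the regularity events of Section~\ref{sec:graph}. A union bound over $\mathcal{N}_j$ with $t$ slightly above $\rho_j$ controls scale $j$, and summing a geometric series over $j$ yields the bound $e^{-cd}$ of Proposition~\ref{prop:flat}.

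\textbf{Main obstacle.} The central challenge is balancing three parameters: the net entropy at scale $m_j$ is $\exp(Cm_j\log(1/\rho_j))$, each independent switching contributes only an $O(1)$ factor of anti-concentration, and the per-scale failure budget is $\exp(-cd)$. Lemma~\ref{lem:goodexpand} guarantees just enough admissible switching positions for the sparsity range $m\le n/(\gamma_0\log^3 n)$, and the relation $\Gamma_0 \ll \gamma_0 \log_d n$ is precisely the largest choice of $\log(1/\rho)/\log n$ compatible with this budget. Overcoming the entry dependence in $A$ through switchings, while simultaneously tolerating an essentially arbitrary perturbation $Z$ that sees the same randomness as $A$, is the core technical content of the argument.
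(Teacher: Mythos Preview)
Your outline has the right high-level architecture (base case via Section~\ref{sec:high}, then an iterative net-plus-switching argument driven by the expansion property), but there are two quantitative gaps that break the proof as stated.

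\textbf{Iteration ratio.} With dyadic scales $m_j = 2^j m_0$ you need $J \asymp \log n$ iterations to reach $m_J \asymp n/\log^3 n$. At each step, passing from $(m_{j-1},\rho_{j-1})$ to $(m_j,\rho_j)$ forces $\rho_j \lesssim (\rho_{j-1}/K)\sqrt{m_{j-1}/n}$, so that the anti-concentration scale inherited from the previous step dominates the $K\sqrt d\,\rho_j$ netting error. Iterating $J$ times yields $\rho_J \le n^{-c\gamma_0 J} = n^{-c\gamma_0 \log n}$, i.e.\ $\Gamma_0 \gtrsim \gamma_0\log n$, strictly weaker than the claimed $\Gamma_0 \ll \gamma_0\log_d n$. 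The paper instead increments with ratio $m_{k+1}/m_k \asymp d/(\gamma_0\log^3 n)$, so only $k^* \asymp \log_d n$ steps are needed; see Lemma~\ref{lem:increment} and the end of Section~\ref{sec:increment}.

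\textbf{Anti-concentration step.} The bound $(Ct/\rho_j)^{cm_j\log n}$ is neither achievable nor what the scheme actually produces. First, swapping edges at columns \emph{outside} $J$ leaves $Av$ unchanged when $\supp(v)\subset J$, so the described switching has zero effect on the quantity you want to anti-concentrate. Second, you apply the bound uniformly over $v\in\mathcal N_j$ without using $v\notin\Flat_0(m_{j-1},\rho_{j-1})$; for $v=m_j^{-1/2}\1_J$ all components coincide and every increment $v_{j_1}-v_{j_2}$ vanishes. The paper's Lemma~\ref{lem:increment} first restricts to $u\in\Flat_0(m',\rho')\setminus\Flat_0(m,\rho)$, then uses Lemma~\ref{lem:gap} to extract disjoint sets $L,L'$ on which the components of $u$ are $\rho/\sqrt n$-separated; only then do the neighborhood switchings of Section~\ref{sec:couplings} (together with Lemma~\ref{lem:iplus}) produce $\asymp \kappa d m$ independent Bernoulli increments of size $\ge \rho/\sqrt n$, and tensorization gives $\exp(-c\kappa d m)$, not a polynomial-in-$t$ bound. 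That $\exp(-c\kappa d m)$ beats the scale-$m'$ net entropy $\exp(O(m'\gamma_0\log^2 n))$ exactly when $m'/m \lesssim \kappa d/(\gamma_0\log^2 n)$, which is the large iteration ratio above --- the two gaps are coupled.
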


We briefly outline some of the ideas of the proof.
As in prior works controlling invertibility over structured vectors, we will reduce to controlling the size of $\|(A+Z)u\|$ for $u$ ranging over a $\rho$-net for the set $\Flat_0(m,\rho)$ -- that is, a finite set $\Sigma_0(m,\rho)\subset \Flat_0(m,\rho)$ whose $\rho$-neighborhood contains $\Flat_0(m,\rho)$. 
The restriction to $\mB(K)$ allows us to argue
\[
\pro{ \event_K(m,\rho)} \le \pro{ \exists u\in \Sigma_0(m,\rho): \|(A+Z)u\|\le 2\rho K\sqrt{d}}.
\]
Applying the union bound,
\[
\pro{ \event_K(m,\rho)} \le |\Sigma_0(m,\rho)| \max_{u\in \Sigma_0(m,\rho)} \pro{  \|(A+Z)u\|\le 2\rho K\sqrt{d}}.
\]
In the next subsection we construct such $\rho$-nets of controlled cardinality. Our task is then to obtain a strong enough lower tail bound for $\|(A+Z)u\|$, holding uniformly over fixed $u\in \Sigma_0(m,\rho)$, to beat the cardinality of the net. 

It turns out that with no additional information on $u$ we can only beat the cardinality of the net when $m$ is fairly small (of size $m\ll d/\log n$). However, once we have shown $\event_K(m_0,\rho_0)$ is small for some $m_0,\rho_0$, then when trying to control vectors in a net $\Sigma_0(m,\rho)$ for $\Flat_0(m,\rho)$ with $m>m_0$, we can restrict the net to the complement of $\Flat_0(m_0,\rho_0)$:
\begin{align*}
\pr\Big( \event_K(m,\rho)\setminus \event_K(m_0,\rho_0)\Big) \le |\Sigma_0(m,\rho)| \max_{u\in \Sigma_0(m,\rho)\setminus \Flat_0(m_0,\rho_0)} \pro{  \|(A+Z)u\|\le 2\rho K\sqrt{d}}.
\end{align*}
This additional information that $u\notin \Flat_0(m_0,\rho_0)$ allows us to get an improved lower tail bound for $\|(A+Z)u\|$, which beats the cardinality of the net $ |\Sigma_1(m,\rho)|$ for $m\ll (d/\log^{O(1)} n)m_0$. Assuming $d$ grows at an appropriate poly-logarithmic rate, we can iterate this argument along a sequence $(m_k,\rho_k)$ until $m_k\gg n/\log^3n$ as in \eqref{bd:propflat}. The values $\rho_k$ will degrade by a polynomial factor at each step, but this is acceptable for our purposes.

We mention that this iterative approach is similar to arguments from \cite{RuZe, Cook:ssv}, and to a lesser extent \cite{TaVu:cond,RuVe:ilo}. However, those works concerned matrices with independent entries; consequently, our proofs of lower tail bounds for $\|(A+Z)u\|$ with fixed $u$  are substantially different, making use of coupled pairs $(A,\tA)$ formed by applying random switching operations, and relying heavily on the graph regularity properties from Section \ref{sec:graph}.

We prove Proposition \ref{prop:flat} in Sections \ref{sec:high} and \ref{sec:increment}.
In the remainder of this section we develop some useful lemmas concerning flat and non-flat vectors.

\subsection{Metric entropy of flat vectors}	\label{sec:entropy}

In this section we bound the metric entropy of the sets $\Flat_0(m,\rho)$ -- that is, we find efficient coverings of these sets by Euclidean balls. 
The following is a standard fact on the existence of such coverings of controlled cardinality, and is established by a well-known volumetric argument; see for instance \cite{MiSc:notes}, \cite[Lemma 2.2]{Cook:ssv}.

\begin{lemma}		\label{lem:net}
Let $V\subset \C^n$ be a subspace of (complex) dimension $m$, and let $\rho\in (0,1)$. There exists a set $\Sigma_V(\rho)\subset V\cap \ball$ with $|\Sigma_V(\rho)| = O(1/\rho)^{2m}$ such that $\Sigma_V(\rho)$ is a $\rho$-net for $V\cap \ball$ -- i.e.\ for every $x\in V\cap \ball$ there exists $y\in \Sigma_V(\rho)$ such that $\|x-y\|\le \rho$.
\end{lemma}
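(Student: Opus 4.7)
The plan is to carry out the standard volumetric packing argument, working with the real $2m$-dimensional Euclidean structure on $V$. Concretely, I would take $\Sigma_V(\rho)$ to be a maximal $\rho$-separated subset of $V \cap \ball$ (existence being immediate by Zorn's lemma or, since $V \cap \ball$ is compact, by a greedy construction that terminates in finitely many steps). By maximality, no point of $V \cap \ball$ lies at distance greater than $\rho$ from $\Sigma_V(\rho)$, which is exactly the $\rho$-net property.

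The cardinality bound will come from a volume comparison. The open Euclidean balls $B(y, \rho/2)$ for $y \in \Sigma_V(\rho)$ are pairwise disjoint by the separation condition, and each is contained in the enlarged set $V \cap (1+\rho/2)\ball$, since $y \in V$ has norm at most $1$. Regarding $V$ as a real vector space of dimension $2m$, the Lebesgue volume (with respect to the induced Euclidean structure on $V$) scales like $r^{2m}$, so summing volumes gives
\begin{equation*}
|\Sigma_V(\rho)| \cdot (\rho/2)^{2m} \;\le\; (1 + \rho/2)^{2m}.
\end{equation*}
Rearranging yields $|\Sigma_V(\rho)| \le (1 + 2/\rho)^{2m} = O(1/\rho)^{2m}$, as desired.

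There is really no obstacle here — the argument is entirely classical and the only mild subtlety is remembering that the volume dimension of a complex $m$-dimensional subspace is $2m$, which is exactly what produces the exponent $2m$ in the bound. I would simply cite the statement as standard (e.g.\ via the references \cite{MiSc:notes}, \cite[Lemma 2.2]{Cook:ssv} already noted in the excerpt) and sketch the maximal-packing-plus-volume argument in one short paragraph for completeness.
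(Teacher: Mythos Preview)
Your proposal is correct and matches the paper's treatment exactly: the paper does not give a proof but simply cites this as a standard fact established by the well-known volumetric argument, pointing to \cite{MiSc:notes} and \cite[Lemma 2.2]{Cook:ssv}. Your maximal-packing-plus-volume sketch is precisely that argument, and you correctly note that the complex dimension $m$ yields real dimension $2m$ and hence the exponent $2m$.
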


Using this we can show:

\begin{lemma}[Metric entropy for flat vectors]		\label{lem:flatnet}
Let $1\le m\le n/10$ and $\rho\in (0,1)$. 
There exists $\Sigma_0 = \Sigma_0(m,\rho)\subset \Flat_0(m,\rho)$ such that $|\Sigma_0| = O\big( \frac{n}{m\rho^2}\big)^m$ and $\Sigma_0$ is a $\rho$-net for $\Flat_0(m,\rho)$.
\end{lemma}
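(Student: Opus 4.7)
The plan is to cover $\Flat_0(m,\rho)$ by the Euclidean $\rho$-neighborhoods of a family of low-dimensional subspaces indexed by the possible support sets of the sparse perturbations. For each $J\subset [n]$ with $|J|=m$, define
\[
W_J := \big(\C^J + \langle\1\rangle\big)\cap \oneperp,
\]
which is a complex subspace of $\C^n$ of dimension $m$. The first step is to show that every $u\in\Flat_0(m,\rho)$ lies within Euclidean distance $\rho$ of some $W_J\cap\ball$: given $v\in\Sparse(m)$ and $\lambda\in\C$ with $\|u-v-\lambda\1\|\le\rho$, enlarge $\supp(v)$ to a set $J$ of size exactly $m$, and let $P$ denote orthogonal projection onto $\oneperp$. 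Since $u\in\oneperp$ one has $Pu=u$, hence $w:=P(v+\lambda\1)\in W_J$ satisfies $\|u-w\|\le\rho$ and $\|w\|\le\|u\|=1$.

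Next, for each such $J$, I would apply Lemma \ref{lem:net} to the $m$-dimensional subspace $W_J$ to obtain a $(c_0\rho)$-net $\tilde\Sigma_J\subset W_J\cap\ball$ of cardinality $O(1/\rho)^{2m}$, where $c_0>0$ is a small absolute constant. Taking the union over the $\binom{n}{m}\le (en/m)^m$ choices of $J$ produces a set $\tilde\Sigma$ of total cardinality $O(n/(m\rho^2))^m$. To build $\Sigma_0$ as a subset of $\Flat_0(m,\rho)$, run through each $\tilde y\in\tilde\Sigma$ and, whenever the intersection $B(\tilde y,(1+c_0)\rho)\cap \Flat_0(m,\rho)$ is non-empty, add one such representative to $\Sigma_0$.

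To verify the covering property, given $u\in\Flat_0(m,\rho)$ take the $J$ and $w$ produced in the first step, pick $\tilde y\in\tilde\Sigma_J$ with $\|w-\tilde y\|\le c_0\rho$, and conclude $\|u-\tilde y\|\le(1+c_0)\rho$; the representative $y^*\in\Sigma_0$ associated to this $\tilde y$ then satisfies $\|u-y^*\|\le 2(1+c_0)\rho$. The only real subtlety is essentially cosmetic: because the net points are constrained to lie in $\Flat_0(m,\rho)$ rather than in the enclosing subspaces $W_J$, the natural construction only yields covering radius a constant multiple of $\rho$; the stated $\rho$-net is recovered by rescaling $\rho \mapsto \rho/C$ at the outset and absorbing the resulting $C^{2m}$ factor into the implied constant in the cardinality bound $O(n/(m\rho^2))^m$. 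No step is a serious obstacle; the key combinatorial input is the union bound over the $\binom{n}{m}$ choices of $J$, and the key geometric input is the standard volumetric covering of $W_J\cap\ball$ supplied by Lemma \ref{lem:net}.
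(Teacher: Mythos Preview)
Your proposal is correct and follows essentially the same approach as the paper: cover $\Flat_0(m,\rho)$ by the $\rho$-neighborhoods of the $m$-dimensional subspaces $W_J=\Proj_{\oneperp}(\C^J)$, net each one via Lemma~\ref{lem:net}, take the union over the $\binom{n}{m}$ choices of $J$, and then pull the net back into $\Flat_0(m,\rho)$ (the paper does this by normalizing to unit length, you by picking representatives). One small slip: the claim $\|w\|\le\|u\|=1$ is not justified, since $w=u-Pr$ only gives $\|w\|\le 1+\rho$; simply net $W_J\cap 2\ball$ instead, which does not affect the cardinality bound.
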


\begin{proof}	
Note we may assume that $\rho$ is smaller than any fixed constant.
Consider an arbitrary element $u\in \Flat_0(m,\rho)$.
We may write 
\[
u = v + \lambda \1 + w
\]
for some $v\in \Sparse(m)$, $\lambda\in \C$ and $\|w\|\le \rho$. 

We begin by crudely bounding $\|v\|$ and $|\lambda|$. 
Suppose $v$ is supported on $J\subset[n]$, $|J|=m$.
By the triangle inequality, $\|v+\lambda\1\|\le 1+\rho$, and by Pythagoras's theorem
\begin{equation}	\label{net:above}
\|v + \lambda \1_J\|^2 + \|\lambda \1_{J^c} \|^2 \le (1+\rho)^2.
\end{equation}
Ignoring the first term gives
\begin{equation}	\label{net:lambda}
|\lambda| \le (1+\rho)\sqrt{\frac{1}{n-m}}\le \frac{2}{\sqrt{n}}
\end{equation}
by our bound on $m$ and assuming $\rho\le 1/2$.
Ignoring the second term in \eqref{net:above} and applying the triangle inequality and \eqref{net:lambda} yields
\begin{equation}	\label{net:v}
\|v\| \le 1+ \rho + \|\lambda \1_J\| \le (1+\rho) \left( 1+ \sqrt{\frac{m}{n-m}}\right) \le 2.
\end{equation}

Denote $\overline{v} := \frac1n \sum_{j=1}^n v_j$ and similarly for $w$. 
Since $u\in \sph_0$, we have
\[
0 = \overline{v} + \lambda + \overline{w}.
\]
Rearranging and applying Cauchy--Schwarz gives
\[
|\overline{v}+ \lambda| = |\overline{w}| \le \rho/\sqrt{n}.
\]
Thus we may alternatively express
\begin{align}
u &= v - \overline{v} \1 + \left[ (\overline{v} + \lambda)\1 + w\right] 	\notag\\
&= v - \overline{v} \1 + w'	\label{net:ualt}
\end{align}
with $\|w'\|\le 2\rho$. 

In the remainder of the proof we will obtain a $3\rho$-net for $\Flat_0(m,\rho)$ from a $\rho$-net for the projection to $\langle \1\rangle^\perp$ of $\Sparse(m) \cap 2\ball$. Then we will show that we can rescale the resulting vectors to lie in $\Flat_0(m,\rho)$ to obtain a $6\rho$-net. The result will then follow by replacing $\rho$ with $\rho/6$. 

By Lemma \ref{lem:net}, for each $J\subset[n]$ with $|J|=m$ there is a $\rho$-net $\Sigma_J(\rho)$ for $2\mathbb{B}^J$ of cardinality $O(1/\rho)^{2m}$ (by dilating a $\rho/2$-net for $\mathbb{B}^J$). Taking the union of these nets over all choices of $J\in {[n]\choose m}$ yields a $\rho$-net $\Sigma(m,\rho)$ for $\Sparse(m)\cap 2\ball$ of cardinality at most ${n\choose m} O(1/\rho)^{2m} = O(n/m\rho^2)^m$. 

Let $\Sigma'(m,\rho)$ denote the projection of $\Sigma(m,\rho)$ to the subspace $\langle \1\rangle^\perp$. 
Since projection to a subspace can only contract distances and cardinalities, $\Sigma'(m,\rho)$ is a $\rho$-net for $\Proj_{\langle \1\rangle^\perp}(\Sparse(m)\cap 2\ball)$ and $|\Sigma'(m,\rho)|\le |\Sigma(m,\rho)|$.
Now by \eqref{net:ualt} and \eqref{net:v}, any element $u\in \Flat_0(m,\rho)$ is within distance $2\rho$ of an element of $\Proj_{\langle \1\rangle^\perp}(\Sparse(m)\cap 2\ball)$, and hence is within distance $3\rho$ of an element of $\Sigma'(m,\rho)$. 

Finally, we rescale every element of $\Sigma'(m,\rho)$ to be of unit length and denote the resulting set by $\Sigma_0(m,\rho)$. Note that the rescaling leaves $\Sigma_0(m,\rho) \subset \Proj_{\langle \1\rangle^\perp}(\Sparse(m))$. 
In particular, $\Sigma_0(m,\rho)\subset \Flat_0(m,\rho)$.
Moreover, for any $u\in \Flat_0(m,\rho)$ and $y\in \Sigma'(m,\rho)$ with $\|u-y\|\le 3\rho$, it follows that $\|y\|\in [1-3\rho,1+3\rho]$, so by the triangle inequality $u$ is within distance $6\rho$ of $y/\|y\|\in \Sigma_0(m,\rho)$. 
Thus, $\Sigma_0(m,\rho)$ is a $6\rho$-net for $\Flat_0(m,\rho)$ of cardinality $O(n/m\rho^2)^m$.
The result now follows by replacing $\rho$ with $\rho/6$. 
\end{proof}

\subsection{Anti-concentration properties of non-flat vectors}		\label{sec:ac} 

In this section we observe that the property of a unit vector $u\in \sph$ not lying in $\Flat(m,\rho)$ implies an anti-concentration property for the empirical measure $\frac1n\sum_{i=1}^n\delta_{u_i}$ of its components. We then show that we can find a large ``bimodal" piece of the empirical measure for such a vector; specifically, we can find two well-separated subsets of the plane that each capture a large portion of the total measure. 

For $v\in \C^n$, $\lambda\in \C$ and $\rho>0$ we write
\begin{equation}
E_v(\lambda,\rho):= \left\{j\in [n]: \left|v_j - \frac{\lambda}{\sqrt{n}}\right| < \frac{\rho}{\sqrt{n}}\right\}
\end{equation}
and define the \emph{concentration function}
\begin{equation}
Q_v(\rho) := \sup_{\lambda\in \C} \frac1n|E_v(\lambda,\rho)|
\end{equation}
(as $|E_v(\lambda,\rho)|$ takes values in the discrete set $[n]$ the supremum is attained). 
We remark that $Q_v(\rho)$ is the classical L\'evy concentration function for the empirical measure $\frac1n\sum_{i=1}^n \delta_{v_i}$.

\begin{lemma}[Anti-concentration for non-flat vectors]	\label{lem:ac}
Let $u\in \sph\setminus \Flat(m,\rho)$. Then $Q_u(\rho)<1-\frac{m}n$.
\end{lemma}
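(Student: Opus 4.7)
The plan is to prove the contrapositive: if $Q_u(\rho)\ge 1 - m/n$, then $u\in \Flat(m,\rho)$. The statement is essentially a matter of unpacking the two definitions, with the only subtle point being to keep track of the $1/\sqrt n$ normalization that appears in $E_u(\lambda,\rho)$ but not in the definition of $\Flat(m,\rho)$.

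Concretely, I would start by using that the supremum defining $Q_u(\rho)$ is attained (as noted just before the lemma): there exists $\lambda\in\C$ with $|E_u(\lambda,\rho)|\ge n - m$. Setting $J = E_u(\lambda,\rho)$, this means $|J^c|\le m$. The goal is then to write $u = v + \mu\1 + w$ with $v$ supported on $J^c$ (hence in $\Sparse(m)$), $\mu\in\C$, and $\|w\|\le \rho$.

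The natural choice is $\mu := \lambda/\sqrt n$, $v_j := u_j - \mu$ for $j\in J^c$ and $v_j := 0$ for $j\in J$, and correspondingly $w_j := u_j - \mu$ for $j\in J$ and $w_j:=0$ for $j\in J^c$. By construction $u = v + \mu\1 + w$ and $v\in\Sparse(m)$. The definition of $E_u(\lambda,\rho)$ gives $|u_j - \lambda/\sqrt n| < \rho/\sqrt n$ for each $j\in J$, so
\[
\|w\|^2 = \sum_{j\in J}|u_j - \mu|^2 < |J|\cdot \frac{\rho^2}{n} \le \rho^2.
\]
Thus $\|u - v - \mu\1\|<\rho$, which places $u\in\Flat(m,\rho)$ by \eqref{def:Aflat}, completing the contrapositive.

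There is no real obstacle here; the only thing to be careful about is to match the normalizations, i.e.\ to set the free scalar $\lambda$ in the definition of $\Flat(m,\rho)$ equal to $\lambda/\sqrt n$ (with $\lambda$ from the concentration function) rather than $\lambda$ itself, and to note that the strict inequality in the definition of $E_u$ comfortably yields the non-strict inequality $\|w\|\le \rho$ required by $\Flat(m,\rho)$.
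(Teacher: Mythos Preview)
Your proof is correct and essentially identical to the paper's: both argue the contrapositive, pick $\lambda$ achieving $|E_u(\lambda,\rho)|\ge n-m$, set $v=(u-\tfrac{\lambda}{\sqrt n}\1)_{E_u(\lambda,\rho)^c}\in\Sparse(m)$, and bound $\|u-v-\tfrac{\lambda}{\sqrt n}\1\|<\rho$ by summing the strict inequalities over $E_u(\lambda,\rho)$. The only cosmetic difference is that you name the remainder $w$ and the scalar $\mu=\lambda/\sqrt n$ explicitly.
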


\begin{proof}
If this were not the case then there would exist $\lambda\in \C$ such that $|E_u(\lambda,\rho)|\ge n-m$. Then taking $v=(u-\frac{1}{\sqrt{n}}\lambda\1)_{E_u(\lambda,\rho)^c}$ we have that $v\in \Sparse(m)$, and
\[
\left\| u - v -\frac1{\sqrt{n}}\lambda\1\right\| = \left\|\left(u-\frac1{\sqrt{n}}\lambda\1\right)_{E_u(\lambda,\rho)} \right\| <\rho
\]
which implies $u\in \Flat(m,\rho)$, a contradiction. 
\end{proof}

\begin{lemma}[Existence of a large bimodal component]		\label{lem:gap}
Let $u\in \sph\setminus \Flat(m,\rho)$. 
There exist disjoint sets $J_1,J_2\subset[n]$ such that $|J_1|\ge m$, $|J_2|\gg n-m$ and 
\begin{equation}	\label{twolevels.weak}
|u_{j_1}-u_{j_2}| \ge \frac{\rho}{2\sqrt{n}} \qquad \forall\; j_1\in J_1,\;j_2\in J_2.
\end{equation}
Moreover, there is a set $J_1'\subset J_1$ with $|J_1'|\gg m$ such that for any $1\le r\le \min(|J_1'|,|J_2|)$, 
\begin{equation}	\label{twolevels.strong}
\left| \frac1 r \sum_{j\in J_1''} u_j - \frac1r \sum_{j\in J_2''} u_j\right| \ge \frac{\rho}{4\sqrt{n}}\qquad \forall\; J_1''\in {J_1'\choose r},\; J_2''\in {J_2\choose r}.
\end{equation}
\end{lemma}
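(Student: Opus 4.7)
The approach hinges on Lemma~\ref{lem:ac}, which gives $Q_u(\rho)<1-m/n$: every ball of radius $\rho/\sqrt{n}$ in $\C$ contains fewer than $n-m$ of the components of $u$. The plan is to pick $\lambda^*\in\C$ maximizing $|E_u(\lambda,\rho/8)|$, write $N^*$ for this maximum, and take as skeleton
\[
J_2 := E_u(\lambda^*,\rho/8), \qquad J_1 := \{j\in[n]:|u_j-\lambda^*/\sqrt{n}|\ge 5\rho/(8\sqrt{n})\}.
\]
The triangle inequality then yields $|u_{j_1}-u_{j_2}|\ge 5\rho/(8\sqrt{n})-\rho/(8\sqrt{n})=\rho/(2\sqrt{n})$ for $j_1\in J_1$, $j_2\in J_2$, giving \eqref{twolevels.weak}; Lemma~\ref{lem:ac} applied at scale $\rho$ together with the monotonicity of $E_u(\lambda^*,\cdot)$ gives $|J_1|\ge m+1$.

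The size bound $|J_2|\gg n-m$ requires a case split. If $N^*\ge c(n-m)$ for a fixed small absolute constant $c>0$, then $|J_2|=N^*\gg n-m$ and we are done. Otherwise, Chebyshev applied to $u-\bar u\1$ places at least $99\%$ of the $u_j$'s in a disk of radius $O(1/\sqrt{n})$ about $\bar u$, and pigeonholing that disk against balls of radius $\rho/(16\sqrt{n})$ forces $N^*\gg n\rho^2$; combined with $N^*<c(n-m)$ this yields $n-m\gg n\rho^2$. In this ``spread'' regime the cluster-based skeleton is replaced by a linear-separator construction: pick a unit $e\in\C$ (say, the top eigenvector of the empirical covariance of the components) and thresholds $a_1<a_2$ with $a_2-a_1\ge\rho/(2\sqrt{n})$ so that $|\{j:\re(eu_j)\le a_1\}|\ge m$ and $|\{j:\re(eu_j)\ge a_2\}|\gg n-m$; these two halfspaces serve as the new $J_1,J_2$.

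To upgrade to \eqref{twolevels.strong} I would use a spherical averaging argument over unit directions. For each $j\in J_1$, since $|u_j-\lambda^*/\sqrt{n}|\ge 5\rho/(8\sqrt{n})$, the set of unit $e\in\C$ with $\re(e(u_j-\lambda^*/\sqrt{n}))\ge\rho/(2\sqrt{n})$ forms an arc of angular measure at least $2\arccos(4/5)>2\pi/5$. Integrating and pigeonholing over $e$ produces some $e^*$ for which
\[
J_1':=\{j\in J_1:\re(e^*(u_j-\lambda^*/\sqrt{n}))\ge\rho/(2\sqrt{n})\}
\]
has $|J_1'|\ge|J_1|/5\gg m$. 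Since $J_2\subset\{z:|z-\lambda^*/\sqrt{n}|\le\rho/(8\sqrt{n})\}$, every $r$-subset average $\tfrac1r\sum_{J_2''}u_j$ lies within $\rho/(8\sqrt{n})$ of $\lambda^*/\sqrt{n}$, while $\re\!\bigl(e^*\cdot\tfrac1r\sum_{J_1''}u_j\bigr)\ge\re(e^*\lambda^*/\sqrt{n})+\rho/(2\sqrt{n})$ for any $r$-subset $J_1''\subset J_1'$ by construction; hence the difference of the two averages has absolute value at least $3\rho/(8\sqrt{n})\ge\rho/(4\sqrt{n})$, as required. In the linear-separator subcase one simply takes $J_1'=J_1$ with $e^*$ the separator direction.

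The main obstacle is realizing the linear-separator subcase: leveraging the non-flatness of $u$ beyond the ball-concentration already encoded in Lemma~\ref{lem:ac} to produce a quantile gap of size $\ge\rho/(2\sqrt{n})$ at the required position along some direction $e$.
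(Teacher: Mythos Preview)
Your argument handles the ``concentrated'' case $N^*\ge c(n-m)$ correctly, and your $J_1'$ construction via averaging over unit directions is essentially the paper's angular-sector pigeonhole in continuous form. The genuine gap is exactly where you flag it: the spread regime.

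The linear-separator idea does not obviously go through. The non-flatness hypothesis controls concentration of the components in \emph{balls}, not in strips or halfplanes, and there is no cheap implication from one to the other at scale $\rho$. Concretely, to realize your plan you would need, along some direction $e$, a strip of width $\rho/(2\sqrt n)$ containing at most $(1-c)(n-m)$ of the components while leaving at least $m$ on one side and $\gg n-m$ on the other; but $Q_u(\rho)<1-m/n$ says nothing about strips, and the pigeonhole bound $N^*\gg n\rho^2$ you derived does not supply the needed quantile gap. The difficulty is real: fixing the scale at $\rho/8$ is too rigid, since for spread-out $u$ the quantity $Q_u(\rho/8)$ can be arbitrarily small compared with $1-m/n$.

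The paper avoids the case split by \emph{adapting the scale}. The key observation is the doubling bound $Q_u(\eps/2)\ge c\,Q_u(\eps)$ for an absolute $c>0$ (cover an $\eps$-ball by $O(1)$ balls of radius $\eps/2$ and pigeonhole). Setting $q_k=Q_u(2^{-k})$ and $k_0=\min\{k:q_k<1-m/n\}$, one has $-\tfrac12\log_2 n\le k_0\le\lceil\log_2(1/\rho)\rceil$, so $2^{-k_0}\gg\rho$. By minimality $q_{k_0-1}\ge 1-m/n$, and by doubling $q_{k_0+1}\ge c^2 q_{k_0-1}\ge c^2(1-m/n)$. Taking $\lambda_0$ to achieve $q_{k_0+1}$ and setting $J_2=E_u(\lambda_0,2^{-k_0-1})$, $J_1=E_u(\lambda_0,2^{-k_0})^c$, one gets $|J_2|\ge c^2(n-m)$ automatically, $|J_1|>m$ since $|E_u(\lambda_0,2^{-k_0})|\le nq_{k_0}<n-m$, and the pairwise gap $\ge 2^{-k_0-1}/\sqrt n\gg\rho/\sqrt n$. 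Working at the first scale where $Q_u$ drops below $1-m/n$, rather than at the fixed scale $\rho/8$, makes both size conditions fall out at once.
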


\begin{remark}
We will later apply the above lemma when studying random variables of the form $W_{u,\pi}=\sum_{j=1}^m \xi_j(u_j-u_{\pi(j)})$, where $\bxi=(\xi_1,\dots, \xi_m)$ is a sequence of iid Bernoulli($1/2$) variables and $j\mapsto \pi(j)$ is a pairing between $2m$ distinct elements of $[n]$.
We will think of $W_{u,\pi}$ as a random walk on $\C$ with steps $u_j-u_{\pi(j)}$, and use \eqref{twolevels.weak} to argue that for certain $u$ this walk takes many large steps and is thus unlikely to concentrate significantly in any small ball. 
At some point we will also consider a random walk whose steps are differences between \emph{averages} of the components of $u$ over sets of equal size, rather than differences between individual components, in which case we will need \eqref{twolevels.strong}.
\end{remark}

\begin{proof} 	
We observe that for any $\eps>0$, 
\begin{equation}	\label{q:cont}
Q_u(\eps/2) \ge c \,Q_u(\eps)
\end{equation}
for some universal constant $c>0$. 
Indeed, letting $\lambda\in \C$ be such that $Q_u(\eps) = \frac1n|E_u(\lambda,\eps)|$, we can cover the ball $\{w\in \C: |w-\lambda|<\eps\}$ with $O(1)$ balls of radius $\eps/2$, and the claim follows from the pigeonhole principle.

Write $q_k= Q_u(2^{-k})$ and consider the non-increasing sequence $\{q_k\}_{k\in \Z}$. 
Since all components of $v$ lie in the unit disk we have $q_k=1$ for $k<-\frac12\log_2n$.
Let
\[
k_0= \min\{k: q_k<1-m/n\}.
\] 
Then $k_0\ge -\frac12\log_2n$, and from Lemma \ref{lem:ac} we have $k_0\le \lceil\log_2(1/\rho)\rceil$.
From \eqref{q:cont},
\begin{equation}
q_{k_0+1} \ge c^2 q_{k_0-1} \ge c^2(1-m/n).
\end{equation}
Let $\lambda_0\in \C$ such that $q_{k_0+1}=\frac1n|E_u(\lambda_0,2^{-k_0-1})|$. 
We have
\[
|E_u(\lambda_0,2^{-k_0})|\le nq_{k_0} <n-m.
\]
Taking $J_1 = E_u(\lambda_0,2^{-k_0})^c$ and $J_2 = E_u(\lambda_0,2^{-k_0-1})$
we have $|J_1|\ge m$, $|J_2|\ge c^2(n-m)$, and for all $j_1\in J_1,j_2\in J_2$, 
$$|u_{j_1}-u_{j_2}| \ge  \frac{2^{-k_0-1}}{\sqrt{n}} \ge\frac{\rho}{2\sqrt{n}},$$
and \eqref{twolevels.weak} follows.

For \eqref{twolevels.strong}, let $c_0>0$ be a sufficiently small constant, and divide the complement of the ball $B(\lambda_0/\sqrt{n},2^{-k_0}/\sqrt{n})\subset \C$ into $\lceil 1/c_0\rceil$ congruent angular sectors.
By the pigeonhole principle one of which must contain at least $c_0m$ of the components of $J_1$. Taking $J_1'$ to be the set of corresponding indices, we can take $c_0$ smaller if necessary to ensure that for some open halfspace $H\subset \C$, 
\[	
\dist\big( H, \{u_j: j\in J_2\}\big),\quad \dist\big(H^c, \{u_j: j\in J_1'\}\big) \ge \frac{\rho}{8\sqrt{n}}.
\]
\eqref{twolevels.strong} now follows from the above lower bounds, the convexity of $H$, and the triangle inequality.
\end{proof}

\section{Invertibility over very flat vectors}	\label{sec:high}

In this section we prove the following lemma, which already implies Proposition \ref{prop:flat} for $d\gg n/\log^2n$, but is weaker for smaller values of $d$.

\begin{lemma}[Very flat vectors]	\label{lem:very}
Let $1\le d\le n/2$ and $1\le K\le n^{\gamma_0}$ for some fixed $\gamma_0\ge1/2$. Recall the events \eqref{def:event} with $A\in \mA_{n,d}$ drawn uniformly at random.
For all $1\le m\le cd/\gamma_0\log n$ we have
\begin{equation}
\pro{ \event_K\left(m,\,\frac{c}{K\sqrt{m}} \right) } =O( e^{-cd})
\end{equation}
where $c>0$ is an absolute constant.
\end{lemma}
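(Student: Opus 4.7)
The plan is to follow the net-plus-union-bound strategy described in Section~\ref{sec:partition}. Applying Lemma~\ref{lem:flatnet} with $\rho = c_0/(K\sqrt m)$ yields a $\rho$-net $\Sigma_0(m,\rho)\subset \Flat_0(m,\rho)$ of cardinality at most
\[
\bigl(Cn/(m\rho^{2})\bigr)^{m} \;=\; \bigl(CnK^{2}/c_0^{2}\bigr)^{m} \;\le\; \exp\bigl(C'\gamma_0 m\log n\bigr),
\]
using $K\le n^{\gamma_0}$. By choosing the constant $c$ in the hypothesis $m\le cd/(\gamma_0\log n)$ small enough, the entropy can be made at most $e^{d/100}$. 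On $\mB(K)$, any $u\in \Flat_0(m,\rho)$ witnessing $\event_K(m,\rho)$ has some net point $u'\in \Sigma_0(m,\rho)$ with $\|u-u'\|\le \rho$; since $u-u'\in \oneperp$ one gets $\|(A+Z)u'\|\le \|(A+Z)u\|+K\sqrt d\,\rho\le 2\rho K\sqrt d$. A union bound thus reduces the problem to the individual estimate
\[
\sup_{u\in \Sigma_0(m,\rho)}\pro{\,\|(A+Z)u\|\le 2\rho K\sqrt d\,}\;\le\; e^{-c_1 d}
\]
for some absolute constant $c_1>1/100$.

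To prove this individual bound I restrict to a high-probability good event $\good$ consisting of the expansion event $\mA^{\expand}(c/\log n)$ of Lemma~\ref{lem:expand} intersected with all codegree events $\mA^{\codeg}(i_1,i_2)$ of Lemma~\ref{lem:codeg}; these give $\pr(A\notin \good)=O(n^2 e^{-cd})$, which is absorbed into the target bound once $d\ge C\log n$. Fix $u\in \Sigma_0(m,\rho)$ and decompose $u = v + \lambda\1 + w$ with $v\in \Sparse(m)$ supported on some $J\subset[n]$ of size $\le m$, $|\lambda|=O(1/\sqrt n)$, and $\|w\|\le \rho$ (cf.\ the proof of Lemma~\ref{lem:flatnet}, which also gives $\|v\|=1+O(\rho)$). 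Because $u$ is a mean-zero unit vector essentially supported on the $m$-set $J$, the components of $u$ are bimodal: a positive-density subset $J_1\subset J$ carries components of magnitude $\gtrsim 1/\sqrt m$, while the bulk of $J^c$ carries components of magnitude $O(1/\sqrt n)$. Consequently $|u_{j_1}-u_{j_2}|\gtrsim 1/\sqrt m$ whenever $j_1\in J_1$ and $j_2$ lies in the bulk of $J^c$.

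The individual estimate now follows from a switching argument in the spirit of \cite{Cook:sing, LLTTY}. On $\good$, Lemma~\ref{lem:goodexpand}(1) applied to $J$ produces a set $I_0$ of rows with a controlled (bounded in $\log n$) intersection with $J$, and the codegree bound guarantees that most ordered pairs $(i_1,i_2)\in I_0\times I_0$ admit a simple switching at a pair of columns $(j_1,j_2)$ with $j_1\in J_1$ and $j_2$ in the bulk of $J^c$. I pair the rows of $I_0$ into a collection of $\gtrsim d$ disjoint pairs and construct a coupling to a family of matrices uniform on $\mA_{n,d}$, obtained by independently activating at each admissible pair a simple switching with probability $1/2$. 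Activating the switching at $(i_1,i_2)$ alters both $\bigl((A+Z)u\bigr)_{i_1}$ and $\bigl((A+Z)u\bigr)_{i_2}$ by $\pm(u_{j_1}-u_{j_2})$, an amount of order $1/\sqrt m$, so $\|(A+Z)u\|^2$ is perturbed by a quantity of order $1/m$. A Littlewood--Offord/Paley--Zygmund estimate applied to the resulting $\gtrsim d$ independent $\pm$-steps then shows that $\|(A+Z)u\|^2\le 4c_0^2 d/m$ can occur for at most an $e^{-c_1 d}$-fraction of switching configurations, giving the required individual bound.

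The principal obstacle is the rigorous construction of the switching coupling: one must verify that the activated Bernoulli$(1/2)$ decisions at disjoint pairs of rows are jointly distributed close to i.i.d.\ while the marginal law of $A$ remains uniform on $\mA_{n,d}$, in spite of the column-sum constraint and of certain switchings being ``blocked''. The graph-regularity events $\mA^{\expand}$ and $\mA^{\codeg}$ do precisely the bookkeeping needed to bound the fraction of blocked switchings---those obstructed by a pre-existing edge, by a column-sum violation, or by the creation of a parallel edge---so that the defect from these exceptional pairs is absorbed into the $O(e^{-cd})$ error. Combining the individual estimate with the entropy bound from the first paragraph then yields the lemma.
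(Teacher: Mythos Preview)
Your net-and-union-bound reduction is exactly what the paper does, and your bimodality observation about the components of $u$ is also correct and matches the paper's. The gap is in the individual anti-concentration estimate: you have made the switching step far more elaborate than necessary, and in doing so you have left the hardest part unproved.

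The paper's key simplification is that one does \emph{not} need to pair up many rows and then, for each row pair, locate an admissible column pair. Instead, one first fixes a \emph{single} pair of columns $(j_1,j_2)$, chosen deterministically from $u$: take $j_1\in J$ with $|u_{j_1}-\lambda/\sqrt n|\ge 1/(2\sqrt m)$ (which exists by pigeonhole since $\|v+w\|\ge 1/2$) and $j_2\in J^c$ with $|u_{j_2}-\lambda/\sqrt n|\le \rho/\sqrt{n-m}$. Then one only needs the single codegree event $A^\tran\in\mA^{\codeg}(j_1,j_2)$, which guarantees $|\mN_{A^\tran}(j_1)\setminus\mN_{A^\tran}(j_2)|\ge 3d/4$. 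The coupling is now straightforward: fix any bijection $\pi$ between $\mN_{A^\tran}(j_1)\setminus\mN_{A^\tran}(j_2)$ and $\mN_{A^\tran}(j_2)\setminus\mN_{A^\tran}(j_1)$, and for each $i$ in the former set independently flip the $2\times 2$ submatrix at rows $(i,\pi(i))$ and columns $(j_1,j_2)$ with probability $1/2$. Because the column pair is fixed and the row pairs are disjoint, the verification that $\tilde A$ remains uniform on $\mA_{n,d}$ is a short symmetry argument with no ``blocked'' switchings to account for. One then has $\tilde R_i\cdot u = R_i\cdot u + \xi_i(u_{j_2}-u_{j_1})$ for $\ge 3d/4$ genuinely independent Bernoulli variables $\xi_i$, and Lemma~\ref{lem:tensorize} gives the $e^{-cd}$ bound directly.

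By contrast, your scheme --- pairing rows first via expansion, then hunting for a column pair per row pair --- forces you to confront exactly the obstacle you flag at the end: different row pairs may want different column pairs, the admissibility of a switching at one pair may depend on what happened at another, and showing the resulting $\tilde A$ is uniform (or close to it) is nontrivial. You also invoke $\mA^{\expand}(c/\log n)$ and a union bound over all $\binom{n}{2}$ codegree events, neither of which the paper needs here; expansion is only used later in Section~\ref{sec:increment} for the incrementing step. So your proposal is not wrong in outline, but the part you describe as the ``principal obstacle'' is precisely what the paper sidesteps by fixing the columns first.
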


Here the key graph regularity property will be the control on codegrees enjoyed by elements of the set $\mA^{\codeg}(i_1,i_2)$ from \eqref{def:goodcodeg}. 
We need the following variant of a lemma of Rudelson and Vershynin (see \cite[Lemma 2.2]{RuVe:ilo},\cite[Lemma 2.10]{Cook:ssv}).
\begin{lemma}[Tensorization of anti-concentration]	\label{lem:tensorize}
Let $\zeta_1,\dots, \zeta_n$ be independent non-negative random variables.
Suppose that for some $\eps_0,p_0>0$ and all $j\in [n]$, $\pro{\zeta_j\le \eps_0}\le p_0$. There are $c_1,p_1\in (0,1)$ depending only on $p_0$ such that
\begin{equation}
\pr\bigg( \sum_{j=1}^n \zeta_j^2 \le c_1\eps_0^2n\bigg) \le p_1^n.
\end{equation}
\end{lemma}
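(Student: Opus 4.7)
The plan is to run a standard Laplace-transform (Chernoff-type) argument. After rescaling we may assume $\eps_0 = 1$, so the goal becomes producing $c_1, p_1 \in (0,1)$ depending only on $p_0$ such that $\pr\bigl(\sum_j \zeta_j^2 \le c_1 n\bigr) \le p_1^n$.

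First I would convert the lower-tail event into a multiplicative one by introducing a parameter $t > 0$ and writing
\[
\pro{ \sum_{j=1}^n \zeta_j^2 \le c_1 n } = \pro{ e^{-t\sum_j \zeta_j^2} \ge e^{-tc_1 n} } \le e^{tc_1 n}\prod_{j=1}^n \e\, e^{-t\zeta_j^2},
\]
using Markov's inequality and independence. Next I would bound each factor: since $\zeta_j \ge 0$ we have $e^{-t\zeta_j^2} \le 1$ everywhere and $e^{-t\zeta_j^2} \le e^{-t}$ on the event $\{\zeta_j > 1\}$, so splitting according to whether $\zeta_j \le 1$ gives the uniform bound
\[
\e\, e^{-t\zeta_j^2} \le p_0 + (1-p_0)\,e^{-t}.
\]

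Now I would pick the parameters in order. Fix $t = t(p_0) > 0$ large enough that $(1-p_0) e^{-t} \le (1-p_0)/2$, so that the per-coordinate bound becomes $(1+p_0)/2 < 1$. Then choose $c_1 = c_1(p_0) > 0$ small enough that
\[
p_1 := e^{t c_1}\cdot \tfrac{1+p_0}{2} < 1,
\]
which is possible because $(1+p_0)/2 < 1$. Assembling the above yields
\[
\pro{ \sum_{j=1}^n \zeta_j^2 \le c_1 n } \le \Bigl( e^{t c_1}\cdot\tfrac{1+p_0}{2}\Bigr)^n = p_1^n,
\]
as required. Undoing the rescaling (replace $\zeta_j$ by $\zeta_j/\eps_0$) restores the general statement.

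There is no real obstacle here — the only thing to be careful about is the order of parameter selection: $t$ must be chosen first, depending only on $p_0$, so that the per-coordinate constant $(1+p_0)/2$ is strictly below one, and only then can $c_1$ be taken small enough (depending on both $t$ and $p_0$, hence on $p_0$ alone) to ensure the prefactor $e^{tc_1}$ does not destroy the exponential decay. The hypothesis $p_0 < 1$ (implicit in the conclusion $p_1 < 1$) enters precisely at the step where we need $(1+p_0)/2 < 1$.
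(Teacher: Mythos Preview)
Your argument is correct; this is exactly the standard Laplace-transform proof of tensorization. The paper does not actually supply its own proof of this lemma --- it just cites \cite[Lemma 2.2]{RuVe:ilo} and \cite[Lemma 2.10]{Cook:ssv}, where essentially the same Chernoff-type computation appears.
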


\begin{proof}[Proof of Lemma \ref{lem:very}]
Let $m$ be as in the statement of the lemma and denote
\[
\rho = \frac{c}{K\sqrt{m}}.
\]

First we consider an arbitrary fixed vector $u\in \Flat_0(m,\rho)$.
By definition, there exists $\lambda\in \C$, $v\in \Sparse(m)$ and $w\in \rho \ball$ such that $u=v+\frac{\lambda}{\sqrt{n}}\1+w$.
We note that 
\begin{equation}	\label{very:vwlb}
\|v+w\|\ge 1/2.
\end{equation}
Indeed, by the triangle inequality,
\begin{equation}	\label{very:1}
|\lambda| = \left\|\frac{\lambda}{\sqrt{n}}\1\right\| \ge \|u\|-\|v+w\| = 1-\|v+w\|.
\end{equation}
On the other hand by the assumption $u\in \sph_0$ and Cauchy--Schwarz,
\[
|\lambda|\sqrt{n} = \left| \sum_{j=1}^n v_j+w_j\right| \le \|v+w\|\sqrt{n}
\]
and so
\[
|\lambda| \le \|v+w\|.
\]
Combined with \eqref{very:1} this gives \eqref{very:vwlb}.

Let $J\subset[n]$ with $|J|=m$ such that $\supp(v)\subset J$.
From \eqref{very:vwlb},
\[
\frac14 \le \|v+w\|^2 \le m \max_{j\in J} \left| u_j -\frac{\lambda}{\sqrt{n}}\right|^2.
\]
It follows that there exists $j_1\in J$ with 
\begin{equation}	\label{vj1}
\left| u_{j_1}-\frac{\lambda}{\sqrt{n}}\right| \ge \frac{1}{2\sqrt{m}}.
\end{equation}
On the other hand, since $\sum_{j\in J^c} |w_j|^2 \le \|w\|^2\le \rho^2$
it follows from the pigeonhole principle that there exists $j_2\in J^c$ such that 
\[
\left| u_{j_2}-\frac{\lambda}{\sqrt{n}}\right| =|w_{j_2}| \le \frac{\rho}{\sqrt{n-m}}.
\]
By the previous displays and the triangle inequality we have
\begin{equation}	\label{very:du}
|u_{j_1} - u_{j_2}| \ge \frac{1}{2\sqrt{m}} - \frac{\rho}{\sqrt{n-m}}\ge \frac{1}{4\sqrt{m}}.
\end{equation}

Let $A\in \mA_{n,d}$ be drawn uniformly at random. We form a coupled matrix $\wA\in \mA_{n,d}$ as follows.
Conditional on $A$, we fix some arbitrary bijection $\pi: \mN_{A^\tran}(j_1)\setminus \mN_{A^\tran}(j_2) \to \mN_{A^\tran}(j_2)\setminus \mN_{A^\tran}(j_1)$ (if these sets are empty we simply set $\wA=A$). We do this in some measurable fashion with respect to the sigma algebra generated by $A$.
We let $\bxi=(\xi_i)_{i=1}^n\in \{0,1\}^n$ be a sequence of iid Bernoulli(1/2) indicators, independent of $A$, and form $\wA$ by replacing the submatrix $A_{(i,\pi(i))\times (j_1,j_2)}$ with 
\begin{equation}	\label{very:switchings}
\wA_{(i,\pi(i))\times (j_1,j_2)} =  \begin{pmatrix} 1 & 0\\ 0 & 1\end{pmatrix} + \xi_i \begin{pmatrix}-1& +1 \\ +1 & -1\end{pmatrix}
\end{equation}
for each $i \in \mN_{A^\tran}(j_1)\setminus \mN_{A^\tran}(j_2)$.

We claim that $\tA\eqd A$. 
It is clear that for any realization of the signs $\bxi$ the replacements \eqref{very:switchings} do not affect the row and column sums, so $\tA\in \mA_{n,d}$.
Now note that $A$ and $\tA$ agree on all entries $(i,j)\notin \mN_{A^\tran}(j_1)\triangle \mN_{A^\tran}(j_2) \times \{j_1,j_2\}$.
Conditional on any realization of the entries of $A$ outside this set, from the constraints on row and column sums we have that the remaining entries of $A$ are determined by the set $ \mN_{A^\tran}(j_1)\setminus \mN_{A^\tran}(j_2) $, and similarly the remaining entries of $\tA$ are determined by $ \mN_{\tA^\tran}(j_1)\setminus \mN_{\tA^\tran}(j_2) $.  $\mN_{A^\tran}(j_1)\setminus \mN_{A^\tran}(j_2)$ is uniformly distributed over subsets of $\mN_{A^\tran}(j_1)\triangle \mN_{A^\tran}(j_2)$ of cardinality $|\mN_{A^\tran}(j_1)\triangle \mN_{A^\tran}(j_2)|/2$. One then notes that for any fixed realization of $\bxi$, the set $\mN_{\tA^\tran}(j_1)\triangle \mN_{\tA^\tran}(j_2)$ is also uniformly distributed over subsets of $\mN_{A^\tran}(j_1)\triangle \mN_{A^\tran}(j_2)$ of the same cardinality. The claim then follows from the independence of $\bxi$ from $A$.

Denote the rows of $\wA+Z$ by $\wR_i$. We have
\begin{equation}	\label{very:oneswitch}
\wR_i\cdot u = R_i \cdot u+ \xi_i (u_{j_2}-u_{j_1}).
\end{equation}
Recall the sets $\mA^{\codeg}(i_1,i_2)\subset\mA_{n,d}$ from Lemma \ref{lem:codeg}. Since $A\eqd \wA$ we have
\begin{align}
&\pro{ \|(A+Z)u\| \le c\sqrt{\frac{d}{\me}} } =\pro{ \|(\wA+Z) u\| \le c\sqrt{\frac{d}{\me}} }		\notag\\
&\qquad\le  \pro{ A^\tran\notin\mA^{\codeg}(j_1,j_2)} + \pro{ \|(\wA+Z) u\| \le c\sqrt{\frac{d}{\me}},\;  A^\tran\in\mA^{\codeg}(j_1,j_2) }		\notag\\
&\qquad =  \pro{ A^\tran\notin\mA^{\codeg}(j_1,j_2)} + \e \pr_\bxi  \left(\|(\wA+Z) u\| \le c\sqrt{\frac{d}{\me}}\right)  \un\Big(A^\tran\in\mA^{\codeg}(j_1,j_2)\Big).		\label{very:split}
\end{align}
For $A$ such that $A^\tran\in\mA^{\codeg}(j_1,j_2)$ we have 
\[
|\mN_{A^\tran}(j_1)\setminus \mN_{A^\tran}(j_2)| = d- |\mN_{A^\tran}(j_1)\cap \mN_{A^\tran}(j_2)| \ge 3d/4.
\]
Now for any $i\in  \mN_{A^\tran}(j_1)\setminus \mN_{A^\tran}(j_2) $, from \eqref{very:du} and \eqref{very:oneswitch} we have
\begin{equation}
\pr_{\xi_i}\left(|\wR_i\cdot u| \le \frac{c}{\sqrt{\me}}\right)\le \frac12
\end{equation}
if $c>0$ is a sufficiently small constant. 
From Lemma \ref{lem:tensorize} it follows that
\begin{align*}
\pr_\bxi\left( \sum_{i\in  \mN_{A^\tran}(j_1)\setminus \mN_{A^\tran}(j_2) } |\wR_i\cdot u|^2 \le \frac{c'}{\me}| \mN_{A^\tran}(j_1)\setminus \mN_{A^\tran}(j_2) | \right) \le e^{-c | \mN_{A^\tran}(j_1)\setminus \mN_{A^\tran}(j_2) |}
\end{align*}
so 
\begin{align*}
&\pr_\bxi \left(\|(\wA+Z) u\| \le c\sqrt{\frac{d}{\me}}\right)  \un\Big(A^\tran\in\mA^{\codeg}(j_1,j_2) \Big)\\
&\quad \le \pr_\bxi \left(\sum_{i=1}^n |\wR_i\cdot u|^2 \le \frac{c^2d}{m}\right)  \un\Big( |\mN_{A^\tran}(j_1)\setminus \mN_{A^\tran}(j_2)|  \ge 3d/4 \Big)\\
& \quad \le \pr_\bxi\left( \sum_{i\in  \mN_{A^\tran}(j_1)\setminus \mN_{A^\tran}(j_2) } |\wR_i\cdot u|^2 \le \frac{c'}{\me}| \mN_{A^\tran}(j_1)\setminus \mN_{A^\tran}(j_2) | \right)\un\Big( |\mN_{A^\tran}(j_1)\setminus \mN_{A^\tran}(j_2)|  \ge 3d/4\Big)\\
&\quad \le e^{-cd}.
\end{align*}
Combined with \eqref{very:split} and Lemma \ref{lem:codeg} (applied to $A^\tran$, which is also uniform over $\mA_{n,d}$) we conclude
\begin{equation}	\label{very:ufixed}
\sup_{u\in\Flat_0(m,\rho)}\pro{  \|(A+Z)u\| \le c\sqrt{\frac{d}{\me}}} = O(e^{-cd}).
\end{equation}

Let $\Sigma_0(m,\rho)\subset \Flat_0(m,\rho)$ be a $\rho$-net for $\Flat_0(m,\rho)$ as in Lemma \ref{lem:flatnet}.
On the event $\event_K(m,\rho)$ we have $\|A+Z\|_{\oneperp}\le K\sqrt{d}$ and $\|(A+Z)v\|\le \rho K\sqrt{d}$ for some $v\in \Flat_0(m,\rho)$. Letting $u\in \Sigma_0(m,\rho)$ such that $\|u-v\|\le \rho$, by the triangle inequality,
\begin{align*}
\|(A+Z)u\| &\le \|(A+Z)v\| + \|(A+Z)(u-v)\|\\
&\le \rho K\sqrt{d} + \rho \|A+Z\|_{\oneperp}\\
& \le 2\rho K\sqrt{d}.
\end{align*}
Thus,
\[
\pro{ \event_K(m,\rho)} \le \pro{ \exists u \in \Sigma_0(\me,\rho): \|(A+Z)u\|\le 2\rho K\sqrt{d}}.
\]
By our choice of $\rho$ (adjusting the constant $c$) we have $2\rho K\sqrt{d}\le c\sqrt{d/m}$.
Thus, we can apply the union bound and the estimate \eqref{very:ufixed} to conclude
\begin{align*}
\pro{ \event_K(m,\rho)}
& \ll O\left(\frac{n}{m\rho^2}\right)^{m} e^{-cd}\\
&\le \expo{ \me \log\left(\frac{n}{\me}\right) + 2\me\left(\log\left(\frac1{\rho}\right)+ O(1)\right) - cd}\\
&\le \expo{ O(\gamma_0\me\log n) - cd}\\
&\le e^{-cd/2}
\end{align*}
where we have substituted the assumed bound on $m$ and the expression for $\rho$. The claim follows.
\end{proof}

\section{Incrementing control on structured vectors}	\label{sec:increment}

In this section we upgrade the control on very flat vectors from Lemma \ref{lem:very} to obtain Proposition \ref{prop:flat} by an iterative argument. Note that this step is not necessary for large degrees $n/\log^2n\ll d\le n/2$.
Whereas Lemma \ref{lem:very} was established by restricting attention to $A\in\mA^{\codeg}(i_1,i_2)$, i.e.\ restricting to the event that $A$ has controlled codegrees, here we will use the expansion property enjoyed by elements of $\mA^{\exp}(\kappa)$ (defined in \eqref{def:goodexp}). 
As in the proof of Lemma \ref{lem:very} we will create a coupled matrix $\tA$ using random switchings. This time the switchings will be applied across several columns rather than just two. While similar in spirit to the coupling from the previous section, the coupling used here requires some more care and notation to properly define.

\subsection{Neighborhood switchings}	\label{sec:switchings}

Let $\bi=\{i,i'\}\subset [n]$ and let  $J, J'\subset [n]$ be disjoint with $|J|=|J'|$.
Define a mapping
\[
\Switch_{\bi,J,J'}: \mA_{n,d}\to\mA_{n,d}
\]
as follows. 
For $A\in \mA_{n,d}$ such that $J\subset \mN_A(i)\setminus \mN_A(i')$ and $J'\subset \mN_A(i')\setminus \mN_A(i)$, let $A'=\Switch_{\bi,J,J'}(A)$ be the element of $\mA_{n,d}$ that agrees with $A$ on all entries $(i_0,j_0)\notin \{i,i'\}\times J\cup J'$, and such that 
\[
\mN_{A'}(i) = (\mN_A(i)\setminus J) \cup J', \quad\quad \mN_{A'}(i')= (\mN_A(i') \setminus J')\cup J.
\]
Similarly, if $J\subset \mN_A(i')\setminus \mN_A(i)$ and $J'\subset \mN_A(i)\setminus \mN_A(i')$, let $A'=\Switch_{\bi,J,J'}(A)$ be the element of $\mA_{n,d}$ that agrees with $A$ on all entries $(i_0,j_0)\notin \bi\times J\cup J'$, and such that 
\[
\mN_{A'}(i) = (\mN_A(i)\setminus J') \cup J, \quad\quad \mN_{A'}(i')= (\mN_A(i') \setminus J)\cup J'.
\]
Otherwise set $\Switch_{\bi,J,J'}(A)=A$.
It is straightforward to verify that $\Switch_{\bi,J,J'}$ is an involution on $\mA_{n,d}$. 
We say that $A$ is \emph{switchable at $(\bi,J,J')$} if $\Switch_{\bi,J,J'}(A)\ne A$. 
See Figure \ref{fig:SwitchiJ}. 

\begin{figure}
\centering
  \includegraphics[width=130mm]{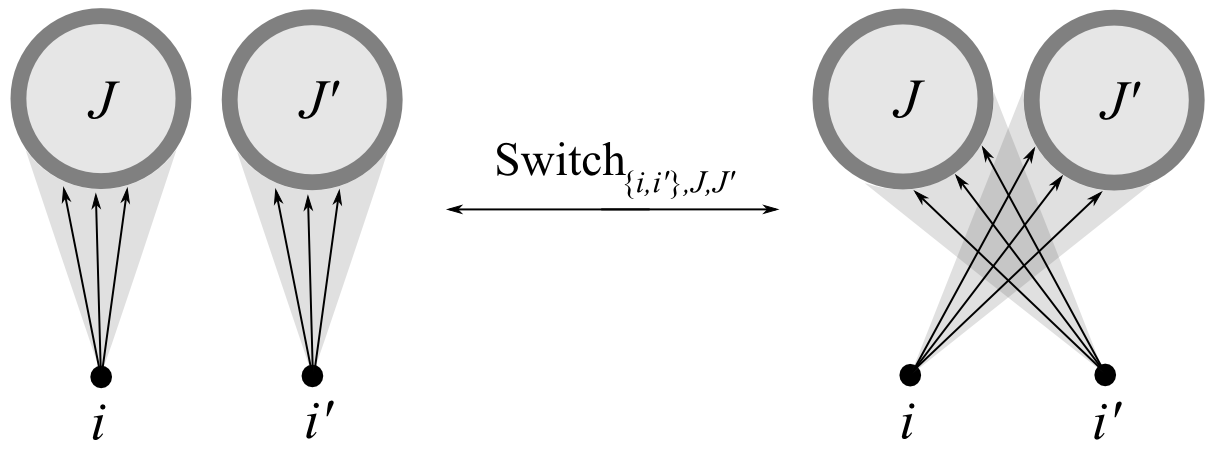}

\caption{Depiction of the effect of applying the switching operation $\Switch_{\{i,i'\},J,J'}$ in the digraph associated to a matrix $A$ that is switchable at $(\{i,i'\},J,J')$. Here we depict only (and all of) the directed edges from $\{i,i'\}$ to $J\cup J'$; in particular, for the configuration on the left, say, it is important that there are no edges from $i$ to $J'$ or from $i'$ to $J$. }

\label{fig:SwitchiJ}
\end{figure}

For $\xi\in \{0,1\}$ we interpret
$\Switch^\xi_{\bi,J,J'}$ to mean $\Switch_{\bi,J,J'}$ when $\xi=1$ and the identity map when $\xi=0$.
We will later need the following:

\begin{lemma}[Stability of $\mA^{\exp}(\kappa)$ under switchings]	\label{lem:Aexp.stable}
Let $\kappa\in (0,1)$ and $A\in \mA^{\exp}(\kappa)$. 
Let $\{(\{i_l,i_l'\},J_l,J_l')\}_{l=1}^k$ be a sequence such that the $2k$ indices $i_1,i_1',\dots, i_k,i_k' \in [n]$ are all distinct, and for all $1\le l\le k$ we have $J_l\cap J_l'=\emptyset$ and $|J_l|=|J'_l|$. Let $\bxi\in \{0,1\}^{k}$ and put 
\[
\tA= \left( \circ_{l\in [k]} \Switch^{\xi_l}_{\{i_l,i_l'\},J_l,J_l'}\right)(A).
\]
Then $\tA\in \mA^{\exp}(\kappa/2)$.
\end{lemma}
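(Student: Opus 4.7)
The plan is to establish a master inequality $|\mN_{\tA^\tran}(J)|\ge \frac12|\mN_{A^\tran}(J)|$ valid for every $J\subset[n]$, and then to feed this into the expansion hypothesis on $A$. The key structural observation is that each individual switching $\Switch_{\{i_l,i_l'\},J_l,J_l'}$ preserves the joint out-neighborhood of its affected pair, namely $\mN_{\tA}(i_l)\cup\mN_{\tA}(i_l')=\mN_A(i_l)\cup\mN_A(i_l')$. This is immediate from the definition: the switchability hypothesis guarantees $J_l\subset\mN_A(i_l)$ and $J_l'\subset\mN_A(i_l')$, so $J_l\cup J_l'\subset \mN_A(i_l)\cup\mN_A(i_l')$, and the operation merely permutes which of the two rows ``owns'' each column of $J_l\cup J_l'$ without ever removing a column from the joint coverage.

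Let $S=\{i_1,i_1',\dots,i_k,i_k'\}$; by hypothesis $|S|=2k$. Rows outside $S$ are never touched by any switching, so $\mN_{\tA^\tran}(J)\setminus S=\mN_{A^\tran}(J)\setminus S$. For rows inside $S$, the invariance of the joint out-neighborhood translates directly into the equivalence $\{i_l,i_l'\}\cap\mN_{\tA^\tran}(J)\ne\emptyset \iff \{i_l,i_l'\}\cap\mN_{A^\tran}(J)\ne\emptyset$: in words, a pair ``hits'' $J$ under $\tA$ if and only if it hits under $A$. Since each hitting pair contributes at least one row to $\mN_{\tA^\tran}(J)\cap S$ while containing at most two rows of $\mN_{A^\tran}(J)\cap S$, this yields $|\mN_{\tA^\tran}(J)\cap S|\ge\frac12|\mN_{A^\tran}(J)\cap S|$; combining with the identity on $S^c$ gives the master inequality.

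To deduce $\tA\in\mA^{\exp}(\kappa/2)$ I need $|\mN_{\tA^\tran}(J)|>(\kappa/2)d|J|$ for every $|J|\le n/(\kappa d)$. For the subrange $|J|\le n/(2\kappa d)$, the hypothesis $A\in\mA^{\exp}(\kappa)$ gives $|\mN_{A^\tran}(J)|>\kappa d|J|$ directly, and the master inequality cuts this in half, as needed. I expect the main obstacle to be the intermediate range $n/(2\kappa d)<|J|\le n/(\kappa d)$, which the hypothesis on $A$ does not cover: there I plan to restrict to an $m_0:=\lfloor n/(2\kappa d)\rfloor$-subset $J_0\subset J$ and use monotonicity $|\mN_{\tA^\tran}(J)|\ge|\mN_{\tA^\tran}(J_0)|$, then carefully balance constants---possibly refining the master inequality by noting that only pairs $l$ with $(J_l\cup J_l')\cap J\ne\emptyset$ can actually cause the halving loss---to recover the stated factor $\kappa/2$ rather than the naive $\kappa/4$.
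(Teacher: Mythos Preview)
Your master-inequality argument is essentially the paper's. The paper phrases it as an injection: if $i\in\mN_{A^\tran}(J)\setminus\mN_{\tA^\tran}(J)$ then $i\in\{i_l,i_l'\}$ for some $l$ with $\xi_l=1$, and the switching forces the partner into $\mN_{\tA^\tran}(J)$; distinctness of the $2k$ indices makes $i\mapsto(\text{partner of }i)$ injective, giving $|\mN_{\tA^\tran}(J)|\ge|\mN_{A^\tran}(J)\setminus\mN_{\tA^\tran}(J)|$ and hence the factor $\tfrac12$ upon rearranging. This is equivalent to your pair-counting via preservation of joint out-neighborhoods.

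Where you go further is the intermediate range $n/(2\kappa d)<|J|\le n/(\kappa d)$: the paper's proof simply asserts ``it suffices to show'' the master inequality (under a size constraint ``$|J|\le n/2d$'' that appears to be a typo) and does not treat this range at all. Your monotonicity-to-a-subset idea yields only $\tA\in\mA^{\expand}(c\kappa)$ for some absolute $c<\tfrac12$, and the refinement you float does not obviously close the gap to $\kappa/2$. This is immaterial, however: in the lemma's sole application (inside the proof of Lemma~\ref{lem:increment}) any fixed $c>0$ is absorbed into the constants, so you need not labor over recovering the exact factor.
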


\begin{proof}
Fix an arbitrary set $J\subset[n]$ with $|J|\le n/2d$. It suffices to show
\[
|\mN_{\tA^\tran}(J)| \ge \frac12 |\mN_{A^\tran}(J)|.
\]
For any $i\in \mN_{A^\tran}(J)\setminus \mN_{\tA^\tran}(J)$ we must have $i\in \{i_l,i'_l\}$ for some $l\in [k]$ such that $\xi_l=1$. In the case that $i=i_l$, by definition of the switching we must have $i'_l\in \mN_{\tA^\tran}(J)$, and similarly if $i=i'_l$ then $i_l\in \mN_{\tA^\tran}(J)$. Since the indices $i_1,i'_1,\dots, i_k,i'_k$ are all distinct, we have
\[
|\mN_{\tA^\tran}(J)| \ge | \mN_{A^\tran}(J)\setminus \mN_{\tA^\tran}(J)| \ge |\mN_{A^\tran}(J)|- | \mN_{\tA^\tran}(J)|
\]
and the result follows upon rearranging. 
\end{proof}

\subsection{Coupling construction}		\label{sec:couplings}

Let $L,L'\subset[n]$ be fixed nonempty disjoint sets. 
Denote
\[
I=[\lf n/2\rf] \quad\text{and} \quad I'=[\lf n/2\rf +1, 2\lf n/2\rf]
\]
and fix a bijection $\pi:I\to I'$. 
For distinct $\{i,i'\}\subset [n]$ define
\begin{equation}	\label{def:goodplus}
\good_{L,L'}(i,i') = \big\{ A\in \mA_{n,d} : L_A(i')=\emptyset, \, | L'_A(i')\setminus \mN_A(i)| \ge | L_A(i)| \ge 1\big\}
\end{equation}
(recall our notation \eqref{def:LA}).
For $i\in I$ we abbreviate 
\[
\good^+(i):= \good_{L,L'}(i,\pi(i)), \quad \good^-(i) := \good_{L,L'}(\pi(i), i).
\]
Note $\good^+(i)\cap \good^-(i)=\emptyset$.
Denote
\begin{align}
I^+(A) &:= \big\{ i\in I: \,A\in \good^+(i)\big\},	\label{def:iplus}\\
I^-(A) &:=  \big\{ i\in I: \,A\in \good^-(i)\big\}.	\label{def:iminus}
\end{align}

\begin{figure}
\centering
  \includegraphics[width=130mm]{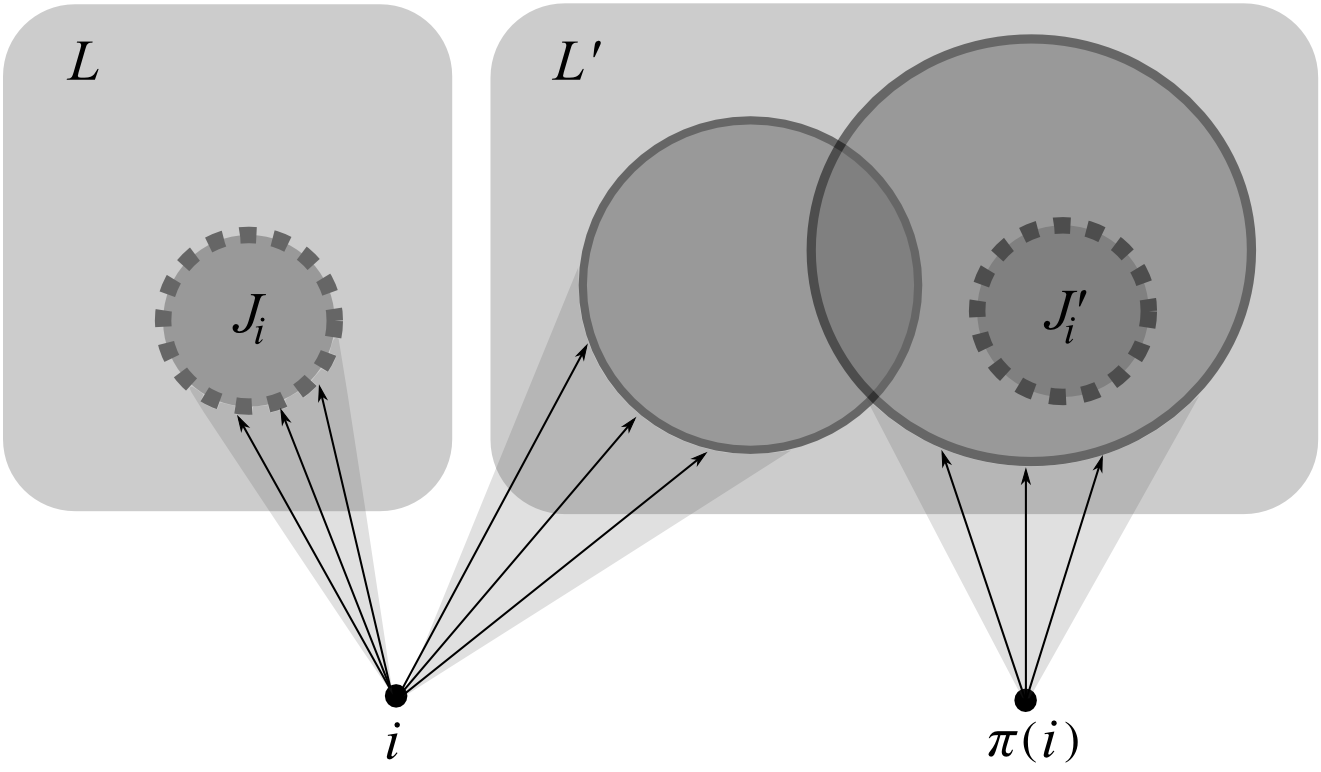}

\caption{Depiction of the out-neighborhoods of vertices $i,\pi(i)$ for the digraph corresponding to a matrix $A\in \good^+(i)=\good_{L,L'}(i,\pi(i))$, where we only depict the directed edges from $\{i,\pi(i)\}$ to $L\cup L'$. Note that $J_i=L_A(i)$, the set of out-neighbors of $i$ in $L$, while $\pi(i)$ has no out-neighbors in $L$. The set $J_i'$ is a random subset of $L'_A(\pi(i))\setminus L'_A(i)=L'\cap (\mN_A(\pi(i))\setminus \mN_A(i))$ of size $|J_i|$. In the formation of the coupled matrix $F_{L,L'}(A)$ we apply $\Switch_{\{i,\pi(i)\}, J_i,J_i'}$ (see Figure \ref{fig:SwitchiJ}), or not, depending on the value of $\xi_i\in \{0,1\}$, and we do this independently for each $i\in I^+(A)$.} 

\label{fig:LJpi}
\end{figure}

Let $A\in \mA_{n,d}$.
We define a deterministic sequence $\mJ(A)=\{J_i(A)\}_{i\in I^+(A) \cup I^-(A) }$ of subsets of $L$, and a sequence of jointly independent random sets $\mJ'(A)=\{J'_i(A)\}_{i\in  I^+(A) \cup I^-(A)}$ in $L'$ as follows.
For each $i\in I^+(A)$ we set $J_i(A)= L_A(i)$ and draw
\begin{equation}	\label{Jprime1}
J'_i(A)\in { L'_A(\pi(i))\setminus \mN_A(i) \choose |L_A(i)|}
\end{equation}
uniformly at random.
For each $i\in I^-(A)$ we set $J_i(A)= L_A(\pi(i))$ and draw
\begin{equation}	\label{Jprime2}
J'_i(A)\in { L'_A(i)\setminus \mN_A(\pi(i)) \choose |L_A(\pi(i))|}
\end{equation}
uniformly at random.
Note that $|J_i(A)|=|J'_i(A)|$ for all $i\in I^+(A) \cup I^-(A)$.
Let $\bxi=(\xi_i)_{i\in [n]} \in \{0,1\}^n$ be drawn uniformly at random, independent of all other random variables. 
Now set
\begin{equation}	\label{def:FLLA}
F_{L,L'}(A)=F_{L,L'}(A; \mJ'(A), \bxi) =  \Big( \circ_{i\in I^+(A)\cup I^-(A)} \Switch^{\xi_i}_{\{i,\pi(i)\}, J_i(A),J_i'(A)} \Big) (A).
\end{equation}
We emphasize that $F_{L,L'}(A)$ is not determined by $A$, as it is also a function of the random variables $\mJ'(A)$ and $\bxi$.

\begin{lemma}	\label{lem:FA}
With $F_{L,L'}(A)$ defined as above, if $A\in \mA_{n,d}$ is uniform random then $F_{L,L'}(A)\in \mA_{n,d}$ is also uniform random. 
\end{lemma}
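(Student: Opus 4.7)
\textit{Proof plan.} I would prove this via a measure-preserving involution argument on the extended sample space of triples $(A,\bxi,\mJ')$. The key point is to reduce the problem to a per-index analysis and then set up a bijection that exchanges the ``forward'' and ``backward'' switching data.

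First, I would note that the switchings $\Switch^{\xi_i}_{\{i,\pi(i)\}, J_i(A), J_i'(A)}$ for distinct $i \in I$ act on disjoint pairs of rows: since $\pi : I \to I'$ is a bijection between the disjoint index sets $I$ and $I'$, the row-pairs $\{i,\pi(i)\}_{i \in I}$ are pairwise disjoint. Hence the switchings commute and modify disjoint matrix entries (in particular, the switchings at one $i$ do not affect the values $J_j(A),J_j'(A)$ or the indicator of whether $j \in I^+(A) \cup I^-(A)$ for other $j \in I$). This reduces the problem to showing that each per-index random switching $F_i$ preserves the uniform measure on $\mA_{n,d}$.

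For a fixed $i \in I$, I would construct an involution $\Phi_i$ on the enlarged sample space of triples $(A,\xi_i,J'_i)$ of the following form. Given such a triple, set $\tA = \Switch^{\xi_i}_{\{i,\pi(i)\},J_i(A),J'_i}(A)$, and set $\tilde{J}'_i = J'_i$, where the latter is to be reinterpreted as an element of the sampling set associated with $\tA$ (which requires a direct check, using the structure of the switching and the fact that switchings carry $\good^+(i)$ bijectively into $\good^-(i)$, that $J'_i$ indeed lies in the set $L'_{\tA}(i) \setminus \mN_{\tA}(\pi(i))$ after the switching). One then puts $\Phi_i(A,\xi_i,J'_i) = (\tA,\xi_i,\tilde{J}'_i)$. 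The involutive property $\Phi_i \circ \Phi_i = \Id$ follows from the involutive nature of each switching operator together with the $\good^+(i) \leftrightarrow \good^-(i)$ reciprocity.

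The main obstacle is verifying that $\Phi_i$ preserves the probability measure on the triple space, or equivalently, that for every $\tA \in \mA_{n,d}$ the total weight $\sum_A \Pr(F_i(A) = \tA)$ equals $1$. This comes down to a combinatorial identity relating the cardinalities of the sampling sets $|L'_A(\pi(i)) \setminus \mN_A(i)|$ and $|L'_{\tA}(i) \setminus \mN_{\tA}(\pi(i))|$ for matrices $A,\tA$ related by a switching, together with bookkeeping of the $A$ in $\good^+(i)$ and $\good^-(i)$ that map to $\tA$. I expect this identity to emerge from the row-sum constraints defining $\mA_{n,d}$ combined with the structural bijections furnished by the switching. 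Once $\Phi_i$ is shown to be measure-preserving, pushing forward the uniform measure on $A$ yields uniformity of $\tA = F_i(A)$, and iterating over $i$ (using the commutativity from the first step) gives the lemma.
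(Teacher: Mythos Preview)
Your reduction to a single index $i$ via the disjointness of the row-pairs $\{i,\pi(i)\}$ is sound and matches the paper's first step. However, the involution-on-triples idea has a real gap. You hope that $\Phi_i$ preserves the measure on triples and that this ``comes down to a combinatorial identity relating the cardinalities of the sampling sets.'' But no such identity is forced by the row-sum constraints. If $A\in\good^+(i)$ is switched (via some $J'$) to $\tA\in\good^-(i)$, then one computes that $A$'s sampling set $L'_A(\pi(i))\setminus\mN_A(i)$ has size $|L'_{\tA}(\pi(i))\setminus L'_{\tA}(i)|+|L_A(i)|$, whereas $\tA$'s sampling set $L'_{\tA}(i)\setminus\mN_{\tA}(\pi(i))$ has size $|L'_{\tA}(i)\setminus L'_{\tA}(\pi(i))|$; the two differ by $|L'_{\tA}(i)|-|L'_{\tA}(\pi(i))|-|L_A(i)|$, and the $d$-regularity constraint does not force this quantity to vanish, because rows $i$ and $\pi(i)$ may have different numbers of neighbors in $(L\cup L')^c$. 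So $\Phi_i$ does not preserve individual triple weights, and summing over all preimages of a fixed $\tA$ does not collapse to the uniform value either.

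The paper's argument is organized quite differently and does not attempt a per-triple weight match. After fixing $i$ and $\xi_i=1$, it first invokes the row-swap symmetry to get $\pr(A\in\good^+)=\pr(A\in\good^-)$, which reduces the task to showing $\pr(\stA=A^0\mid\good^+)=\pr(A=A^0\mid\good^-)$ for a fixed target $A^0$. It then conditions further on the event $\event_0$ that all entries of $A$ outside $\{i,\pi(i)\}\times(L\cup L')$ agree with $A^0$. The point of this conditioning is precisely to pin down the parts of rows $i,\pi(i)$ lying in $(L\cup L')^c$, so that within the conditional class the sampling set for $J'$ becomes $L'_0\setminus L''_A$ (where $L'_0=L'_A(i)\triangle L'_A(\pi(i))$ and $L''_A=L'_A(i)\setminus L'_A(\pi(i))$) and in particular has \emph{constant} size across all $A$ in the class. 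The remaining randomness on $\good^+\cap\event_0$ is then just the uniform choice of $L''_A\subset L'_0$ of a fixed size $r-s$; drawing $J'$ uniformly of size $s$ from $L'_0\setminus L''_A$ and forming $L''_{\stA}=L''_A\cup J'$ produces a uniform size-$r$ subset of $L'_0$, which is exactly the conditional law of $L''_A$ on $\good^-\cap\event_0$. It is this two-stage-sampling-yields-uniform computation, made possible by the $\event_0$-conditioning that freezes the $(L\cup L')^c$ portions of the two rows, that carries the proof --- and this is the idea your plan is missing.
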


\begin{proof}
That $F_{L,L'}(A)\in \mA_{n,d}$ follows from the fact that the individual mappings $\Switch^{\xi_i}_{\{i,\pi(i)\}, J_i,J_i'}$ have range in $\mA_{n,d}$.
To show $F_{L,L'}(A)$ is uniformly distributed, by the joint independence of the indicator variables $\xi_i$  and the sets $J_i'(A)$ conditional on $A$ it suffices to consider only one switching operation. 
That is, fix $i\in I$ and let
\begin{equation}
\stA = 
\begin{cases} 
\Switch^{\xi_i}_{\{i,\pi(i)\}, J_i(A),J_i'(A)} & A\in \good^+(i)\cup \good^-(A)\\
A & \text{otherwise}.
\end{cases}
\end{equation}
Furthermore, by the independence of $\xi_i$ from all other variables we may fix $\xi_i=1$ as the claim is immediate for $\xi_i=0$. Since $i$ is fixed we will lighten notation by writing $i'= \pi(i)$, $\good^\pm = \good^\pm (i)$, $J=J_i, J'=J'_i$.

Fix an arbitrary element $A^0\in \mA_{n,d}$. 
Our aim is to show
\begin{equation}
\pr(\stA=A^0) = \pr(A= A^0).
\end{equation}
Since $\stA=A$ when $A\notin \good^+\cup \good^-$, 
\begin{align*}
\pr(\stA=A^0) 
&= \pr( A = A^0, A\notin \good^+\cup \good^-) + \pr(\stA=A^0 | A\in \good^+)\pr(A\in\good^+)\\
&\qquad   + \pr( \stA=A^0| A\in \good^-) \pr(A\in \good^-).
\end{align*}
By symmetry we have $\pr(A\in \good^+) = \pr(A\in \good^-)$.
Thus, by subtracting a similar expression for $\pr(A=A^0)$, it suffices to show
\begin{align}
\pro{ \stA=A^0 \,\middle|\, A\in \good^+} &= \pro{ A = A^0 \,\middle|\, A\in \good^-}, \text{ and}	\label{Fb:goal1}\\
\pro{ \stA=A^0 \,\middle|\, A\in \good^-} &= \pro{ A = A^0 \,\middle|\, A\in \good^+}.
\end{align}
Again by symmetry, it suffices to establish \eqref{Fb:goal1}. 
Note that 
\begin{equation}	\label{ata.outside}
A(i_0,j_0) = \stA(i_0,j_0) \quad \forall (i_0,j_0)\notin \{i,i'\}\times L\cup L'.
\end{equation}
Thus, both sides of \eqref{Fb:goal1} are zero unless the event
\[
\event_0 = \{A(i_0,j_0) = A^0(i_0,j_0) \quad \forall (i_0,j_0)\notin \{i,i'\}\times L\cup L'\}
\]
holds. Our aim is now to show
\begin{equation}	\label{Fb:goal2}
\pro{ \stA=A^0 \,\middle|\, A\in \good^+, \event_0} = \pro{ A = A^0 \,\middle|\, A\in \good^-, \event_0}.
\end{equation}
Now notice that due to the constraints on column sums for elements of $\mA_{n,d}$, restricting to the event $\event_0\wedge \{A\in \good^+\}$ fixes all entries of $A,A^*$ except for those in 
$\{i,i'\}\times L'_0$, where 
\begin{align*}
L'_0 &:= L'_{A^0}(i) \triangle L'_{A^0}(i') \\
&\;= L'_{A}(i) \triangle L'_{A}(i') = L'_{\stA}(i) \triangle L'_{\stA}(i'),
\end{align*}
and the same is true if we restrict to $\event_0\wedge \{A\in \good^-\}$. 
Thus, it suffices to show
\begin{equation}	\label{Fb:goal3}
\pro{ \stA_{(i,i')\times L_0'} =A^0_{(i,i')\times L_0'} \,\middle|\, A\in \good^+, \event_0} = \pro{ A_{(i,i')\times L_0'} = A^0_{(i,i')\times L_0'} \,\middle|\, A\in \good^-, \event_0}.
\end{equation}
Under the conditioning, the submatrix $A_{(i,i')\times L_0'}$ is determined by the random set $L''_A:=L'_A(i)\setminus L'_A(i')\subset L'_0$, and similarly $\stA_{(i,i')\times L_0'}$ is determined by $L''_{\stA}=L'_{\stA}(i)\setminus L'_{\stA}(i')\subset L'_0$.
On the event $\event_0\wedge \{A\in \good^-\}$, since $A$ is uniformly distributed, $L''_A$ is uniformly distributed over subsets of $L'_0$ of fixed size $r:=|L''_{A^0}|$.
Thus, it suffices to show that on $\event_0\wedge \{A\in \good^+\}$, $L''_{\stA}$ is also uniformly distributed over subsets of $L'_0$ of size $r$.

From the definition \eqref{Jprime1}, on $\event_0\wedge \{A\in \good^+\}$ we have
\[
L''_{\stA} = L''_A\cup J'
\]
where, conditional on $A$, $J'$ is drawn uniformly from subsets of $L'_A(i')\setminus L'_A(i)=L_0'\setminus L''_A$ of size $|L_A(i)|=:s$, and we note that $s$ is fixed by the constraints on column sums. From the constraints on row sums we have $|L_A''| = r-s$. 
Also, again by uniformity of $A\in \mA_{n,d}$, $L_A''$ is uniformly distributed over subsets of $L_0'$ of size $r-s$. 
Thus, on $\event_0\wedge \{A\in \good^+\}$, $L_{\stA}''$ is generated by first selecting $L_A''\subset L_0'$ of size $r-s$ uniformly at random, and then adjoining a uniform random set $J'\subset L_0'\setminus L_A''$ of size $s$. 
It follows that $L_{\stA}''$ is uniformly distributed over subsets of $L'_0$ of size $r$, as desired.
\end{proof}

In Lemma \ref{lem:iplus} below we show that when $A$ is drawn uniformly at random, with high probability the number of random switchings $|I^+(A)\cup I^-(A)|$ that are applied to form $F_{L,L'}(A)$ is fairly large. 
To prove this we need a consequence of concentration of measure for the symmetric group, which will also be used in the proof of Lemma \ref{lem:walk}.

\begin{lemma}	\label{lem:conc}
Let $S, T$ be finite sets with $m=|S|\le |T|$ and let $\sigma:S\to T$ be a uniform random injection.
Let $T_1,\dots, T_m\subset T$ be fixed subsets and set
\[
N_\sigma = |\{i\in S: \sigma(i)\in T_i\}|.
\]
We have $\e N_\sigma = \frac1{|T|}\sum_{i\in S} |T_i|$, and for any $\delta>0$,
\begin{equation}	\label{conc1}
\pro{ \left| N_\sigma - \e N_\sigma \right| \ge \delta \e N_\sigma } \le 2\expo{ -\frac{\delta^2}{4+2\delta} \e N_\sigma}. 
\end{equation}
In particular, if $T_0\subset T$ is fixed and $U\subset T$ is a uniform random set of size $m$, then for any $\delta>0$,
\begin{equation}	\label{conc2}
\pro{ \left| |U\cap T_0| - \frac{m|T_0|}{|T|}\right| \ge \delta \frac{m|T_0|}{|T|}} \le 2\expo{ -\frac{\delta^2}{4+ 2\delta} \frac{m|T_0|}{|T|}}.
\end{equation}

\end{lemma}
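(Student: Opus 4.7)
The plan is to compute $\e N_\sigma$ by linearity of expectation and then to deduce the tail bound by combining standard Chernoff estimates with the negative dependence inherent in sampling without replacement. The mean is immediate: since $\sigma(i)$ is marginally uniform on $T$, $\pr(\sigma(i)\in T_i)=|T_i|/|T|$, and linearity gives $\e N_\sigma=\frac{1}{|T|}\sum_{i\in S}|T_i|$.

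For \eqref{conc1}, write $N_\sigma=\sum_{i\in S}X_i$ with $X_i=\1(\sigma(i)\in T_i)$. The central observation is that $(X_i)_{i\in S}$ forms a \emph{negatively associated} (NA) family: the coordinate indicators $\{\1(\sigma(i)=t)\}_{i\in S,\,t\in T}$ of a uniform random injection are NA by a classical result of Joag-Dev and Proschan on permutations and sampling without replacement, and NA is preserved under summing within disjoint index blocks. Consequently, for every $\lambda\ge 0$,
\[
\e\exp\Bigl(\lambda\sum_{i\in S} X_i\Bigr)\le \prod_{i\in S}\e\exp(\lambda X_i),
\]
so the standard exponential moment computation for sums of independent Bernoullis carries over to $N_\sigma$. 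Writing $\mu:=\e N_\sigma$ and optimizing in $\lambda$ produces the one-sided Chernoff bounds
\[
\pr(N_\sigma\ge(1+\delta)\mu)\le \exp\!\bigl(-\tfrac{\delta^2\mu}{2+\delta}\bigr),\qquad \pr(N_\sigma\le(1-\delta)\mu)\le \exp\!\bigl(-\tfrac{\delta^2\mu}{2}\bigr).
\]
Since both right-hand exponents dominate $\delta^2\mu/(4+2\delta)$, adding the two tails yields \eqref{conc1}.

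Part (2) is an immediate specialization: take $T_i=T_0$ for every $i\in S$, so that $N_\sigma=|\sigma(S)\cap T_0|$. As the image of a uniform random injection from $S$ into $T$, $\sigma(S)$ is uniformly distributed over $m$-subsets of $T$, and therefore coincides in law with $U$. The mean becomes $m|T_0|/|T|$ and \eqref{conc2} follows from \eqref{conc1}.

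The main obstacle I foresee is substantiating the Chernoff-type bound in the absence of genuine independence; citing and carefully applying the NA framework should resolve this cleanly. If one prefers a self-contained alternative, a bounded-differences martingale argument along the filtration $\mathcal{F}_k=\sigma(\sigma(1),\dots,\sigma(k))$ is available (re-sampling $\sigma(k)$ changes $N_\sigma$ by $O(1)$), though plain Azuma gives a slightly weaker denominator and one would need Freedman-style control on the conditional variance, or an exchangeable-pair approach in the style of Chatterjee, to recover the precise $4+2\delta$ form.
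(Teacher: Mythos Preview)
Your argument is correct but takes a genuinely different route from the paper. The paper embeds the random injection into a uniform permutation $\tau$ of $[N]$ (with $N=|T|$) by padding with zero rows, so that $N_\sigma$ has the same law as the Hoeffding combinatorial statistic $\sum_{i=1}^N m_{i\tau(i)}$ with $m_{ij}=\1(i\in[m],\,t_j\in T_i)$; the bound \eqref{conc1} is then quoted directly from Chatterjee's exchangeable-pair concentration inequality for such statistics. Your route via negative association of the injection indicators (Joag-Dev--Proschan) and the resulting MGF domination is a genuine alternative: it avoids any appeal to Stein's method and in fact delivers slightly sharper one-sided constants ($2+\delta$ and $2$ in place of $4+2\delta$). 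The paper's approach has the virtue of being a one-line reduction to a cited result with exactly the stated constant, whereas yours is more elementary but requires assembling the NA closure properties. Amusingly, you flag Chatterjee's exchangeable-pair method as a fallback for recovering the precise denominator, which is exactly the paper's primary tool.
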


\begin{proof}
Label the elements of $S,T$ as $S=\{s_1,\dots, s_m\}$, $T=\{t_1,\dots, t_N\}$, and define an $N\times N$ matrix 
$M=(m_{ij})$ with 
\[
m_{ij} = \1(i\in [m], t_j\in T_i).
\]
Letting $\tau:[N]\to [N]$ be a uniform random permutation, we have
\[
N_\sigma \eqd \sum_{i=1}^N m_{i\tau(i)}.
\]
\eqref{conc1} now follows from a concentration bound for Hoeffding's permutation statistic due to Chatterjee; see \cite[Proposition 1.1]{Chatterjee}.
\eqref{conc2} is obtained by setting $T_i\equiv T_0$ for all $i\in S$ and noting that $U\eqd \sigma(S)$.
\end{proof}

\begin{lemma}	\label{lem:iplus}
Let $\kappa\in (0,1)$, and assume 
\begin{equation}	\label{iplus.d}
\frac{C}{\kappa^2} \le d\le \frac{\kappa n}{C}
\end{equation}
for a sufficiently large constant $C>0$.
Let $L,L'\subset[n]$ be fixed disjoint sets satisfying
\begin{equation}	\label{LLbounds}
\frac{8}{\kappa} \le |L| \le \frac{n}{16d}, \qquad \frac{32n}{\kappa^2 d} \le |L'|\le n
\end{equation}
(note there exist such $L,L'$ if $C$ is taken sufficiently large). 
Let $A\in \mA_{n,d}$ be uniform random, and let $I^+(A)$ be as in \eqref{def:iplus}. 
We have
\begin{equation}
\pro{ |I^+(A)|\le c\kappa d|L|, \, A\in \mA^{\exp}(\kappa)} \ll \expo{ -c\kappa d |L|}
\end{equation}
for a sufficiently small constant $c>0$.
\end{lemma}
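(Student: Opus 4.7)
The plan is to extract two deterministic structural lower bounds from membership in $\mA^{\expand}(\kappa)$ and then combine them with the row-permutation invariance of the law of $A$ via a two-stage application of Chatterjee's concentration (Lemma~\ref{lem:conc}). Setting $r=\lceil 2/\kappa\rceil$, Lemma~\ref{lem:goodexpand}(1) delivers $|S_1|:=|\mN_{A^\tran}^{\le r}(L)|\ge \kappa d|L|/2$ and trivially $|S_2|:=n-|\mN_{A^\tran}(L)|\ge 15n/16$ (using $|L|\le n/(16d)$). Since $\sum_j|L'\cap \mN_A(j)|=d|L'|$, at most $2d$ indices of $S_1$ can have $|L'\cap \mN_A(j)|>|L'|/2$, so the hypothesis $|L|\ge 8/\kappa$ lets me refine to $S_1^*:=\{j\in S_1:|L'\cap \mN_A(j)|\le |L'|/2\}$ with $|S_1^*|\ge \kappa d|L|/4$. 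For each $j\in S_1^*$ the set $L'\setminus \mN_A(j)$ still has size $\ge |L'|/2\ge 16n/(\kappa^2 d)$, and the relevant threshold $\kappa d|L'\setminus \mN_A(j)|/(4n)\ge 4/\kappa\ge r$; Lemma~\ref{lem:goodexpand}(2) then yields $|\{j':|L'_A(j')\setminus \mN_A(j)|>r\}|>n/8$, and after intersecting with $S_2$ I obtain $G^*(j):=\{j'\in S_2\setminus\{j\}:|L'_A(j')\setminus \mN_A(j)|>r\}$ with $|G^*(j)|\ge n/16$. Any pair $(j,j')$ with $j\in S_1^*$ and $j'\in G^*(j)$ then satisfies all three requirements of $\good_{L,L'}(j,j')$; the third follows from $|L_A(j)|\le r<|L'_A(j')\setminus \mN_A(j)|$.

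Since $\mA^{\expand}(\kappa)$ is row-permutation-invariant, $A\eqd P_\tau A$ for $\tau\in S_n$ uniform and independent of $A$. Writing $\sigma=\tau^{-1}$, integrating over $A$ reduces the claim to showing that for each fixed $A\in \mA^{\expand}(\kappa)$,
\[
\pro{|\{i\in I:\sigma(i)\in S_1^*,\ \sigma(\pi(i))\in G^*(\sigma(i))\}|\ge c\kappa d|L|}\ge 1-C\exp(-c\kappa d|L|),
\]
since by the discussion above this count is a lower bound for $|I^+(P_\tau A)|$.

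I would handle this by revealing $\sigma$ in two stages. Stage~1 reveals $\sigma|_I$: Lemma~\ref{lem:conc} with $T_i\equiv S_1^*$ gives $|I^{\dagger\ast}|:=|\{i\in I:\sigma(i)\in S_1^*\}|\gtrsim \kappa d|L|$ outside probability $\le 2\exp(-c\kappa d|L|)$. Simultaneously, the second form of Lemma~\ref{lem:conc} applied with $U=\sigma(I)$ and $T_0=G^*(j)$ gives, for each fixed $j\in S_1^*$, that $|G^*(j)\cap \sigma(I)|\le 3|G^*(j)|/4$ outside probability $\le 2\exp(-cn)$; a union bound over the at most $n$ choices of $j\in S_1^*$, together with $\kappa d|L|\le n$, controls this uniformly by $\exp(-c\kappa d|L|)$. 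On this good Stage~1 event the available targets $T'_i:=G^*(\sigma(i))\setminus \sigma(I)$ have size $\ge n/64$ for every $i\in I^{\dagger\ast}$. Stage~2 reveals $\sigma|_{I'}$; conditional on Stage~1 the restriction $\sigma\circ \pi|_{I^{\dagger\ast}}$ is a uniform injection into $V=[n]\setminus \sigma(I)$, and a final application of Lemma~\ref{lem:conc} with these $T'_i$ produces $|\{i\in I^{\dagger\ast}:\sigma(\pi(i))\in T'_i\}|\gtrsim \kappa d|L|$ outside probability $\le 2\exp(-c\kappa d|L|)$.

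The key technical subtlety is the uniform control over $j\in S_1^*$ in Stage~1: without it an individual $T'_i$ could in principle shrink far below $|G^*(\sigma(i))|$. The rescue is that Chatterjee's concentration for $|G^*(j)\cap \sigma(I)|$ has a tail of order $\exp(-cn)$ (thanks to $|G^*(j)|\gtrsim n$), which absorbs the polynomial union-bound cost precisely because the hypothesis $|L|\le n/(16d)$ forces $\kappa d|L|\le n$. It is exactly the interplay of the upper and lower bounds on $|L|$, $|L'|$, and $d$ in the statement that makes each structural estimate used along the way non-trivial while keeping all three failure probabilities below $\exp(-c\kappa d|L|)$.
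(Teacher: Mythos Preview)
Your proposal is correct and follows essentially the same strategy as the paper: your sets $S_1^*$ and $G^*(j)$ are precisely the paper's $I_r(A)$ and $I'_r(A;i)$ (up to notation), obtained from Lemma~\ref{lem:goodexpand} with the same choice $r\asymp 1/\kappa$, and both arguments finish via Lemma~\ref{lem:conc}. The one genuine difference is in how row-exchangeability is exploited. The paper conditions on $I_r(A)\cap I$ and then introduces an auxiliary permutation of the complementary row indices, so that the targets $I'_r(A;i)$ already sit inside the set being permuted and no further control is needed. You instead inject an external permutation $\tau$ of all of $[n]$, condition fully on $A$, and reveal $\sigma=\tau^{-1}$ in two stages; this is conceptually cleaner (no delicate statements about residual exchangeability after conditioning on a row-dependent event), but it forces the extra Stage~1 step of uniformly controlling $|G^*(j)\cap\sigma(I)|$ over $j\in S_1^*$ so that the Stage~2 targets $T_i'$ remain large. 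Your observation that this union bound costs only $n\exp(-cn)\le\exp(-c\kappa d|L|)$ thanks to $|L|\le n/(16d)$ is exactly what makes the variant go through.
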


\begin{proof}
We will use Lemma \ref{lem:goodexpand} and the row-exchangeability of $A$.

Let $r\ge1$ to be chosen later, and denote
\begin{align*}
I_r(A) &= \mN_{A^\tran}^{\le r}(L) \setminus \mN_{A^\tran}^{\ge |L'|/2} (L'),\\
I'_r(A;i) &= \mN_{A^\tran}^{>r}(L'\setminus \mN_A(i)) \setminus \mN_{A^\tran}(L).
\end{align*}
(recall our notation from Lemma \ref{lem:goodexpand}). 
Note that if $i\in I$ is such that $i\in I_r(A)$ and $\pi(i)\in I_r'(A;i)$, then $i\in I^+(A)$. 
Indeed, from $\pi(i)\in I_r'(A;i)$ we have $\pi(i)\notin \mN_{A^\tran}(L)$, i.e.\ $L_A(\pi(i))=\emptyset$, and
\[
|L'_A(\pi(i))\setminus \mN_A(i)|  > r \ge |L_A(i)| \ge 1.
\]
(So far we have not used the disjointness of $I_r(A)$ from $\mN_{A^\tran}^{\ge |L'|/2} (L')$; this will be needed below to obtain a lower bound on the size of $I'_r(A;i)$.)
Thus, setting
\[
I_r^*(A)= \{i\in I\cap I_r(A): \pi(i)\in I_r'(A;i)\}
\]
it suffices to show
\begin{equation}	\label{iplus.goal1}
\pro{ |I_r^*(A)| \le c\kappa d |L|, A\in \mA^{\exp}(\kappa)} \ll \expo{ -c\kappa d|L|}.
\end{equation}

For the remainder of the proof we restrict to the event $\{A\in \mA^{\exp}(\kappa)\}$; note the rows of $A$ are still exchangeable under this restriction.
First we use Lemma \ref{lem:goodexpand} to prove bounds on the sizes of $I_r(A)$ and $I'_r(A;i)$.
From Lemma \ref{lem:goodexpand}(1), by our upper bound \eqref{LLbounds} on $|L|$ and taking
\begin{equation}	\label{take.r1}
r\ge 2/\kappa
\end{equation}
we have
\[
|\mN_{A^\tran}^{\le r}(L)| \ge \kappa d|L|/2.
\]
Furthermore, from
\[
\frac{|L'|}{2} |\mN_{A^\tran}^{\ge |L'|/2}(L')| \le e_A([n],L') = d|L'|
\]
we have $|\mN_{A^\tran}^{\ge |L'|/2}(L')| \le 2d$. Then from the lower bound on $|L|$ in \eqref{LLbounds} we conclude
\begin{equation}	\label{ir.lb}
|I_r(A)| \ge \kappa d|L|/2 - 2d \ge \kappa d|L|/4.
\end{equation}
Now let $i\in I_r(A)$.
Since $i\notin \mN_{A^\tran}^{\ge |L'|/2} (L')$ we have
\[
|L'\setminus \mN_A(i)|> |L'|/2.
\]
Now from our assumption \eqref{LLbounds} it follows $|L'|\ge n/\kappa d$, so $|L'\setminus \mN_A(i)|>n/2\kappa d$.
Thus, taking
\begin{equation}	\label{take.r2}
r \le \frac{\kappa d|L'|}{8n}
\end{equation}
we have $r\le \kappa d|L'\setminus \mN_A(i)|/4n$, and we can apply Lemma \ref{lem:goodexpand}(2) to obtain
\[
|\mN_{A^\tran}^{>r}(L'\setminus \mN_A(i))| >n/8.
\]
Our lower bound on $|L'|$ in \eqref{LLbounds} implies there exists a choice of $r$ satisfying both \eqref{take.r1} and \eqref{take.r2}. We henceforth fix such a choice of $r$.
Since $|\mN_{A^\tran}(L)|\le d|L| \le n/16$, we conclude
\begin{equation}	\label{irprime.lb}
|I'_r(A;i)| \ge n/16 \qquad \forall i\in I_r(A).
\end{equation}

Next we argue that
\begin{equation}	\label{iplus.goal1a}
\pro{ |I_r(A)\cap I| \le \frac1{16}\kappa d|L|} \ll \expo{ -c\kappa d|L|}
\end{equation}
(recall our restriction to the event $\{A\in \mA^{\exp}(\kappa)\}$). 
Let us condition on the size of $I_r(A)$. 
Note that the rows of $A$ are exchangeable under this conditioning and the restriction to $\{A\in \mA^{\exp}(\kappa)\}$.
Thus, $I_r(A)$ is a uniform random subset of $[n]$ of fixed size at least $ \kappa d|L|/4$, and \eqref{iplus.goal1a} follows from \eqref{conc2} (with $\delta =1/4$, say) and the fact that $|I|=\lf n/2\rf \ge n/4$.

Now we may restrict to the event $\{|I_r(A)\cap I| \ge \frac1{16}\kappa d|L|\}$. We condition on $ I_r(A)\cap I$ and write $m=|I_r(A)\cap I|$.
We will be done if we can show
\begin{equation}	\label{iplus.goal2}
\big| \big\{ i\in I_r(A)\cap I: \pi(i) \in I_r'(A;i)\big\}\big| \gg \kappa d|L|
\end{equation}
except with probability $O(\expo{ -c\kappa d|L|})$.
Under all of the conditioning we still have that the rows of $A$ with indices in $(I_r(A)\cap I)^c$ are exchangeable; thus, 
\begin{align*}
&\pro{ \big| \big\{ i\in I_r(A)\cap I: \pi(i) \in I_r'(A;i)\big\}\big| \le c\kappa d|L|} \\
&\qquad\qquad= \pro{ \big| \big\{ i\in I_r(A)\cap I: \pi(i) \in \sigma(I_r'(A;i))\big\}\big| \le c\kappa d|L|}
\end{align*}
where $\sigma$ is a uniform random permutation of $(I_r(A)\cap I)^c$, independent of $A$ under the conditioning. 
We condition on $A$ and proceed using only the randomness of $\sigma$.
Note that the restriction of $\sigma^{-1}\circ \pi$ to $I_r(A)\cap I$ is a uniform random injection into $(I_r(A)\cap I)^c$.
We have
\begin{align*}
\e  \big| \big\{ i\in I_r(A)\cap I: \pi(i) \in \sigma(I_r'(A;i))\big\}\big| &= \frac{1}{n-m} \sum_{i\in I_r(A)\cap I} |I_r'(A;i)| \gg m
\end{align*}
where we used \eqref{irprime.lb} and the fact that $|I_r(A)|\le |\mN_{A^\tran}(L)| \le d|L|\le n/16$.
Applying Lemma \ref{lem:conc} with $S= I_r(A)\cap I$, $T= (I_r(A)\cap I)^c$, $T_i = I_r'(A; i)$, $\sigma^{-1}\circ \pi|_{I_r(A)\cap I}$ in place of $\sigma$, and $\delta =1/2$, say, we conclude \eqref{iplus.goal2} holds except with probability 
\[
O(\expo{-c'm}) = O(\expo{-c\kappa d|L|})
\]
as desired.
\end{proof}

\subsection{Incrementing flatness}

Now we use the coupled pair $(A,F_{L,L'}(A))$ from Lemma \ref{lem:FA} to boost the weak control on structured vectors in Lemma \ref{lem:very} to the stronger control of Proposition \ref{prop:flat}.
We do this by iterative application of the following:

\begin{lemma}[Incrementing flatness]	\label{lem:increment}
There is a constant $c_0>0$ such that the following holds.	
Fix $\gamma_0\ge 1/2$ and let $1\le K\le n^{\gamma_0}$.
Let $\kappa\in (0,1)$ and assume $ 1/(c_0\kappa)^2\le d \le c_0\kappa n$. Let 
\begin{equation}	\label{incr.assumelb}
e^{ -\gamma_0\log^2n} \le \rho < 1, \qquad \frac1{c_0^2\kappa} \le m \le  \frac{c_0 n}{d},
\end{equation}
and let $\rho',m'$ satisfy
\begin{equation}	\label{assume:ratios}
0<\rho'\le \left(\frac{c_0}{Kn}\right) \rho, \qquad m<m' \le \left(\frac{c_0\kappa d}{\gamma_0\log^2n}\right)m	.	
\end{equation}
Let $A\in \mA_{n,d}$ be drawn uniformly at random, and recall the events \eqref{def:event}.
We have
\begin{equation}	\label{incr:goal1}
\pr\Big( \event_K(m',\rho')\,\wedge\, \event_K(m,\rho)^c \,\wedge \,\big\{ A\in \mA^{\expand}(\kappa )\big\}\Big) \le 
\expo{ -c_0 \kappa dm}.
\end{equation}
\end{lemma}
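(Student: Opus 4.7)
The plan is to combine the metric-entropy bound of Lemma~\ref{lem:flatnet} for $\Flat_0(m',\rho')$ with a switching-based anti-concentration estimate. Let $\Sigma_0\subset\Flat_0(m',\rho')$ be the $\rho'$-net from Lemma~\ref{lem:flatnet}, of cardinality $O(n/m'(\rho')^2)^{m'}$. On the event in~\eqref{incr:goal1}, the existence of $v\in\Flat_0(m',\rho')$ with $\|(A+Z)v\|\le\rho' K\sqrt{d}$ together with $\mB(K)$ produces $u\in\Sigma_0$ with $\|(A+Z)u\|\le 2\rho'K\sqrt d$. Because $\rho'K\sqrt d<\rho K\sqrt d$, the failure of $\event_K(m,\rho)$ forces $v\notin\Flat_0(m,\rho)$; and since $\rho'\le\rho/2$, this in turn forces $u\notin\Flat_0(m,\rho/2)$. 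Hence it suffices to establish, for each fixed $u\in\sph_0\setminus\Flat_0(m,\rho/2)$,
\begin{equation}\label{plan:fixedu}
\pro{\|(A+Z)u\|\le 2\rho' K\sqrt d,\ A\in\mA^{\expand}(\kappa)} \le e^{-c\kappa d m},
\end{equation}
and then verify that the entropy factor $|\Sigma_0|$ is absorbed.

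To prove~\eqref{plan:fixedu} for a fixed such $u$, apply Lemma~\ref{lem:gap} to produce disjoint sets $J_1'\subset J_1$ and $J_2$ with $|J_1'|\gs m$, $|J_2|\gs n$ satisfying the averaged separation~\eqref{twolevels.strong}. Set $L:=J_1'$, $L':=J_2$; the hypotheses on $\kappa,d,m$ place $|L|$ and $|L'|$ inside the regime~\eqref{LLbounds} required by Lemma~\ref{lem:iplus}. Form $\wA:=F_{L,L'}(A)$ as in~\eqref{def:FLLA}. The marginal identity $\wA\eqd A$ (Lemma~\ref{lem:FA}) yields
\[
\pro{\|(A+Z)u\|\le \eps, A\in\mA^{\expand}(\kappa)} = \pro{\|(\wA+Z)u\|\le \eps, \wA\in\mA^{\expand}(\kappa)},
\]
and the involutivity of the switching operators makes Lemma~\ref{lem:Aexp.stable} bidirectional, giving $\{\wA\in\mA^{\expand}(\kappa)\}\subset\{A\in\mA^{\expand}(\kappa/2)\}$. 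The left-hand side of~\eqref{plan:fixedu} is therefore bounded by $\pro{\|(\wA+Z)u\|\le 2\rho' K\sqrt d,\ A\in\mA^{\expand}(\kappa/2)}$.

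Conditional on $A\in\mA^{\expand}(\kappa/2)$ with $|I^+(A)|\ge c\kappa d|L|$, the $i$th row of $\wA$ for $i\in I^+(A)$ satisfies
\[
\wR_i\cdot u = R_i\cdot u + \xi_i\bigg(\sum_{j\in J_i'}u_j-\sum_{j\in J_i}u_j\bigg),
\]
with $J_i\subset L$ and $J_i'\subset L'$ of common cardinality $|J_i|\ge 1$. The averaged separation~\eqref{twolevels.strong} bounds the parenthetical gap below by $|J_i|\rho/(8\sqrt n)\ge\rho/(8\sqrt n)$, uniformly in the random set $J_i'$. Because the $\xi_i$ are i.i.d.\ Bernoulli($1/2$) and independent of $(A,\mJ')$, $\pr_{\xi_i}(|\wR_i\cdot u|\le c\rho/\sqrt n)\le 1/2$ conditionally on $(A,\mJ')$, and Lemma~\ref{lem:tensorize} tensorizes over the $\gs\kappa d m$ indices to give
\[
\|(\wA+Z)u\|^2\ge\sum_{i\in I^+(A)}|\wR_i\cdot u|^2 \gs \frac{\rho^2\kappa d m}{n}
\]
except with conditional probability $e^{-c\kappa d m}$. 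The assumption $\rho'\le c_0\rho/(Kn)$ makes $(2\rho'K\sqrt d)^2\ll \rho^2\kappa d m/n$, contradicting the upper bound on $\|(\wA+Z)u\|$; combining with Lemma~\ref{lem:iplus} to absorb the event $|I^+(A)|<c\kappa d|L|$, this establishes~\eqref{plan:fixedu}.

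The union bound multiplies~\eqref{plan:fixedu} by $|\Sigma_0|\le\exp\big(m'\log(n/m'(\rho')^2)\big)$, and the hypotheses $\rho'\ge e^{-\gamma_0\log^2 n}$ and $m'\le c_0\kappa d m/(\gamma_0\log^2n)$ give $m'\log(n/m'(\rho')^2)\ll c_0\kappa d m$, so for $c_0$ sufficiently small the net exponent is $-c_0\kappa d m$ as required. The main obstacle is balancing three quantitative scales: the per-row separation $\gs\rho/\sqrt n$ (available only when $u\notin\Flat_0(m,\rho/2)$), the number $\gs\kappa d m$ of switchable row pairs (supplied by the expansion property via Lemma~\ref{lem:iplus}), and the net entropy $\asymp m'\log(1/\rho')$. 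The restrictive ratio in~\eqref{assume:ratios} is chosen precisely so that the switching gain $\kappa d m$ swamps the entropy cost; a secondary technical point is propagating the expansion property through the coupling, which the involutivity of the $\Switch$ operators handles cleanly.
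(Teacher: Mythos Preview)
Your proposal is correct and follows essentially the same approach as the paper: apply Lemma~\ref{lem:gap} to a fixed $u$ outside $\Flat_0(m,\cdot)$ to obtain the sets $L,L'$, form the coupling $F_{L,L'}(A)$, use Lemma~\ref{lem:iplus} to count switchable row pairs, tensorize via Lemma~\ref{lem:tensorize}, and close with the net bound from Lemma~\ref{lem:flatnet}. Two small polish points: (i) you should trim $L$ down to size $\asymp m$ before invoking Lemma~\ref{lem:iplus}, since $|J_1'|$ is only bounded \emph{below} by $\gg m$ and the lemma requires $|L|\le n/(16d)$; (ii) the claim ``$\rho'\ge e^{-\gamma_0\log^2 n}$'' is not a hypothesis---instead use monotonicity of $\event_K(m',\rho')$ in $\rho'$ to reduce to the extremal value $\rho'=(c_0/Kn)\rho$, from which $\log(1/\rho')=O(\gamma_0\log^2 n)$ follows. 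The paper handles the net restriction slightly differently (it excludes $u\in\Flat_0(m,\rho)$ directly at the event level rather than passing through $v\notin\Flat_0(m,\rho)\Rightarrow u\notin\Flat_0(m,\rho/2)$), but both reductions are valid.
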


We remark that at least one of the assumptions \eqref{incr.assumelb} and \eqref{assume:ratios} on $m,m'$ is vacuous unless
\begin{equation}	\label{incr:dbds}
\frac{\gamma_0}{c_0\kappa}\log^2n < d \le c_0^3\kappa n.
\end{equation}

\begin{proof}
Let $c_0>0$ to be taken sufficiently small. 
Let $m,m',\rho,\rho'$ be as in the statement of the lemma. 

For fixed $u\in \sph$ and $t>0$ define
\begin{equation}
\mC(u,t) := \big\{A_0\in \mA_{n,d}: \|(A_0+Z)u\|\le t\big\}.
\end{equation}
We first use the couplings $(A,F_{L,L'}(A))$ from Section \ref{sec:couplings} to show that 
\begin{equation}	\label{incr.goal1}
\sup_{u \in \Flat_0(m',\rho')\setminus \Flat_0(m,\rho)} \pro{ A\in \mC\left(u, c_0\rho\sqrt{\frac{\kappa dm}{n}}\right)\cap \mA^{\exp}(\kappa)} \ll \expo{ -c_0\kappa dm}
\end{equation}
if $c_0>0$ is taken sufficiently small.
Fix an arbitrary element $u\in \Flat_0(m',\rho')\setminus \Flat_0(m,\rho)$. 
Since
\begin{align*}
\Flat_0(m',\rho')\setminus \Flat_0(m,\rho) 
&= \sph_0 \cap \Flat(m',\rho') \cap \Flat(m,\rho)^c\\
&\subset \sph\setminus \Flat(m,\rho)
\end{align*}
we can apply Lemma \ref{lem:gap} to obtain disjoint sets $L,L'\subset[n]$ with $|L|\gg m$ and $|L'| \gg n-m \gg n$ such that for any nonempty sets $J\subset L, J'\subset L'$ with $|J|=|J'|$, 
\begin{equation}	\label{incr:avglb}
\left| \sum_{j\in J}u_j- \sum_{j\in J'} u_j\right| \ge \frac{\rho}{4\sqrt{n}}.
\end{equation}
By deleting elements we may take $|L|=\lf cm\rf$ and $|L'|=\lf cn\rf$ for some constant $c>0$.

Now let $\tA= F_{L,L'}(A)$ be as in \eqref{def:FLLA}. By Lemma \ref{lem:FA} we have $\tA\eqd A$, so
\[
\pro{ A\in \mC\left(u, c_0\rho\sqrt{\frac{\kappa dm}{n}}\right)\cap \mA^{\exp}(\kappa)} = \pro{\tA\in \mC\left(u, c_0\rho\sqrt{\frac{\kappa dm}{n}}\right), \tA\in \mA^{\exp}(\kappa)}.
\]
From \eqref{def:FLLA} and the fact that the switching mappings are their own inverses it follows that
\[
A = \Big( \circ_{i\in I^+(A)\cup I^-(A)} \Switch^{\xi_i}_{\{i,\pi(i)\}, J_i,J_i'}\Big)(\tA).
\]
From Lemma \ref{lem:Aexp.stable} we deduce that $\{\tA\in \mA^{\exp}(\kappa)\}\subset \{A\in \mA^{\exp}(\kappa/2)\}$, so
\[
\pro{ A\in \mC\left(u, c_0\rho\sqrt{\frac{\kappa dm}{n}}\right)\cap \mA^{\exp}(\kappa)} \le \pro{ \tA\in \mC\left(u, c_0\rho\sqrt{\frac{\kappa dm}{n}}\right), A\in \mA^{\exp}(\kappa/2)} .
\]
For the right hand side, letting $c'>0$ a sufficiently small constant and applying the union bound,
\begin{align*}
&\pro{ \tA\in \mC\left(u, c_0\rho\sqrt{\frac{\kappa dm}{n}}\right), A\in \mA^{\exp}(\kappa/2)} \\
&\qquad \le \pro{ \tA\in \mC\left(u, c_0\rho\sqrt{\frac{\kappa dm}{n}}\right), |I^+(A)|>c'\kappa dm}
 + \pro{ A\in \mA^{\exp}(\kappa/2), |I^+(A)|\le c'\kappa dm} .
 \end{align*}
By our assumed bounds on $d$ and $m$ and taking $c_0,c'$ sufficiently small we can apply Lemma \ref{lem:iplus} to bound the second term above by $O(\expo{ -c'\kappa dm})$.
Thus, taking $c_0\le c'$, to obtain \eqref{incr.goal1} it suffices to show
\begin{equation}	\label{incr.goal2}
\pro{ \tA\in \mC\left(u, c_0\rho\sqrt{\frac{\kappa dm}{n}}\right), |I^+(A)|>c_0\kappa dm} \ll \expo{ -c_0\kappa dm}.
\end{equation}

Condition on $A$ satisfying $|I^+(A)|>c_0\kappa dm$, and consider an arbitrary element $i\in I^+(A)$. 
Writing $R_i,\wR_i$ for the $i$th rows of $A+Z$ and $\wA+Z$, respectively, we have
\[
\wR_{i} = R_{i} + \xi_i \bigg( \sum_{j\in J_i' } u_j - \sum_{j\in J_i} u_j \bigg).
\]
From \eqref{incr:avglb} it follows that
\begin{equation}
\pr_{\xi_i}\bigg( \big|\wR_{i} \cdot u\big|<\frac{\rho}{8\sqrt{n}} \bigg) \le 1/2.
\end{equation}
By independence of the components of $\bxi$ we can apply Lemma \ref{lem:tensorize} to obtain that if $c_0$ is sufficiently small, 
\begin{align*}
\pr_{\bxi}\left( \|(\tA+Z) u\| \le c_0\rho\sqrt{\frac{\kappa dm}{n}} \right)
&= \pr_{\bxi}\bigg( \sum_{i=1}^n \big|\wR_i\cdot u\big|^2 \le c_0^2  \rho^2 \kappa dm/n  \bigg)\\
&\le  \pr_{\bxi}\bigg( \sum_{i\in I^+(A)}\big|\wR_{i_l}\cdot u\big|^2 \le c_0 \rho^2 |I^+(A)|/n \bigg)\\
&\le 2^{-|I^+(A)|} \le e^{-c_0\kappa dm}.
\end{align*}
Undoing the conditioning on $A$ we obtain \eqref{incr.goal2}, and hence \eqref{incr.goal1}.

By Lemma \ref{lem:flatnet} we may fix a $\rho'$-net $\Sigma_0=\Sigma_0(m',\rho')\subset \Flat_0(m',\rho')$ for $\Flat_0(m',\rho')$ with $|\Sigma_0|\le O(n/m'\rho'^2)^{m'}$.
By definition, on the event $\event_K(m',\rho')$ we have $A\in \mC(u,\rho'K\sqrt{d})$ for some $u\in \Flat_0(m',\rho')$. Arguing as in the proof of Lemma \ref{lem:very} it follows that $A\in \mC(u',2\rho'K\sqrt{d})$ for some $u'\in \Sigma_0$.
Thus, by our assumption $\rho'<\rho/2$,
\begin{align}
&\pro{ \event_K(m',\rho')\wedge \event_K(m,\rho)^c \wedge\big\{A\in  \mA^{\expand}(\kappa)\big\}}	\notag\\
&\qquad\le \sum_{u\in \Sigma_0} \pro{ \event_K(m,\rho)^c \wedge\Big\{ A\in \mC\big(u,2\rho'K\sqrt{d}\big) \cap  \mA^{\expand}(\kappa) \Big\}}	\notag\\
&\qquad\le \sum_{u\in \Sigma_0\setminus \Flat_0(m,\rho)} \pro{ A\in \mC\big(u,2\rho'K\sqrt{d}\big)  \cap \mA^{\expand}(\kappa)}.	\label{incr.sumover}
\end{align}
Now by the assumed bound \eqref{assume:ratios} on $\rho'/\rho$, we have
$
2\rho'K\sqrt{d} \le c_0\rho\sqrt{\kappa dm/n}
$
for all $n$ sufficiently large,
and hence
\[
\mC\big(u,2\rho'K\sqrt{d}\big) \subset  \mC\left(u, c_0\rho\sqrt{\frac{\kappa dm}{n}}\right)
\]
for any $u\in \sph$.
Combining \eqref{incr.sumover} with \eqref{incr.goal1} we have
\begin{align*}
&\pr\Big( \event_K(m',\rho')\,\wedge\, \event_K(m,\rho)^c \,\wedge \,\big\{ A\in \mA^{\expand}(\kappa )\big\}\Big) \\
&\qquad\qquad\qquad \ll |\Sigma_0| \expo{ -c_0\kappa dm}\\
&\qquad\qquad\qquad \le O\left(\frac{n}{m'\rho'^2}\right)^{m'} \expo{ -c_0\kappa  dm}\\
&\qquad\qquad\qquad \le \expo{ O(m' \log(n/\rho') )- c_0\kappa d m}\\
&\qquad\qquad\qquad \le \expo{ O(m'( \log (Kn) + \log (1/\rho))) -c_0\kappa dm} \\
&\qquad\qquad\qquad \le \expo{ O(m'\gamma_0\log^2n -c_0\kappa dm} \\
&\qquad\qquad\qquad \le \expo{ -\frac12c_0 \kappa dm}
\end{align*}
where in the fifth and final lines we applied the upper bounds \eqref{assume:ratios}, and in the sixth line the lower bound \eqref{incr.assumelb} on $\rho$.
The result follows by replacing $c_0$ with $2c_0$.
\end{proof}

\begin{proof}[Proof of Proposition \ref{prop:flat}]

We may and will assume throughout that $n$ is sufficiently large depending on $\gamma_0$.
Since Lemma \ref{lem:very} already implies Proposition \ref{prop:flat} when $d \gg n/\log^2n$, we may assume
\begin{equation}	\label{struct:dub}
d\le \frac{n}{\log^2n}.
\end{equation}
In the sequel, we will frequently apply the observation that the events $\event_K(m,\rho)$ are monotone increasing in the parameters $m$ and $\rho$.

For $k\ge 0$ set
\begin{equation}
m_k:= \left(\frac{c'd}{\gamma_0\log^3n}\right)^k, \qquad \rho_k:= n^{-10\gamma_0k}
\end{equation}
where $c'>0$ is a sufficiently small constant, and denote
\[
\event_k := \event_K(m_k,\rho_k).
\]
Note that the sequence $m_k$ is monotone increasing (if $n$ is sufficiently large) by our assumption $d\ge \log^4n$. 
From Lemma \ref{lem:very} and monotonicity,
\begin{equation}	\label{bd:event1}
\pro{ \event_1} = O(e^{-cd}).
\end{equation}
Let $k^*\ge 0$ be the integer such that
\begin{equation}	\label{kstar}
m_{k^*}\le \frac{c_0n}{d} < m_{k^*+1}
\end{equation}
where the constant $c_0$ is as in Lemma \ref{lem:increment}.

Let us denote $\kappa=c/\log n$ for a sufficiently small constant $c>0$.
By \eqref{struct:dub} we have $d\le  c_0 \kappa n$ for all $n$ sufficiently large. Also by \eqref{struct:dub},
\[
\frac{c_0n}{d} \ge c_0 \log^2n\ge \frac1{c_0^2\kappa}
\]
for all $n$ sufficiently large, so the range for $m$ in \eqref{incr.assumelb} is nonempty. 
From the definitions of $k^*$ and $m_k$ we have
\begin{equation}	\label{struct:kasymp}
k^* \ll \frac{\log n}{\log d}.
\end{equation}
Let $m'>c_0n/d$ to be specified later. By the union bound and monotonicity of $\event_K(m,\rho)$ in $m$ and $\rho$,
\begin{align}
\pro{ \event_K(m',\rho_{k^*+2})} &\le \pro{ A\notin \mA^{\exp}(\kappa)} + \pro{ \event_1} 
+ \sum_{k=1}^{k^*} \pro{ \event_{k+1}\wedge \event_k^c \wedge \big\{ A \in \mA^{\exp}(\kappa)\big\}}	\notag\\
&\quad + \pro{ \event_K(m',\rho_{k^*+2})\wedge \event_{k^*+1}^c\wedge \big\{ A \in \mA^{\exp}(\kappa)\big\}}	 .\label{struct:split}
\end{align}
From \eqref{struct:kasymp} and the assumed lower bound on $d$ we have
\[
\rho_{k^*+1} = n^{-10\gamma_0(k^*+1)} \ge \expo{ -\gamma_0\log^2n}
\]
for all $n$ sufficiently large. 
Thus, taking	
\[
m'= \frac{c_0^2 c n}{\gamma_0\log^3n} = \frac{c_0n}{d}\times \frac{c_0\kappa d}{\gamma_0\log^2n}
\]
we can apply Lemma \ref{lem:increment} to bound
\[
\pro{ \event_K(m',\rho_{k^*+2})\wedge \event_{k^*+1}^c\wedge \big\{ A \in \mA^{\exp}(\kappa)\big\}} \le \expo{-c_0^2\kappa n}.
\]
From monotonicity of $\event_K(m,\rho)$ in $m$ and \eqref{kstar}, 
\begin{equation}	\label{struct:end}
\pro{ \event_K(m',\rho_{k^*+2})\wedge \event_K\left( \frac{c_0n}{d}, \rho_{k^*+1}\right)^c\wedge \big\{ A \in \mA^{\exp}(\kappa)\big\}} \le \expo{-c_0^2\kappa n}.
\end{equation}
For each $1\le k\le k^*$, since $\rho_k\ge \rho_{k^*+1}$ we can also apply Lemma \ref{lem:increment} to bound
\begin{equation}	\label{struct:eventk}
\pro{ \event_{k+1}\wedge \event_k^c \wedge \big\{ A \in \mA^{\exp}(\kappa)\big\}}  \le \expo{ -c_0\kappa dm_k}.
\end{equation}
Assembling the bounds \eqref{struct:split}, \eqref{bd:event1}, \eqref{struct:end}, \eqref{struct:eventk} and applying Lemma \ref{lem:expand} we conclude
\begin{align*}
\pro{ \event_K(m',\rho_{k^*+2})} &\ll e^{-cd} + e^{-cn/\log n} + \sum_{k=1}^{k^*} \expo{ -c_0\kappa dm_k} \le e^{-cd/2}.
\end{align*}
Since $\rho_{k^*+2} = n^{-10\gamma_0(k^*+2)} = n^{-O(\gamma_0\log_dn)}$ by \eqref{struct:kasymp}, the claim follows after adjusting $c$.
\end{proof}

\section{Invertibility over non-flat vectors}		\label{sec:unstructured}

In this section we conclude the proof of Theorem \ref{thm:ssv}.
Throughout this section $A$ and $Z$ are as in the theorem statement.

\subsection{An averaging argument} 

The aim of this subsection is to establish Lemma \ref{lem:goodoverlap} below, which essentially reduces our task to bounding the probability that the difference between two rows of $A+Z$ has small inner product with a fixed unit vector. 
The statement and proof were inspired by arguments in \cite{LLTTY} for the invertibility problem, which in turn are an intricate refinement of a basic averaging argument that goes back to Koml\'os in his work on the invertibility of random $\pm1$ matrices with iid entries \cite{Komlos77}.

\begin{definition}[Good overlap events]	\label{def:mO}
For $i_1,i_2\in [n]$, $\ell \ge1$ and $\rho, t >0$ we define $\mO_{i_1,i_2}(\ell, \rho,t)$ to be the event that there exists $u\in \sph_0$ and disjoint sets $L_1,L_2\subset  \mN_A(i_1) \triangle \mN_A(i_2)$ such that the following hold:
\begin{enumerate}
\item $|L_1|=|L_2|=\ell$,
\item $|u_{j_1} - u_{j_2}| \ge \frac{\rho}{\sqrt{n}}$ for all $j_1\in L_1,j_2\in L_2$, 
\item $\|(A+Z)^{(i_1,i_2)}u\| \le \frac{t}{\sqrt{n}}$, and
\item $|(R_{i_1}+ R_{i_2}) \cdot u| \le \frac{2t}{\sqrt{n}}$.
\end{enumerate}
Recall our notation $A^{(i_1,i_2)}$ for the matrix obtained by removing the rows with indices $i_1,i_2$, and write $\langle A^{(i_1,i_2)}\rangle$ for the sigma algebra it generates. 
Crucially, we note that $\mO_{i_1,i_2}(\ell,\rho,t)$ is a $\langle A^{(i_1,i_2)}\rangle$-measurable event. Indeed, conditioning on $A^{(i_1,i_2)}$ fixes $R_{i_1}+R_{i_2}$ and $\mN_A(i_1) \triangle \mN_A(i_2)$ by the constraint on column sums.

For each pair of distinct indices $i_1,i_2\in [n]$ we choose a $\langle A^{(i_1,i_2)}\rangle$-measurable random vector 
\[
u^{(i_1,i_2)}\in \sph_0
\]
and $\langle A^{(i_1,i_2)}\rangle$-measurable sets 
\[
L_1(i_1,i_2), \, L_2(i_1,i_2)\subset  \mN_A(i_1) \triangle \mN_A(i_2)
\]
which, on the event $\mO_{i_1,i_2}(\tau,\rho,\ell)$, satisfy the properties (1)--(4) stated for $u, L_1,L_2$;
off this event we define $u^{(i_1,i_2)}, L_1(i_1,i_2)$, and $L_2(i_1,i_2)$ in some arbitrary (but $\langle A^{(i_1,i_2)}\rangle$-measurable) fashion. 	
\end{definition}

Informally, $\mO_{i_1,i_2}(\ell,\rho,\tau)$ is the event that there is a unit vector which is ``almost normal" to the span of the $n-1$ vectors $\{R_i: i\notin \{i_1,i_2\}\} \cup \{R_{i_1}+R_{i_2}\}$, and which also has ``high variation" on $\mN_A(i_1) \triangle \mN_A(i_2)$.
The key property of $u^{(i_1,i_2)}, L_1(i_1,i_2)$, and $L_2(i_1,i_2)$ is that they are fixed upon conditioning on $A^{(i_1,i_2)}$.
We will eventually be able to restrict to these events with parameters $\ell\ge d/\log^{O(1)}n$ and $\rho,t \ge n^{-O(\Gamma)}$.

For $m\ge1$ and $\rho,t>0$ we define the good event
\begin{equation}	\label{def:notflat}
\good(m,\rho,t) = \left\{ \forall u, v\in \Flat_0(m,\rho),\; \min\big(\|(A+Z)u\|,\|(A+Z)^*v\|\big)> \frac{t}{\sqrt{n}}\right\}.
\end{equation}
Recall also the set $\mA^{\edge}(n_0,\delta)\subset\mA_{n,d}$ from \eqref{event:discrep}.

\begin{lemma}[Good overlap on average]	\label{lem:goodoverlap}
Assume $1\le d\le n/2$. 
Let $1\le m\le c_1n$ for a sufficiently small constant $c_1>0$, and put $\ell = md/(8n)$.
Then for all $\rho>0$ and $0<t\le n^{-10}$  we have
\begin{align}
&\pr\bigg( \good(m,\rho,t) \wedge \left\{ s_n(A+Z) \le \frac{t}{\sqrt{n}}, \; A\in \mA^{\edge}\left(\frac{m}{8},\frac12\right)\right\}\bigg) \notag\\
&\qquad \le \frac{2}{mn} \sum_{i_1,i_2=1}^n\pro{ \mO_{i_1,i_2}(\ell,  \rho/2,t) \wedge\left\{ \big|(R_{i_1}-R_{i_2})\cdot u^{(i_1,i_2)} \big| \le \frac{8t}{\rho}\right\}}.	\label{bd:overlap}
\end{align}
\end{lemma}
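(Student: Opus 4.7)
The strategy is to establish \eqref{bd:overlap} via a pointwise inequality $\1_E\le(2/mn)\sum_{i_1,i_2}\1_{F_{i_1,i_2}}$, where $E$ denotes the event on the LHS and $F_{i_1,i_2}=\mO_{i_1,i_2}(\ell,\rho/2,t)\wedge\{|(R_{i_1}-R_{i_2})\cdot u^{(i_1,i_2)}|\le 8t/\rho\}$; taking expectations then produces the claim. On $E$, the hypothesis $s_n(A+Z)\le t/\sqrt n$ furnishes a unit vector $u$ with $\|(A+Z)u\|\le t/\sqrt n$. Using $(A+Z)\1=(d+\zeta)\1$ together with $(A+Z)^*\1=(d+\bar\zeta)\1$, the operator $A+Z$ preserves $\oneperp$; decomposing $u=\alpha\1/\sqrt n+u_0$ with $u_0\in\oneperp$ and applying orthogonality gives $|\alpha||d+\zeta|\le t/\sqrt n$. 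Combined with $|d+\zeta|\ge n^{-10}$ and $t\le n^{-10}$, this forces $|\alpha|\le n^{-1/2}$, so $\|u_0\|\ge\sqrt{1-1/n}$, and renormalizing produces $\tilde u=u_0/\|u_0\|\in\sph_0$ with $\|(A+Z)\tilde u\|\ll t/\sqrt n$ (so $|R_i\cdot\tilde u|\ll t/\sqrt n$ for every $i$). On $\good(m,\rho,t)$ we have $\tilde u\notin\Flat_0(m,\rho)$, so Lemma \ref{lem:gap} supplies disjoint $J_1,J_2\subset[n]$ with $|J_1|\ge m$, $|J_2|\gg n$, and $|\tilde u_{j_1}-\tilde u_{j_2}|\ge\rho/(2\sqrt n)$ for $j_1\in J_1, j_2\in J_2$.

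The second step is a double-counting argument. For any $J\subset[n]$, exchanging order of summation and using the column-sum identity $|\{i:j\in\mN_A(i)\}|=d$ yields
$$\sum_{i_1,i_2}|(\mN_A(i_1)\triangle\mN_A(i_2))\cap J|=2d(n-d)|J|,$$
which is $\gg dmn$ for $J=J_1$ and $\gg dn^2$ for $J=J_2$. Combining with the elementary bound $|\mN_A(i_1)\triangle\mN_A(i_2)|\le 2d$ and invoking $A\in\mA^{\edge}(m/8,1/2)$ to rule out anomalous configurations on sets of size $>m/8$, one finds at least $mn/2$ pairs $(i_1,i_2)$ for which $|(\mN_A(i_1)\triangle\mN_A(i_2))\cap J_\sigma|\ge\ell$ holds for both $\sigma=1,2$. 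For each such pair choose $L_1\subset J_1\cap(\mN_A(i_1)\triangle\mN_A(i_2))$ and $L_2\subset J_2\cap(\mN_A(i_1)\triangle\mN_A(i_2))$ of size $\ell$. Then $\tilde u$ is a valid witness of $\mO_{i_1,i_2}(\ell,\rho/2,t)$: property (1) holds by construction, (2) from the bimodal separation, and (3), (4) from the componentwise bound $|R_i\cdot\tilde u|\ll t/\sqrt n$. Hence $\mO_{i_1,i_2}$ holds, and automatically $|(R_{i_1}-R_{i_2})\cdot\tilde u|\ll t/\sqrt n$.

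The most delicate part is transferring this last bound from the ``natural'' witness $\tilde u$ to the specific $\langle A^{(i_1,i_2)}\rangle$-measurable witness $u^{(i_1,i_2)}$ prescribed by Definition \ref{def:mO}. The plan is to choose $u^{(i_1,i_2)}$ as a canonical approximate right-null vector of the auxiliary $(n-1)\times n$ matrix $N_{i_1,i_2}$ whose rows are $\{R_i:i\ne i_1,i_2\}\cup\{(R_{i_1}+R_{i_2})/\sqrt 2\}$; this matrix is manifestly $\langle A^{(i_1,i_2)}\rangle$-measurable, and on $\mO_{i_1,i_2}$ the canonical null vector inherits conditions (1)--(4) up to constants. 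Both $\tilde u$ and $u^{(i_1,i_2)}$ are approximate null vectors of $N_{i_1,i_2}$, so they are proportional up to an error controlled by the next-smallest singular value of $N_{i_1,i_2}$; the quantitative bimodality of $\tilde u$ along $L_1\cup L_2$ forces the proportionality constant to be close to $\pm 1$, which together with $\|R_{i_1}-R_{i_2}\|\le O(\sqrt d+n^\gamma)$ transfers the estimate $|(R_{i_1}-R_{i_2})\cdot\tilde u|\ll t/\sqrt n$ into $|(R_{i_1}-R_{i_2})\cdot u^{(i_1,i_2)}|\le 8t/\rho$ (using $\rho\le O(\sqrt n)$). Combined with the count of at least $mn/2$ good pairs, this yields the pointwise inequality $\1_E\le(2/mn)\sum\1_{F_{i_1,i_2}}$, and \eqref{bd:overlap} follows on taking expectations. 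The hardest part will be obtaining this alignment quantitatively with explicit dependence on $\rho$: absent a priori spectral information about $N_{i_1,i_2}$, the witness $u^{(i_1,i_2)}$ may need to be chosen via a subtler $A^{(i_1,i_2)}$-measurable variational procedure, whose analysis relies on the graph-regularity framework of Section \ref{sec:graph}.
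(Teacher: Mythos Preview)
Your overall strategy and the first two steps are close to the paper's, but the third step---transferring the bound from the natural witness $\tilde u$ to the $\langle A^{(i_1,i_2)}\rangle$-measurable vector $u^{(i_1,i_2)}$---has a genuine gap, and the paper's mechanism for this transfer is quite different from what you propose.

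Your plan is to argue that $\tilde u$ and $u^{(i_1,i_2)}$ are both approximate null vectors of the $(n-1)\times n$ matrix $N_{i_1,i_2}$ and hence are nearly aligned. But this alignment is only as good as the gap between $s_{n-1}(N_{i_1,i_2})$ and $s_{n-2}(N_{i_1,i_2})$, and you have no a priori lower bound on $s_{n-2}(N_{i_1,i_2})$; indeed, obtaining such a bound is essentially as hard as the problem you are trying to solve. The acknowledgment that this ``may need to be chosen via a subtler $A^{(i_1,i_2)}$-measurable variational procedure'' is exactly where the argument is incomplete, and the graph-regularity properties of Section~\ref{sec:graph} do not supply any such spectral gap.

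The paper sidesteps this entirely by using the \emph{left} singular vector $v$, which you never invoke (note that $\good(m,\rho,t)$ controls both $\|(A+Z)u\|$ and $\|(A+Z)^*v\|$ over flat vectors, and you are discarding the second half). Since $v\notin\Flat(m,\rho)$, Lemma~\ref{lem:ac} gives, for each $i_1$, at least $m$ indices $i$ with $|v_{i_1}-v_i|\ge\rho/\sqrt n$; intersecting with the pairs already found from the $u$-side yields $\ge mn/2$ pairs $(i_1,i_2)$ enjoying both properties. For such a pair one writes
\[
\frac{t}{\sqrt n}\ge\big|v^*(A+Z)u^{(i_1,i_2)}\big|
\]
and expands the right side into the contribution from rows $i\notin\{i_1,i_2\}$ (controlled by property~(3) of $\mO_{i_1,i_2}$) and the contribution from rows $i_1,i_2$. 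Rewriting $\bar v_{i_1}R_{i_1}+\bar v_{i_2}R_{i_2}$ in terms of $R_{i_1}\pm R_{i_2}$, the $R_{i_1}+R_{i_2}$ piece is controlled by property~(4), and the remaining piece is $\tfrac12(\bar v_{i_1}-\bar v_{i_2})(R_{i_1}-R_{i_2})\cdot u^{(i_1,i_2)}$. Dividing through by $|v_{i_1}-v_{i_2}|\ge\rho/\sqrt n$ gives exactly $|(R_{i_1}-R_{i_2})\cdot u^{(i_1,i_2)}|\le 8t/\rho$, with no spectral-gap hypothesis needed. The point is that $v$ provides a single linear functional that is small on \emph{every} column of $(A+Z)^*$---hence on any linear combination of rows of $A+Z$---which is precisely what lets you test the arbitrary measurable witness $u^{(i_1,i_2)}$ rather than only $\tilde u$.
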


\begin{proof}
Suppose the event on the left hand side of \eqref{bd:overlap} holds.
Let $u,v\in \sph$ be the eigenvectors of $(A+Z)^*(A+Z)$, $(A+Z)(A+Z)^*$, respectively, associated to the eigenvalue $s_n(A+Z)^2$. By our hypotheses and \eqref{pf} we have
\[
(A+Z)^*(A+Z)\1 = (A+Z)(A+Z)^*\1 = |d+\zeta|^2\1.
\]
Then since
\[
s_n(A+Z)\le \frac{t}{\sqrt{n}}<n^{-10}\le |d+\zeta|,
\]
it follows that $u$ and $\1$ are associated to distinct eigenvalues of $(A+Z)^*(A+Z)$ and hence $u\perp \1$; we similarly have $v\perp \1$. Thus, we have located $u,v\in \sph_0$ such that 
\[
\|(A+Z)u\|, \, \|(A+Z)^*v\| \le \frac{t}{\sqrt{n}}.
\]
Furthermore, by our restriction to $\good(m,\rho,t)$ we have that $u,v\in \sph_0\setminus \Flat(m,\rho)$.

Our next step is to find a large set $\mI^*(u,v)$ of pairs of row indices $(i_1,i_2)$ for which $\mO_{i_1,i_2}(\ell,\rho/2,t)$ holds, and for which $|v_{i_1}-v_{i_2}|$ is reasonably large. 
We begin with the former -- that is, counting pairs of row indices that are ``good" for $u$.
By Lemma \ref{lem:gap} there are disjoint sets $J_1,J_2\subset[n]$ with $|J_1|\ge m$, $|J_2|\gg n-m$ such that
\begin{equation}
|u_{j_1}- u_{j_2}| \ge \frac{\rho}{2\sqrt{n}} \qquad \forall \;j_1\in J_1,\; j_2\in J_2.
\end{equation}
Let us take the constant $c_1$ sufficiently small that $|J_2|\ge m$.
For $\alpha=1,2$ put 
\[
I_\alpha^0(u) = \left\{ i\in [n] : \big|\mN_A(i)\cap J_\alpha \big|\le \frac{3d}{2n}|J_\alpha|\right\}.
\]
By our restriction to $\mA^{\edge}(m/8,1/2)$ we have $|I_\alpha^0(u)|\ge n-m/8$ for $\alpha=1,2$.
Indeed, if this were not the case we would have $|I_\alpha^0(u)^c|>m/8$, so
\[
\frac{3d}{2n} |I_\alpha^0(u)^c||J_\alpha| > e_A\big( I_\alpha^0(u)^c, J_\alpha\big) = \sum_{i\in I_\alpha^0(u)^c} \big|\mN_A(i) \cap J_\alpha\big| > \frac{3d}{2n} |I_\alpha^0(u)^c||J_\alpha| \,,
\]
a contradiction. 
Now for $\alpha\in \{1,2\}$ and $i_1\in [n]$ put
\[
I_\alpha^1(u; i_1) = \left\{ i\in [n]\setminus \{i_1\}: \big| \mN_A(i)\cap\mN_A(i_1)^c\cap  J_\alpha \big|\ge \frac{d}{2n}|\mN_A(i_1)^c\cap J_\alpha|\right\}.
\]
For $i_1\in I_\alpha^0(u)$ we have
\[
|\mN_A(i_1)^c\cap J_\alpha| \ge \left( 1-\frac{3d}{2n} \right) |J_\alpha| \ge \frac{m}4
\]
where in the last inequality we applied our assumption $d\le n/2$.
Thus, using our restriction to $\mA^{\edge}(m/8,1/2)$ and arguing by contradiction as above, we find that $|I_\alpha(u;i_1)| \ge n-m/8$ for $\alpha\in \{1,2\}$ and any $i_1\in I_\alpha^0(u)$.
Setting 
\[
\mI_\alpha(u) = \left\{ (i_1,i_2)\in [n]^2 : \big| \mN_A(i_2)\cap \mN_A(i_1)^c \cap   J_\alpha\big| \ge \frac{md}{8n}\,\right\}
\]
we have
\[
|\mI_\alpha(u)| \ge \sum_{i_1\in I_\alpha^0(u) } |I_\alpha^1(u;i_1)| \ge \left(n-\frac{m}8\right)^2>n^2-\frac{mn}{4}.
\]
Setting $\mI(u):= \mI_1(u)\cap \mI_2(u)$, 
\begin{equation}	\label{lb:uready}
|\mI(u)^c|\le |\mI_1(u)^c|+ |\mI_2(u)^c| < 2\frac{mn}{4} = \frac{mn}2.
\end{equation}
Now for each $\alpha=1,2$ and $(i_1,i_2)\in \mI(u)$, since
\[
|\mN_A(i_2) \cap  \mN_A(i_1)^c\cap J_\alpha| \ge \frac{md}{8n}= \ell
\]
we may select a subset $L'_\alpha (i_1,i_2)$ of cardinality $\ell$.
We have that $L'_1(i_1,i_2),L'_2(i_1,i_2)$ are disjoint subsets of $\mN_A(i_1)\triangle \mN_A(i_2)$ such that
\begin{equation}
|u_{j_1}-u_{j_2}|\ge \frac{\rho}{2\sqrt{n}} \quad \forall j_1\in L'_1(i_1,i_2), \;j_2\in L'_2(i_1,i_2).
\end{equation}
Furthermore, for each $(i_1,i_2)\in [n]^2$,
\[
\|(A+Z)^{(i_1,i_2)}u\| \le \|(A+Z)u\| \le \frac{t}{\sqrt{n}}
\]
and 
\[
|(R_{i_1}+R_{i_2})\cdot u| \le |R_{i_1}\cdot u|+ |R_{i_2}\cdot u| \le 2\|(A+Z)u\|\le \frac{2t}{\sqrt{n}}.
\]
Thus, $u, L'_1(i_1,i_2), L'_2(i_1,i_2)$ satisfy the conditions on $u, L_1, L_2$ in Definition \ref{def:mO}, and it follows that $\mO_{i_1,i_2}(\ell,\rho/2,t)$ holds for each $(i_1,i_2)\in \mI(u)$. 

Now we count pairs of row indices that are ``good" with respect to $v$.
By Lemma \ref{lem:ac} and the fact that $v\in \sph\setminus\Flat(m,\rho)$  we have that for any $i_1\in [n]$, 
\begin{equation}	\label{v:notflat}
|E_v(v_{i_1}\sqrt{n}, \rho)^c| = \left|\left\{i\in [n]\setminus\{i_1\}: |v_{i_1}-v_i|\ge \frac{\rho}{\sqrt{n}}\right\}\right| \ge m.
\end{equation}
Setting
\begin{equation}
\mI^*(u,v) := \left\{ (i_1,i_2)\in \mI(u): |v_{i_1}-v_{i_2}|\ge \frac{\rho}{\sqrt{n}}\right\}
\end{equation}
from \eqref{v:notflat} and \eqref{lb:uready} we have
\begin{equation}
|\mI^*(u,v)| \ge mn-\frac{mn}2 =\frac{mn}2.
\end{equation}

Fix $(i_1,i_2)\in \mI^*(u,v)$. 
By several applications of the fact that $\mO_{i_1,i_2}(\ell,\rho/2,t)$ holds and the Cauchy--Schwarz inequality,
\begin{align*}
\frac{t}{\sqrt{n}} & \ge \big\|v^*(A+Z)\big\| \ge \big|v^*(A+Z)u^{(i_1,i_2)}\big|  = \left| \sum_{i=1}^n \overline{v}_i R_i\cdot u^{(i_1,i_2)}\right|	\\
&\ge \big|(\overline{v}_{i_1} R_{i_1}+ \overline{v}_{i_2}R_{i_2})\cdot u^{(i_1,i_2)}\big| -  \left| \sum_{i\in [n]\setminus\{i_1,i_2\}} \overline{v}_i R_i\cdot u^{(i_1,i_2)}\right| \\
&\ge \big|(\overline{v}_{i_1} R_{i_1}+ \overline{v}_{i_2}R_{i_2})\cdot u^{(i_1,i_2)}\big| - \big\|(A+Z)^{(i_1,i_2)}u^{(i_1,i_2)}\big\|\\
&\ge \big|(\overline{v}_{i_1} R_{i_1}+ \overline{v}_{i_2}R_{i_2})\cdot u^{(i_1,i_2)}\big| - \frac{t}{\sqrt{n}}\\
&= \frac12\left|(\overline{v}_{i_1} + \overline{v}_{i_2})(R_{i_1}+R_{i_2}) \cdot u^{(i_1,i_2)} + (\overline{v}_{i_1}-\overline{v}_{i_2})(R_{i_1}-R_{i_2})\cdot u^{(i_1,i_2)}\right| - \frac{t}{\sqrt{n}}\\
&\ge \frac12 \left|(\overline{v}_{i_1}-\overline{v}_{i_2})(R_{i_1}-R_{i_2})\cdot u^{(i_1,i_2)}\right| - \left|(R_{i_1}+R_{i_2}) \cdot u^{(i_1,i_2)}\right|-\frac{t}{\sqrt{n}}\\
&\ge \frac12 \left|(\overline{v}_{i_1}-\overline{v}_{i_2})(R_{i_1}-R_{i_2})\cdot u^{(i_1,i_2)}\right| - \frac{3t}{\sqrt{n}}\\
&\ge \frac{\rho}{2\sqrt{n}} \left|(R_{i_1}-R_{i_2})\cdot u^{(i_1,i_2)}\right|-\frac{3t}{\sqrt{n}}.
\end{align*}
Rearranging we have
\[
\big|(R_{i_1}-R_{i_2})\cdot u^{(i_1,i_2)}\big|\le \frac{8t}{\rho}.
\]
In summary, we have shown that on the event 
\[
\good(m,\rho,t) \wedge \left\{ s_n(A+Z) \le \frac{t}{\sqrt{n}},\, A\in \mA^{\edge}\left(\frac{m}8,\frac12\right)\right\},
\]
the event $\mO_{i_1,i_2}(\ell,\rho/2,t)\wedge \{ |(R_{i_1}-R_{i_2})\cdot u^{(i_1,i_2)}|\le 8t/\rho\}$ holds for at least $mn/2$ pairs $(i_1,i_2)\in [n]^2$ -- that is,
\begin{align*}
&\frac{mn}{2}\un_{\good(m,\rho,t)} \un\left( s_n(A+Z) \le \frac{t}{\sqrt{n}},\, A\in \mA^{\edge}\left(\frac{m}8,\frac12\right)\right) \\
&\qquad\qquad\qquad\qquad\le 
\sum_{i_1,i_2=1}^n \un_{\mO_{i_1,i_2}(\ell,\rho/2,t)}\un\left(|(R_{i_1}-R_{i_2})\cdot u^{(i_1,i_2)}|\le \frac{8t}{\rho} \right).
\end{align*}
The bound \eqref{bd:overlap} now follows by taking expectations on each side and rearranging.
\end{proof}

\subsection{Anti-concentration for row-pair random walks}

The aim of this subsection is to prove the following: 

\begin{lemma}	\label{lem:walk}
Assume $1\le d\le n/2$.
Let $i_1,i_2\in [n]$ distinct, and suppose the event $\mO_{i_1,i_2}(\ell,\rho,t)$ holds for some $\ell\ge1$, $\rho,t>0$. Then for all $r\ge 0$,
\begin{equation}
\pro{ \big|(R_{i_1}-R_{i_2})\cdot u^{(i_1,i_2)} \big|\le r \,\Big|\, A^{(i_1,i_2)} } \ll  \left( 1+ \frac{r\sqrt{n}}{\rho}\right) \left(\frac{\log(n/\rho)}{\ell}\right)^{1/2}+  e^{-c\ell}.
\end{equation}
\end{lemma}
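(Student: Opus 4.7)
The plan is to condition on $A^{(i_1,i_2)}$, identify the residual randomness of $R_{i_1}-R_{i_2}$ as a uniform random subset of a fixed size, extract independent Bernoulli choices via a pairing of $L_1$ with $L_2$, and finally apply a complex Littlewood--Offord/Esseen anti-concentration inequality. Under the conditioning on $A^{(i_1,i_2)}$, the column sums of the two missing rows are determined, which fixes the symmetric difference $D := \mN_A(i_1)\triangle\mN_A(i_2)$ and its common split size $k := |D|/2$; the only residual randomness is $S := \mN_A(i_1)\cap D$, which is uniform over the subsets of $D$ of size $k$. Writing $u := u^{(i_1,i_2)}$,
\[
(R_{i_1}-R_{i_2})\cdot u \;=\; 2\sum_{j\in S} u_j \;-\; \sum_{j\in D} u_j,
\]
and the second sum is deterministic under the conditioning, so it suffices to bound the conditional concentration function of $W := \sum_{j\in S} u_j$ at scale $r/2$.

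Fix any bijection $\pi:L_1\to L_2$; by property~(2) of Definition~\ref{def:mO}, $|u_j-u_{\pi(j)}|\ge \rho/\sqrt{n}$ for all $j\in L_1$. Now condition further on $S\setminus(L_1\cup L_2)$ (which determines $m_0 := |S\cap(L_1\cup L_2)|$) and on the pair-cardinalities $X_j := |S\cap\{j,\pi(j)\}|$ for $j\in L_1$, and set $I := \{j\in L_1 : X_j = 1\}$. Under this extended conditioning, the indicators $\xi_j := \un(j\in S)$ for $j\in I$ are jointly independent Bernoulli($1/2$) random variables (since the conditional law of $S\cap(L_1\cup L_2)$ factorises over the pairs), and $W = C + \sum_{j\in I}\xi_j(u_j-u_{\pi(j)})$ for a deterministic constant $C$. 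Applying a Kolmogorov--Rogozin/Esseen inequality for complex Bernoulli sums---after projecting onto whichever of the real or imaginary axis captures at least half the steps $u_j-u_{\pi(j)}$ with magnitude $\ge \rho/(\sqrt{2n})$---yields, on $\{|I|\ge 1\}$,
\[
\sup_{w\in\C}\pr\big(|W-w|\le r/2 \;\big|\; A^{(i_1,i_2)},\,S\setminus(L_1\cup L_2),\,(X_j)_{j\in L_1}\big) \;\ll\; \Big(1+\tfrac{r\sqrt{n}}{\rho}\Big)\,|I|^{-1/2}.
\]

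The remaining task is to show $|I|\gtrsim \ell$ except on a conditional event of probability $\le e^{-c\ell}$. A short moment computation with the uniform law of $S$ gives $\e[|I|\mid A^{(i_1,i_2)}]\asymp \ell$, and $m_0$ is hypergeometric in $k$ with mean $\ell$, hence concentrates as $m_0\asymp\ell$ up to conditional probability $O(e^{-c\ell})$ by a standard Chernoff bound. Conditional on $m_0$, the set $S\cap(L_1\cup L_2)$ is uniform over subsets of $L_1\cup L_2$ of size $m_0$, and $|I|$ is a bounded-differences statistic (swapping one element in with one out changes $|I|$ by at most $2$) on a universe of size $2\ell$; McDiarmid's inequality for sampling without replacement---equivalently, Lemma~\ref{lem:conc} cast as a suitable permutation statistic on $L_1\cup L_2$---then gives $\pr(|I|<c\ell\mid A^{(i_1,i_2)},m_0)\ll e^{-c\ell}$. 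Integrating out the intermediate conditioning on $S\setminus(L_1\cup L_2)$ and $(X_j)_{j\in L_1}$, combining with the anti-concentration bound above, and handling the low-probability event $\{|I|<c\ell\}$ by the trivial bound of $1$, yields the lemma; the mild $\log(n/\rho)$ factor in the stated bound comfortably absorbs the slack in converting these concentration estimates into an effective lower bound on $|I|$.

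\emph{Main obstacle.} The delicate step is the concentration of $|I|$: applying bounded differences directly to $S$ as a $k$-subset of $D$ would give a variance proxy of order $k$, which can be as large as $d$ and thus dwarfs the target mean $\ell = md/(8n)$ when $m\ll n$, failing to yield an exponential-in-$\ell$ tail. The key trick is to reveal the coarser statistic $m_0$ first, after which the bounded-differences argument plays out on the much smaller $2\ell$-element universe $L_1\cup L_2$ and delivers the required $e^{-c\ell}$ tail. The other steps---the conditional reduction to $S$, the extraction of independent Bernoullis via pairing, and the Esseen bound for the resulting complex random walk---are standard once the concentration of $|I|$ is in hand.
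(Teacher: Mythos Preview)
Your proof is correct and follows the same overall strategy as the paper: reduce to independent Bernoulli choices indexed by a pairing $\pi:L_1\to L_2$, show the number of ``active'' pairs is $\gg\ell$ except with probability $e^{-c\ell}$, and then apply a small-ball inequality for the resulting random walk. Two points of execution differ and are worth noting. First, where the paper constructs an explicit coupled matrix $\tA$ by resampling the $2\times 2$ blocks with fresh Rademacher variables, you obtain the same independent Bernoulli structure by conditioning on $S\setminus(L_1\cup L_2)$ and the pair counts $(X_j)$; these are two views of the same mechanism, and your conditioning route is arguably cleaner since it avoids verifying that $\tA\eqd A$. Second, for the anti-concentration step the paper applies a Berry--Ess\'een-type bound (Lemma~\ref{lem:be}), which forces a dyadic pigeonholing to locate a subset $L^*$ on which the step sizes $|u_j-u_{\pi(j)}|$ are comparable, and it is precisely this pigeonholing that produces the $\log(n/\rho)$ loss in the final bound. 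Your projection-plus-Erd\H{o}s--Littlewood--Offord argument uses only the lower bound $|u_j-u_{\pi(j)}|\ge\rho/\sqrt{n}$ and thus skips the pigeonholing entirely; in fact it yields the stronger estimate $\ll(1+r\sqrt{n}/\rho)\ell^{-1/2}+e^{-c\ell}$ without the logarithmic factor, so your closing remark that the $\log(n/\rho)$ ``absorbs slack'' undersells what you have actually proved.
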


We will need the following standard anti-concentration bound of Berry--Ess\'een-type; see for instance \cite[Lemma 2.7]{Cook:ssv} (the condition there of $\kappa$-controlled second moment is easily verified to hold with $\kappa=1$ for a Rademacher variable).

\begin{lemma}[Berry--Ess\'een-type small ball inequality]	\label{lem:be}
Let $v\in \C^n$ be a fixed nonzero vector and let $\xi_1,\dots, \xi_n$ be iid Rademacher variables. 
Then for $r\ge 0$,
\[
\sup_{z\in \C} \pr\bigg( \bigg| z+ \sum_{j=1}^n \xi_j v_j \bigg| \le r\bigg) \ll \frac{r+ \|v\|_\infty}{\|v\|}.
\]
\end{lemma}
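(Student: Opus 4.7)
The plan is to exploit the randomness remaining after conditioning on $A^{(i_1,i_2)}$. By the column-sum constraint defining $\mA_{n,d}$, this conditioning fixes $u^{(i_1,i_2)}$, the row sum $R_{i_1}+R_{i_2}$, the action of $Z$, and the symmetric difference $S := \mN_A(i_1)\triangle\mN_A(i_2)$ (which has even size), while $T := \mN_A(i_1)\cap S$ remains uniformly distributed on $\binom{S}{|S|/2}$ (all these facts follow from the column and row sum constraints together with the symmetry of the uniform law on $\mA_{n,d}$). A direct calculation gives
\[
(R_{i_1}-R_{i_2})\cdot u^{(i_1,i_2)} \;=\; 2\sum_{j\in T} u^{(i_1,i_2)}_j \;+\; C,
\]
with $C$ fixed by the conditioning, so the task reduces to a small-ball estimate for $\sum_{j\in T}u^{(i_1,i_2)}_j$.

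Fix an arbitrary bijection $\pi:L_1\to L_2$, which produces $\ell$ pairs $P_k:=\{a_k,\pi(a_k)\}\subset S$, and assign class labels $c_k:=|T\cap P_k|\in\{0,1,2\}$. Condition further on $T\cap(S\setminus(L_1\cup L_2))$ and on the class signature $(c_1,\dots,c_\ell)$. Since swapping the two members of any single $P_k$ preserves both the cardinality of $T$ and its restriction to $S\setminus P_k$, the uniform law on $\binom{S}{|S|/2}$ is invariant under this swap; consequently, under the extra conditioning the choices of ``which element of $P_k$ lies in $T$'' for the class-1 pairs are jointly independent and uniform. Re-expressing these choices via Rademacher variables $\eta_k\in\{\pm1\}$ one obtains
\[
2\sum_{j\in T}u^{(i_1,i_2)}_j \;=\; C' \;+\; \sum_{k:\,c_k=1}\eta_k v_k,\qquad v_k:=u^{(i_1,i_2)}_{a_k}-u^{(i_1,i_2)}_{\pi(a_k)},
\]
where $C'$ is deterministic. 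By property~(2) of $\mO_{i_1,i_2}(\ell,\rho,t)$ every $|v_k|\ge\rho/\sqrt n$, and trivially $|v_k|\le 2$.

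Let $c$ denote the number of class-1 pairs. A two-step application of Lemma~\ref{lem:conc} (first to show $|T\cap(L_1\cup L_2)|\in[\ell/2,3\ell/2]$ with conditional probability $1-O(e^{-c\ell})$, then to show that conditional on that event the class-1 count concentrates around its mean $\asymp\ell$) yields $c\ge c_0\ell$ except on an event of conditional probability $\ll e^{-c\ell}$. On the good event, decompose the coefficients dyadically: the magnitude range $[\rho/\sqrt n,2]$ is covered by $O(\log(n/\rho))$ dyadic bands, so by pigeonhole some band $[\tau,2\tau]$ contains an index set $\mK$ of size $|\mK|\gg \ell/\log(n/\rho)$. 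Conditioning on the $\eta_k$ with $k\notin\mK$ and applying Lemma~\ref{lem:be} to the residual Rademacher sum, whose coefficient vector obeys $\|\cdot\|_\infty\le 2\tau$ and $\|\cdot\|_2\ge\tau\sqrt{|\mK|}$,
\[
\sup_z\pro{\;\Bigl|\,z+\sum_{k\in\mK}\eta_k v_k\Bigr|\le r\;} \;\ll\; \frac{r+\tau}{\tau\sqrt{|\mK|}} \;\ll\; \Bigl(1+\frac{r\sqrt n}{\rho}\Bigr)\sqrt{\frac{\log(n/\rho)}{\ell}},
\]
using $\tau\ge\rho/\sqrt n$ in the last step. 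Combining with the probability $e^{-c\ell}$ of the bad event yields the claim.

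The main obstacle is the concentration step $c\ge c_0\ell$: because of the constraint $|T|=|S|/2$ the class-membership indicators across pairs are correlated, so one cannot use independence directly and must invoke a Hoeffding-statistic inequality as packaged by Lemma~\ref{lem:conc}. Once this is established, the dyadic decomposition (which supplies the $\sqrt{\log(n/\rho)}$ factor that matches the stated bound, absorbing the fact that the coefficients $v_k$ have only an $O(1)$ upper bound rather than being of size $\asymp \rho/\sqrt n$) and the final Berry--Ess\'een step are essentially routine.
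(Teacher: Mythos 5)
Your proposal does not address Lemma~\ref{lem:be} at all. Lemma~\ref{lem:be} is a free-standing anti-concentration estimate for a single linear form $z + \sum_j \xi_j v_j$ with \emph{fixed} complex coefficients $v_j$ and iid Rademacher signs $\xi_j$: it involves no random matrix $A$, no conditioning on $A^{(i_1,i_2)}$, and no event $\mO_{i_1,i_2}$. What you have written is instead a proof of Lemma~\ref{lem:walk} (the anti-concentration bound for the row-pair random walk $(R_{i_1}-R_{i_2})\cdot u^{(i_1,i_2)}$), and that argument explicitly invokes Lemma~\ref{lem:be} as a black box in its final small-ball step. Read as a proof of Lemma~\ref{lem:be} it is circular; read literally, it answers a different question than the one posed. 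This is a genuine and complete gap: no proof of the stated lemma is offered.

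For Lemma~\ref{lem:be} the paper supplies no internal argument; it cites an external source (a Berry--Ess\'een-type bound for random variables with $\kappa$-controlled second moment, a condition trivially satisfied by Rademacher variables). If you wish to prove it directly, the route is standard: choose $\theta$ so that $\|\Re(e^{\ii\theta}v)\| \gg \|v\|$ (possible since $\int_0^{2\pi}\|\Re(e^{\ii\theta}v)\|^2\,d\theta = \pi\|v\|^2$); observe that $|z+\sum_j \xi_j v_j|\le r$ forces the real Rademacher walk $\sum_j \xi_j\,\Re(e^{\ii\theta}v_j)$ into an interval of length $2r$; and apply the one-dimensional Berry--Ess\'een theorem to that walk (variance $\gg \|v\|^2$, third absolute moment $\le \|v\|_\infty\|v\|^2$), which gives a Kolmogorov-distance error $\ll \|v\|_\infty/\|v\|$ from the Gaussian and hence the small-ball bound $\ll r/\|v\| + \|v\|_\infty/\|v\|$. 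If you in fact intended to prove Lemma~\ref{lem:walk}, say so and be evaluated against that statement.
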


\begin{proof}[Proof of Lemma \ref{lem:walk}]
By symmetry we may take $(i_1,i_2)=(1,2)$. 
We condition on a realization of $A^{(1,2)}$ satisfying the event $\mO_{1,2}(\ell,\rho,t)$ for the remainder of the proof. 
For ease of notation we will write $u, L_1,L_2$ instead of $u^{(1,2)}, L_1(1,2), L_2(1,2)$.
Note that conditioning on $A^{(1,2)}$ fixes $u, L_1, L_2$. Let $r\ge0$ be arbitrary. Our aim is to show
\begin{equation}	\label{walk:goal0}
 \pro{ \left| \big( R_1-R_2\big) \cdot u \right| \le r \ \middle|\ A^{(1,2)}} 
\ll \left( 1+ \frac{r\sqrt{n}}{\rho}\right)\left(\frac{\log(n/\rho)}{\ell}\right)^{1/2} + e^{-c\ell}.
\end{equation}

We now construct a coupled matrix $\tA\in \mA_{n,d}$ on an augmented probability space. 
Under the conditioning on $A^{(1,2)}$, by the constraint on column sums the only remaining randomness is in the submatrix $A_{(1,2)\times \mN_A(1)\triangle \mN_A(2)}$. We create $\tA$ by resampling a sequence of $2\times 2$ submatrices of this submatrix uniformly and independently.
Specifically, we fix a bijection $\pi:L_1\to L_2$ in some arbitrary (but measurable) fashion under the conditioning on $A^{(1,2)}$, and let $\bxi=(\xi_j)_{j=1}^n$ be iid Rademacher variables independent of $A$. 
Conditional on $A$, for each $j\in L_1$ such that
\begin{equation}	\label{switchable2by2}
A_{(1,2)\times(j,\pi(j))} \in \left\{\begin{pmatrix} 1 & 0\\ 0 & 1\end{pmatrix},\, \begin{pmatrix} 0 & 1\\ 1 & 0 \end{pmatrix} \right\}
\end{equation}
we put
\[
\wA_{(1,2)\times (j,\pi(j))} = \begin{pmatrix} 1 & 0\\ 0 & 1\end{pmatrix} \un(\xi_j = +1) +\begin{pmatrix} 0 & 1\\ 1 & 0 \end{pmatrix} \un(\xi_j=-1);
\]
if \eqref{switchable2by2} does not hold then we set $\wA_{(1,2)\times(j,\pi(j))} = A_{(1,2)\times(j,\pi(j))}$. Finally, we set $\wA(i,j)=A(i,j)$ for all $(i,j)\notin \{1,2\}\times L_1\cup L_2$. 
Note that $\tA_{(1,2)\times \mN_A(1)\triangle \mN_A(2)}$ is generated by first sampling $A_{(1,2)\times \mN_A(1)\triangle \mN_A(2)}$ uniformly under the conditioning on $A^{(1,2)}$, and then independently and uniformly resampling the $2\times2$ submatrices $A_{(1,2)\times (j,\pi(j))}$. It readily follows that $\tA_{(1,2)\times [n]}\eqd A_{(1,2)\times [n]}$ conditional on $A^{(1,2)}$.
Denoting the first two rows of $\wA+Z$ by $\wR_1,\wR_2$, by replacing $A$ with $\wA$ in \eqref{walk:goal0} it suffices to show
\begin{equation}	\label{walk:goal1}
 \pro{ \left| \big( \wR_1-\wR_2\big) \cdot u \right| \le r \ \middle|\ A^{(1,2)}} 
\ll \left( 1+ \frac{r\sqrt{n}}{\rho}\right)\left(\frac{\log(n/\rho)}{\ell}\right)^{1/2} + e^{-c\ell}.
\end{equation}

Define
\begin{equation}	\label{def:L1prime}
L_1'= L_1'(A):=\big\{ j\in L_1: \text{\eqref{switchable2by2} holds}\big\}.
\end{equation}
Using this set we can express the dot products $\wR_1\cdot u, \wR_2\cdot u $ in a manner which exposes the dependence on the Rademacher variables $\xi_j$:
\begin{align*}
\wR_1\cdot u & = z_1(A) + \sum_{j\in L_1'} u_j \un (\xi_j=+1) + u_{\pi(j)}\un(\xi_j=-1),\\
\wR_2\cdot u & = z_2(A) + \sum_{j\in L_1'} u_{\pi(j)} \un (\xi_j=+1) + u_{j}\un(\xi_j=-1),
\end{align*}
where $z_1(A), z_2(A)\in \C$ denote quantities that depend only on $A$. Subtracting we obtain
\begin{align}
(\wR_1-\wR_2)\cdot u 
&= z(A) + \sum_{j\in L_1'} (u_j-u_{\pi(j)}) \un(\xi_j=1) + (u_{\pi(j)}- u_j) \un(\xi_j=-1) 	\notag\\
&= z(A) + \sum_{j\in L_1'} \xi_j \partial_j(u)		\label{walkrep}
\end{align}
where $\partial_j(u):= u_j-u_{\pi(j)}$ and $z(A)$ depends only on $A$. 
Since $\pi(j)\in L_2$ for each $j\in L_1'\subset L_1$ we have
\begin{equation}	\label{partialbounds}
\frac{\rho}{\sqrt{n}}\le |\partial_j(u)|\le 2 \qquad \forall j\in L_1'.
\end{equation}

Our first task is to show that
$|L_1'|\gg \ell$
with high probability in the randomness of the first two rows of $A$. The conditioning on $A^{(1,2)}$ has fixed all entries but the submatrix $A_{(1,2)\times \mN_A(1)\triangle \mN_A(2)}$, and has also fixed $m:=|\mN_A(1)\setminus \mN_A(2)|$. Moreover, the submatrix $A_{(1,2)\times \mN_A(1)\triangle \mN_A(2)}$ is determined by the set $\mN_A(1)\setminus \mN_A(2)$, which is uniformly distributed over subsets of $\mN_A(1)\triangle \mN_A(2)$ of size $m$.
Define 
\begin{equation}
L_1'' = \mN_A(1)\cap L_1,\qquad L_2'' = \mN_A(2)\cap \pi(L_1'')\subset L_2.
\end{equation}
We have $|L_1'|\ge |L_2''|$; indeed, $L_2''$ is the image under $\pi$ of the elements of $j\in L_1$ for which the first alternative in \eqref{switchable2by2} holds.
Now we obtain a high probability lower bound on $|L_2''|$ by two applications of the bound \eqref{conc2} in Lemma \ref{lem:conc}. Applying \eqref{conc2} with $T= \mN_A(1)\triangle \mN_A(2)$, $U= \mN_A(1)\setminus \mN_A(2)$, $T_0= L_1$ and $\delta = 1/2$ (say) gives
\begin{equation}	\label{L1prime}
\pro{ |L_1''| \ge \ell/4} \ge 1-2\exp(-c\ell).
\end{equation}
Next, conditional on $L_1''$ we can apply \eqref{conc2} again with $T= (\mN_A(1)\triangle \mN_A(2))\setminus L_1$, $U= (\mN_A(2)\setminus \mN_A(1))\setminus L_1$, $T_0 = \pi(L_1'')$ and $\delta=1/2$. With these choices we have
\[
|T| = 2m-\ell, \qquad |U| = m-(\ell-|L_1''|), \qquad |T_0| = |L_1''|
\] 
so on the event $|L_1''|\ge \ell/4$,
\[
\frac{|U||T_0|}{|T|} = \frac{m-\ell+|L_1''|}{2m-\ell} |L_1''| \ge \frac{\max(m-\ell,\ell/4)}{2m}\frac{\ell}{4} \gg \ell
\]
and we obtain
\begin{align*}
\pro{ |L_2''| \ge c\ell } \un( |L_1''|\ge \ell/4) &\ge \pro{ |L_2''| \ge \frac12 \frac{m-\ell+|L_1''|}{2m-\ell} |L_1''|}\un(|L_1''|\ge \ell/4) \\
&\ge 1 - 2\exp( -c\ell)
\end{align*}
for a sufficiently small constant $c>0$. Combining this bound with \eqref{L1prime} and the lower bound $|L_1'|\ge |L_2''|$ we obtain
\begin{equation}	\label{lb:L2prime}
\pro{ |L_1'| \ge c\ell } = 1-O(e^{-c\ell})
\end{equation}
as desired.
We henceforth condition on a realization of $A$ satisfying $|L_1'|\ge c\ell$. Now it suffices to show
\begin{equation}	\label{walk:goal2}
 \pro{ \left| \big( \wR_1-\wR_2\big) \cdot u \right| \le r }
\ll \left( 1+ \frac{r\sqrt{n}}{\rho}\right)\left(\frac{\log(n/\rho)}{\ell}\right)^{1/2}.
\end{equation}
At this point the only randomness is in the Rademacher variables $\xi_j$.

Next we locate a large subset of $L_1'$ on which the discrete derivatives $\partial_j(u)$ have roughly the same size.
For $k\ge -1$ define $L^{(k)} = \{j\in L_1': 2^{-(k+1)}<|\partial_j(u)|\le 2^{-k}\}$. 
From \eqref{partialbounds} we have 
\[
L_1'\subset \bigcup_{k=-1}^{O(\log(n/\rho))}L^{(k)}
\]
so by the pigeonhole principle we have
\begin{equation}
|L^*|:= |L^{(k)}| \gg \frac{\ell}{\log(n/\rho)}
\end{equation}
for some $k$. Now define $v\in \C^n$ to have components $v_j= \partial_j(u)\un(j\in L^*)$. 
From \eqref{partialbounds} and the fact that the components of $v$ vary by a factor at most $2$ on $L^*$ we have 
\begin{equation}
|v_j|\ge \max\left( \frac{\rho}{\sqrt{n}}, \frac12\|v\|_\infty\right) \qquad \forall \,j\in L^*.
\end{equation}
From this we obtain 
\begin{equation}
\|v\| \ge \frac{\rho}{\sqrt{n}}|L^*|^{1/2} \gg \frac{\rho}{\sqrt{n}}\left(\frac{\ell}{\log(n/\rho)}\right)^{1/2}
\end{equation}
and
\begin{equation}
\|v\|_\infty \le \frac{2\|v\|}{|L^*|^{1/2}} \ll \left(\frac{\log(n/\rho)}{\ell}\right)^{1/2} \|v\|.
\end{equation}
From the expression \eqref{walkrep}, we condition on the variables $(\xi_j)_{j\notin L^*}$ and apply Lemma \ref{lem:be} to get
\begin{align*}
 \pr_{(\xi_j)_{j\in L^*}}\Big( \left| \big( \wR_1-\wR_2\big) \cdot u \right| \le r \Big)
&\le \sup_{z\in \C} \pro{ \bigg| z + \sum_{j\in L^*} \xi_j \partial_j(u)\bigg|\le r }  \\
&= \sup_{z\in \C} \pro{ \left| z + \sum_{j=1}^n \xi_j v_j\right|\le r }  \\
&\ll \frac{r}{\|v\|} + \frac{\|v\|_\infty}{\|v\|}\\
&\ll \left( 1+ \frac{r\sqrt{n}}{\rho}\right)\left(\frac{\log(n/\rho)}{\ell}\right)^{1/2}.
\end{align*}
Undoing the conditioning on $(\xi_j)_{j\notin L^*}$ gives \eqref{walk:goal2} as desired.
\end{proof}

\subsection{Conclusion of proof of Theorem \ref{thm:ssv}}

Fix $\gamma\ge 1$ and let $C_1,\Gamma>0$ to be chosen sufficiently large. We may and will assume that $n$ is sufficiently large depending on $\gamma$. We may also assume
\begin{equation}	\label{concl:dlb}
d\ge \log^{2C_1}n
\end{equation}
as the desired bound holds trivially otherwise. 

From our hypotheses, \eqref{pf} and the triangle inequality
\[
\|A+Z\|_{\oneperp} \le d + n^\gamma \le 2n^\gamma \le n^{\gamma+1}
\]
with probability 1.
Thus, we may restrict to the event $\mB(n^{\gamma+1/2})$. 
Set
\begin{equation}	\label{end.m}
m=\frac{cn}{\gamma \log^3n}
\end{equation}
for a sufficiently small constant $c>0$.
With this choice of $m$ and the lower bound \eqref{concl:dlb}, taking $C_1$ sufficiently large ($C_1\ge 4$ will do),
we can apply Lemma \ref{lem:discrep} to bound
\begin{equation}	\label{end.disc}
\pro{ A\notin \mA^{\edge}\left( \frac{m}8, \frac12\right)} \le n^{O(1)} e^{-cd}.
\end{equation}
Furthermore, recalling the events \eqref{def:notflat}, by applying Proposition \ref{prop:flat} to $A+Z$ and $(A+Z)^*$ and taking the union bound we have
\begin{equation}	\label{end.good}
\pro{ \good\left( m,\, n^{-\Gamma_0}, \, n^{-\Gamma+1/2}\right)} \ge 1-2e^{-cd}
\end{equation}
for some $\Gamma_0\ll \gamma \log_dn$, as long as
\[
n^{-\Gamma} < n^{-\Gamma_0} n^{\gamma+1/2}\sqrt{d}.
\]
We assume henceforth that $\Gamma\ge \Gamma_0$, so that \eqref{end.good} holds.

Applying Lemma \ref{lem:goodoverlap} (taking $\Gamma \ge 10.5$) followed by Lemma \ref{lem:walk}, 
\begin{align*}
& \pro{\good(m, n^{-\Gamma_0}, n^{-\Gamma+1/2}) \wedge \left\{ s_n(A+Z)\le n^{-\Gamma}, \, A\in \mA^{\edge}\left( \frac{m}8, \frac12\right)\right\}}\\
&\quad \le \frac{2}{mn} \sum_{i_1,i_2=1}^n \pr\bigg(  \mO_{i_1,i_2}\left( \frac{md}{8n} , \frac{1}{2}n^{-\Gamma_0}, n^{-\Gamma+1/2}\right) \\
&\qquad\qquad\qquad\qquad \qquad \wedge \left\{ \big| (R_{i_1}-R_{i_2}) \cdot u^{(i_1,i_2)}\big| \le  8n^{ -\Gamma+\Gamma_0 + 1/2}\right\} \bigg) \\
&\quad \ll \frac{n}{m} \left( 1+ n^{-\Gamma+ 2\Gamma_0+1}\right) \left(\frac{n}{md}\right)^{1/2} (\Gamma_0+1/2)^{1/2}\log^{1/2}n + \frac{n}me^{-cmd/n}\\
&\quad \ll_\gamma \frac{1}{\sqrt{d}}\left(\frac{n}m\right)^{3/2}  (1+n^{-\Gamma+2\Gamma_0+1}) \log n + \frac{n}me^{-cmd/n}.
\end{align*}
Taking
\begin{equation}
\Gamma= 2\Gamma_0 + 1 \ll \gamma \log_dn
\end{equation}
and substituting the bounds \eqref{end.disc}, \eqref{end.good} and our choice \eqref{end.m} for $m$, we conclude
\begin{align*}
\pro{ s_n(A+Z) \le n^{-\Gamma}\,} 
&\ll_\gamma  \frac{\log^{5.5}n}{\sqrt{d}} + n^{O(1)}e^{-cd} + (\log^3n) e^{-cd/(\gamma \log^3n)}\\
&\ll  \frac{\log^{5.5}n}{\sqrt{d}} .
\end{align*}
The proof of Theorem \ref{thm:ssv} is complete.

\section{Proof of Theorem \ref{thm:main}: reduction to asymptotics for singular value distributions}		\label{sec:highlevel}

We turn now to the proof of Theorem \ref{thm:main}.
We begin by recalling a lemma concerning the logarithmic potential, which allows us to convert the question of convergence of the ESDs $\mu_{\Abar_n}$ to questions about empirical \emph{singular value} distributions. 
For a Borel probability measure $\mu$ over $\C$ integrating $\log|\cdot |$ in a neighborhood of infinity, the \emph{logarithmic potential} $U_\mu:\C\to (-\infty,\infty]$ is defined
\begin{equation}
U_\mu(z):= -\int_\C\log |\lambda-z| \dd\mu(\lambda).
\end{equation}
Recall from \eqref{def:esvd} our notation $\nu_M$ for the empirical singular value distribution of a matrix $M$.
The following is taken from \cite[Lemma 4.3]{BoCh:survey} (see also \cite[Theorem 1.20]{TaVu:esd}).

\begin{lemma}	\label{lem:logpot}
For each $n\ge1$ let $M_n$ be a random $n\times n$ matrix with complex entries. 
Suppose that for a.e. $z\in \C$, 
\begin{enumerate}
\item there exists a probability measure $\nu_z$ on $\R_+$ such that $\nu_{M_n-z}\to \nu_z$ in probability;
\item the measures $\nu_{M_n-z}$ uniformly integrate the function $s\mapsto \log (s)$ in probability, i.e.\
for every $\eps>0$ there exists $T<\infty$ such that
\begin{equation}	\label{ui:log}
\sup_{n\ge1} \;\pro{ \int_{\{|\log(s)|>T\}} |\log(s)|\,\dd \nu_{M_n-z}(s)  > \eps} \le \eps.
\end{equation}
\end{enumerate}
Then $\mu_{M_n}$ converges in probability to the unique probability measure $\mu$ on $\C$ satisfying
\begin{equation}	\label{Umu}
U_\mu(z) = - \int_0^\infty \log(s) \,\dd\nu_z(s)
\end{equation}
for all $z\in \C$.
\end{lemma}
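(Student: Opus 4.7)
The plan is to exploit the classical identity
\begin{equation}
U_{\mu_{M_n}}(z) = -\frac{1}{n}\log\bigl|\det(M_n-z)\bigr| = -\int_0^\infty \log(s)\,\dd\nu_{M_n-z}(s),
\end{equation}
valid whenever $M_n-z$ is invertible, which reexpresses the logarithmic potential of the ESD purely in terms of the empirical singular value distribution of $M_n-z$. Hypotheses (1) and (2) are tailored exactly to give convergence of the right-hand side.

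First I would show that for a.e.\ $z\in \C$, $U_{\mu_{M_n}}(z) \to U(z) := -\int_0^\infty \log(s)\,\dd\nu_z(s)$ in probability. Hypothesis (1) gives $\nu_{M_n-z}\to \nu_z$ weakly in probability, so for any $T>0$ the truncated integrals $\int [(\log s\wedge T)\vee(-T)]\,\dd\nu_{M_n-z}(s)$ converge in probability to their analogues against $\nu_z$ (the truncated $\log$ is a bounded continuous function on $[0,\infty)$). Hypothesis (2) bounds the tail contribution $\int_{\{|\log s|>T\}}|\log s|\,\dd\nu_{M_n-z}(s)$ uniformly in $n$ and in probability, allowing me to let $T\to\infty$ and remove the truncation. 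A Fatou-type argument at the same time shows $\nu_z$ integrates $\log$, so $U(z)$ is well-defined and finite for a.e.\ $z$.

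The main obstacle is upgrading the a.e.-$z$ convergence of logarithmic potentials (in probability) to weak/vague convergence of the measures $\mu_{M_n}$ themselves. The key is the distributional identity $\Delta U_\nu = -2\pi\nu$ on $\C$: if one can promote the pointwise convergence to $U_{\mu_{M_n}}\to U$ in $L^1_{\text{loc}}(\C)$, then taking distributional Laplacians yields $\mu_{M_n}\to -\tfrac{1}{2\pi}\Delta U$ as distributions, hence vaguely. This upgrade proceeds by a subsequence argument: given any subsequence, extract a further subsequence along which pointwise convergence holds almost surely at a countable dense set of probe points $z$, verify tightness of $(\mu_{M_n})$ (for instance from a uniform bound on $\tfrac{1}{n}\|M_n\|_{\HS}^2 = \int s^2\,\dd\nu_{M_n}(s)$, which can be extracted from (2) applied at $z=0$ together with a crude operator-norm bound), and invoke dominated convergence with a locally integrable envelope for $\log|\lambda-z|$ to promote pointwise convergence to $L^1_{\text{loc}}$ convergence in $z$. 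The limit $\mu := -\tfrac{1}{2\pi}\Delta U$ is thereby uniquely identified and automatically a probability measure; since every subsequence yields this same limit, the full sequence $(\mu_{M_n})$ converges in probability to $\mu$, completing the proof.
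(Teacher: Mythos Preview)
The paper does not supply a proof of this lemma; it is quoted directly from \cite[Lemma~4.3]{BoCh:survey} (with a pointer also to \cite[Theorem~1.20]{TaVu:esd}). So there is no in-paper argument to compare against.

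Your outline is the standard Girko route and is essentially correct. One point needs repair: you claim tightness of $(\mu_{M_n})$ follows from a bound on $\tfrac{1}{n}\|M_n\|_{\HS}^2=\int s^2\,\dd\nu_{M_n}(s)$ extracted from hypothesis~(2). Hypothesis~(2) is a uniform-integrability statement for $\log s$ against $\nu_{M_n-z}$ and gives no control on second moments of $\nu_{M_n}$; there is also no ``crude operator-norm bound'' available in the hypotheses. What~(2) at $z=0$ \emph{does} give is a uniform-in-$n$ bound, in probability, on $\int_0^\infty \log^+ s\,\dd\nu_{M_n}(s)$. Weyl's majorization inequality then yields
\[
\int_\C \log^+|\lambda|\,\dd\mu_{M_n}(\lambda)\;\le\;\int_0^\infty \log^+ s\,\dd\nu_{M_n}(s),
\]
and Markov's inequality applied to $\log^+|\lambda|$ gives tightness of $(\mu_{M_n})$ in probability. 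With that fix your subsequence-plus-$L^1_{\mathrm{loc}}$ argument goes through exactly as in the cited references: along a further subsequence one has almost-sure convergence $U_{\mu_{M_n}}(z)\to U(z)$ for a.e.\ $z$; the family $(U_{\mu_{M_n}})$ is locally uniformly integrable in $z$ (by Fubini, local integrability of $\log|\cdot|$ on $\C$, and the tightness just established for the large-$|\lambda|$ contribution); hence $U_{\mu_{M_n}}\to U$ in $L^1_{\mathrm{loc}}$, and taking distributional Laplacians identifies the unique limit $\mu=-\tfrac{1}{2\pi}\Delta U$.
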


(We note that \cite{BoCh:survey} uses the weak topology on the space of measures -- defined in terms of bounded continuous test functions -- rather than the vague topology used in this article. However, under the uniform integrability assumption \eqref{ui:log} the assumption (1) above is equivalent to weak convergence in probability.)

We introduce the centered and rescaled matrix
\begin{equation}	\label{def:Y}
Y_n = \frac{1}{\sqrt{p(1-p)}} (A_{n} - p \1\1^\tran) = \sqrt{n}\,\Abar_n - \sqrt{\frac{p}{1-p}}\1\1^\tran
\end{equation}
where we recall our notation $p=d/n$.
The following two propositions, along with Corollary \ref{cor:ssv}, are the main ingredients for establishing the conditions (1) and (2) in Lemma \ref{lem:logpot}, and hence for proving Theorem \ref{thm:main}. The proofs are deferred to later sections.

\begin{prop}[Weak convergence of singular value distributions]	\label{prop:weak}
Assume $\log^4n\le d\le n/2$. For each $z\in \C$ there exists a probability measure $\nu_z$ on $\R_+$ such that $\nu_{\frac1{\sqrt{n}}Y_n-z}\to \nu_z$ in probability. Moreover, the family $\{\nu_z\}_{z\in \C}$ satisfies the relation \eqref{Umu} with $\mu=\muc$.
\end{prop}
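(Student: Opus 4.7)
The plan is to carry out the two-step comparison outlined in the paper overview: replace $A_n$ first with an iid Bernoulli matrix and then with an iid Gaussian matrix, for which the limiting singular value distribution and its relation to $\muc$ are classical. Before beginning, observe that $\frac{1}{\sqrt{n}} Y_n - zI$ differs from $\Abar_n - zI$ by a rank-one matrix, so by the Weyl interlacing inequality for singular values the two empirical singular value distributions agree in Kolmogorov distance up to $O(1/n)$. It therefore suffices to prove convergence in probability of $\nu_{\Abar_n - zI}$ to the desired limit $\nu_z$.

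Step one: compare $A_n$ to an iid Bernoulli matrix $B_n = (b_{ij})$ with $b_{ij} \sim \mathrm{Ber}(p)$. The conditional law of $B_n$ given $\{B_n \in \mA_{n,d}\}$ coincides with the law of $A_n$, so for any bounded continuous functional $F$ of the singular value distribution,
\[
\e F\bigl(\nu_{\Abar_n - zI}\bigr) \;=\; \frac{\e\bigl[F\bigl(\nu_{\frac{1}{\sqrt{np(1-p)}} B_n - zI}\bigr)\,\un(B_n \in \mA_{n,d})\bigr]}{\pr(B_n \in \mA_{n,d})}.
\]
Following Tran--Vu--Wang \cite{TVW}, I would combine a lower bound on $\pr(B_n \in \mA_{n,d})$ of the form $e^{-o(n^2)}$ (a local CLT for the joint distribution of row and column sums of $B_n$, proved in Appendix \ref{app:band}) with sufficiently strong unconditional concentration of the singular value statistic under the iid model to transfer weak convergence from the Bernoulli ensemble to the regular digraph ensemble. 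Step two then replaces $B_n$ with a Gaussian matrix: the entries of $\frac{1}{\sqrt{np(1-p)}}(B_n - p\1\1^\tran)$ are iid centered with variance $1/n$, and after discarding the rank-one shift, matching of first and second moments together with control of higher moments via Chatterjee's invariance principle (Theorem \ref{thm:invar}) allows replacement by iid Gaussian entries at the level of smoothed linear statistics such as the Stieltjes transform of the singular value distribution, provided $d \ge \log^4 n$.

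For the Gaussian endpoint $\frac{1}{\sqrt{n}} G_n - zI$ with $G_n$ having iid standard Gaussian entries, convergence of the empirical singular value distribution to a deterministic limit $\nu_z$ is classical (it is exactly the Hermitization used in Girko's method), and the Circular Law for the Ginibre ensemble ($\mu_{\frac{1}{\sqrt{n}} G_n} \to \muc$) together with well-known uniform integrability of $\log(\cdot)$ for Gaussian singular values yields the identity $U_{\muc}(z) = -\int_0^\infty \log s \,d\nu_z(s)$ via Lemma \ref{lem:logpot}. The main obstacle will be step one: since row sums of $B_n$ concentrate on a scale $\sqrt{np(1-p)}$, a sharp $2n$-dimensional local limit theorem (accounting for the overall sum constraint) is required to obtain a lower bound on $\pr(B_n \in \mA_{n,d})$ that is strong enough to beat the inflation factor introduced by conditioning, and this must be combined with quantitative concentration of the singular value functional under the unconditioned Bernoulli model.
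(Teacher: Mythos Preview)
Your proposal follows the same two-step comparison as the paper: condition a Bernoulli matrix on having the correct row and column sums (the Tran--Vu--Wang idea), then pass from Bernoulli to Gaussian via Chatterjee's invariance principle, and finally invoke the known limiting singular value distributions for shifted Gaussian matrices. The paper packages these steps as Lemmas~\ref{lem:YtoX}, \ref{lem:XtoG}, and~\ref{lem:dosi} respectively, and the short proof in Section~\ref{sec:weak} assembles them exactly as you describe.

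Two points of precision. First, your required lower bound $\pr(B_n\in\mA_{n,d})\ge e^{-o(n^2)}$ is too weak: the concentration estimate for linear statistics of the Hermitized Bernoulli matrix (via Guionnet--Zeitouni, Lemma~\ref{lem:guze}) gives tails of order $\exp(-c\,nd\,\eps^4/L^2|I|^2)$, so after dividing by $\pr(B_n\in\mA_{n,d})$ one needs the latter to be at least $e^{-o(nd)}$. In the sparse regime $d=\log^4 n$ this is a much stronger requirement than $e^{-o(n^2)}$. The paper obtains $e^{-O(d^{2/3}n\log n)}$ (Lemma~\ref{lem:band}), not via a local CLT for row and column sums but by a Shamir--Upfal-type argument showing that the Bernoulli digraph typically contains a $d$-regular factor, combined with the Ore--Ryser criterion. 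Second, for the Gaussian endpoint the paper does not derive the relation \eqref{Umu} by running Lemma~\ref{lem:logpot} backward from the Ginibre Circular Law; instead it cites Dozier--Silverstein for the existence of $\nu_z$ and Pan--Zhou for the explicit verification of \eqref{Umu} with $\mu=\muc$, which is more direct.
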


\begin{prop}[Anti-concentration of the spectrum]		\label{prop:wegner}
Assume $\log^4n\le d\le n/2$. There are absolute constants $C,c>0$ such that with probability $1-O(e^{-n})$, for all $\eta\in (0, 1]$,
\begin{equation}	\label{anti_spec}
\nu_{\frac1{\sqrt{n}}Y_n-z}([0,\eta]) \le C( \eta + d^{-1/48}).
\end{equation}
\end{prop}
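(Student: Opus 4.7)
The plan follows the two-step comparison strategy described in Section~\ref{sec:overview}. Lemma~\ref{lem:wegner.gaussian} supplies an anti-concentration estimate for singular values of perturbed iid Gaussian matrices, and this estimate is to be transferred first to the centered and rescaled Bernoulli matrix $\tilde B_n := \tfrac1{\sqrt{p(1-p)}}(B_n - p\1\1^\tran)$ via Chatterjee's invariance principle (Theorem~\ref{thm:invar}), and then to $Y_n$ via the Tran--Vu--Wang conditioning argument.

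First I would reduce to a bound on the Stieltjes transform of the symmetrized singular value distribution. For any $n \times n$ matrix $M$ and $\eta > 0$,
\[
\nu_M([0,\eta]) \le \frac{2\eta^2}{n}\sum_{i=1}^n \frac{1}{s_i(M)^2 + \eta^2},
\]
so writing $M_n(z) := \tfrac1{\sqrt n}Y_n - z$, it suffices to prove that with probability $1 - O(e^{-n})$,
\[
\frac{1}{n}\sum_{i=1}^n \frac{1}{s_i(M_n(z))^2 + \eta^2} \le \frac{C}{\eta} + \frac{Cd^{-1/48}}{\eta^2} \quad \text{for all } \eta \in (0,1].
\]
By monotonicity of both sides it suffices to check this on a dyadic grid of values of $\eta$ in $[d^{-1/48}, 1]$.

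For the Gaussian model $\tilde G_n$ with iid complex Gaussian entries of variance~$1$, Lemma~\ref{lem:wegner.gaussian} gives $\nu_{\frac1{\sqrt n}\tilde G_n - z}([0,\eta]) \le C\eta$ with overwhelming probability, uniformly in $\eta$, which implies the required Stieltjes-transform bound in the Gaussian case. I would then apply Chatterjee's invariance principle (Theorem~\ref{thm:invar}) to the functional $F_\eta(M) := \tfrac1n \sum_i \tfrac{1}{s_i(M)^2 + \eta^2}$, written as an $\eta$-regularized trace of the resolvent of the Hermitization of $M$, in order to compare $\tfrac1{\sqrt n}\tilde G_n$ with $\tfrac1{\sqrt n}\tilde B_n$. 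Since the entries of $\tilde B_n$ have variance $1$ but fourth moment of order $n/d$, the invariance principle produces an expected swap error of the form $n^{O(1)}(n/d)\eta^{-O(1)}$; an exponential concentration bound (via a bounded martingale difference argument exploiting $\|\tilde B_n\| = O(\sqrt n)$ with overwhelming probability) then upgrades this to a high-probability statement for $\tilde B_n$. Finally, since $A_n \eqd B_n \mid \{B_n \in \mA_{n,d}\}$, the Tran--Vu--Wang conditioning gives $\pro{\event(A_n)} \le \pro{\event(B_n)}/\pro{B_n \in \mA_{n,d}}$; combined with the lower bound on $\pro{B_n \in \mA_{n,d}}$ from Appendix~\ref{app:band} and the exponential concentration from the Bernoulli case, this transfers the bound to $Y_n$.

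The main obstacle is controlling the Lindeberg swap uniformly in $\eta$. Each derivative of $F_\eta$ in a matrix entry produces a factor $\eta^{-1}$ via the resolvent identity, while the sparsity of $B_n$ inflates entrywise higher moments by large powers of $n/d$. Moreover, the concentration bound for the Bernoulli case must be exponentially strong in $n$ to overcome the exponentially small denominator $\pro{B_n \in \mA_{n,d}}$ produced by the Tran--Vu--Wang step. Balancing these constraints---the smallest admissible $\eta$ against the loss factor $n/d$, and the smoothness of $F_\eta$ against the required exponential concentration---produces the specific exponent $d^{-1/48}$ in the statement, which is presumably not optimal.
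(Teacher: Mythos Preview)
Your strategy is essentially the one the paper uses: transfer the Gaussian anti-concentration bound (Lemma~\ref{lem:wegner.gaussian}) first to the Bernoulli model via Chatterjee's invariance principle applied to the Stieltjes transform of the Hermitization, then to $Y_n$ via Guionnet--Zeitouni concentration together with the Tran--Vu--Wang conditioning, followed by a dyadic union bound in $\eta$.

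Two clarifications are worth making. First, the Lindeberg error in the paper's Lemma~\ref{lem:XtoG} is $O\big(d^{-1/2}(\Im w)^{-4}\big)$ with no polynomial-in-$n$ prefactor; your $n^{O(1)}(n/d)\eta^{-O(1)}$ is too pessimistic and would not close. Second, the exponent $d^{-1/48}$ does not arise from balancing the Lindeberg error against $\eta$ (that balance gives the sharper $d^{-1/10}$, as the paper notes). It comes from the $Y\to X$ concentration step: applying Lemma~\ref{lem:YtoX} with the $\eta^{-1}$-Lipschitz bump $f_\eta$ requires $\eps^4\gg d^{-1/3}\log n$ in order for the Guionnet--Zeitouni exponent $nd\eps^4$ to dominate the denominator $\exp\big(O(d^{2/3}n\log n)\big)$ from $\pro{B_n\in\mA_{n,d}}$, and combining $\eps\sim d^{-1/12}\log^{1/4}n$ with the hypothesis $d\ge\log^4 n$ yields $\eps=O(d^{-1/48})$. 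Otherwise your outline matches the paper's proof.
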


\begin{remark}
From the existence of the weak limit $\nu_z$ provided by Proposition \ref{prop:weak} we obtain $\nu_{\frac1{\sqrt{n}}Y_n-z}(I) =O_z(|I|) + o(1)$ for any fixed interval $I\subset\R_+$. Proposition \ref{prop:wegner} gives a quantitative improvement of this bound for intervals near zero, providing nontrivial estimates down to the ``mesoscopic scale" $\eta \sim d^{-1/48}$. 
Recently there have been major advances establishing quantitative versions of the Kesten--McKay \cite{BHY:km} and semicircle \cite{BKY:semicircle} laws for undirected random regular graphs of (large) fixed degree or growing degree, proving that these limiting laws are a good approximation for the finite $n$ ESDs on intervals $I$ at the near-optimal scale $|I|\sim n^{\eps-1}$. In particular, we expect that the arguments in \cite{BKY:semicircle} could be used to obtain a similar local law for $\nu_{\frac1{\sqrt{n}}Y_n-z}$.
\end{remark}

Now we conclude the proof of Theorem \ref{thm:main} on Propositions \ref{prop:weak} and \ref{prop:wegner}.
We let $C_0>0$ to be taken sufficiently large. 
For the duration of the proof we abbreviate 
$$\nu_{n,z}:= \nu_{\Abar_n-z}.$$
For any $z\in \C$, $\frac1{\sqrt{n}}Y_n-z$ and $\Abar_n-z$ differ by a matrix of rank one.
By a standard eigenvalue interlacing bound it follows that their singular value distributions are close in Kolmogorov distance, specifically:
\begin{equation}	\label{main.kol}
 \sup_{\eta\in \R} \left|\nu_{n,z}([0,\eta]) - \nu_{\frac1{\sqrt{n}}Y_n-z}([0,\eta])\right| \le 1/n.
\end{equation}
By Lemma \ref{lem:logpot}, the above estimate and Proposition \ref{prop:weak}, it suffices to show the measures $\nu_{n,z}$ uniformly integrate the logarithm function, i.e.\ for every $\eps>0$ there exists $T=T(\eps,z)<\infty$ such that 
\begin{equation}	\label{ui:goal1}
\sup_{n\ge1} \pro{ \int_{\{|\log(s)|>T\}} |\log(s)|\dd\nu_{n,z}(s)>\eps}\le \eps.
\end{equation}

First we address the singularity of $\log$ at infinity. We claim that for any $\eps>0$ and $z\in \C$ there exists $T'=T'(\eps,z)>0$ such that
\begin{equation}	\label{logtight}
\int_{e^{T'}}^\infty \log(s) \, \dd \nu_{n,z}(x) \le \eps/2
\end{equation}
almost surely for all $n\ge 1$. 
Indeed, note that for any fixed $z\in \C$,
\begin{align*}
\int_0^\infty s^2 \,\dd\nu_{n,z}(s) 
&= \frac1n \tr\left( \frac1{\sqrt{d(1-d/n)}}A_n - z\I_n\right)^*\left( \frac1{\sqrt{d(1-d/n)}}A_n  - z\I_n\right)\\
&= \frac1n \sum_{i,j=1}^n \left|\frac1{\sqrt{d(1-d/n)}}a_{ij} - z\delta_{ij}\right|^2\\
&\le \frac{2}{n} \sum_{i,j=1}^n \frac{a_{ij}}{ d(1-d/n)} + |z|^2 \delta_{ij}\\
&\le \left(\frac4{nd}\sum_{i,j=1}^n a_{ij}\right)  + 2|z|^2\\
&\ll 1+ |z|^2
\end{align*}
with probability one, where in the fourth line we used our assumption $d\le n/2$ (see Remark \ref{rmk:reduced}), and in the final line we used that $\sum_{i,j=1}^na_{ij} =dn$ for any element $A=(a_{ij})\in \mA_{n,d}$.
Thus, with probability one, for all $T'$ sufficiently large and for all $n$,
\[
\int_{e^{T'}}^\infty \log (s) \, \dd \nu_{n,z}(s) 
\le T'e^{-2T'} \int_{e^{T'}}^\infty s^2 \, \dd \nu_{n,z}(s)
\ll_z T'e^{-2T'}
\]
and we can take $T'$ sufficiently large to obtain \eqref{logtight}.

It remains to control the contribution of small singular values, i.e.\ to show that for some $T=T(\eps,z)\ge T'$,
\begin{equation}	\label{ui:goal2}
\sup_{n\ge1}\pro{ \int_0^{e^{-T}} |\log(s)| d\nu_{n,z}(s)>\eps/2} \le \eps.
\end{equation}
By Corollary \ref{cor:ssv} and taking $C_0\ge C_1'$, there exists $\Gamma=o( \log n)$ such that
\begin{equation}	\label{ui:fromssv}
\pro{ s_n\left( \frac1{\sqrt{d(1-d/n)}}A_n -z\right) \le n^{-\Gamma}} =o(1) .
\end{equation}
Thus, it suffices to show
\begin{equation}	\label{ui:goal3}
\sup_{n\ge1}\pro{ \int_{n^{-\Gamma}}^{e^{-T}} |\log(s)| d\nu_{n,z}(s)>\eps/2} \le \eps/2.
\end{equation}
(While the term $o(1)$ in \eqref{ui:fromssv} may not be smaller than $\eps/2$ for small values of $n$, we can take $T$ larger, if necessary, in \eqref{ui:goal3} to make the left hand side equal to zero unless $n$ is sufficiently large.)
Now since
\[
\sup_{s\in [n^{-\Gamma}, e^{-T}]} |\log(s)|\le \Gamma\log n =o(\log^2n),
\]
from \eqref{main.kol} we have
\begin{equation*}
\left| \int_{n^{-\Gamma}}^{e^{-T}} |\log(s)| d\nu_{n,z}(s) - \int_{n^{-\Gamma}}^{e^{-T}} |\log(s)| d\nu_{\frac1{\sqrt{n}}Y_n-z}(s) \right| =o\left(\frac{\log^2n}{n}\right) = o(1)
\end{equation*}
so again enlarging $T$ depending on $\eps$ if necessary, it suffices to show
\begin{equation}	\label{ui:goal4}
\sup_{n\ge1}\pro{ \int_{n^{-\Gamma}}^{e^{-T}} |\log(s)| d\nu_{\frac1{\sqrt{n}}Y_n-z}(s)>\eps/4} \le \eps/2.
\end{equation}
From the pointwise bound
\[
|\log(s)| \ll \sum_{m=0}^\infty 1_{[0,2^{-m}]}(s),\quad s\in [0,1]
\]
and Fubini's theorem, we have 
\begin{align*}
\int_{n^{-\Gamma}}^{e^{-T}} |\log(s)| d\nu_{\frac1{\sqrt{n}}Y_n-z}(s)
&\ll \sum_{m=0}^\infty \nu_{\frac1{\sqrt{n}}Y_n-z}([n^{-\Gamma}, e^{-T}\wedge 2^{-m}])\\
&= \sum_{m=0}^{O(\Gamma\log n)} \nu_{\frac1{\sqrt{n}}Y_n-z}([n^{-\Gamma}, e^{-T}\wedge 2^{-m}])\\
&\le \sum_{m=0}^{O(\Gamma\log n)} \nu_{\frac1{\sqrt{n}}Y_n-z}([0, e^{-T}\wedge 2^{-m}]).
\end{align*}
Now by Proposition \ref{prop:wegner} (and assuming $C_0\ge 4$), except with probability $O(e^{-n})$, for all $m\ge 0$,
\[
\nu_{\frac1{\sqrt{n}}Y_n-z}([0, e^{-T}\wedge 2^{-m}]) \ll e^{-T}\wedge 2^{-m} + d^{-1/48}.
\]
Summing these bounds over $m$, we have that on this event,
\begin{equation*}
\int_{n^{-\Gamma}}^{e^{-T}} |\log(s)| d\nu_{\frac1{\sqrt{n}}Y_n-z}(s) \ll e^{-T} + d^{-1/48}\Gamma\log n \ll e^{-T} + o(d^{-1/48}\log^2n).
\end{equation*}
Assuming $C_0\ge 96$ (in addition to our previous assumptions on $C_0$), we have that except with probability $O(e^{-n})$,
\begin{equation}
\int_{n^{-\Gamma}}^{e^{-T}} |\log(s)| d\nu_{\frac1{\sqrt{n}}Y_n-z}(s) \ll e^{-T} + o(1).
\end{equation}
We can now take $T$ sufficiently large depending on $\eps$ to make the right hand side smaller than $\eps/4$ for all $n$ sufficiently large, giving
\[
\sup_{n\ge n_0(\eps)}\pro{ \int_{n^{-\Gamma}}^{e^{-T}} |\log(s)| d\nu_{\frac1{\sqrt{n}}Y_n-z}(s)>\eps/4} \ll e^{-n}.
\]
The right hand side is smaller than $\eps/2$ for all sufficiently large $n$.
We take $T$ larger if necessary to make the integral zero for all other values of $n$, which yields \eqref{ui:goal4}. 
Finally, taking $T\ge T'(\eps,z)$ completes proof of Theorem \ref{thm:main} on Propositions \ref{prop:weak} and \ref{prop:wegner}.

The remainder of the paper is organized as follows.
In Section \ref{sec:hermitization} we recall the approach to studying empirical singular value distributions via Hermitization and the resolvent method.
In Section \ref{sec:BG} we introduce two iid models to be compared with $Y_n$ -- a Bernoulli matrix $X_n$ and a Gaussian matrix $G_n$ -- and state lemmas giving quantitative comparisons between the singular value distributions of $Y_n$ and $X_n$, and between $X_n$ and $G_n$.
In the remainder of Section \ref{sec:comparison} we use these comparison lemmas to prove Propositions \ref{prop:weak} and \ref{prop:wegner}.
The comparison lemmas are proved in Section \ref{sec:compare.proofs}. 
In the appendix we prove a bound on the local density of small singular values for perturbed Gaussian matrices.

\section{The comparison strategy} 	\label{sec:comparison}

\subsection{Hermitization and the Stieljes transform}			\label{sec:hermitization}

To prove Propositions \ref{prop:weak} and \ref{prop:wegner} we will use a popular linearization trick and Stieltjes transform techniques, which we now briefly outline; see \cite{BoCh:survey} for additional background and motivation.

For an $n\times n$ matrix $M$ and $z\in \C$ we define the $2n\times 2n$ matrix
\begin{equation}	\label{def:her}
\Her_z(M):= \begin{pmatrix} 0 & \frac1{\sqrt{n}}M - z\I_n \\ \frac{1}{\sqrt{n}}M^* - \overline{z}\I_n &0 \end{pmatrix}.
\end{equation}
which we refer to as the (shifted and rescaled) \emph{Hermitization} of $M$. 
It is routine to verify that the $2n$ eigenvalues of $\Her_z(M)$ (counted with multiplicity) are the signed singular values $\pm s_1(\frac1{\sqrt{n}}M-z), \cdots, \pm s_n(\frac1{\sqrt{n}}M-z)$. 
Consequently, the ESD $\mu_{\Her_z(M)}$ is the symmetrization of the empirical singular value distribution $\nu_{\frac{1}{\sqrt{n}}M-z}$, i.e.\ 
\begin{equation}	\label{symmetrize}
\mu_{\Her_z(M)}= \frac12\Big(\nu_{\frac{1}{\sqrt{n}}M-z}(\cdot) + \nu_{\frac{1}{\sqrt{n}}M-z}(-\,\cdot)\Big).
\end{equation}
Thus, to prove weak convergence and anti-concentration for the measures $\nu_{\frac1{\sqrt{n}}Y_n-z}$, it will suffice to study the ESDs $\mu_{\Her_z(M)}$.
The key advantage of considering the ESDs of $\Her_z(M)$ over $(\frac1{\sqrt{n}}M - z\I_n)^*(\frac1{\sqrt{n}}M - z\I_n)$ is that $\Her_z(M)$ is a linear function of $M$.

We denote the resolvent of $\Her_z(M)$ at $w\in \C_+$ by
\begin{equation}	\label{def:res}
\Res_{z,w}(M) := (\Her_z(M)-w\I_{2n})^{-1}
\end{equation}
and its normalized trace 
\begin{equation}	\label{def:g}
g_{z,w}(M):= \frac1{2n} \tr \Res_{z,w}(M).
\end{equation}
We will frequently apply the bound
\begin{equation}	\label{bd:resop}
\|\Res_{z,w}(M)\|\le \frac1{\Im w}
\end{equation}
which is immediate from \eqref{def:res} as $\Im w$ is a lower bound on the distance from $w$ to the spectrum of $\Her_z(M)$ in the complex plane.

Denote the Stieltjes transform of a probability measure $\mu$ on $\R$ by
\begin{equation}
m_{\mu}: \C_+\to \C_+, \qquad m_\mu(w) = \int_\R \frac{d\mu(x)}{x-w}.
\end{equation}
If $H$ is an $n\times n$ Hermitian matrix with real eigenvalues $\lambda_1, \cdots, \lambda_n$, the Stietjes transform of its empirical spectral distribution is given by
\begin{equation}	\label{spectralcalc}
m_{\mu_{H}}(w) = \frac1n\sum_{i=1}^n \frac{1}{\lambda_i-w} = \frac{1}{n}\tr (H-w)^{-1}.
\end{equation}
In particular, 
\begin{equation}
g_{z,w}(M) = m_{\mu_{\Her_z(M)}}(w).
\end{equation}

The Stieltjes transforms $w\mapsto g_{z,w}(Y_n)$ will be a key tool in the proofs of Propositions \ref{prop:weak} and \ref{prop:wegner}.
Indeed, it is a standard fact in random matrix theory that weak convergence of the ESDs $\mu_{\Her_z(Y_n)}$ follows from pointwise convergence of their Stieltjes transforms. 
Furthermore, a key advantage of considering Stieltjes transforms over moments is that the former provide good quantitative control of ESDs of Hermitian matrices at short scales, which will be key for obtaining Proposition \ref{prop:wegner}. 
For more discussion of the Stieltjes transform method in Hermitian random matrix theory we refer to the books \cite{AGZ:book, Tao:book} and the review article \cite{BeKn:locallaw}.

\subsection{Comparison with Gaussian and Bernoulli ensembles}	\label{sec:BG}

For each $n\ge 1$ we let $G_n$ denote an $n\times n$ matrix with iid standard real Gaussian entries.
We also let $B_n=(b_{ij}^{(n)})$ be an $n\times n$ matrix with independent Bernoulli$(p)$-distributed 0--1 entries, and set
\begin{equation}	\label{def:X}
X_n = \frac1{\sqrt{p(1-p)}}(B_n-p\1_n\1_n^\tran) 
\end{equation}
where we recall $p:=d/n\le 1/2$.
We denote the entries of $X_n$ by $\xi_{ij}^{(n)}$. 
As with $A_n,Y_n$ we will often suppress the dependence on $n$ and write $G,X,\xi_{ij}$. 
Note that the variables $\xi_{ij}$ are iid centered variables with unit variance. 
We further note that for any $q>2$ we have the moment bound
\begin{equation}	\label{Lq}
\e |\xi_{ij}|^q \le  \left({\frac{1-p}{p}}\right)^{(q-2)/2} \e |\xi_{ij}|^2\le \left({\frac{1}{p}}\right)^{(q-2)/2} = \left(\frac{n}{d}\right)^{(q-2)/2}. 
\end{equation}

The following two lemmas give quantitative comparisons between the ESDs of $\Her_z(Y)$ and $\Her_z(X)$, and of $\Her_z(X)$ and $\Her_z(G)$. The proofs are deferred to Sections \ref{sec:YtoX} and \ref{sec:XtoG}.

\begin{lemma}[Comparison with iid Bernoulli]	\label{lem:YtoX}
Let $z\in \C$ and let $f: \R\to \R$ be an $L$-Lipschitz function supported on a compact interval $I\subset \R$. 
Let $X,Y$ be $n\times n$ matrices as in \eqref{def:X}, \eqref{def:Y}, respectively. Assume $\log^4n\le d\le n/2$.
For any $\eps>0$,
\begin{align}
\pr\bigg( \bigg|  \int_\R f \,\dd \mu_{\Her_z(Y)} - \,&\,\e \int_\R f\,\dd \mu_{\Her_z(X)} \bigg| \ge \eps\bigg) 	\le \frac{|I|}{\eps} \expo{ O(d^{2/3}n\log n)- \frac{cnd\eps^4}{L^2|I|^2}}
\end{align}
for some constant $c>0$.
\end{lemma}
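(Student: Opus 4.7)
The plan is to reduce the claim for $Y_n$ to a concentration bound for $X_n$ via a conditioning identity together with a probability lower bound for the $d$-regularity event. First, since every matrix in $\mA_{n,d}$ contains exactly $nd$ ones, each carries the same probability under the iid Bernoulli law, so the conditional distribution of $B_n$ given $\Omega:=\{B_n\in\mA_{n,d}\}$ is uniform on $\mA_{n,d}$; hence $Y_n\stackrel{d}{=} X_n\mid\Omega$. Writing $F(M):=\int f\dd\mu_{\Her_z(M)}$, this yields
\[
\pr(|F(Y_n)-\e F(X_n)|\ge\eps)\le\frac{\pr(|F(X_n)-\e F(X_n)|\ge\eps)}{\pr(\Omega)}.
\]
The denominator will be handled by the estimate $\pr(\Omega)\ge\exp(-Cd^{2/3}n\log n)$, to be proved in Appendix~\ref{app:band} via a refined Canfield--McKay--Wormald type asymptotic for the number of 0--1 matrices with prescribed line sums; this supplies the $\exp(O(d^{2/3}n\log n))$ factor in the target.

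It remains to establish the iid concentration inequality
\[
\pr(|F(X_n)-\e F(X_n)|\ge\eps)\le \tfrac{|I|}{\eps}\exp\!\big({-}cnd\eps^4/(L^2|I|^2)\big).
\]
I plan to pass through the Stieltjes transform $g_{z,w}(X_n)$ rather than applying McDiarmid directly to $F(X_n)$. The resolvent identity $\Res_{z,w}(H)-\Res_{z,w}(H')=\Res_{z,w}(H)(H'-H)\Res_{z,w}(H')$, combined with the fact that a single-entry change in $B_n$ yields a rank-$2$ perturbation of $\Her_z(X_n)$ of Hilbert--Schmidt norm $O(1/\sqrt{d})$ and the bound $\|\Res_{z,w}(H)\|_{\HS}\le\sqrt{2n}/\Im w$, gives the sharper sensitivity estimate $|g_{z,w}(X)-g_{z,w}(X')|\lesssim 1/(n(\Im w)^2\sqrt{d})$. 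McDiarmid then furnishes Gaussian concentration of $g_{z,w}(X_n)$ with effective variance of order $1/(d(\Im w)^4)$.

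To invert this pointwise control back to the linear statistic, I would use a Helffer--Sj\"ostrand formula with an almost-analytic extension of $f$ at imaginary scale $\eta\sim\eps/L$, which introduces a smoothing error of order $\eps$; then union-bound the pointwise Stieltjes concentration over an $O(|I|/\eta)=O(L|I|/\eps)$-net of points on the line $\{\Im w=\eta\}$, balancing the discretization and concentration scales so as to produce the quartic exponent $nd\eps^4/(L^2|I|^2)$ together with the $|I|/\eps$ prefactor. Substituting this bound and the estimate on $\pr(\Omega)$ into the conditioning inequality yields the claim.

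The main obstacle is the Stieltjes-to-linear-statistic inversion. A direct application of McDiarmid to the $n^2$ entries of $B_n$, using the Hoffman--Wielandt Lipschitz estimate $|F(X)-F(X')|\lesssim L/\sqrt{nd}$, only yields a Gaussian exponent of the form $-c d\eps^2/(nL^2)$, which is far too weak to absorb the factor $\exp(Cd^{2/3}n\log n)$ contributed by the denominator. Extracting the quartic $\eps^4$ dependence with the correct $nd$ coefficient requires both the Hilbert--Schmidt (rather than rank) improvement of the single-entry resolvent sensitivity and a careful choice of the spectral smoothing scale $\eta$, traded against the discretization cost and the pointwise concentration rate.
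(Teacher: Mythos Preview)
Your conditioning framework is exactly the paper's: condition the Bernoulli matrix on the event $\Omega=\{B_n\in\mA_{n,d}\}$, use Lemma~\ref{lem:band} to bound $\pr(\Omega)^{-1}$ by $\exp(O(d^{2/3}n\log n))$, and then reduce to a concentration estimate for the iid linear statistic $F(X_n)=\int f\,\dd\mu_{\Her_z(X_n)}$. The divergence is entirely in how you obtain that concentration estimate.

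The paper does not pass through the Stieltjes transform at all here. It invokes Lemma~\ref{lem:guze} (the Guionnet--Zeitouni concentration inequality for linear spectral statistics), which is proved via Talagrand's inequality for \emph{convex} Lipschitz functions of bounded independent variables. That convexity input (the map $H\mapsto\tr\,\varphi(H)$ is convex for convex $\varphi$) is what produces the exponent $cn^2\eps^4/(K^2L^2|I|^2)$; with $K^2=1/p=n/d$ this becomes $cnd\eps^4/(L^2|I|^2)$ as needed.

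Your proposed route --- McDiarmid on $g_{z,w}(X)$ followed by Helffer--Sj\"ostrand inversion --- does not reach this exponent. Your entrywise sensitivity bound $|g_{z,w}(X)-g_{z,w}(X')|\lesssim 1/(n\sqrt{d}\,\eta^2)$ is correct, and McDiarmid over the $n^2$ independent entries then gives
\[
\pr\big(|g_{z,w}(X)-\e g_{z,w}(X)|\ge t\big)\le 2\exp(-c\,t^2 d\,\eta^4).
\]
But to pass from $\sup_E|g(E+i\eta)-\e g(E+i\eta)|\le t$ to $|F-\e F|\le\eps$ you need (at minimum) the smoothing error $L\eta\lesssim\eps$ and the integrated error $\|f\|_1\cdot t\lesssim L|I|^2 t\lesssim\eps$, forcing $\eta\sim\eps/L$ and $t\sim\eps/(L|I|^2)$. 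Substituting, the resulting exponent is
\[
t^2 d\,\eta^4 \;\asymp\; \frac{d\,\eps^6}{L^6|I|^4},
\]
with no factor of $n$. There is no choice of scales that recovers the missing $n$: the constraints $\eta\lesssim\eps/L$ and $t\lesssim\eps/(L|I|^2)$ already pin down $t^2\eta^4$ up to constants. Since the denominator contributes $\exp(O(d^{2/3}n\log n))$, any concentration exponent lacking a linear factor in $n$ is useless for all $n$ large, and your final bound is vacuous. (Row-wise McDiarmid using the rank bound on the resolvent recovers an $n$ but loses the $d$, and still has the wrong $\eps,L,|I|$ dependence.)

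The substantive point is that bounded-differences alone cannot produce the $nd\eps^4$ exponent; you need the Talagrand-type improvement that exploits convexity of the trace functional, which is what Lemma~\ref{lem:guze} packages. Replacing your Stieltjes/McDiarmid block by a direct appeal to that lemma (applied to $H=\Her_z(X)-H_0$ with $H_0$ the deterministic shift, entries bounded by $K/\sqrt{2n}$ with $K=\sqrt{2/p}$) gives the claimed bound in one line.
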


\begin{lemma}[From iid Bernoulli to iid Gaussian]		\label{lem:XtoG}
Let $z\in \C$ and $w\in \C_+$. We have
\begin{equation}
\big| \e g_{z,w}(X) - \e g_{z,w}(G) \big| \ll  \frac1{d^{1/2}(\Im w)^4}\left( 1+ \frac1{(n\Im w)^2}\right)
\end{equation}
where the implied constant is absolute.
\end{lemma}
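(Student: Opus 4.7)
The plan is to carry out a Lindeberg exchange, replacing the entries of $X$ one at a time by those of $G$, via Chatterjee's invariance principle (Theorem~\ref{thm:invar}). Since the entries $\xi_{ij}$ of $X$ and $G_{ij}$ of $G$ are jointly independent, centered, and of unit variance, the first two moments of each entry agree across the two ensembles; Chatterjee's result then bounds the replacement error in terms of the supremum of the third partial derivatives $\partial_{ij}^3 g_{z,w}(M)$, weighted by the third absolute moments of $\xi_{ij}$ and $G_{ij}$.

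The main calculation is a derivative bound. Writing $R = (\Her_z(M) - w\I_{2n})^{-1}$ and observing that $\partial \Her_z(M)/\partial M_{ij} = n^{-1/2} E_{ij}$, where $E_{ij}$ is the symmetric rank-$2$ matrix with unit entries at positions $(i, n+j)$ and $(n+j, i)$ (so $\|E_{ij}\|_{\mathrm{op}} = 1$ and $\|E_{ij}\|_1 = 2$), iterated application of the resolvent identity $\partial_{ij} R = -n^{-1/2} R E_{ij} R$ gives
\[
\partial_{ij}^3 g_{z,w}(M) = -\frac{3}{n^{5/2}}\, \tr\bigl((R E_{ij})^3 R\bigr).
\]
Cyclicity of trace combined with $|\tr(AB)| \le \|A\|_1 \|B\|_{\mathrm{op}}$ and the uniform bound $\|R\| \le 1/\Im w$ from \eqref{bd:resop} yields
\[
\sup_M |\partial_{ij}^3 g_{z,w}(M)| \ll \frac{1}{n^{5/2}(\Im w)^4}.
\]
From \eqref{Lq} with $q = 3$, $\e|\xi_{ij}|^3 \ll (n/d)^{1/2}$, while $\e|G_{ij}|^3 = O(1)$. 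Chatterjee's theorem applied entry by entry over all $n^2$ positions then gives
\[
|\e g_{z,w}(X) - \e g_{z,w}(G)| \ll n^2 \cdot \frac{1}{n^{5/2}(\Im w)^4} \cdot \left(\frac{n}{d}\right)^{1/2} = \frac{1}{d^{1/2}(\Im w)^4},
\]
which recovers the leading term in the asserted bound.

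The subleading correction of size $d^{-1/2}(\Im w)^{-4}(n \Im w)^{-2}$ should come from a more refined accounting of terms in the Chatterjee error: one of the three copies of $E_{ij}$ can be absorbed into an adjacent resolvent block, so that two factors $R E_{ij} R$ are estimated in Hilbert--Schmidt rather than operator norm, using $\|R\|_{\HS}^2 \le 2n/(\Im w)^2$ together with the $(2n)^{-1}$ prefactor in $g_{z,w}$; this gains an extra factor $(n \Im w)^{-2}$ and is what prevents the bound from blowing up as $\Im w$ shrinks toward the mesoscopic scale $n^{-1}$. The principal technical obstacle, as I see it, is not the Lindeberg idea itself --- entirely standard --- but the bookkeeping needed to obtain precisely the stated form of the correction rather than a cruder monolithic bound such as $d^{-1/2}(\Im w)^{-6}$. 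In particular, one must verify that the rank-$2$ (as opposed to rank-$1$) structure of $E_{ij}$ does not inflate any constants, and that Chatterjee's principle is applied in a form that preserves the explicit split between the principal and subleading contributions.
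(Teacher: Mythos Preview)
Your approach is the same as the paper's --- Lindeberg exchange via Chatterjee's invariance principle, with bounds on the resolvent derivatives --- and your third-derivative computation and the resulting leading-term estimate $d^{-1/2}(\Im w)^{-4}$ are correct. Since this leading term alone is already dominated by the right-hand side of the lemma (the factor $1+(n\Im w)^{-2}$ only weakens the bound), your argument as it stands already proves the statement.

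Your final paragraph, however, misidentifies the source of the subleading term. It is \emph{not} obtained by a Hilbert--Schmidt refinement of the third-derivative estimate; it is an artifact of applying Theorem~\ref{thm:invar} as a black box. The quantity $\lambda_3(f)$ there is $\max_{r\in\{1,2,3\}}\sup|\partial_i^r f|^{3/r}$, so one must also bound the first and second derivatives. The paper shows $|\partial_\alpha g|\le n^{-3/2}(\Im w)^{-2}$ and $|\partial_\alpha^2 g|\ll n^{-2}(\Im w)^{-3}$; cubing the first gives $n^{-9/2}(\Im w)^{-6}$, which after multiplying by $\gamma_3 N\ll (n/d)^{1/2}n^2$ produces exactly the $d^{-1/2}(\Im w)^{-4}(n\Im w)^{-2}$ term. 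Thus if you invoke Theorem~\ref{thm:invar} as stated (as your opening sentence does), you are obliged to bound all three orders of derivative, and the subleading term appears automatically. If instead you run the Lindeberg telescoping directly with $h=\mathrm{id}$ and a third-order Taylor expansion (which is what your computation actually does), only the third derivative enters and you get the sharper bound $d^{-1/2}(\Im w)^{-4}$ without the correction. Either route is fine; just be clear about which one you are using, and drop the Hilbert--Schmidt speculation.
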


In the remainder of this section we use the above lemmas to establish Propositions \ref{prop:weak} and \ref{prop:wegner}.

\subsection{Proof of Proposition \ref{prop:weak}} 	\label{sec:weak}

Our starting point is the following, which gives the desired limiting behavior for the Gaussian matrices $G_n$ in place of $Y_n$. We will then use Lemmas \ref{lem:YtoX} and \ref{lem:XtoG} to transfer this limiting property to $Y_n$.

\begin{lemma}[Convergence of singular value distributions, Gaussian case]	\label{lem:dosi}
For each $z\in \C$ there exists a probability measure $\nu_z$ on $\R_+$ such that $\nu_{\frac1{\sqrt{n}}G_n-z}\to \nu_z$ in probability and in expectation. Moreover, the family $\{\nu_z\}_{z\in \C}$ satisfies the relation \eqref{Umu} with $\mu=\muc$.
\end{lemma}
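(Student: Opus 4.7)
The plan is to combine three classical facts about the real Ginibre ensemble with Lemma \ref{lem:logpot}, which is tailor-made for this reverse-engineering step: rather than using the lemma to go from singular value data to ESDs, we use it to identify the relation \eqref{Umu} once the ESD limit is independently known.

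First I would establish the existence of the limit $\nu_z$. The Hermitization $\Her_z(G_n)$ is a $2n\times 2n$ Hermitian random matrix whose entries, up to the deterministic shift by $-z$ on the two off-diagonal blocks, are jointly Gaussian. By standard resolvent/Stieltjes transform techniques for matrices with a deterministic profile (or alternatively by the Dozier--Silverstein description of singular value limits of iid matrices under deterministic perturbation), the ESD $\mu_{\Her_z(G_n)}$ converges in probability to an explicit deterministic symmetric measure $\hat\nu_z$ on $\R$. By \eqref{symmetrize}, this is equivalent to $\nu_{\frac{1}{\sqrt{n}}G_n-z}\to\nu_z$ in probability for the measure $\nu_z$ on $\R_+$ obtained by pushing forward $\hat\nu_z$ under $s\mapsto |s|$. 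Convergence in expectation follows by bounded convergence, since for $f\in C_c(\R)$ the integrals $\int f\,\dd\nu_{\frac{1}{\sqrt{n}}G_n-z}$ are uniformly bounded by $\|f\|_\infty$.

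Next I would invoke the Circular Law for the real Ginibre ensemble: $\mu_{\frac{1}{\sqrt{n}}G_n}\to\muc$ in probability. This is due to Edelman \cite{Edelman:circ}, and is also a direct consequence of Theorem \ref{thm:circ} with $\xi$ a standard real Gaussian. To apply Lemma \ref{lem:logpot} to $M_n=G_n$, it remains to verify the uniform integrability of $\log(s)$ under $\nu_{\frac{1}{\sqrt{n}}G_n-z}$. The contribution from large singular values is handled by the standard operator norm tail bound $\|\frac{1}{\sqrt{n}}G_n-z\|\le 3+|z|$ with probability $1-e^{-cn}$. The contribution near $s=0$ requires a lower tail estimate on $s_n(\frac{1}{\sqrt{n}}G_n-z)$; the bound $\pr(s_n\le\eta)\ll_z\eta$ holds uniformly in $\eta>0$ either via Edelman's explicit formulas for shifted Ginibre or via the Rudelson--Vershynin small-ball estimate for smooth-entry iid matrices, and integrating it against $|\log s|$ using the layer-cake representation yields \eqref{ui:log}.

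With all three hypotheses of Lemma \ref{lem:logpot} verified for $M_n=G_n$, the lemma produces a unique probability measure $\mu$ on $\C$ satisfying $U_\mu(z)=-\int_0^\infty\log(s)\,\dd\nu_z(s)$ and asserts that $\mu_{\frac{1}{\sqrt{n}}G_n}$ converges to this $\mu$ in probability. Combining with Edelman's theorem forces $\mu=\muc$, which is precisely the relation \eqref{Umu} claimed in the lemma. No real obstacle arises here: this statement serves as the anchor at the iid Gaussian end of the comparison chain $Y_n\rightsquigarrow X_n\rightsquigarrow G_n$, and all of its inputs are classical. The only point warranting minor care is the uniform integrability of $\log s$ near zero, which for a general model would require substantial work (as in Theorem \ref{thm:ssv}) but for the Ginibre ensemble is immediate from explicit smallest singular value estimates.
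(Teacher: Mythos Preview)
Your argument is correct, but it takes a different route from the paper. The paper's proof is a two-line citation: the existence of the limits $\nu_z$ is a special case of Dozier--Silverstein \cite{DoSi07a}, and the identity \eqref{Umu} with $\mu=\muc$ is quoted from Pan--Zhou \cite[Lemma~3]{PaZh}. You instead reverse-engineer the identity by combining the known Ginibre Circular Law with Lemma~\ref{lem:logpot}: once (1) and (2) in that lemma are verified for $M_n=G_n$, the lemma forces the ESD limit to be the unique $\mu$ satisfying \eqref{Umu}, and Edelman's theorem pins this $\mu$ down as $\muc$.

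Both approaches are legitimate. The paper's is shorter because the relevant computations have already been carried out in the cited works (indeed, Pan--Zhou's verification essentially follows the same logarithmic-potential strategy you outline). Your approach has the virtue of being more self-contained relative to this paper, since it recycles Lemma~\ref{lem:logpot} and could even invoke Lemma~\ref{lem:wegner.gaussian} from Appendix~\ref{app:wegner} to supply the needed control on small singular values. One small imprecision: the bound $\pr(s_n(\tfrac{1}{\sqrt{n}}G_n-z)\le\eta)\ll_z\eta$ as stated is slightly too strong uniformly in $n$ (the standard Sankar--Spielman--Teng or Rudelson--Vershynin estimates give $\ll n^{O(1)}\eta$), but any polynomial-in-$n$ prefactor is harmless for the uniform integrability argument, so this does not affect correctness.
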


\begin{proof}
The existence of the measures $\nu_z$ follows as a special case of a result of Dozier and Silverstein \cite{DoSi07a} (which allows more general entry distributions and more general shifts than $-z\I_n$). See \cite[Lemma 3]{PaZh} for the verification that \eqref{Umu} holds with $\mu=\muc$.
\end{proof}

\begin{proof}[Proof of Proposition \ref{prop:weak}]
By Lemma \ref{lem:dosi} it suffices to show $\nu_{\frac1{\sqrt{n}}Y_n-z}-\e\nu_{\frac1{\sqrt{n}}G_n-z}$ converges in probability to the zero measure, i.e.\ to show that for any $f\in C_c(\R)$ and any $\eps>0$,
\begin{equation}	\label{weak.goal1}
\pro{\left| \int_\R f\, \dd \nu_{\frac1{\sqrt{n}}Y_n-z} - \e \int_\R f \, \dd \nu_{\frac1{\sqrt{n}}G_n - z} \right|\ge \eps} =o(1)
\end{equation}
where here and in the remainder of the proof we implicitly allow quantities $o(1)$ to tend to zero at a rate depending on $f$ and $\eps$.

Fix $f\in C_c(\R)$ and $\eps>0$.
From Lemma \ref{lem:XtoG} and our assumption that $d$ grows to infinity with $n$ it follows that
\begin{equation*}
\e \int_\R f \, \dd \nu_{\frac1{\sqrt{n}}G_n - z} = \e \int_\R f \, \dd \nu_{\frac1{\sqrt{n}}X_n - z} + o(1)
\end{equation*}
(see for instance \cite[Theorem 2.4.4]{AGZ:book}). 
Thus, it suffices to show
\begin{equation}	\label{weak.goal2}
\pro{\left| \int_\R f\, \dd \nu_{\frac1{\sqrt{n}}Y_n-z} - \e \int_\R f \, \dd \nu_{\frac1{\sqrt{n}}X_n - z} \right|\ge \eps} =o(1).
\end{equation}
There exists a compactly supported Lipschitz function $f_\eps$ with support and Lipschitz constant depending only on $f$ and $\eps$ such that $\|f-f_\eps\|_\infty\le \eps/4$. 
Now it suffices to show
\begin{equation}	\label{weak.goal2}
\pro{\left| \int_\R f_\eps\, \dd \nu_{\frac1{\sqrt{n}}Y_n-z} - \e \int_\R f_\eps \, \dd \nu_{\frac1{\sqrt{n}}X_n - z} \right|\ge \eps/2} =o(1).
\end{equation}
But the above is immediate from Lemma \ref{lem:YtoX} and our assumption $d\ge \log^4n$.
\end{proof}

\subsection{Proof of Proposition \ref{prop:wegner}}	\label{sec:wegner}

As in the proof of Proposition \ref{prop:weak}, our starting point is a result for Gaussian matrices. The proof is deferred to Appendix \ref{app:wegner}.

\begin{lemma} 	\label{lem:wegner.gaussian}
There are constants $C,c>0$ such that the following holds.
Let $1\le k\le n$ and let $M\in \mM_n(\C)$ be a deterministic matrix.
Except with probability $O(n^2e^{-ck})$, for all $k\le j\le n-1$ we have $s_{n-j}(\frac1{\sqrt{n}}G +M) \ge cj/n$. 
\end{lemma}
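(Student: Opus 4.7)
\textit{Proof plan.} The plan is to combine Cauchy interlacing for singular values with a sharp small-ball lower bound for the smallest singular value of a shifted rectangular Gaussian matrix.

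First I would fix $j$ with $k\le j\le n-1$, set $X=\tfrac{1}{\sqrt n}G+M$, and let $X_I$ denote the $(n-j)\times n$ submatrix consisting of the rows of $X$ indexed by any set $I\subset[n]$ with $|I|=n-j$. Removing a row from $X$ subtracts a rank-one positive semi-definite matrix from $X^*X$, so by Weyl's inequality (applied $j$ times) $s_i(X)\ge s_i(X_I)$ for all $i\le n-j$. In particular,
\[
s_{n-j}(X)\ \ge\ s_{\min}(X_I)\ =\ \tfrac{1}{\sqrt n}\,s_{\min}\!\left(G_I+\sqrt n\,M_I\right),
\]
where $G_I$ is the corresponding $(n-j)\times n$ row-submatrix of $G$ (still iid $N(0,1)$) and $M_I$ is the corresponding submatrix of $M$.

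Next I would invoke the following sharp small-ball estimate for the smallest singular value of a rectangular Gaussian matrix with an arbitrary deterministic shift: for any $m\le n$, any $m\times n$ iid $N(0,1)$ matrix $W$, and any $m\times n$ deterministic $B$,
\[
\pro{\,s_{\min}(W+B)\le\varepsilon(\sqrt n-\sqrt{m-1})\,}\ \ll\ (C\varepsilon)^{n-m+1}+e^{-cn}.
\]
In the unshifted case this is the rectangular smallest-singular-value bound of Rudelson--Vershynin; for Gaussian $W$ the shift can be placed in canonical rectangular-diagonal form via the bi-unitary invariance $W\eqd UWV^\tran$, after which the usual compressible/incompressible decomposition argument goes through. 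Specializing to $W=G_I$, $B=\sqrt n\,M_I$, $m=n-j$, and using $\sqrt n-\sqrt{n-j-1}\asymp(j+1)/\sqrt n$, I would obtain
\[
\pro{s_{\min}(X_I)\le c(j+1)/n}\ \ll\ 2^{-(j+1)}+e^{-cn},
\]
and hence, combining with the interlacing, $\pro{s_{n-j}(X)<cj/n}\ll 2^{-j}+e^{-cn}$ for each $k\le j\le n-1$.

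A union bound over $j$ would then give
\[
\pro{\exists\,k\le j\le n-1:\ s_{n-j}(X)<cj/n}\ \ll\ \sum_{j\ge k}\!\left(2^{-j}+e^{-cn}\right)\ \ll\ 2^{-k}+ne^{-cn}\ \ll\ n^{2}e^{-ck},
\]
as required. The hard part will be establishing the sharp rate $(n-m+1)/\sqrt n$ in the shifted rectangular Gaussian small-ball bound: the naive negative second moment identity $\sum_i s_i(W+B)^{-2}=\sum_k\dist(r_k,H_{-k})^{-2}$, combined with a chi-squared tail bound for the Gaussian distances, only yields the suboptimal rate $\sqrt{n-m+1}/\sqrt n$, so recovering the extra factor of $\sqrt{n-m+1}$ requires the more delicate Rudelson--Vershynin partition of the sphere (or a direct computation exploiting the explicit Gaussian density).
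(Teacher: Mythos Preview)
Your approach works, but the paper takes a more elementary route, and your last paragraph misjudges what the negative second moment identity can deliver.

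The paper proceeds precisely via the inverse second moment identity that you flag as ``suboptimal'', but with one twist that recovers the missing $\sqrt j$ factor. For fixed $j$, instead of deleting $j$ rows and bounding $s_{\min}$ of the resulting $(n-j)\times n$ block, one deletes only $\lceil j/2\rceil$ rows to get an $m\times n$ matrix $\tilde G'$ with $m=n-\lceil j/2\rceil$, and bounds the \emph{roughly $(j/2)$-th smallest} singular value $s_{n-j}(\tilde G')$ rather than $s_m(\tilde G')$. Cauchy interlacing still gives $s_{n-j}(\tilde G)\ge s_{n-j}(\tilde G')$. In the identity $\sum_{i\le m} s_i(\tilde G')^{-2}=\sum_{i\le m}\dist(R_i,V_i)^{-2}$ one now retains about $j/2$ terms on the left:
\[
\tfrac{j}{2}\,s_{n-j}(\tilde G')^{-2}\ \le\ \sum_{i=n-j}^{m}s_i(\tilde G')^{-2}\ \le\ n\,\max_{i\le m}\dist(R_i,V_i)^{-2},
\]
so $s_{n-j}(\tilde G')\gg\sqrt{j/n}\,\min_i\dist(R_i,V_i)$. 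Each distance dominates the distance from the centered Gaussian row $G_i$ to a subspace of dimension at most $m$; by rotational invariance this is a chi variable in at least $n-m\ge j/2$ coordinates, so $\dist\ge c\sqrt j$ except with probability $e^{-cj}$. Combining, $s_{n-j}(\tilde G')\gg\sqrt{j/n}\cdot\sqrt j=j/\sqrt n$, i.e.\ $s_{n-j}(\tfrac1{\sqrt n}G+M)\ge cj/n$, except with probability $O(ne^{-cj})$ after the union bound over rows; summing over $j\ge k$ gives the lemma.

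Your route via a shifted rectangular Rudelson--Vershynin bound is valid---for Gaussian $W$ the shift is harmless because every small-ball input to the RV argument is translation invariant (your bi-unitary reduction is not actually needed). But this imports heavier machinery than the problem requires. The point you are missing is that one need not delete all $j$ rows: deleting half of them and bounding a correspondingly higher singular value lets the negative second moment identity contribute an extra factor $\sqrt j$, which combines with the $\sqrt j$ from the chi-squared distance to produce the correct $j/n$ rate with nothing beyond Gaussian concentration.
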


\begin{remark}
We will apply the lemma with $k=\sqrt{n}$, but we note that it gives a nontrivial result down to much smaller scales:
namely, with high probability, all but the smallest $k$ singular values are within a constant of their expected values, as long as $k$ grows faster than $\log n$. 
\end{remark}

\begin{proof}[Proof of Proposition \ref{prop:wegner}]
It will be more convenient to work with $\mu_{\Her_z(Y)}$, the symmetrized probability measure for $\nu_{\frac1{\sqrt{n}}Y-z}$ (see Section \ref{sec:hermitization}). Thus, it suffices to show that with probability $1-O(e^{-n})$, for all $\eta\in (0,1]$, 
\begin{equation}	\label{wegner.goal1}
\mu_{\Her_z(Y)}([-\eta,\eta]) \ll \eta + d^{-1/48}.
\end{equation}

First we show there is a constant $C$ such that for any fixed $\eta\in (d^{-1/48},1]$,
\begin{equation}	\label{wegner.step1}
\pro{ \mu_{\Her_z(Y)}([-\eta,\eta]) \le C(\eta+d^{-1/48})} = 1-O(\eta e^{-n}).
\end{equation}
Fix $\eta\in (d^{-1/48},1]$ and denote $I=[-\eta,\eta]$. 
Let $f_\eta$ be the piece-wise linear $\eta^{-1}$-Lipschitz function which is equal to $1$ on $[-\eta,\eta]$ and zero outside of $(-2\eta,2\eta)$. We have $1_I\le f_\eta$ pointwise, and by Lemma \ref{lem:YtoX} (taking $\eps=C'd^{-1/12}\log^{1/4}n$ for a sufficiently large constant $C'>0$),
\[
\mu_{\Her_z(Y)}(I) \le \int f_\eta \, \dd\mu_{\Her_z(Y)} \le \e \int f_\eta \,\dd\mu_{\Her_z(X)} + O(d^{-1/12}\log^{1/4}n)
\]
except with probability $O(\eta\exp( -cd^{2/3}n\log n))= O(\eta e^{-n})$ (adjusting the constant $c$ to absorb the prefactor $d^{1/12}/\log^{1/4}n$). The error term on the right hand side above is $O(d^{-1/48})$ by our assumption on $d$. 
Now to obtain \eqref{wegner.step1} it suffices to show 
\begin{equation}	\label{wegner:goal2}
\e \int f_\eta \,\dd \mu_{\Her_z(X)} \ll \eta + d^{-1/48}.
\end{equation}
From the pointwise bound
\[
\frac1{5\eta} 1_{[-2\eta,2\eta]}(x) \le \frac{\eta}{x^2+ \eta^2}  = \Im \frac{1}{x-\ii\eta}  , \quad x\in \R
\]
together with Lemma \ref{lem:XtoG} we have
\begin{align}
\e \int f_\eta \,\dd \mu_{\Her_z(X)} 
&\le \e \int 1_{[-2\eta,2\eta]} \,\dd \mu_{\Her_z(X)} \notag \\
&\le 5\eta \e \Im g_{z, \ii\eta}(X) \notag \\
&\le 5\eta \e \Im g_{z,\ii\eta}(G) + O\left( \frac{1}{\eta^4\sqrt{d} }\left( 1+  \frac1{(n\eta)^{2}}\right)\right) 	\\
&\le 5\eta \e \Im g_{z,\ii\eta}(G) + O\left( \frac{1}{\eta^4\sqrt{d} }\right), \label{weg:1}
\end{align}
where in the last line we used the assumption $\eta\ge d^{-1/48}\ge n^{-1}$ to bound $(n\eta)^{-1}=O(1)$.
Now we use Lemma \ref{lem:wegner.gaussian} to bound the first term.
First, by Fubini's theorem,
\begin{align}
\Im g_{z,\ii\eta}(G) &= \int_{\R} \frac{\eta}{x^2+\eta^2} \,\dd \mu_{\Her_z(G)} (x)	\notag\\
& =\int_0^{1/\eta} \int_{\R} \mathbf{1}_{\left\{y\le \frac{\eta}{x^2+\eta^2}\right\}}(x,y) \,\dd \mu_{\Her_z(G)} (x)\,\dd y	\notag\\
&= \int_0^{1/\eta} \mu_{\Her_z(G)} \left( \left[ -\sqrt{\eta\left(y^{-1}-\eta\right)},\sqrt{\eta\left(y^{-1}-\eta\right)}\right]\right) \,\dd y	\notag\\
&= \frac1n \int_0^{1/\eta} \left| \left\{ j\in [n]: s_{n-j}\left( \frac1{\sqrt{n}}G-z\right) \le \sqrt{\eta\left( y^{-1}-\eta\right)} \right\}\right| \,\dd y.	\label{img.int}
\end{align}
Let constants $C,c$ be as in Lemma \ref{lem:wegner.gaussian}, let $k\ge 1$ to be chosen later, and let $\good_k$ denote the event that $s_{n-j}(\frac1{\sqrt{n}}G-z)\ge cj/n$ for all $k\le j\le n-1$.
On $\good_k$ the integrand in \eqref{img.int} is bounded by
\[
k + O(n\sqrt{\eta(y^{-1} -\eta)}).
\]
Inserting this estimate in \eqref{img.int}, we have 
\begin{align*}
\Im g_{z,\ii \eta}(G) \un_{\good_k} 
&\ll \frac{k}{\eta n} + \int_0^{1/\eta} \sqrt{\eta(y^{-1} - \eta)} \,\dd y \\
&= \frac{k}{\eta n} + \int_0^{1/\eta}\int_0^\infty \1_{\left\{ y\le \frac{\eta}{x^2+ \eta^2} \right\}}(x,y)d xdy \\
&= \frac{k}{\eta n} + \int_0^\infty \frac{\eta}{x^2+ \eta^2} d x\\
&= O\left( 1+ \frac{k}{\eta n}\right).
\end{align*}
From the deterministic bound $\Im g_{z,\ii \eta}(G) \le 1/\eta$ and Lemma \ref{lem:wegner.gaussian} (taking $M=-z\I_n$) we conclude
\begin{equation}
\e \Im g_{z,\ii \eta}(G) \ll  1 + \frac{k}{\eta n} + \frac1\eta\pr(\good_k^c) \le  1 + \frac{k}{\eta n}+ \frac1\eta n^2e^{-ck} .
\end{equation}
Substituting into \eqref{weg:1} yields
\[
\e \int f_\eta\, \dd \mu_{\Her_z(X)}  \ll \eta + \frac{k}{n} + n^2e^{-ck} + \frac{1}{\eta^4\sqrt{d} }.
\]
Taking $k=\sqrt{n}$, by our assumption on $\eta$ the last three terms are of lower order, which yields \eqref{wegner:goal2} and hence \eqref{wegner.step1}. (By optimizing $\eta$ to balance the first and last terms above, one sees that we actually showed the stronger bound $\e \int f_\eta \,\dd \mu_{\Her_z(X)} \ll \eta + d^{-1/10}$.)

Now consider the events
\begin{equation}
\bad_m(t)  =  \left\{ \mu_{\Her_z(Y)}([-2^{-m},2^{-m}])> t \right\}, \quad m\in \N_{\ge0},\; t\in \R_+
\end{equation}
and put
\[
\mB=\bigvee_{m\ge 0} \bad_m\Big( 2C\big(2^{-m} + d^{-1/48}\big)\Big).
\]
Denoting $m^*=\lf \frac1{48} \log_2d\rf$, from the union bound,
\begin{align*}
\pro{ \bad} 
&\le \sum_{m=0}^{m^*} \pro{ \mB_m\Big( 2C\big(2^{-m} + d^{-1/48}\big)\Big) } + \pr \bigg( \bigvee_{m> m^*} \bad_m\Big( 2C\big(2^{-m} + d^{-1/48}\big)\Big) \bigg) \\
&\le \sum_{m=0}^{m^*} \pro{ \mB_m\Big( 2C\big(2^{-m} + d^{-1/48}\big)\Big) } + \pro{ \mB_{m^*+1} (2Cd^{-1/48})}\\
&\le \sum_{m=0}^{m^*+1} \pro{ \mB_m\Big( C\big(2^{-m} + d^{-1/48}\big)\Big) },
\end{align*}
where we in the second and third lines we used that the events $\bad_m(t)$ are monotone in the parameter $t$.
Applying \eqref{wegner.step1}, 
\[
\pro{ \bad}\ll e^{-n} \sum_{m=0}^\infty 2^{-m} \ll e^{-n}.
\]
Fix $\eta\in (0,1]$ arbitrarily and suppose $\mB$ does not hold. Let $m$ be the integer such that $2^{-(m+1)}<\eta\le 2^{-m}$.
We have
\[
\mu_{\Her_z(Y)}([-\eta,\eta]) \le \mu_{\Her_z(Y)}([-2^{-m},2^{-m}]) \le C(2^{-m} + d^{-1/48}) \le 2C(\eta + d^{-1/48}). 
\]
The result follows after adjusting the constant $C$.
\end{proof}

\section{Proofs of comparison lemmas}	\label{sec:compare.proofs}

\subsection{Proof of Lemma \ref{lem:YtoX}}		\label{sec:YtoX}

Recall that the matrix $B=B_n$ has iid Bernoulli($p$) entries, with $p=d/n$.
Here we follow a strategy that was used by Tran, Vu and Wang in \cite{TVW} to prove a local semicircle law for adjacency matrices of random (undirected) regular graphs of growing degree. 
The idea is to use sharp concentration estimates for linear eigenvalue statistics of Hermitian random matrices together with a lower bound on the the probability that the iid Bernoulli matrix $B_n$ lies in $\mA_{n,d}$. 
For the former we have the next lemma, which is easily obtained from the arguments of Guionnet and Zeitouni in \cite{GuZe}:

\begin{lemma}[Concentration of linear statistics]	\label{lem:guze}
Let $H=(h_{ij})_{i,j=1}^n$ be a Hermitian random matrix with entries on and above the diagonal jointly independent and uniformly bounded by $K/\sqrt{n}$ for some $K<\infty$.
Let $f:\R\to\R$ be an $L$-Lipschitz function supported on a compact interval $|I|\subset \R$, and let $H_0$ be an arbitrary $n\times n$ deterministic Hermitian matrix. 
For any $\eps>0$,
\begin{equation}
\pro{ \left| \int_\R f \dd\mu_{H+H_0}  - \e \int_\R f\dd\mu_{H+H_0}\right| \ge \eps} \le (C|I|/\eps) \expo{ -\frac{cn^2\eps^4}{K^2 L^2 |I|^2}}
\end{equation}
for some constants $C,c>0$.
\end{lemma}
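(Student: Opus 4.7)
The plan is to view the linear spectral statistic $\phi(H) := \int_\R f\,d\mu_{H+H_0} = \tfrac{1}{n}\tr f(H+H_0)$ as a Lipschitz function of the independent entries of $H$ and apply a sharp concentration inequality for Lipschitz functions of bounded independent variables. This is exactly the program of Guionnet--Zeitouni.

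First I would verify the Lipschitz property: for any Hermitian $H_1, H_2$, the Hoffman--Wielandt inequality (for the eigenvalues of $H_i+H_0$) combined with Cauchy--Schwarz yields
\[
|\tr f(H_1+H_0) - \tr f(H_2+H_0)| \le L \sum_i |\lambda_i(H_1+H_0) - \lambda_i(H_2+H_0)| \le L\sqrt{n}\,\|H_1-H_2\|_{\HS},
\]
so $\phi$ is $(L/\sqrt{n})$-Lipschitz in the Hilbert--Schmidt norm. Since the HS norm on Hermitian $n\times n$ matrices equals (up to a factor of $\sqrt{2}$ on off-diagonal entries) the Euclidean norm on the vector of independent coordinates $\{h_{ij}\}_{i\le j}$, we have $\phi$ Lipschitz on the product space in which the entries live, each entry supported in an interval of length $\le 2K/\sqrt{n}$.

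Next I would invoke the Talagrand-type concentration inequality used in \cite{GuZe} for Lipschitz functions of bounded independent random variables. Combining the $(L/\sqrt{n})$ HS-Lipschitz constant with the $O(K/\sqrt{n})$ range of each coordinate yields a rate of the form
\[
\pr\!\bigl(|\phi(H) - \e \phi(H)| \ge t\bigr) \le C\exp\!\bigl( -c\, n^2 t^2 / (K^2 L^2) \bigr).
\]
The crucial gain of the $n^2$ factor (over naive McDiarmid, which would only yield $O(n)$) comes from exploiting the Euclidean/HS structure, via either log-Sobolev inequalities for smoothed versions of the bounded entry distributions or Talagrand's convex-distance-type inequality.

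Finally, to recover the precise form stated---with the $\eps^4/|I|^2$ in the exponent and the $|I|/\eps$ prefactor---I would discretize. Partition $I$ into $N \asymp L|I|/\eps$ subintervals of width $\sim \eps/L$, approximate $f$ by a piecewise-constant (or smoothed-indicator) function $\tilde f$ on this grid with $\|f-\tilde f\|_\infty \ll \eps$, and apply the concentration to each piece with a union bound; the per-piece concentration scale $\sim \eps^2/(L|I|)$ and the $N$-fold union bound yield the stated exponent and prefactor after a short calculation.

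The main obstacle is establishing the $n^2$ scaling in the concentration step for \emph{bounded} (rather than Gaussian) entries, where log-Sobolev-type inequalities are more delicate; once that ingredient of \cite{GuZe} is in hand, the Lipschitz verification and the discretization/union-bound step are essentially routine.
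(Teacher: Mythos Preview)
Your proposal is essentially correct and follows the Guionnet--Zeitouni argument that the paper simply cites (the paper's own ``proof'' is just a reference to \cite[Theorem 1.3(a)]{GuZe} and \cite[Lemma 3.2]{Cook:circpm}). One small imprecision worth flagging: the intermediate bound $\pr(|\phi-\e\phi|\ge t)\le C\exp(-cn^2t^2/(K^2L^2))$ you state for general Lipschitz $f$ is too strong---if that held, no discretization would be needed and one would get $\eps^2$ rather than $\eps^4$ in the exponent. In the actual argument, Talagrand's inequality gives that rate only for \emph{convex} Lipschitz functionals of bounded independent variables; the role of the piecewise-linear approximation is to decompose $f$ into $O(L|I|/\eps)$ pieces, each expressible as a difference of convex nondecreasing functions (so that the trace functional is a difference of convex functions of $H$), and the $\eps^4$ loss and the $|I|/\eps$ prefactor both arise from this reduction and the accompanying union bound.
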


\begin{proof}
The case $H_0=0$ follows directly from \cite[Theorem 1.3(a)]{GuZe}. The general case follows from a slight modification of the proof in \cite{GuZe} -- see \cite[Lemma 3.2]{Cook:circpm}.
\end{proof}

The following is established in Appendix \ref{app:band}, following an argument of Shamir and Upfal for undirected $d$-regular graphs \cite{ShUp}.

\begin{lemma}	\label{lem:band}
Assume $\log^4n\le d\le n/2$.  
Then
\[
\pro{ B\in \mA_{n,d}} \ge \expo{  -O(d^{2/3}n\log n)}.
\]
\end{lemma}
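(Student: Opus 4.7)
The plan is to decompose
\[
\pr(B\in \mA_{n,d}) \;=\; \pr(R)\cdot \pr(C\mid R),
\]
where $R$ is the event that all row sums of $B$ equal $d$ and $C$ is the analogous event for column sums, and then to bound each factor separately.

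The first factor is straightforward: since $B$ has independent Bernoulli$(p)$ entries, its rows are independent and $\pr(R) = \pr(\mathrm{Bin}(n,p) = d)^n$. A Stirling estimate at the mode of a binomial gives $\pr(\mathrm{Bin}(n,p) = d) \gs 1/\sqrt{d(1-d/n)}$, so $\pr(R) \ge \exp(-O(n\log d))$, which is already stronger than the target bound (and dominated by the bound on $\pr(C\mid R)$ below).

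The bulk of the work is proving $\pr(C\mid R) \ge \exp(-O(d^{2/3}n\log n))$. Under the conditional measure, $B$ is uniform over $n\times n$ $0$--$1$ matrices with row sums equal to $d$ (equivalently, its rows are iid uniform $d$-subsets of $[n]$). Writing $N(\mathbf c)$ for the number of such matrices with column sum vector $\mathbf c$, the task reduces to
\[
N(d\1) \;\ge\; \exp\!\big(-O(d^{2/3}n\log n)\big)\,\binom{n}{d}^n.
\]
Under this measure each column sum $C_j$ is marginally $\mathrm{Bin}(n,d/n)$, so by Chernoff and a union bound the ``nearly regular'' event $\mathcal G:=\{\max_j|C_j-d|\le d^{2/3}\}$ has probability at least $1/2$; here I use the hypothesis $d\ge \log^4 n$ to ensure the concentration threshold $\sqrt{d\log n}$ is dominated by $d^{2/3}$.

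I then adapt the switching strategy of Shamir--Upfal to the bipartite (directed) setting. For a row-regular matrix $M$ with defect vector $\boldsymbol\delta:=(C_j-d)_j$ not identically zero, I select an excess column $j_+$ (with $\delta_{j_+}>0$) and a deficit column $j_-$ (with $\delta_{j_-}<0$); the number of rows $i$ with $M(i,j_+)=1$ and $M(i,j_-)=0$ is at least $C_{j_+}-C_{j_-}>0$, so I can pick such a row and flip these two entries, pushing $\|\boldsymbol\delta\|_1$ down by $2$ while preserving every row sum. Iterating, every $M\in\mathcal G$ admits a sequence of at most $\Delta\le nd^{2/3}$ atomic switchings terminating in some $M'\in\mA_{n,d}$. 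Running this in reverse bounds the number of preimages of any $M'\in\mA_{n,d}$ by the total number of length-$\le nd^{2/3}$ switching sequences that can be applied to $M'$, which is at most $(n^{3})^{nd^{2/3}}=\exp(O(nd^{2/3}\log n))$. Combined with $|\mathcal G|\ge \tfrac12\binom{n}{d}^n$, this yields
\[
\tfrac12\binom{n}{d}^n \;\le\; |\mathcal G| \;\le\; N(d\1)\cdot \exp\!\big(O(nd^{2/3}\log n)\big),
\]
which rearranges to the required bound on $\pr(C\mid R)$.

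The main obstacle will be implementing the switching count without over-counting: one needs to specify a measurable rule for choosing the triple $(j_+,j_-,i)$ at each step so the induced map from $\mathcal G$ to $\mA_{n,d}$ is well-defined, verify that the reverse sequence applied to an element of $\mA_{n,d}$ genuinely produces the original $M\in\mathcal G$, and confirm that the per-step multiplicity stays $O(n^3)$ so the exponent in the preimage bound is $O(d^{2/3}n\log n)$ rather than something larger.
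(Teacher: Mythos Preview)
Your proposal is correct and takes a genuinely different route from the paper.

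The paper does not condition on row sums. Instead, it introduces a slightly denser Bernoulli$(p')$ matrix $B'$ with $p'$ chosen so that $d=(1-\delta)p'n$ for $\delta\asymp d^{-1/3}$, and uses the Ore--Ryser criterion to show that $B'$ contains a $d$-regular factor with high probability (this is the content of Lemma~\ref{lem:factor}). On the high-probability event that $B'$ has a $d$-regular factor and all row and column sums within $O(d^{2/3})$ of $p'n$, each such $B'$ is obtained from some $A\in\mA_{n,d}$ by adding at most $m=O(d^{2/3})$ ones to each row and column. Counting these augmentations gives $|\mA_{n,d}|\ge \tfrac14\binom{n}{m}^{-2n}(p')^{-nd}(1-p')^{-(n^2-nd)}$, and a Kullback--Leibler comparison yields $\pr(B\in\mA_{n,d})\ge \pr(B'\in\mA_{n,d})\ge \tfrac14\binom{n}{m}^{-2n}$.

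Your approach is more elementary: you avoid the Ore--Ryser theorem entirely by conditioning on the row-sum event and working directly with the uniform measure on row-regular matrices, then using an explicit column-balancing switching (move a $1$ from an excess column to a deficit column within a single row) together with a crude $n^3$-per-step preimage count. The paper's factor-containment argument is somewhat more structural and yields the slightly more general Lemma~B.1 (allowing $d$ as small as $(\log n)^{1+\eps}$), but for the range $d\ge\log^4 n$ both arguments give the same $\exp(-O(d^{2/3}n\log n))$ bound with comparable effort. Your identification of the ``main obstacle'' is apt but not actually problematic: any fixed lexicographic rule for choosing $(j_+,j_-,i)$ suffices, and the injectivity of $M\mapsto(\text{forward switching sequence})$ on each fiber over $M'\in\mA_{n,d}$ is immediate since the sequence determines $M$ from $M'$.
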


\begin{remark}
The more accurate asymptotic
\begin{equation}
\pro{ B\in \mA_{n,d}} = (1+o(1))\sqrt{2\pi d(n-d)} \expo{ -n\log\left(\frac{2\pi d(n-d)}{n}\right) }
\end{equation}
was established for the range $d=o(\sqrt{n})$ by McKay and Wang \cite{McWa} and $\min(d,n-d)\gg n/\log n$ by Canfield and McKay \cite{CaMc}.
For the proof of Theorem \ref{thm:main} it is only important that we have a bound of the form $\pr(B\in \mA_{n,d})\ge \expo{ -o(nd)}$.
\end{remark}

In Appendix \ref{app:band} we actually prove a slightly stronger version of Lemma \ref{lem:band} allowing $d$ to grow as slowly as $\log^{1+\eps}n$. 

\begin{proof}[Proof of Lemma \ref{lem:YtoX}]
Recall that $A$ denotes a uniform random element of $\mA_{n,d}$, and $B$ denotes an $n\times n$ matrix with independent Bernoulli($p$) entries, where $p=d/n$.
For fixed $B_0\in \{0,1\}^{n\times n}$ we denote
\begin{equation}
M(B_0) := \frac1{\sqrt{p(1-p)}} (B_0-p \1_n\1_n^\tran).
\end{equation}
In this notation we have $X=M(B)$ and $Y=M(A)$ (see \eqref{def:X}, \eqref{def:Y}). 
For $f\in C_c(\R)$, $z\in C$ and $\eps>0$ we denote the corresponding ``bad set" of 0--1 matrices
\begin{equation}
\mB(f,z,\eps) := \left\{ B_0\in \{0,1\}^{n\times n} : \ \left| \int_\R f \dd\mu_{\Her_z(M(B_0))} - \e \int_\R f \dd\mu_{\Her_z(M(B))} \right| \ge \eps\right\}.
\end{equation}
Our aim is to show that for any $L$-Lipschitz function $f:\R\to \R$ supported on a compact interval $I$ and any $\eps>0$, 
\begin{equation}	\label{YtoX:goal1}
\pro{ A\in \mB(f,z,\eps) } \ll \frac{|I|}{\eps} \expo{ O(nd^{1/2}) - \frac{cnd\eps^4}{L^2|I|^2}}.
\end{equation}
Fix such $f$ and $\eps$. 
We can apply Lemma \ref{lem:guze} (with $n$ replaced by $2n$), taking $H_z(X)$ for $H$, $\begin{pmatrix} 0 & z\\ \overline{z} & 0\end{pmatrix}\otimes \I_n$ for $H_0$, and $K=1/\sqrt{p}$ to obtain
\begin{equation}	\label{apply:guze}
\pro{ B\in \mB(f,z,\eps)} \ll \frac{|I|}{\eps} \expo{ - \frac{cn^2p\eps^4}{L^2|I|^2}} = \frac{|I|}{\eps} \expo{ - \frac{cnd\eps^4}{L^2|I|^2}}.
\end{equation}
Now notice that conditional on the event $\{B\in \mA_{n,d}\}$, $B$ is uniformly distributed over $\mA_{n,d}$.
Thus,
\begin{align*}
\pro{ A\in \mB(f,z,\eps)} 
&= \pro{ B\in \mB(f,z,\eps) \mid B\in \mA_{n,d}} \\
&= \frac{\pro{ B\in \mB(f,z,\eps)\cap   \mA_{n,d}} }{ \pro{ B\in \mA_{n,d}}}\\
&\le \frac{\pro{ B\in \mB(f,z,\eps) }}{ \pro{ B\in \mA_{n,d}}}, 
\end{align*}
and \eqref{YtoX:goal1} follows from \eqref{apply:guze} and Lemma \ref{lem:band}.
\end{proof}

\subsection{Proof of Lemma \ref{lem:XtoG}}	\label{sec:XtoG}

Here we make use of the Lindeberg replacement strategy, which was introduced to random matrix theory by Chatterjee \cite{Chatterjee:invariance1,Chatterjee:invariance2}, who used it to prove the semicircle law for random symmetric matrices with exchangeable entries above the diagonal.
It has since become a widely used tool in universality theory for random matrices, most notably with its use by Tao, Vu and others to establish universality of local eigenvalue statistics for various models; see e.g.\ \cite{TaVu:4MT_survey2,ErYa12:survey, TaVu:4MT_iid} and references therein.

In particular we will apply the following \emph{invariance principle}:

\begin{theorem}[cf.\ {\cite[Theorem 1.1 and Corollary 1.2]{Chatterjee:invariance1}}]	\label{thm:invar}
Let $X$ and $W$ be independent random vectors in $\R^N$ with independent components having finite third moment and satisfying $\e X_i = \e W_i$ and $\e X_i^2 = \e W_i^2$ for $1\le i\le N$.
Denote
\begin{equation}
\gamma_3= \max_{i\in [n]} \max\{ \e |X_i|^3, \e |W_i|^3\}.
\end{equation}
Let $f\in C^3(\R^N\to \R)$, and denote
\begin{equation}
\lambda_3(f) = \sup_{x\in \R^N} \max_{r \in \{1,2,3\}}  \max_{i\in [n]} |\partial_i^{r} f(x)|^{3/r}.
\end{equation}
Write $U=f(X)$, $V=f(W)$, and let $h\in C^3(\R\to \R)$. 
Then
\begin{equation}	\label{bd:invar}
 |\e h(U) - \e h(V)| \ll_h \gamma_3\lambda_3(f) N.
\end{equation}
\end{theorem}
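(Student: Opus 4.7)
The plan is to prove this via the Lindeberg swapping strategy, telescoping between $X$ and $W$ one coordinate at a time. Define the hybrid vectors
\[
Z^{(i)} := (W_1,\dots, W_i, X_{i+1},\dots, X_N), \qquad i=0,1,\dots, N,
\]
so that $Z^{(0)}=X$ and $Z^{(N)}=W$, and write the telescoping identity
\[
\e h(f(W))-\e h(f(X)) = \sum_{i=1}^N \Bigl( \e h(f(Z^{(i)})) - \e h(f(Z^{(i-1)}))\Bigr).
\]
Letting $Y^{(i)}$ denote the vector obtained from $Z^{(i)}$ by zeroing out the $i$-th coordinate, the two consecutive vectors differ only in position $i$: $Z^{(i-1)}=Y^{(i)}+X_i e_i$ and $Z^{(i)}=Y^{(i)}+W_i e_i$, where $Y^{(i)}$ is independent of both $X_i$ and $W_i$ by the joint independence hypotheses. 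Thus each summand in the telescoping identity reduces to a one-dimensional comparison after conditioning on $Y^{(i)}$.

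Next I would condition on $Y^{(i)}$ and set $\varphi(t):= h(f(Y^{(i)}+te_i))$. Taylor expanding to third order,
\[
\varphi(t) = \varphi(0)+\varphi'(0)t+\tfrac12 \varphi''(0)t^2 + R(t), \qquad |R(t)|\le \tfrac16 \sup_s |\varphi'''(s)|\, |t|^3.
\]
Evaluating at $t=X_i$ and $t=W_i$ and taking expectations, the zeroth-, first- and second-order terms are equal by the moment-matching hypotheses $\e X_i=\e W_i$ and $\e X_i^2=\e W_i^2$. Consequently
\[
\bigl| \e[\varphi(X_i)]-\e[\varphi(W_i)] \bigr| \le \tfrac16 \sup_s |\varphi'''(s)| \bigl(\e|X_i|^3+\e|W_i|^3\bigr) \le \tfrac13 \gamma_3\, \sup_s |\varphi'''(s)|.
\]

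Finally I would control $\varphi'''$ via the chain rule, which gives
\[
\varphi'''(t) = h'''(f)(\partial_i f)^3 + 3 h''(f)(\partial_i f)(\partial_i^2 f) + h'(f)\, \partial_i^3 f,
\]
with $f$ and its partials evaluated at $Y^{(i)}+t e_i$. The definition of $\lambda_3(f)$ as the supremum over $r\in\{1,2,3\}$ of $|\partial_i^r f|^{3/r}$ is precisely tuned so that each monomial appearing here is bounded by $\lambda_3(f)$: indeed $|\partial_i f|^3\le \lambda_3(f)$, $|\partial_i f|\cdot |\partial_i^2 f| \le \lambda_3(f)^{1/3}\cdot \lambda_3(f)^{2/3}=\lambda_3(f)$, and $|\partial_i^3 f|\le \lambda_3(f)$. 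Hence $\sup_s |\varphi'''(s)| \ll_h \lambda_3(f)$, where the implied constant depends only on $\|h'\|_\infty,\|h''\|_\infty,\|h'''\|_\infty$. Summing the resulting bound $O_h(\gamma_3\lambda_3(f))$ over $i=1,\dots, N$ yields \eqref{bd:invar}.

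There is no serious obstacle here beyond being careful with the chain-rule bookkeeping; the main observation is that the peculiar exponent $3/r$ in the definition of $\lambda_3(f)$ is exactly what makes the three chain-rule monomials each contribute at most $\lambda_3(f)$, so no interpolation or mixing-of-scales argument is needed. A minor point to address is the measurability of $\tau$ in the Taylor remainder; this is handled cleanly by using the integral form of the remainder $R(t)=\tfrac12\int_0^t (t-s)^2 \varphi'''(s)\,ds$, whose uniform bound matches the Lagrange form up to the harmless constant already absorbed into $\ll_h$.
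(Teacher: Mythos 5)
The paper does not supply a proof of this result: it is imported wholesale from Chatterjee's work via the citation, so there is no ``paper's own proof'' to compare against. Your argument is a correct and complete reconstruction of the standard Lindeberg telescoping argument, which is precisely Chatterjee's approach: hybridize one coordinate at a time, condition on the frozen coordinates, Taylor-expand $t\mapsto h(f(Y^{(i)}+te_i))$ to third order so that the matched first and second moments kill the low-order terms, and control the third derivative of the composite via the chain rule. Your observation that the exponent $3/r$ in $\lambda_3(f)$ is tuned exactly so that each of the three chain-rule monomials $(\partial_i f)^3$, $(\partial_i f)(\partial_i^2 f)$, $\partial_i^3 f$ is bounded by $\lambda_3(f)$ is the right way to see why no interpolation is needed; the switch to the integral form of the remainder to dodge measurability issues is also the standard device. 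The one implicit assumption, which you correctly surface, is that the implied constant in $\ll_h$ absorbs $\|h'\|_\infty$, $\|h''\|_\infty$, $\|h'''\|_\infty$, so the statement really requires $h$ to have bounded derivatives up to order three; this is consistent with Chatterjee's formulation and harmless here since the paper applies the result with $h$ the identity, for which $h'\equiv 1$ and $h''=h'''\equiv 0$.
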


With Theorem \ref{thm:invar} in hand, the proof of Lemma \ref{lem:XtoG} boils down to estimating the partial derivatives of the resolvent $\Res_{z,w}(M)$ from \eqref{def:res}, viewed as a function of $M$. 
The proof is similar an argument that was sketched in the appendix of \cite{TaVu:esd}, and subsequently applied in the sparse setting by Wood \cite{Wood:sparse}, and to matrices with exchangeable entries by Adamczak, Chafa\"i and Wolff in \cite{ACW:exchangeable} (who used a more general invariance principle from \cite{Chatterjee:invariance2} for exchangeable sequences). 

Here we will follow similar lines to the above-mentioned works; the only difference is that we will need to quantify errors in order to obtain the bound in Proposition \ref{prop:wegner}. 
The aforementioned works all obtained estimates like Proposition \ref{prop:wegner} by a different geometric argument, also introduced in \cite{TaVu:esd}. 
Adapting that argument to the setting of random regular digraphs appears to be of comparable difficulty to the proof of Theorem \ref{thm:ssv} due to the dependencies among entries. Instead, we have opted to make our comparisons in Lemmas \ref{lem:YtoX} and \ref{lem:XtoG} quantitative, and apply the geometric argument from \cite{TaVu:esd} in the simpler Gaussian setting (see the proof of Lemma \ref{lem:wegner.gaussian}).

\begin{proof}[Proof of Lemma \ref{lem:XtoG}]
Fix $z\in \C$ and $w\in \C_+$. 
For a differentiable matrix-valued function $H:\R\to \R^{2n\times 2n}$ and $R(t) = (H(t)-w\I_{2n})^{-1}$,
the following well-known identity is easily verified:
\begin{equation}	\label{id:res}
\frac{\dd}{\dd t} R = - R \frac{\dd H}{\dd t} R.
\end{equation}
Now let $\Her(M)=\Her_z(M)$ and $\Res(M)=\Res_{z,w}(M)$ be as in \eqref{def:her}, \eqref{def:res}.
By iterating \eqref{id:res} we obtain the following formulas for the partial derivatives of $\Res$ with respect to entries $\alpha_1,\alpha_2,\alpha_3\in [n]^2$ of $M$:
\begin{align}
\partial_{\alpha_1}\Res &= \Res \big(\partial_{\alpha_1}\Her\big)\Res,	\label{deriv1}\\
\partial_{\alpha_2}\partial_{\alpha_1}\Res &= \sum_{\sigma\in \sym(2)}\Res \big(\partial_{\alpha_{\sigma(1)}}\Her\big)\Res \big(\partial_{\alpha_{\sigma(2)}}\Her\big)\Res,	\label{deriv2} \\
\partial_{\alpha_3}\partial_{\alpha_2}\partial_{\alpha_1}\Res &= \sum_{\sigma\in \sym(3)}
\Res \big(\partial_{\alpha_{\sigma(1)}}\Her\big)
\Res \big(\partial_{\alpha_{\sigma(2)}}\Her\big)\Res \big(\partial_{\alpha_{\sigma(3)}}\Her\big)\Res,	\label{deriv3}
\end{align}
where the sums in \eqref{deriv2}, \eqref{deriv3} run over the symmetric group on two and three labels, respectively.
(Here we have used the fact that $\partial_{\alpha_1}\partial_{\alpha_2}\Her=0$ for any $\alpha_1,\alpha_2$ as $\Her$ is a linear function of $M$.)
Now if $\alpha=(i,j)$ we have
\begin{equation}	\label{derivH}
\partial_\alpha \Her = \frac1{\sqrt{n}}\big( \mathbf{E}_{i,j+n} + \mathbf{E}_{j+n,i}\big) ,
\end{equation}
where $\mathbf{E}_{i,j}$ is the $2n\times 2n$ matrix with entries $(\mathbf{E}_{i,j})_{k,l}= \delta_{i=k}\delta_{j=l}$. 
In particular,
\begin{equation}	\label{dHHS}
\max_{\alpha\in [n]^2}\|\partial_\alpha \Her\|_\HS =O(n^{-1/2}).
\end{equation}
Combining \eqref{deriv1}, \eqref{derivH} and the definition of $g(M)=g_{z,w}(M)$ we obtain for any $\alpha=(i,j)\in [n]^2$,
\begin{align*}
\partial_{\alpha}g 
&= -\frac1{2n^{3/2}} \tr \Res \big( \mathbf{E}_{i,j+n} + \mathbf{E}_{j+n,i}\big) \Res\\
&= -\frac1{2n^{3/2}} \tr \big( \mathbf{E}_{i,j+n} + \mathbf{E}_{j+n,i}\big) \Res^2.
\end{align*}
Since each of $\mathbf{E}_{i,j+n}$, $\mathbf{E}_{j+n,i}$ have one nonzero entry with value 1, we obtain
we have
$
|\partial_\alpha g| \le \|R\|^2/n^{3/2},
$
and from \eqref{bd:resop} we conclude
\begin{equation}
\max_{\alpha\in [n]^2}\sup_{M\in \R^{n\times n}}|\partial_{\alpha}g(M) |\le \frac{1}{n^{3/2}(\Im w)^2}.
\end{equation}

We turn to the second order partial derivatives of $g$.
By repeated application of the inequalities
\begin{equation}	\label{opHS}
|\tr(AB)|\le \|A\|_\HS \|B\|_\HS, \qquad \|AB\|_\HS \le \|A\|\|B\|_\HS
\end{equation}
we can bound
\begin{align*}
|\partial_{\alpha_2}\partial_{\alpha_1} g| 
&\le \frac1{2n}\sum_{\sigma\in \sym(2)} \left| \tr \Res \big(\partial_{\alpha_{\sigma(1)}}\Her\big)\Res \big(\partial_{\alpha_{\sigma(2)}}\Her\big)\Res \right| \\
&= \frac1{2n}\sum_{\sigma\in \sym(2)} \left| \tr  \big(\partial_{\alpha_{\sigma(1)}}\Her\big)\Res \big(\partial_{\alpha_{\sigma(2)}}\Her\big)\Res^2 \right| \\
&\le \frac1{2n}\sum_{\sigma\in \sym(2)}  \|\partial_{\alpha_{\sigma(1)}}\Her\|_\HS \| \Res \big(\partial_{\alpha_{\sigma(2)}}\Her\big)\Res^2\|_\HS\\
&\le  \frac1{2n}\sum_{\sigma\in \sym(2)}  \|\partial_{\alpha_{\sigma(1)}}\Her\|_\HS \| \partial_{\alpha_{\sigma(2)}}\Her\|_\HS \|\Res\|^3\\
&\ll \frac1{n^{2}} \|\Res\|^3,
\end{align*}
where in the final line we used \eqref{dHHS}. Applying \eqref{bd:resop} we conclude
\begin{equation}
\max_{\alpha_1,\alpha_2\in [n]^2}\sup_{M\in \R^{n\times n}} \left| \partial_{\alpha_1}\partial_{\alpha_2} g(M) \right| 
\ll \frac{1}{n^2 (\Im w)^3}.
\end{equation}

By similar steps applied to the trace of the identity \eqref{deriv3} (cyclically permuting the trace, repeated application of \eqref{opHS}, and the bounds \eqref{dHHS} and \eqref{bd:resop})
one obtains
\begin{equation}
\max_{\alpha_1,\alpha_2,\alpha_3\in [n]^2} \sup_{M\in \R^{n\times n}} \left| \partial_{\alpha_1}\partial_{\alpha_2}\partial_{\alpha_3} g(M) \right| 
\ll \frac{1}{n^{5/2} (\Im w)^4}.
\end{equation}

Since $\partial_\alpha$ commutes with $\Re(\cdot)$ the above estimates give (with notation as in Theorem \ref{thm:invar})
\begin{align*}
\lambda_3(\Re g_{z,w}) 
&\ll \frac{1}{n^{9/2}(\Im w)^{6}}+ \frac{1}{n^{3}(\Im w)^{9/2}}+ \frac1{n^{5/2}(\Im w)^4} \\
&= O\left(\frac{1}{n^{5/2}(\Im w)^4}\right)\left(\frac{1}{(n\Im w)^{2}}+ \frac{1}{(n\Im w)^{1/2}} +1\right).
\end{align*}
Now we apply Theorem \ref{thm:invar} with $\Re g_{z,w}$ in place of $f$, $n^2$ in place of $N$ (identifying $\R^{n\times n}$ with $R^{n^2}$), and matrices $X,G$ in place of the vectors $X,W$.
From \eqref{Lq} we have
\[
\gamma_3 \ll (n/d)^{1/2}.
\]
Taking $h$ to simply be the identity mapping, \eqref{bd:invar} gives
\begin{align*}
&\left|\e\Re g_{z,w}(X) - \e \Re g_{z,w}(G)\right| \\
&\qquad\qquad\ll \left(\frac{n}{d}\right)^{1/2} \times \left(\frac{1}{n^{5/2}(\Im w)^4}\right)\left(1+ \frac{1}{(n\Im w)^{1/2}}+ \frac{1}{(n\Im w)^{2}} \right) \times n^2 \\
&\qquad\qquad\ll \frac{1}{d^{1/2}(\Im w)^4}\left(1+\frac{1}{(n\Im w)^{2}} \right).
\end{align*}
One obtains the same bound for the imaginary parts by the same lines. 
\end{proof}

\appendix

\section{Proof of Lemma \ref{lem:wegner.gaussian}}	\label{app:wegner}

In this appendix we establish the estimate of Lemma \ref{lem:wegner.gaussian} for the local density of small singular values of a perturbed real Gaussian matrix.
The argument is a (by now standard) application of an approach introduced in \cite{TaVu:esd}. In fact, a weaker version of the lemma (but still sufficient for our purposes) follows directly from \cite[Lemma 6.7]{TaVu:esd}, which applies to any matrix with iid standardized entries with finite second moment.
However, the argument is simpler in the Gaussian case and gives a stronger bound, so we include the proof below.

Fix $M\in \mM_n(\C)$ and denote $\tG= G+ \sqrt{n}M$. 
By the union bound it suffices to show that for some constants $C,c>0$,
\begin{equation}	\label{app:goal1}
\pro{ s_{n-k}(\tG) \le ck/\sqrt{n}} =O(ne^{-ck})
\end{equation}
for all $1\le k\le n-1$. 
Fix such a $k$; since the desired bound is trivial for small values of $k$ we may assume $k$ is larger than any fixed constant. Let $R_i$ denote the $i$th row of $\tG$.
Put $m=n-\lceil k/2\rceil$, and for each $i\in [m]$ set
\[
V_{i} = \Span( R_j: j\in [m]\setminus \{i\}).
\]
We claim 
\begin{equation}	\label{app:claim1}
s_{n-k}(\tG) \gg \sqrt{\frac{k}{n}} \min_{i\in [m]}\dist(R_i,V_i).
\end{equation}
Indeed, letting $\tG'$ be the $m\times n$ matrix obtained by removing the last $\lf k/2\rf$ rows from $\tG$, by the Cauchy interlacing law we have
\begin{equation}	\label{app:cauchy}
s_{n-k}(\tG)\ge s_{n-k}(\tG').
\end{equation}
On the other hand, from the inverse second moment identity (cf.\ \cite[Lemma A.4]{TaVu:esd}) we have
\[
\sum_{i=1}^m s_i(\tG')^{-2} = \sum_{i=1}^m \dist(R_i, V_i)^{-2}.
\]
Thus,
\begin{align*}
n \max_{i\in [m]} \dist(R_i,V_i)^{-2} &\ge \sum_{i=1}^m s_i(\tG')^{-2}
\ge \sum_{i=n-k}^m s_i(M')^{-2}
\ge \frac{k}{2} s_{n-k}(\tG')^{-2}
\end{align*}
and \eqref{app:claim1} now follows from \eqref{app:cauchy} and rearranging.
By the union bound, to obtain \eqref{app:goal1} it suffices to show that for each fixed $i\in [m]$, 
\begin{equation}
\pro{ \dist(R_i,V_i) \le c\sqrt{k}} \le e^{-ck}
\end{equation}
for a sufficiently small (adjusted) constant $c>0$.

Fix $i\in [m]$.
For the remainder of the proof we condition on the rows $\{R_j: j\in [m]\setminus \{i\}\}$, which fixes the subspace $V_i$. 
Now let $V_i'= \Span(V_i,\e R_i)$ (since $G$ is centered, $\e R_i$ is just the $i$th row of $\sqrt{n}M$). 
Writing $G_i=R_i-\e R_i$ for the $i$th row of $G$, we have $\dist(G_i,V_i')\le \dist(R_i,V_i)$, so it suffices to show
\begin{equation}	\label{app:goal}
\pro{\dist(G_i,V_i')\le c\sqrt{k}} \le e^{-ck}
\end{equation}
for a suitable constant $c>0$.
Also,
\begin{equation}	
\dim V_i'\le \dim V_i+ 1\le m.
\end{equation}
where here and in the following we mean dimension over $\C$.
By rotational symmetry of the distribution of $G_i$ we may assume $V_i'$ is spanned by the last $\dim V_i'$ standard basis vectors in $\C^n$. 
Then we have
\begin{equation}
\dist(G_i,V_i')^2 = \sum_{j=1}^{n-\dim V_i'} G_{ij}^2.
\end{equation}
In particular,
\begin{equation}
\e \dist(G_i,V_i')^2 = n-\dim V_i' \ge n-m \ge k/2.
\end{equation}
\eqref{app:goal} now follows from a standard concentration inequality for Gaussian measure (see for instance \cite{Ledoux:phenom}).

\section{Proof of Lemma \ref{lem:band}}	\label{app:band}

In this appendix we establish the following:

\begin{lemma}	\label{lem:band}
Fix $\alpha\in (0,1/2)$ and $\eps>0$, and let $(\log n)^{\frac1{1-2\alpha}+\eps} \le d\le n/2$. Set $p=d/n$ and let $B\in \{0,1\}^{n\times n}$ have iid Bernoulli$(p)$ entries. With $\mA_{n,d}$ as in \eqref{def:And}, there is a constant $C>0$ such that
\[
\pro{ B\in \mA_{n,d}} \ge \expo{ -Cn\log n\max\big\{ \sqrt{d\log n}, d^{2/3}, d^{1-\alpha}\big\}}
\]
for all $n$ sufficiently large depending on $\eps$. 
\end{lemma}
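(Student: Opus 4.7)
The plan is a two-step conditioning argument in the spirit of Shamir--Upfal. Writing $\event_R = \{B\1 = d\1\}$ for the event that all row sums equal $d$, I factor
\begin{equation*}
\pro{B \in \mA_{n,d}} \,=\, \pro{\event_R} \cdot \pro{B^\tran\1 = d\1 \mid \event_R}
\end{equation*}
and treat each factor separately.

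The first factor is easy. Since the rows of $B$ are independent with $\mathrm{Bin}(n,p)$ row sums and $d = np$ sits at the mode, Stirling's formula gives $\pro{\mathrm{Bin}(n,p)=d} \gtrsim 1/\sqrt{d(1-p)}$, so $\pro{\event_R} \ge \expo{-\tfrac{n}{2}\log d}$, which is comfortably dominated by the target bound.

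The substantive work is in the second factor. Conditional on $\event_R$, the rows of $B$ become i.i.d.\ uniform on $\binom{[n]}{d}$; since the rows are still independent and each entry is marginally $\mathrm{Bernoulli}(p)$, each column sum $T_j=\sum_i B_{ij}$ is exactly $\mathrm{Bin}(n,p)$, and the deviations $W_j := T_j - d$ satisfy $\sum_j W_j = 0$. A Bernstein/moment bound followed by a union bound yields $\|W\|_\infty \le M := C\max\{\sqrt{d\log n},\, d^{1-\alpha}\}$ except on an event of probability $\le e^{-n}$; the second term reflects the crossover to a Poisson-type tail when $d$ is only polylogarithmic, and dictates the lower bound $d \ge \log^{1/(1-2\alpha)+\eps}n$. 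To turn this concentration into the desired ratio $\pro{W=0\mid\event_R} = |\mA_{n,d}|\binom{n}{d}^{-n}$, I apply a column-swap switching argument: given $B_0$ with row sums $d$ and $W \ne 0$, pick a row $i$ and a column pair $(j_+,j_-)$ with $W_{j_+}>0$, $W_{j_-}<0$, $B_0(i,j_+) = 1$, $B_0(i,j_-) = 0$, and swap these two entries. This preserves $\event_R$ and decreases $\|W\|_1$ by $2$, so at most $\|W\|_1/2 \le nM/2$ such swaps suffice to reach $\mA_{n,d}$. A double count of switching sequences on the bipartite graph whose edges link nearly regular $B_0$ to their switching neighbors then produces
\begin{equation*}
\frac{|\mA_{n,d}|}{\big|\{B_0 : \event_R,\ \|W(B_0)\|_\infty \le M\}\big|} \,\ge\, \expo{-Cn\log n\cdot M},
\end{equation*}
and combining with Step 1 gives the claimed bound.

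The main obstacle is the switching double-count. On the one hand, one must show that from each target $M \in \mA_{n,d}$ many distinct nearly regular $B_0$ can be produced by reverse switchings (so $\mA_{n,d}$ is not a combinatorially thin subset); on the other, one must show that from each such $B_0$ the greedy forward procedure never stalls, i.e.\ an admissible triple $(i,j_+,j_-)$ exists as long as $W \ne 0$. The latter reduces to a pigeonhole/discrepancy estimate on the number of rows passing through $j_+$ whose overlap with $\mN_{B_0}(j_-)^c$ is nonempty, which in turn is controlled by the sparsity and near-regularity of $B_0$. The three regimes $\sqrt{d\log n}$, $d^{2/3}$, $d^{1-\alpha}$ correspond to different balances between the length of the switching sequence and the entropy of intermediate configurations: $d^{2/3}$ is the classical Shamir--Upfal trade-off exploiting $\|W\|_2^2 \asymp nd$, while $d^{1-\alpha}$ handles the sparse range where Bernstein is replaced by a weaker MGF-type bound.
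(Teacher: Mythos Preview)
Your approach is genuinely different from the paper's and, once the details are filled in, yields the claimed bound (in fact a slightly cleaner one). The paper does not condition on row sums and switch. Instead it works with a slightly denser Bernoulli matrix $B'$ (density $p'$ chosen so that $d=(1-\delta)p'n$), invokes the Ore--Ryser theorem to show $B'$ contains a $d$-regular factor with high probability, and then counts how many near-regular $B'$ can arise from each $A\in\mA_{n,d}$ by adding at most $m\asymp \delta d + d^{1-\alpha}$ ones per row and column. The $d^{2/3}$ term in the paper's statement comes from the constraint $\delta\ge C(pn)^{-1/3}$ in the factor lemma (needed for the union bound over ``medium'' $T$ in the Ore--Ryser condition), and the $d^{1-\alpha}$ term comes from their particular choice of deviation threshold for the row/column sums of $B'$. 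Neither of these is intrinsic to your switching route.

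Two remarks on your sketch. First, the ``main obstacle'' you flag --- that the greedy swap never stalls --- is a one-line pigeonhole: if $T_{j_+}>d>T_{j_-}$, then among the $T_{j_+}$ rows with a $1$ in column $j_+$, at most $T_{j_-}<T_{j_+}$ also have a $1$ in column $j_-$, so some admissible row exists. The double-count then reduces to the injectivity of $B_0\mapsto (A,s)$, where $s$ records the at most $\|W\|_1/2\le nM/2$ triples $(i,j_+,j_-)\in[n]^3$, giving $|\mA_{n,d}|\ge \tfrac12\binom{n}{d}^n\exp(-O(nM\log n))$ directly. Second, your attributions of the $d^{2/3}$ and $d^{1-\alpha}$ terms are off: with $M=C\sqrt{d\log n}$ (which suffices since $d\gg\log n$ throughout the stated range, so the Bernstein tail is genuinely sub-Gaussian at this scale), your argument already delivers $\exp(-O(n\log n\cdot\sqrt{d\log n}))$, and this dominates the other two terms in the target maximum. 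There is no separate ``$\|W\|_2^2\asymp nd$'' trade-off producing $d^{2/3}$ in your scheme. Also, the exceptional probability for $\{\|W\|_\infty>M\}$ is $O(1/n)$, not $e^{-n}$; this is harmless since you only need it bounded away from $1$.
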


Lemma \ref{lem:band} follows from the above by setting $\alpha=1/3$ and $\eps=1$.

To prove Lemma \ref{lem:band} we follow an argument of Shamir and Upfal from \cite{ShUp}, who established estimates on the probability that an (undirected) Erd\H{o}s--R\'enyi graph is a $d$-regular graph.
The heart of the proof is to show that with high probability, the matrix $B$ contains a \emph{$d$-regular factor} of slightly smaller density. 
Recall that a $d$-regular factor in $B=(b_{ij})$ is an element $B'=(b_{ij}')\in \mA_{n,d}$ such that $b_{ij}=0\Rightarrow b_{ij}'=0$ (usually this terminology is applied to the associated graphs/digraphs rather than adjacency matrices). 

\begin{lemma}	\label{lem:factor}
Let $n\ge 1$, $p\in (0,1)$, and let $B\in \{0,1\}^n$ have iid Bernoulli($p$) entries. Let
\begin{equation}	\label{lbdelta}
\frac12\ge \delta \ge  C\max\left[\left(\frac{\log n}{pn}\right)^{1/2}, \frac1{(pn)^{1/3}}\right] 
\end{equation}
for a sufficiently large constant $C>0$, 
and put $d=(1-\delta)pn$ (we assume $\delta$ is such that $d$ is an integer, and $n$ and $p$ are such that the range for $\delta$ is nonempty). 
Then $B$ contains a $d$-regular factor except with probability at most $\expo{ -c\delta^2 pn}$ for some constant $c>0$. 
\end{lemma}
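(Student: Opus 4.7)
The plan is to reduce the existence of a $d$-regular factor in $B$ to a combinatorial ``Hall-type'' cut condition via a max-flow/min-cut argument, and then verify this cut condition with high probability using Chernoff deviation bounds and the union bound.

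\textbf{Step 1 (Cut condition).} I would view $B$ as the biadjacency matrix of a random bipartite graph $\Gamma_B$ between two copies of $[n]$, with an edge $(u,v)$ if and only if $b_{uv}=1$. Then a $d$-regular factor of $B$ in $\mA_{n,d}$ corresponds exactly to a $d$-regular spanning subgraph of $\Gamma_B$. Applying max-flow/min-cut to the transportation network with a source joined to one side by capacity-$d$ edges, the edges of $\Gamma_B$ with capacity $1$, and the other side joined to a sink by capacity-$d$ edges, existence of such a subgraph is equivalent to the condition
\[
e_B(S,T) \;\ge\; d\bigl(|S|+|T|-n\bigr) \qquad \forall\, S,T\subseteq [n],
\]
which is nontrivial only when $|S|+|T|>n$.

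\textbf{Step 2 (Chernoff for a fixed cut).} For fixed $S,T$ with $|S|=a$, $|T|=b$, $a+b>n$, $e_B(S,T)$ is $\mathrm{Bin}(ab,p)$ with mean $pab$. Using $ab-n(a+b-n)=(n-a)(n-b)$, the slack between mean and threshold equals
\[
pab - d(a+b-n) \;=\; p\bigl[(n-a)(n-b)+\delta n(a+b-n)\bigr],
\]
and a multiplicative Chernoff bound yields
\[
\pro{e_B(S,T)<d(a+b-n)} \;\le\; \expo{-c\,\frac{p\bigl[(n-a)(n-b)+\delta n(a+b-n)\bigr]^{2}}{ab}}.
\]

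\textbf{Step 3 (Union bound).} Writing $x=n-a$, $y=n-b$, the number of pairs $(S,T)$ of these sizes is at most $\binom{n}{x}\binom{n}{y}\le \expo{x\log(en/x)+y\log(en/y)}$, and it remains to sum the failure probabilities over $(x,y)$ with $x+y<n$. I would partition the parameter range into three regimes: (i) the \emph{boundary} regime where one of $x,y$ is small (controlled by $\binom{n}{x}$ combined with the $\delta n(a+b-n)$ contribution, and requiring $\delta^{2}pn\gtrsim \log n$, which produces the first lower bound in \eqref{lbdelta}); (ii) the \emph{deep interior} regime where $x,y$ are both a constant fraction of $n$ (handled by the $(n-a)(n-b)=xy$ term, which contributes at least $p n^{2}$ to the Chernoff exponent and comfortably beats the $O(n)$ entropy); and (iii) the \emph{intermediate} regime where both contributions to the slack are relevant simultaneously.

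\textbf{Main obstacle.} The decisive work is in the intermediate regime (iii), where $x,y=o(n)$ but $x+y$ is still a nontrivial fraction of $n$ (and correspondingly $a+b-n$ is small). One must trade off the $(n-a)(n-b)=xy$ and $\delta n(a+b-n)$ contributions to the Chernoff numerator against the entropy $x\log(en/x)+y\log(en/y)$, and optimizing over $(x,y)$ along the relevant frontier is precisely what produces the $\delta\gtrsim (pn)^{-1/3}$ threshold in \eqref{lbdelta}. Once each regime is handled with failure probability at most $\expo{-c'\delta^{2}pn}$, a dyadic summation over the sizes $(x,y)$ yields a total failure probability of $\expo{-c\delta^{2}pn}$, as required.
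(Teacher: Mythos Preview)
Your approach is essentially the same as the paper's, though organized differently. The paper starts from the Ore--Ryser criterion (for all $T\subset[n]$, $\sum_i\min(d,\deg_T(i))\ge d|T|$) rather than max-flow/min-cut, but these are equivalent: optimizing your cut condition over $S$ for fixed $T$ recovers Ore--Ryser with the optimal choice $S=\{i:\deg_T(i)<d\}=H_T^c$. The paper then re-introduces a second set by union bounding over candidate values of $H_T$, so both arguments end up summing Bernstein/Chernoff tail bounds over pairs of sets of prescribed sizes.

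Where the arguments diverge is in the case analysis. The paper splits by $|T|$: for $|T|\le(1-\delta/2)n$ it runs your Chernoff-plus-union argument directly, while for $|T|>(1-\delta/2)n$ it drops the union bound over $H$ and instead controls the set of ``light'' rows $L_T=\{i:\deg_T(i)\le(1-\delta/2)p|T|\}$ separately. Your uniform treatment over all $(S,T)$ is cleaner, and because your Chernoff numerator uses the full slack $p[xy+\delta n(n-x-y)]$ rather than only the multiplicative deficit $\delta$, it is in principle sharper than the paper's bound $\exp(-c\delta^2 p(n-h)t)$ in the middle range.

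One caution: your description of regime (iii) is muddled---``$x,y=o(n)$ but $x+y$ still a nontrivial fraction of $n$'' is self-contradictory---and your assertion that the $(pn)^{-1/3}$ threshold emerges there is not substantiated. In the paper this threshold comes from a crude step: for $n/2\le|T|\le(1-\delta/2)n$ the entropy is bounded by $10^n$ and the Bernstein exponent by $\delta^2 p(n-|T|)n\ge\tfrac12\delta^3 pn^2$, forcing $\delta^3 pn\gtrsim 1$. The paper itself remarks this second condition could likely be removed with more care; your sharper slack bound may well achieve this, but you should work through the optimization rather than assert the outcome.
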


In the proof of Lemma \ref{lem:factor} we will write
\begin{equation*}
e(S,T) := \sum_{i\in S, j\in T} B(i,j)
\end{equation*}
for $S,T\subset[n]$ (as in \eqref{def:edge} but suppressing the subscript $B$). For $i\in [n]$, $T\subset[n]$ we write
\begin{equation*}
\deg_T(i):= |\mN_B(i)\cap T|
\end{equation*}
where $\mN_B(i)$ is as in \eqref{def:nbhd}.
We will apply the following consequence of the Ore--Ryser theorem \cite{Ore}:
\begin{prop}
Let $B\in \{0,1\}^{n\times n} $ and let $d\in [n]$. Then $B$ contains a $d$-regular factor $B'$ if and only if 
\begin{equation}	\label{cond:OR}
\forall T\subset[n], \quad X_T:= \sum_{i=1}^d \min(d, \deg_T(i) ) \ge d|T|.	
\end{equation}
\end{prop}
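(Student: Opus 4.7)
The plan is to recast the existence of a $d$-regular factor as a max-flow problem on an auxiliary bipartite network, so that the condition on $X_T$ emerges as the min-cut characterization. (I will interpret the sum $\sum_{i=1}^d$ in $X_T$ as the intended $\sum_{i=1}^n$, since otherwise the condition is trivially insufficient at $|T|=n$.)

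The necessity direction ($\Rightarrow$) is a one-line double count. Given a $d$-regular factor $B'\in \mA_{n,d}$ with $b'_{ij}\le b_{ij}$ for all $i,j$, for every $T\subset [n]$
\[
d|T| \;=\; e_{B'}([n],T) \;=\; \sum_{i=1}^n |\mN_{B'}(i)\cap T| \;\le\; \sum_{i=1}^n \min\bigl(d,\, |\mN_B(i)\cap T|\bigr) \;=\; X_T,
\]
using $|\mN_{B'}(i)\cap T|\le d$ (since $B'$ is $d$-out-regular) and $|\mN_{B'}(i)\cap T|\le |\mN_B(i)\cap T|$ (since $B'\le B$ entrywise).

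For sufficiency ($\Leftarrow$), I would introduce the directed network $\mathcal{N}$ with source $s$, sink $t$, row-vertices $u_1,\dots,u_n$ and column-vertices $v_1,\dots, v_n$, and integer capacities
\[
c(s, u_i) = d,\qquad c(u_i, v_j) = b_{ij},\qquad c(v_j, t) = d.
\]
Integer $(s,t)$-flows of value $dn$ in $\mathcal{N}$ are in bijection with $d$-regular factors of $B$: setting $b'_{ij}$ equal to the flow on $u_i\to v_j$ produces a $\{0,1\}$-matrix (by the unit capacity), with row sums $d$ (from saturating every $s\to u_i$) and column sums $d$ (from saturating every $v_j\to t$), and conversely any $B'\in \mA_{n,d}$ with $B'\le B$ routes exactly $d$ units through each row- and column-vertex.

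The heart of the plan is then the min-cut computation. An arbitrary $(s,t)$-cut is specified by a pair $(I,T)$ with $I,T\subset [n]$: $I$ indexes the row-vertices kept on the source side and $T$ indexes the column-vertices pushed to the sink side. Its capacity is
\[
d(n-|I|) + d(n-|T|) + \sum_{i\in I}|\mN_B(i)\cap T|.
\]
Holding $T$ fixed and minimizing over $I$ amounts to choosing independently, for each row $i$, between contributions $d$ (if $i\notin I$) and $|\mN_B(i)\cap T|$ (if $i\in I$), yielding $\min(d,|\mN_B(i)\cap T|)$. The minimum cut over all $(s,t)$-cuts is therefore
\[
\min_{T\subset [n]} \bigl[d(n-|T|) + X_T\bigr],
\]
and this is $\ge dn$ for every $T$ precisely when $X_T\ge d|T|$ for every $T$. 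Combined with the integer max-flow min-cut theorem and the bijection above, this gives the claim.

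The only substantive step is the cut-capacity calculation and the observation that the minimization over $I$ decouples row-by-row; once the network is set up, the rest is mechanical, and there is no real obstacle.
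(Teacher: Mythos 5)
Your proof is correct, and you were right to read $\sum_{i=1}^d$ as the typo it is for $\sum_{i=1}^n$ (with the former the condition fails at $T=[n]$ even for $B=\1\1^\tran$). The paper does not actually prove this proposition; it cites it as a consequence of the Ore--Ryser theorem on degree-constrained subgraphs of bipartite graphs, so there is nothing to compare your argument against line by line. What you have supplied is a clean self-contained derivation by the integral max-flow/min-cut theorem: the bijection between integer flows of value $dn$ on your network and $d$-regular factors $B'\le B$ is exactly as you say, the cut-capacity formula $d(n-|I|)+d(n-|T|)+\sum_{i\in I}|\mN_B(i)\cap T|$ is correct, and the row-by-row minimization over $I$ gives $\min_T\big[d(n-|T|)+X_T\big]$ as the min cut, so that max flow $=dn$ iff $X_T\ge d|T|$ for all $T$. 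This is essentially how one proves Ore--Ryser itself, so the argument is the morally standard one; the only thing the citation buys that your proof does not is the attribution.
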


\subsection{Proof of Lemma \ref{lem:factor}}		\label{sec:factor}

We let the constant $C>0$ to be taken sufficiently large over the course of the proof. 
To $T\subset[n]$, associate the set of \emph{heavy} vertices
\begin{equation}
H_T:= \{ i\in [n]: \deg_T(i) \ge d\}.
\end{equation}
We have
\begin{equation}
X_T = d|H_T| + e(H_T^c, T).
\end{equation}
Thus, we see the inequality in \eqref{cond:OR} automatically holds for $T\subset[n]$ such that $|H_T|\ge |T|$, so it suffices to show
\begin{equation}	\label{factor.ets}
\pro{ \exists T\subset[n]: |H_T|\le |T|, X_T< d|T|} \le \expo{-c\delta^2pn}.
\end{equation}
First we will control the event that $X_T< d|T|$ for some $T$ of size less than $(1-\delta/2)n$.
By the union bound and exchangeability of the rows and columns of $B$,
\begin{align*}
&\pro{ \exists T\subset[n]: |H_T|\le |T| \le (1-\delta/2)n, X_T<d|T|}\\
&\qquad\qquad\qquad\le \sum_{\substack{T\subset[n]: \\|T|\le (1-\delta/2)n}}\sum_{\substack{H\subset[n]:\\ |H|\le |T|}} \pr\big( e(H^c,T)<d(|T|-|H|)\big)\\
&\qquad\qquad\qquad\le \sum_{t\le (1-\delta/2)n} \sum_{h\le t} {n\choose t} {n\choose h} \pr\big( e([n-h], [t]) < d(t-h)\big).
\end{align*}
Now since
\begin{align*}
d(t-h) &= (1-\delta)pn(t-h) \\
&= (1-\delta) p(n-h)t -(1-\delta)ph(n-t) \\
&\le  (1-\delta) p(n-h)t\\
&= (1-\delta) \e e([n-h],[t]),
\end{align*}
we can use Bernstein's inequality to bound
\begin{align*}
 \pr\big( e([n-h], [t]) < d(t-h)\big) &\le  \pr\big( e([n-h], [t]) <(1-\delta) \e e([n-h],[t])\big) \\
 &\le \expo{ -c \delta^2p (n-h)t}
\end{align*}
for some constant $c>0$.
Thus,
\begin{align*}
\pro{ \exists T: |H_T|\le |T| \le (1-\delta/2)n, X_T<d|T|}
&\le \sum_{t\le (1-\delta/2)n} \sum_{h\le t} {n\choose t} {n\choose h}\expo{ -c \delta^2p (n-h)t}.
\end{align*}
For $t\le n/2$ we can bound
\begin{align*}
 \sum_{h\le t} {n\choose t} {n\choose h}\expo{ -c \delta^2 p(n-h)t}
 &\le n^t\expo{ -\frac12c\delta^2pnt}\sum_{h\le t} n^h\\
 &\le  n^{2t+1} \expo{ -\frac12c \delta^2 pnt}\\
 &\le \expo{ -\frac14c\delta^2pnt}
\end{align*}
where we have applied the first lower bound in \eqref{lbdelta}, taking $C$ sufficiently large. For $n/2\le t\le (1-\delta/2)n$ we have
\begin{align*}
 \sum_{h\le t} {n\choose t} {n\choose h}\expo{ -c \delta^2 p(n-h)t}
 &\le 10^n \expo{ -\frac12c \delta^2 p(n-t)n}\\
 &\le \expo{ -\frac14c\delta^2pn(n-t)}
\end{align*}
where we have lower bounded $n-t\ge \delta n/2$ and applied the second lower bound in \eqref{lbdelta}
(taking $C$ larger, if necessary).
Summing these bounds gives
\begin{equation}	\label{smallT}
\pro{ \exists T: |H_T|\le |T| \le (1-\delta/2)n, X_T<d|T|} \le \expo{ -c'\delta^2pn}.
\end{equation}

It remains to bound the probability that $X_T<d|T|$ for some $T$ of size $|T|\ge (1-\delta/2)n$.
For $T\subset[n]$ define the set of \emph{light} vertices
\[
L_T:= \{i\in [n]: \deg_T(i) \le (1-\delta/2)p|T|\}.
\]
For fixed $T$ and $i$, since $\e \deg_T(i)=p|T|$, from Bernstein's inequality we have
\[
\pro{ i\in L_T} \le \expo{ -c\delta^2 p|T|}.
\]
Thus, for any $\ell\ge1$ we have
\[
\pro{ |L_T|\ge \ell} \le n^\ell \pro{ [\ell]\subseteq L_T} \le \expo{ \ell\big( \log n - c\delta^2 p|T|\big)}.
\]
Thus, taking $C$ larger if necessary, and if $|T|\ge n/2$, say, then 
\[
\pro{ |L_T|\ge \ell} \le \expo{ -(c/2)\delta^2p|T|\ell}.
\]
By the union bound, for each $1\le m\le n/2$,
\begin{align*}
\pro{ \exists T\in {[n]\choose n-m}: |L_T|\ge \ell} 
&\le n^m \expo{ -(c/2)\delta^2 p(n-m)\ell}\\
&\le  n^m\expo{ -(c/4)\delta^2pn\ell}\\
&\le \expo{ -c'\delta^2pn\ell}
\end{align*}
if
\begin{equation}
\ell= \ell^*(m):=\frac{C m \log n}{\delta^2 pn}.
\end{equation}
By another union bound over the choices of $m$ (and adjusting the constants $C,c'$) we conclude that except with probability at most $\expo{ -c'\delta^2 pn}$, 
\begin{equation}	\label{good.big}
\forall\, 1\le m\le n/2, \quad \forall \,T\in {[n]\choose n-m}, \qquad |L_T|\le \ell^*(m).
\end{equation}
We may restrict to this event. Now consider $T\subset[n]$ with $|T|=n-m$ for some $m\le \delta n/2$. 
We have
\begin{equation}	\label{XT.above}
X_T  \ge \sum_{i\notin L_T} \min(d, \deg_T(i)) \ge (n-\ell^*(m)) \min \big(d, (1-\delta/2)p(n-m)\big).
\end{equation}
Note that under \eqref{lbdelta} we have $m\le \ell^*(m)$. Also, since $m\le \delta n/2$ we have 
\[
(1-\delta/2)p(n-m)\ge (1-\delta)pn = d.
\]
Hence, the right hand side of \eqref{XT.above} is bounded below by $d(n-m)=d|T|$, as desired. 
Thus, we have shown that 
\begin{equation*}	
\pro{ \exists T\subset[n]: n(1-\delta/2)\le |T|\le n, X_T<d|T|} \le \expo{ -c'\delta^2 pn}.
\end{equation*}
which together with \eqref{smallT} completes the proof.

\begin{remark}
The second lower bound in \eqref{lbdelta} could likely be improved, or removed entirely, by separately controlling $X_T$ for sets $T$ of ``intermediate" size. However, we do not pursue such an improvement as it not necessary for the purposes of this work.
\end{remark}

\subsection{Proof of Lemma \ref{lem:band}}

Set $\delta$ equal the lower bound in \eqref{lbdelta}, let $p'$ such that $d= (1-\delta)p'n$, and let $B'\in \{0,1\}^{n\times n}$ have iid Bernoulli($p'$) entries. 

Let $\alpha\in (0,1/2)$. From Bernstein's inequality, the probability that a given row or column of $B'$ has support of size differing from $p'n$ by at least $d^{1-\alpha}$ is at most $2\expo{ -cd^{1-2\alpha}}$ for some constant $c>0$. By the union bound we have that all rows and columns have supports of size $p'n + O(d^{1-\alpha})$ with probability at least $1/2$, say, for all $n$ sufficiently large depending on $\eps$. 
By Lemma \ref{lem:factor}, $B'$ contains a $d$-regular factor with probability at least $3/4$. 
Denoting the intersection of these events by $\mG$, we have $\pr(\mG)\ge 1/4$.

Identifying $\mG$ with a subset of $\{0,1\}^{n\times n}$, we see that every element of $\mG$ can be obtained by taking an appropriate element of $\mA_{n,d}$ and adding at most 
\begin{equation}	\label{m.def}
m:=\delta p'n+ d^{1-\alpha} \ll \sqrt{d\log n} + d^{2/3} + d^{1-\alpha}
\end{equation}
entries to each row and column. Thus, letting $\mB(B,m)\subset \{0,1\}^{n\times n}$ denote the set of matrices that can be obtained from $B\in \{0,1\}^{n\times n}$ by adding at most $m$ entries to each row and column, we have
\[
1/4\le \pr(B'\in \mG) \le \sum_{B\in \mA_{n,d}} \pr(B'\in \mB(B,m)).
\]
Since $p'\le 1/2$ and each element of $\mB(B,m)$ has at least $nd$ nonzero entries, 
\[
\pr(B'\in \mB(B,m))\le {n\choose m}^{2n} (p')^{nd}(1-p')^{n^2-nd}
\]
which yields the estimate
\begin{equation}	\label{And.est}
|\mA_{n,d}| \ge \frac14{n\choose m}^{-2n} (p')^{-nd}(1-p')^{-(n^2-nd)}.
\end{equation}

Now let $B$ be as in Lemma \ref{lem:band}. We first note that
\begin{equation}	\label{goal:ge}
\pr(B\in \mA_{n,d})\ge \pr(B'\in \mA_{n,d}).
\end{equation}
This is easily seen from the following:
\begin{align*}
\frac{\pr(B\in \mA_{n,d})}{ \pr(B'\in \mA_{n,d})}
&= \left(\frac{p}{p'}\right)^{nd} \left(\frac{1-p}{1-p'}\right)^{n^2-nd} = \expo{ n^2 D_{KL}(\mu_p||\mu_{p'})},
\end{align*}
where $\mu_q$ is the Bernoulli$(q)$ measure on $\{0,1\}$ and $D_{KL}(\mu_p||\mu_{p'})$ is the Kullback--Leibler divergence from $\mu_{p'}$ to $\mu_p$. Since the latter is strictly positive for $p\ne p'$, \eqref{goal:ge} follows.
Combining \eqref{goal:ge}, \eqref{And.est} and \eqref{m.def}, we conclude
\begin{align*}
\pr(B\in \mA_{n,d})&\ge \pr(B'\in \mA_{n,d})\\
&= |\mA_{n,d}|(p')^{nd}(1-p')^{n^2-nd}\\
&\ge \frac14 {n\choose m}^{-2n}\\
&\ge \frac14 \expo{ -2nm\log n}\\
&\ge \expo{ -O(n\log n(\sqrt{d\log n} + d^{2/3} + d^{1-\alpha}))}
\end{align*}
as desired.

\bibliographystyle{alpha}
\bibliography{rrd_circ}

\end{document}